\definecolor{Blue}{rgb}{0.3,0.3,0.9}
\DeclareSymbolFont{cyrillic}{T2A}{cmr}{m}{n}
\DeclareMathSymbol{\Sha}{\mathalpha}{cyrillic}{216}
\theoremstyle{plain}
\newtheorem{thm}{Theorem}[subsection] 
\theoremstyle{definition}
\newtheorem{defn}[thm]{Definition}
\newtheorem{rmk}[thm]{Remark}
\newtheorem{assumption}{Assumption}[section]
\theoremstyle{definition}
\theoremstyle{plain}
\newtheorem{prop}[thm]{Proposition}
\theoremstyle{plain}
\newtheorem{lem}[thm]{Lemma}
\theoremstyle{plain}
\newtheorem{cor}[thm]{Corollary}
\newtheorem*{thm-intro}{Theorem}
\newtheorem*{cor-intro}{Corollary}
\theoremstyle{remark}
\newtheorem{notation}[thm]{Notation}
\numberwithin{equation}{section}
\newcommand{\eps}{\varepsilon}
\newcommand{\cc}{\mathbf{c}}
\newcommand{\ch}{\xi}
\newcommand{\Sel}{Sel}
\newcommand{\rH}{H}
\newcommand{\pp}{\mathfrak{p}}
\newcommand{\ppbar}{\bar{\mathfrak{p}}}
\newcommand{\fkm}{\mathfrak{m}}
\newcommand{\fkmbar}{\overline{\mathfrak{m}}}
\newcommand{\fkf}{\mathfrak{f}}
\newcommand{\fkfbar}{\overline{\mathfrak{f}}}
\newcommand{\fkl}{\mathfrak{l}}
\newcommand{\fklbar}{\overline{\mathfrak{l}}}
\newcommand{\fkn}{\mathfrak{n}}
\newcommand{\qq}{\mathfrak{q}}
\newcommand{\fa}{\mathfrak{a}}
\newcommand{\relstr}{{{\rm rel},{\rm str}}}
\newcommand{\strrel}{{{\rm str},{\rm rel}}}
\newcommand{\ord}{{{\rm ord},{\rm ord}}}
\newcommand{\ts}{\textsuperscript}
\DeclareMathOperator{\pa}{\mathfrak{p}}
\DeclareMathOperator{\et}{\acute{e}t}
\DeclareMathOperator{\F}{\mathscr{F}}
\DeclareMathOperator{\Ll}{\mathscr{L}}
\DeclareMathOperator{\Ss}{\mathscr{S}}
\DeclareMathOperator{\p}{\mathfrak{P}}
\DeclareMathOperator{\pr1}{\text{pr}_{1*}}
\DeclareMathOperator{\tpr}{\text{pr}}
\DeclareMathOperator{\Norm}{\mathrm{Norm}}
\DeclareMathOperator{\Frob}{\mathrm{Frob}}
\DeclareMathOperator{\ur}{\mathrm{ur}}
\DeclareMathOperator{\bal}{\mathrm{bal}}
\DeclareMathOperator{\BDP}{BDP} 
\DeclareMathOperator{\BD}{BD} 
\DeclareMathAlphabet\mathbfcal{OMS}{cmsy}{b}{n}
\DeclareTextSymbolDefault{\uhorn}{T5}
\DeclareTextSymbolDefault{\ocircumflex}{T5}
\DeclareTextSymbolDefault{\acircumflex}{T5}
\newcommand{\bQ}{\mathbf{Q}}
\newcommand{\bZ}{\mathbf{Z}}
\newcommand{\bC}{\mathbf{C}}
\def\cA{{\mathcal A}}  
\def\cL{{\mathcal L}}
\def\cO{\mathcal O}
\def\Lcal{\mathcal L}
\newcommand{\VQdag}{{\mathbf{V}_{\underline{Q}}^\dagger}}
\newcommand{\Vsdag}{{\mathbf{V}_{Q_0}^\dagger}}
\newcommand{\Vdag}{{\mathbf{V}^\dagger}}
\newcommand{\Adag}{{\mathbf{A}^\dagger}}
\newcommand{\VVdag}{{\mathbb{V}^\dagger}}
\newcommand{\unb}{{\boldsymbol{f}}}
\newcommand{\Q}{\mathbf{Q}}
\newcommand{\Z}{\mathbf{Z}}
\def\makeop#1{\expandafter\def\csname#1\endcsname
	{\mathop{\rm #1}\nolimits}\ignorespaces}
\def\Sel{{\rm Sel}}
\def\Ord{{\mathrm{ord}}}
\def\iso{\simeq}
\newcommand{\dBr}[1]{\llbracket{#1}\rrbracket}
\newcommand{\bT}{\mathbb{T}}
\newcommand{\cR}{\mathbb{I}}
\newcommand{\any}{?}
\newcommand{\bfff}{{\boldsymbol{f}}}
\newcommand{\bff}{{\boldsymbol{f}}}
\newcommand{\bfg}{{\boldsymbol{g}}}
\newcommand{\bfh}{{\boldsymbol{h}}}
\begin{document}
	
\title{Diagonal cycles and anticyclotomic Iwasawa theory of modular forms}
\author[F.\,Castella]{Francesc Castella}
\author[K.\,T.\,Do]{Kim Tuan Do}
	
\subjclass[2020]{Primary 11G05; Secondary 11G40}
\date{\today} 
		
\address[]{Department of Mathematics, University of California Santa Barbara, CA 93106, USA}
\email{castella@ucsb.edu}
\address[]{Department of Mathematics, University of California Los Angeles, CA 90095, USA}
\address[]{Department of Mathematics, University of California Santa Barbara, CA 93106, USA}
\email{ktdo@ucsb.edu}



\begin{abstract}
We construct a new Euler system for the Galois representation $V_{f,\chi}$ attached to a newform $f$ of weight $2r\geq 2$ twisted by an anticyclotomic Hecke character $\chi$. The Euler system is anticyclotomic in the sense of Jetchev--Nekov\'a{\v r}--Skinner. We then show some arithmetic applications of the constructed Euler system, including new results on the Bloch--Kato conjecture in ranks zero and one, and a divisibility towards the Iwasawa--Greenberg main conjecture for $V_{f,\chi}$. 

In particular, in the case where the base-change of $f$ to our imaginary quadratic field  has root number $+1$  and $\chi$ has higher weight (which implies that the complex $L$-function $L(V_{f,\chi},s)$ vanishes at the center), our results show that the Bloch--Kato Selmer group of $V_{f,\chi}$ is nonzero, as predicted by the Bloch--Kato conjecture; and if in addition a certain distinguished class $\kappa_{f,\chi}$ is nonzero, then the Selmer group is one-dimensional. Such applications to the Bloch--Kato conjecture for $V_{f,\chi}$ were left wide open by the earlier approaches using Heegner cycles and/or Beilinson--Flach elements. Our construction is based instead on a generalization of the Gross--Kudla--Schoen diagonal cycles.
\end{abstract}


\maketitle
\setcounter{tocdepth}{2}
\newpage
\tableofcontents

\newpage
\section{Introduction}

\addtocontents{toc}{\setcounter{tocdepth}{-10}}

{
\renewcommand{\thethm}{\Alph{thm}}

Let $f=\sum_{n=1}^\infty a_nq^n\in S_{2r}(\Gamma_0(N_f))$ be an elliptic newform of even weight $2r\geq 2$,  and let $p\nmid 6N_f$ be a prime. Let $K$ be an imaginary quadratic field in which $p$ splits. Let $L$ be a number field containing $K$ and the Fourier coefficients of $f$, and let $\mathfrak{P}$ be a prime of $L$ above $p$ at which $f$ is ordinary, i.e. $v_{\mathfrak{P}}(a_p)=0$. Let $\chi$ be an anticyclotomic Hecke character of $K$, and consider the conjugate self-dual $G_K={\mathrm{Gal}}(\overline{\Q}/K)$-representation
\[
V_{f,\chi}:=V_f^\vee(1-r)\otimes\chi^{-1},
\] 
where $V_f^\vee$ is the contragredient of Deligne's $\mathfrak{P}$-adic Galois representation associated to $f$. 

We prove the following applications to the Bloch--Kato conjecture for $V_{f,\chi}$. Under mild hypotheses on $f$ and $\chi$, the nonvanishing of the Rankin--Selberg $L$-function $L(f/K,\chi,s)$ at the center $s=r$ implies that the associated Bloch--Kato Selmer group is $0$; and when this central $L$-value vanishes, the nonvanishing of a distinguished class $\kappa_{f,\chi}$ implies that the dimension of the associated Bloch--Kato Selmer group is $1$. In addition, we also prove a divisibility in the Iwasawa main conjecture for $V_{f,\chi}$, both in the definite and in the indefinite settings. These results are deduced as applications of the main contribution of this paper, which is the  construction of a new anticyclotomic Euler system for $V_{f,\chi}$. By exploiting the decomposition of certain triple products, our construction is based on a generalization of the diagonal cycles introduced by Gross--Kudla \cite{gross-kudla} and Gross--Schoen \cite{gross-schoen}, and studied more recently by Darmon--Rotger and Bertolini--Seveso--Venerucci (see \cite{BDRSV}). 

%
%

\subsection{Main results} 

We fix once and for all complex and $p$-adic embeddings  $i_\infty:\overline{\Q}\hookrightarrow\overline{\mathbf{C}}$ and $i_p:\overline{\Q}\hookrightarrow\overline{\mathbf{C}}_p$. Assume that the discriminant $D_K$ of $K$ is coprime to $N_f$, and writing  $N_f=N^+N^-$ with $N^+$ (resp. $N^-$) divisible only by primes that are split (resp. inert) in $K$, assume that 
\begin{equation}\label{eq:sq}
\textrm{$N^-$ is squarefree.} \tag{sq}
\end{equation} 
Assume also that 
\begin{equation}\label{eq:spl}
\textrm{$p=\pp\ppbar$ splits in $K$},\tag{spl}
\end{equation}
with $\pp$ the prime of $K$ above $p$ induced by $i_p$. Let $\Gamma^-$ be the Galois group of the anticyclotomic $\Z_p$-extension of $K$. We consider anticyclotomic Hecke characters of $K$ of the form $\chi=\chi_0\phi$, with $\chi_0$ a ring character such that
\begin{equation}\label{eq:cond}
\textrm{$\chi_0$ has conductor $c\cO_K$ with $(c,pN_f)=1$,} \tag{cond}
\end{equation}
and $\phi$ an anticyclotomic Hecke character of $K$ whose $p$-adic avatar (still denoted $\phi$) factors through $\Gamma^-$. Denote by $\nu(N^-)$ the number of prime factors of $N^-$. 
Under hypotheses \eqref{eq:sq}, \eqref{eq:spl}, and \eqref{eq:cond},  the sign $\epsilon(f,\chi)$ in the functional equation for $L(f/K,\chi,s)$ (relating its values at $s$ and $2r-s$) depends only on the global root number of the base-change of $f$ to $K$, given by
\[
\epsilon(f/K)=(-1)^{\nu(N^-)+1},
\]
and the infinity type, say $(-j,j)$ with $j\in\Z$, of $\chi$. Because  $L(f/K,\chi,s)=L(f/K,\chi^\cc,s)$, where $\chi^\cc$ is the composition of $\chi$ with the action of complex conjugation, without loss of generality we may assume $j\geq 0$. Accordingly, the values of $\epsilon(f,\chi)$ are as in the following table. 

\begin{center}
\begin{tabular}{c|c|c|}
\cline{2-3}
  & $\epsilon(f/K)=-1$ & $\epsilon(f/K)=+1$ \\
 \hline
\multicolumn{1}{|c|}{$0\leq j<r$} & $-1$ & $+1$  \\
 \hline
 \multicolumn{1}{|c|}{$j\geq r$} & +1 & $-1$  \\
 \hline
\end{tabular}
\end{center}

\subsubsection{The Euler system}

Assume in addition that 
\begin{equation}\label{eq:ord}
\textrm{$f$ is ordinary at $\mathfrak{P}$},\tag{ord}
\end{equation}
with $\mathfrak{P}\mid\pp$, 
and that
\begin{equation}\label{eq:cn}
\textrm{$p\nmid h_K$, where $h_K$ is the class number of $K$.}\tag{cn}
\end{equation}
%

For every positive integer $n$, let $K[n]$ denote the maximal $p$-subextension of the ring class field of $K$ of conductor $n$. Denote by $\mathcal{L}$ the set of rational primes $\ell\neq p$ split in $K$. For each $\ell\in\mathcal{L}$, we fix a prime $\mathfrak{l}$ of $K$ lying above it, and let $\mathcal{N}$ be the set of squarefree products of primes $\ell\in\mathcal{L}$ coprime to $pN_fc$ (with  $1\in\mathcal{N}$ by convention, corresponding to the empty product). Let $\cO$ be the ring of integers in the completion $L_\mathfrak{P}$. 
 
%
%

\begin{thm}[Theorem~\ref{maintheorem2}]
\label{thmA}
Assume \eqref{eq:spl}, \eqref{eq:cond}, \eqref{eq:ord}, and \eqref{eq:p-nmid-h}. There exists a family of cohomology classes
\[
\bigl\{z_{f,\chi,m,s}\in H^1(K[mp^s],T_{f,\chi})\;\lvert\; m\in\mathcal{N},\, s\geq 0\bigr\},
\]
where $T_{f,\chi}$ is a certain $G_K$-stable $\mathcal{O}$-lattice inside $V_{f,\chi}$, such that 
\[
\Norm^{K[mp^{s+1}]}_{K[mp^s]}(z_{f,\chi,m,s+1})=z_{f,\chi,m,s}
\]
for all $s\geq 0$, and for every $m\in\mathcal{N}$ and $\ell\in\mathcal{L}$ with $m\ell\in\mathcal{N}$, we have the tame norm relation
\[
\Norm^{K[m\ell p^s]}_{K[mp^s]}(z_{f,\chi,m\ell,s})=
P_{\fkl}(\Frob_{\fkl})\,z_{f,\chi,m,s},
\] 
where $P_{\fkl}(X)=\det(1-\Frob_{\fkl}X\,\vert\,V_{f,\chi}^\vee(1))$, and $\Frob_\fkl$ is a geometric Frobenius.
\end{thm}

The collection of classes of Theorem~\ref{thmA} defines an \emph{anticyclotomic Euler system} in the sense of Jetchev--Nekov{\'a}{\v{r}}--Skinner \cite{JNS} for the conjugate self-dual representation $V_{f,\chi}$. %
Significantly extending Kolyvagin's methods, the general theory developed in \emph{op.\,cit.} provides a machinery that bounds Selmer groups attached to conjugate self-dual representations $V$ from the input of a non-trivial anticyclotomic Euler system. 
The Selmer group being bounded depends on the local conditions at the primes $w\mid p$ satisfied by the Euler system classes, and 
in this paper we produce in fact \emph{two different} anticyclotomic Euler systems for $V_{f,\chi}$, differing in their local conditions at the primes above $p$.

To describe this further, recall that by $p$-ordinarity of $f$, the Galois representation $V_f^\vee$ restricted to a decomposition group $G_{\Q_p}\subset G_\Q$ fits into a short exact sequence
\[
0\rightarrow V_f^{\vee,+}\rightarrow V_f^\vee\rightarrow V_f^{\vee,-}\rightarrow 0,
\] 
with each $V_f^{\vee,\pm}$ $1$-dimensional over $L_\mathfrak{P}$, and with the $G_{\Q_p}$-action on $V_f^{\vee,-}$ given by the unramified character sending an arithmetic Frobenius $\Frob^{-1}_p$ to $\alpha_p$, the unit root of $x^2-a_px+p^{2r-1}$. Put 
\[
V_{f,\chi}^\pm:=V_f^{\vee,\pm}(1-r)\otimes\chi^{-1}.
\] 
In terms of this, the construction in Theorem~\ref{maintheorem2} yields in fact:
\begin{itemize}
\item An anticyclotomic Euler system 
$\{z_{f,\chi,m,s}^{\ord}\}_{m,s}$
%
with local conditions at the primes $w\mid p$ given by 
\[
H^1_{\Ord}(K[mp^s]_w,V_{f,\chi}):=\mathrm{ker}\bigl(H^1(K[mp^s]_w,V_{f,\chi})\rightarrow H^1(K[mp^s]_w,V_{f,\chi}^-)\bigr).
\]
\item An anticyclotomic Euler system 
$\{z_{f,\chi,m,s}^{\relstr}\}_{m,s}$ with local conditions at the primes $w\mid p$ given by
\begin{equation}\label{intro:rel-str}
\begin{cases}
H^1(K[mp^s]_w,V_{f,\chi})&\textrm{if $w\mid\pp$,}\\[0.2em]
0&\textrm{if $w\mid\ppbar$.}
\end{cases}\nonumber
\end{equation}
\end{itemize}

Depending on the infinity type of $\chi$, we show that one of these classes always lands in the Bloch--Kato Selmer group $\Sel_{\mathrm{BK}}(K[mp^s],V_{f,\chi})$, namely the class
\[
\kappa_{f,\chi,m,s}:=\begin{cases}
z_{f,\chi,m,s}^{\relstr} & \textrm{if 
$j\geq r$,}\\[0.3em]
z_{f,\chi,m,s}^{\ord} & 
\textrm{
if $0\leq j<r$.}
\end{cases}
\]


\subsubsection{Applications to the Bloch--Kato conjecture in rank $1$} 

Let $\kappa_\mathfrak{P}$ denote the residue field of $L_\mathfrak{P}$, and let 
\[
\bar{\rho}_f:G_\Q\rightarrow{\mathrm{GL}}_2(\kappa_\mathfrak{P})
\]
be the residual representation associated to $f$. By $p$-ordinarity, the restriction $\bar{\rho}_f\vert_{G_{\Q_p}}$ is reducible, and we say that $\bar{\rho}_f$ is \emph{$p$-distinguished} when the semi-simplification of $\bar{\rho}_f\vert_{G_{\Q_p}}$ is non-scalar. Put
\[
\kappa_{f,\chi}:=
\kappa_{f,\chi,1,0}\in\Sel_{\mathrm{BK}}(K,V_{f,\chi}), 
\]
using that $K[1]=K$ as a consequence of \eqref{eq:p-nmid-h}. From the general results of \cite{JNS}  applied to the construction of Theorem~\ref{thmA} we deduce in particular the following result. 


\begin{thm}[Theorem~\ref{thm:BK-def-1}] 
\label{thmB}
Under the hypotheses of Theorem~\ref{thmA}, assume in addition  that
\begin{itemize}
    \item \eqref{eq:sq} holds;
    \item $\bar{\rho}_f$ is absolutely irreducible and $p$-distinguished;
    \item $p>2r-2$;
    \item $f$ is not of CM-type.
\end{itemize}
Assume also that
\[
\epsilon(f/K)=+1\quad\textrm{and}\quad j\geq r,
\]
which implies $L(f/K,\chi,r)=0$. Then 
%
%
\[
\dim_{L_\mathfrak{P}}\Sel_{\mathrm{BK}}(K,V_{f,\chi})\geq 1.
\] 
Moreover, if the class $\kappa_{f,\chi}$ 
is nonzero, then
\[
\Sel_{\mathrm{BK}}(K,V_{f,\chi})=L_\mathfrak{P}\cdot\kappa_{f,\chi}.
\]
\end{thm}

By the Gross--Zagier formula for the modified diagonal cycles introduced in \cite{gross-kudla,gross-schoen}
(a special case of the arithmetic Gan--Gross--Prasad conjecture for ${\mathrm{SO}}(3)\times\mathrm{SO}(4)$) proved by Yuan--Zhang--Zhang \cite{YZZ} under some assumptions, the non-triviality of $\kappa_{f,\chi}$ can be related to the nonvanishing of $L'(f/K,\chi,r)$, and hence Theorem~\ref{thmB} yields  evidence towards the Bloch--Kato conjecture for $V_{f,\chi}$ in analytic rank $1$. 

Our methods also yield an analogue of Theorem~\ref{thmB} in the 
``indefinite case'' $\epsilon(f/K)=-1$ and $0\leq j<r$ (see  Theorem~\ref{thm:BK-indef-1}), 
but we note that in this case such result can also be obtained from the Euler system of (generalized) Heegner cycles \cite{nekovar-invmath,cas-hsieh1}.

\subsubsection{Applications to the Bloch--Kato conjecture in rank $0$}

We now turn our attention to the cases where $\epsilon(f,\chi)=+1$, so the central value $L(f/K,\chi,r)$ is expected to be generically nonzero. Put
\begin{equation}\label{intro-non-Sel}
\kappa'_{f,\chi,m,s}:=\begin{cases}
z_{f,\chi,m,s}^{\ord} & 
\textrm{if $j\geq r$,}\\[0.3em]
z_{f,\chi,m,s}^{\relstr} & 
\textrm{if $0\leq j<r$,}
\end{cases}\nonumber\quad\quad\kappa_{f,\chi}':=\kappa'_{f,\chi,1,0}.
\end{equation}

Building on the explicit reciprocity law by Bertolini--Seveso--Venerucci for diagonal cycles \cite{BSV}, 
we show the equivalence 
\[
\kappa_{f,\chi}'\in\Sel_{\mathrm{BK}}(K,V_{f,\chi})\quad\Longleftrightarrow\quad L(f/K,\chi,r)=0.
\]
Hence when $L(f/K,\chi,r)\neq 0$, the classes $\kappa_{f,\chi}'$ provide non-trivial annihilators of classes in $\Sel_{\mathrm{BK}}(K,V_{f,\chi})$ via global duality. Together with the general results of \cite{JNS} applied to the construction of Theorem~\ref{thmA} extending $\kappa_{f,\chi}'$, this leads in particular to the following cases of the Bloch--Kato conjecture for $V_{f,\chi}$. 

%

\begin{thm}[Theorem~\ref{thm:BK-def}] 
\label{thmC}
Under the hypotheses of Theorem~\ref{thmA}, assume in addition  that
\begin{itemize}
    \item \eqref{eq:sq} holds;
    \item $\bar{\rho}_f$ is absolutely irreducible and $p$-distinguished;
    \item $p>2r-2$;
    \item $f$ is not of CM-type.
\end{itemize}
Assume also that
\[
\epsilon(f/K)=+1\quad\textrm{and}\quad 0\leq j< r,
\]
which implies $\epsilon(f,\chi)=+1$. Then
\[
L(f/K,\chi,r)\neq 0\quad\Longrightarrow\quad\Sel_{\mathrm{BK}}(K,V_{f,\chi})=0,
\]
and hence the Bloch--Kato conjecture holds in this case.
\end{thm}

We also obtain an analogue of Theorem~\ref{thmC} for $\epsilon(f/K)=-1$ and $j\geq r$ 
(see Theorem~\ref{thm:BK-indef}), but in these cases the result was previously known using generalized Heegner cycles \cite{cas-hsieh1}. Finally, we note 
that our results also include the proof of a divisibility in the anticyclotomic Iwasawa main conjecture for $V_{f,\chi}$, 
both in the definite and in the indefinite settings, giving in particular a new proof of the main result of \cite{bdIMC} (see Theorem\,\ref{thm:IMC-def}) dispensing with their ``level-raising'' ramification hypotheses.

\subsection{Relation to previous works}\label{subsec:previous}

Starting with the landmark results by Gross--Zagier and Kolyvagin \cite{GZ,kol88} (see also \cite{BD-crelle}), and followed by their vast generalizations by Zhang \cite{zhang130}, Tian \cite{Tian-PhD}, Nekov{\'a}{\v{r}} \cite{nekovar-CM}, Yuan--Zhang--Zhang \cite{YZZ-GZ} and others, the Euler system of Heegner points and Heegner cycles has been a key ingredient in the study of the arithmetic of $V_{f,\chi}$ under the Heegner hypothesis
\[
\epsilon(f/K)=-1.
\]
Classical Heegner cycles account for the cases where the anticyclotomic character $\chi$ has finite order (i.e., $j=0$), but using their new variant by Bertolini--Darmon--Prasanna \cite{bdp1}, one obtains classes controlling the arithmetic of $\Sel_{\mathrm{BK}}(K,V_{f,\chi})$ in the following cases:
\begin{equation}\label{eq:1}
\epsilon(f/K)=-1,\quad 0\leq j<r.\tag{1\ts{st} quadrant}
\end{equation} 

In another major advance, Bertolini--Darmon \cite{bdIMC} exploited congruences between modular forms on different quaternion algebras and the Cerednik--Drinfeld theory of interchange of invariants to realize the Galois representation (on finite quotients of) $T_{f,\chi}$ in the torsion of the Jacobian of certain Shimura curves. This allowed them to still use the Heegner point construction in situations where $\epsilon(f/K)=+1$. Together with the extension to higher weights by Chida--Hsieh \cite{ChHs2}, these methods yielded a proof of many cases of the Bloch--Kato conjecture in analytic rank $0$ when
\begin{equation}\label{eq:2}
\epsilon(f/K)=+1,\quad j=0\tag{2\ts{nd} quadrant}
\end{equation} 
under certain `level-raising' hypotheses. More recently, the Euler system of Beilinson--Flach elements  constructed by Lei--Loeffler--Zerbes \cite{LLZ,LLZ-K} and Kings--Loeffler--Zerbes \cite{KLZ,KLZ-AJM} (inspired in part by earlier results of Bertolini--Darmon--Rotger \cite{BDR1,BDR2}) provided an alternative approach to similar rank $0$ results (among other applications) under different mild hypotheses.

On the other hand, 
exploiting the variation of Heegner cycles in $p$-adic families, the first-named  author and Hsieh \cite{cas-hsieh1,cas-2var} (see also related work by Magrone \cite{Magr} and Kobayashi \cite{kobayashi-GHC}),  obtained results on the Bloch--Kato conjecture for $V_{f,\chi}$ in rank $0$ in the cases
\begin{equation}\label{eq:3}
\epsilon(f/K)=-1,\quad j\geq r.\tag{3\ts{rd} quadrant}
\end{equation} 

Contrastingly, in the cases where
\begin{equation}\label{eq:4}
\epsilon(f/K)=+1,\quad j\geq r,\tag{4\ts{th} quadrant}
\end{equation} 
the conjectures of Beilinson--Bloch and Bloch--Kato  predict the existence of nonzero classes in $\Sel_{\mathrm{BK}}(K,V_{f,\chi})$ coming from geometry (since $\epsilon(f,\chi)=-1$ and therefore $L(f/K,\chi,r)=0$), but the construction of such classes seems to fall outside of all the aforementioned methods. 
%
%

The `degenerate' diagonal cycle 
 Euler system constructed in this paper allows us to fill this gap, yielding new evidence towards the Bloch--Kato conjecture for $V_{f,\chi}$ in analytic rank $1$ in this case (i.e., $\epsilon(f/K)=+1$ and $j\geq r$), while also providing a new approach 
to the aforementioned results in the other cases: 
\vspace{5pt}

\begin{center}
\begin{tabular}{c|c|c|}
\cline{2-3}
  & $\epsilon(f/K)=-1$ & $\epsilon(f/K)=+1$ \\
 \hline
\multicolumn{1}{|c}{} & \multicolumn{1}{|c}{1\ts{st} quadrant} & \multicolumn{1}{|c|}{2\ts{nd} quadrant} \\ 
\multicolumn{1}{|c}{$0\leq j<r$} & \multicolumn{1}{|c}{ \cite{kol88}, \cite{BD-crelle}, \cite{Tian-PhD}, \cite{nekovar-CM}, etc.} & \multicolumn{1}{|c|}{\cite{bdIMC}, \cite{ChHs2}, \cite{KLZ}, \cite{KLZ}, etc.} \\ 
\multicolumn{1}{|c}{} & \multicolumn{1}{|c}{Theorem~\ref{thm:BK-indef-1}} & \multicolumn{1}{|c|}{Theorem~\ref{thm:BK-def}} \\
 \hline
\multicolumn{1}{|c}{} & \multicolumn{1}{|c}{3\ts{rd} quadrant} & \multicolumn{1}{|c|}{4\ts{th} quadrant} \\ 
\multicolumn{1}{|c}{$j\geq r$} & \multicolumn{1}{|c}{\cite{cas-hsieh1}, \cite{cas-2var}, \cite{Magr}, \cite{kobayashi-GHC}, etc.} & \multicolumn{1}{|c|}{$-$} \\ 
\multicolumn{1}{|c}{} & \multicolumn{1}{|c}{Theorem~\ref{thm:BK-indef}} & \multicolumn{1}{|c|}{Theorem~\ref{thm:BK-def-1}} \\
\hline
\end{tabular}
\end{center}

\vspace{5pt}

To obtain our anticyclotomic Euler system classes, we exploit   diagonal cycles attached to triple products of modular forms with two of the factors having CM by $K$. This setting is also  considered in the work of the first-named author with Hsieh \cite{cas-hsieh-ord} on the conjectures of Darmon--Rotger \cite{DR2.5} in the `adjoint CM case'.   
The method of construction in this paper using diagonal cycles for triple products has recently been adapted by the second-named author \cite{Do-biquadratic} to the case where two of the factors have CM by \emph{different} imaginary quadratic fields, 
resulting in an anticyclotomic Euler system for modular forms based-changed to a biquadratic CM field, together with results on the Bloch--Kato conjecture, and a divisibility towards the Iwasawa--Greenberg main conjecture. 


In future work, 
we intend to generalize our construction to totally real fields, a setting in which one finds even more cases where the arithmetic of Rankin--Selberg convolutions falls outside the scope of Heegner cycles and/or Beilinson--Flach elements. 

\subsection{Acknowledgements} 
The present article grew out of the second-named author's PhD thesis \cite{Do-PhD}, supervised by Christopher Skinner. We heartily thank him for inspiring this collaboration, his guidance,  and optimism. We would also like to thank Ra\'{u}l Alonso, Haruzo Hida, Kartik Prasanna, \'Oscar Rivero, and Romyar Sharifi for helpful exchanges related to various aspects of this work, and the anomymous referee for a very careful reading of the paper,  whose comments and suggestions helped us to signficantly improve the exposition. 

During the preparation of this paper, the first-named author was partially supported by the NSF grants DMS-1946136, DMS-2101458, and DMS-2401321.

\addtocontents{toc}{\setcounter{tocdepth}{2}}

}

\newpage
\part{The Euler system}
\label{part:ES}
\section{Preliminaries}

\subsection{Modular curves and Hecke operators}
 We give a precise description of the modular curves and Hecke operators 
 that will appear in our construction. The main references for this section are \cite[\S 2]{kato}, \cite[\S 2]{BSV}, and \cite[\S 2]{ACR}, where more details can be found.
 
\subsubsection{Modular curves}Let $M,N,u,v$ be positive integers such that $M+N\ge 5$. Define $Y(M,N)$ to be the affine modular curve over $\Z[1/MN]$ representing the functor
\begin{equation*}
    S\mapsto\left\{
  \begin{array}{ll} \text{isomorphism classes of triples } (E,P,Q) \text{ where $E$ is an elliptic curve over $S$,} \\
  \text{$P$, $Q$ are sections of $E$ over $S$ such that $M\cdot P=N\cdot Q=0$; and the map} \\
  \Z/M\Z\times \Z/N\Z\rightarrow E, \text{ sending }(a,b)\mapsto a\cdot P+b\cdot Q\text{ is injective} \end{array} \right\}
\end{equation*}
for $\Z[1/MN]$-schemes $S$. More generally, define the affine modular curve $Y(M(u),N(v))$ over $\Z[1/MNuv]$ representing the functor
\begin{equation*}
  S\mapsto\left\{
  \begin{array}{ll} \text{isomorphism classes of quintuples } (E,P,Q,C,D) \text{ where }\\ \text{$(E,P,Q)$ is as above,} \\
  \text{$P\in C$ is a cyclic subgroup of $E$ of order $Mu$,} \\ \text{$Q\in D$ is a cyclic subgroup of $E$ of order $Nv$ such that} \\
   \text{$C$ is complementary to $Q$ and $D$ is complementary to $P$} \end{array} \right\}
\end{equation*}
for $\Z[1/MNuv]$-schemes $S$. When either $u=1$ or $v=1$, we drop them from the notation.

Let $\mathbf{H}$ be the Poincar\'e upper half-plane and define the modular group:
\begin{equation*}
    \Gamma(M(u),N(v))=\left\{
  \gamma\in \mathrm{SL}_2(\Z) \text{ such that } \gamma\equiv\begin{pmatrix}
1 & 0\\
0 & 1 
\end{pmatrix}\text{ mod } \begin{pmatrix}
M & Mu\\
Nv & N 
\end{pmatrix} \right\}.
\end{equation*}
The Riemann surface $Y(M,N)(\bC)$ admits a complex uniformization:
\begin{equation*}
\begin{array}{ccc}
       (\Z/M\Z)^{\times}\times \Gamma(M,N)\backslash \mathbf{H}&\xrightarrow{\sim} &Y(M,N)(\bC)  \\
    (m,z) &\mapsto &(\bC/\Z+\Z z,mz/M,1/N), 
\end{array}
\end{equation*}
and similarly for $Y(M(u),N(v))(\bC)$.

Let $\ell$ be a prime. There is an isomorphism of $\Z[1/\ell MN]$-schemes:
\begin{equation*}
    \begin{array}{ccl}
         \varphi_\ell:Y(M,N(\ell))&\rightarrow &Y(M(\ell),N)  \\
        (E,P,Q,C)&\mapsto &(E/NC,P+NC,\ell^{-1}(Q)\cap C+NC,(\ell^{-1}(\Z\cdot P+NC)/NC)),
    \end{array}
\end{equation*}
which under the complex uniformization is induced by the map $(m,z)\mapsto (m,\ell\cdot z)$.

\subsubsection{Degeneracy maps}
We have the natural degeneracy maps
\[
\xymatrix{
    Y(M,N\ell)  \ar[r]^-{\mu_\ell} & Y(M,N(\ell)) \ar[d]_-{\varphi_\ell} \ar[r]^-{\nu_\ell} & Y(M,N)  \\
   Y(M\ell,N) \ar[r]^-{\check{\mu}_\ell} & Y(M(\ell),N) \ar[r]^-{\check{\nu}_\ell} & Y(M,N),  
}
\]
where $\mu_\ell(E,P,Q)=(E,P,\ell\cdot Q,\Z\cdot Q)$, $\nu_\ell(E,P,Q,C)=(E,P,Q)$, and $\check{\mu}_\ell$, $\check{\nu_\ell}$ are defined similarly. Put
\begin{equation}
\begin{split}
\text{pr}_1:=\nu_\ell\circ \mu_\ell: 
Y(M,N\ell)&\rightarrow Y(M,N),    \\
(E,P,Q)&\mapsto (E,P,\ell\cdot Q)
\end{split}\nonumber
\end{equation}
and
\begin{align*}
\text{pr}_\ell:=\check{\nu}_\ell\circ\varphi_\ell\circ \mu_\ell:
Y(M,N\ell)&\rightarrow Y(M,N)   \\
       (E,P,Q)&\mapsto (E/N\Z\cdot Q ,P+N\Z\cdot Q, Q+N\Z\cdot Q). 
  \end{align*}
On the complex upper half plane $\mathbf{H}$, the map $\text{pr}_1$ (resp.   $\text{pr}_\ell$) is induced by the identity (resp. multiplication by $\ell$). Moreover,  $\mu_\ell,\check{\mu}_\ell,\nu_\ell,\check{\nu}_\ell,\text{pr}_1,\text{pr}_\ell$ are all finite \'etale morphisms of $\Z[1/MN\ell]$-schemes.

\subsubsection{Relative Tate modules and Hecke operators}\label{relativeTate}

Let $S$ be a $\Z[1/MN\ell p]$-scheme where $p$ is a fixed prime. For each $\Z[1/MN\ell p]$-scheme $X$, denote the base change \[X_S=X\times_{\Z[1/MN\ell p]}S.\] Notate $A=A_X$ to be either the locally constant sheaf $\Z/p^m\Z(j)$ or the locally constant $p$-adic sheaf $\Z_p(j)$ on $X_{\et}$ 
for some fixed  $j\in \Z$ and $m\geq 1$. 

For the ease of notation, we may write $\cdot$ for $M(u),N(v)$ (i.e. $Y(\cdot)=Y(M(u),N(v))$). Denote by $E(\cdot)$ the universal elliptic curve over $Y(\cdot)$. Under the base change by \[\varphi_\ell^{*}E(M(\ell),N)\rightarrow Y(M,N(\ell)),\] one obtains a natural degree $\ell$ isogeny of universal elliptic curves:
\begin{equation*}
    \lambda_\ell: E(M,N(\ell))\rightarrow \varphi_\ell^{*}(E(M(\ell),N).
\end{equation*}
Denote by $v_{\cdot}: E(\cdot)_S\rightarrow Y(\cdot)_S$ the structure map. We also use $\nu_\ell,\check{\nu}_\ell$ and $\lambda_\ell$ for the base change to $S$ of the corresponding degeneracy maps. Set:
\begin{equation*}
    \mathscr{T}_{\cdot}(A)=R^1v_{\cdot*}\Z_p(1)\otimes_{\Z_p}A \text{  and  } \mathscr{T}_{\cdot}^{*}(A)=\text{Hom}_{A}(\mathscr{T}_{\cdot}(A),A)
\end{equation*}
where $R^qv_{\cdot*}$ is the $q$-th right derivative of $v_{\cdot*}:E(\cdot)_{\et}\rightarrow Y(\cdot)_{\et}$. When $A=\Z_p$, this gives the relative Tate module of the universal elliptic curve, in which case we will drop the $\Z_p$ from the notation. 

Fix an integer $r\ge 0$. The (perfect) cup product pairing combined with the relative trace
\begin{equation*}
\mathscr{T}_{\cdot}\otimes_{\Z_p}\mathscr{T}_{\cdot}\rightarrow R^2v_{\cdot*}\Z_p(2)\cong \Z_p(1)
\end{equation*}
allows us to identify $\mathscr{T}_{\cdot}(-1)$ with $\mathscr{T}_{\cdot}^{*}$. 
Put
\begin{equation*}
    \mathscr{L}_{\cdot,r}(A)=\text{TSym}^{r}_A\mathscr{T}_{\cdot}(A),\quad\mathscr{S}_{\cdot,r}(A)=\text{Sym}^{r}_A\mathscr{T}_{\cdot}^{*}(A),
\end{equation*}
where $\text{TSym}^{r}_R M$ is the $R$-submodule of the symmetric tensors in $M^{\otimes r}$, $\text{Sym}^{r}_R M$ is the maximal symmetric quotient of $M^{\otimes r}$, and $M$ is any finite free module over a profinite $\Z_p$-algebra $R$. When the level is clear, we shall simplify the notations, e.g. writing:
\begin{equation}\label{LlSs}
\begin{aligned}
    \mathscr{L}_r(A)=\mathscr{L}_{M(u),N(v),r}(A),\quad\mathscr{L}_r=\mathscr{L}_r(\Z_p),\\
    \mathscr{S}_r(A)=\mathscr{S}_{M(u),N(v),r}(A),\quad \mathscr{S}_r=\mathscr{S}_r(\Z_p).
\end{aligned}
\end{equation}

Let $\F_{\cdot}^r$ be either $\mathscr{L}_{\cdot,r}(A)$ or $\mathscr{S}_{\cdot,r}(A)$. 
Then there are natural isomorphisms of sheaves
\begin{equation*}
    \nu_\ell^{*}(\F_{M,N}^r)\cong \F_{M,N(\ell)}^r,\quad\check{\nu}_\ell^{*}(\F_{M,N}^r),\cong \F_{M(\ell),N}^r,
\end{equation*}
and these induce pullback maps
\begin{equation*}
\xymatrixrowsep{0.18in}
\xymatrixcolsep{0.06in}
  \xymatrix{
& H^i_{\et}(Y(M,N)_S,\F_{M,N}^r) \ar[dl]_{\nu_{\ell}^{*}} \ar[dr]^{\check{\nu}_{\ell}^{*}} & \\
H^i_{\et}(Y(M,N({\ell}))_S,\F_{M,N({\ell})}^r) &  & H^i_{\et}(Y(M({\ell}),N)_S,\F_{M({\ell}),N}^r) }
\end{equation*}
and

\begin{equation*}
\xymatrixrowsep{0.18in}
\xymatrixcolsep{0.06in}
  \xymatrix{
& H^i_{\et}(Y(M,N)_S,\F_{M,N}^r) & \\
H^i_{\et}(Y(M,N({\ell}))_S,\F_{M,N({\ell})}^r)\ar[ur]^{\nu_{{\ell}*}} &  & \ar[ul]_{\check{\nu}_{{\ell}*}} H^i_{\et}(Y(M({\ell}),N)_S,\F_{M({\ell}),N}^r) }
\end{equation*}
which are trace maps.

The finite \'etale isogeny $\lambda_\ell$ induces morphisms
\begin{equation*}
    \lambda_{\ell*}:\F_{M,N(\ell)}^r\rightarrow \varphi_{\ell}^{*}(\F_{M(\ell),N}^r),\quad  \lambda_{\ell}^{*}:\varphi_{\ell}^{*}(\F_{M(\ell),N}^r) \rightarrow \F_{M,N(\ell)}^r
\end{equation*}
and this allows us to define a pushforward 
\begin{equation*}
    \Phi_{\ell*}:=\varphi_{\ell*}\circ\lambda_{\ell*}: H^i_{\et}(Y(M,N(\ell))_S,\F_{M,N(\ell)}^r)\rightarrow H^i_{\et}(Y(M(\ell),N)_S,\F_{M(\ell),N}^r)
\end{equation*}
and a pullback
\begin{equation*}
\Phi_{\ell}^{*}:=\lambda_{\ell}^{*}\circ\varphi_{\ell}^{*}:H^i_{\et}(Y(M(\ell),N)_S,\F_{M(\ell),N}^r)\rightarrow H^i_{\et}(Y(M,N(\ell))_S,\F_{M,N(\ell)}^r).
\end{equation*}

The Hecke operator $T_\ell$ and the dual Hecke operator $T_\ell'$ acting on $H^i_{\et}(Y(M,N)_S,\F_{M,N}^r)$ are defined  by
\begin{equation*}
T_\ell:=\check{\nu}_{\ell*}\circ\Phi_{\ell*}\circ\nu_\ell^{*},\quad T_\ell':=\nu_{\ell*}\circ\Phi_\ell^{*}\circ\check{\nu}_\ell^{*}.
\end{equation*}

\begin{rmk}
Note the relations
\begin{equation*}
   \text{deg}(\mu_\ell) T_\ell=\text{pr}_{\ell*}\circ \text{pr}_1^{*},\quad\text{deg}(\mu_\ell)T_\ell'=\text{pr}_{1*}\circ\text{pr}_\ell^{*},
\end{equation*}
as it follows immediately from the definitions.
\end{rmk}


For $d\in (\Z/MN\Z)^{*}$, the diamond operator $\langle d\rangle$ on $Y(\cdot)$ is defined in terms of moduli by
\begin{equation*}
    (E,P,Q,C,D)\mapsto (E,d^{-1}\cdot P,d\cdot Q,C,D).
\end{equation*}
There is also a unique diamond operator $\langle d\rangle $ on the universal elliptic curve making the following diagram cartesian:
\begin{equation*}
    \xymatrix{
E(\cdot)_S \ar[d]_{v_{\cdot}} \ar[r]^{\langle d\rangle} & E(\cdot)_S \ar[d]^{v_{\cdot}} \\
Y(\cdot)_S \ar[r]^{\langle d\rangle} & Y(\cdot)_S,}
\end{equation*}
and this induces automorphisms $\langle d\rangle=\langle d\rangle^{*}$ and $\langle d\rangle'=\langle d\rangle_{*}$ on $H^i_{\et}(Y(\cdot)_S,\mathscr{F^r_{\cdot}})$.

For any profinite $\Z_p$-algebra $R$ and finite free $R$-module $M$, the evaluation map induces a perfect pairing
\begin{equation*}
    \text{TSym}^r_RM\otimes_R\text{Sym}^r_R M^{*}\rightarrow R,
\end{equation*}
where $M^{*}=\text{Hom}_R(M,\Z_p)$. This gives a perfect pairing $\mathscr{L}_r\otimes_{\Z_p}\mathscr{S}_r\rightarrow \Z_p$, and therefore a cup product
\begin{equation*}
    \langle \cdot,\cdot\rangle: H^1_{\et}(Y(\cdot)_{\overline{\Q}},\Ll_r(1))\otimes_{\Z_p}H^1_{\et,c}(Y(\cdot)_{\overline{\Q}},\Ss_r)\rightarrow H^2_{\et}(Y(\cdot)_{\overline{\Q}},\Z_p(1))\cong \Z_p,
\end{equation*}
which is perfect 
after inverting $p$. Moreover, the Hecke operators $T_\ell$, $T_\ell'$, $\langle d\rangle$, $\langle d\rangle '$ induce endomorphisms on the compactly supported cohomology groups $H^1_{\et,c}(Y(\cdot)_{\overline{\Q}},\Ss_r)$, and by construction, $(T_\ell,T_\ell')$ and $(\langle d\rangle,\langle d\rangle ')$ are adjoint pairs under $\langle \cdot,\cdot\rangle$. The Eichler--Shimura isomorphism \cite{shimura-pink}
\begin{equation*}
    H^1_{\et}(Y_1(N)_{\overline{\Q}},\Ll_r)\otimes_{\Z_p}\bC\cong M_{r+2}(N,\bC)\oplus \overline{S_{r+2}(N,\bC)}
\end{equation*}
commutes with the action of the Hecke operators on both sides.

\subsection{Galois representations associated to newforms} 
\label{subsec:Galreps}

Let $f=\sum_{n=1}^\infty a_n q^n$ be a normalized newform of weight $k\ge 2$, level $\Gamma_1(N_f)$, and nebentype $\chi_f$. Let $p\nmid N_f$ be an odd prime. Fix embeddings $i_\infty: \overline{\Q}\hookrightarrow \bC$, $i_p: \overline{\Q}\hookrightarrow \overline{\Q}_p$, and let $L/\Q$ be a finite extension containing all values $i_\infty^{-1}(a_n)$ and $i_\infty^{-1}\circ \chi_f$. 
Let $\mathfrak{P}$ be the prime of $L$ above $p$ with respect to $i_p$. 
Then Eichler--Shimura 
and Deligne 
construct a $p$-adic Galois representation associated to $f$:
\begin{equation}\label{ES}
    \rho_f=\rho_{f,\mathfrak{P}}:G_{\Q}\rightarrow {\mathrm{GL}}_2(L_{\mathfrak{P}})\nonumber
\end{equation}
which is unramified outside $pN_f$, and characterized by the property for all finite primes $\ell\nmid pN_f$, 
\[
\mathrm{trace}(\rho_{f}(\mathrm{Frob}_\ell))=i_p(a_\ell),\quad
\det(\rho_{f}(\mathrm{Frob}_\ell))=i_p(\chi_f(\ell)l^{k-1}),
\]
where $\Frob_\ell$ is a geometric Frobenius. Moreover, $\rho_{f,\mathfrak{P}}$ is known to be irreducible \cite{Ri}, hence absolutely irreducible since the image of the complex conjugation has eigenvalues $1$ and $-1$.

\subsubsection{Geometric realizations} The representation $\rho_{f,\mathfrak{P}}$ can be geometrically realized as the largest subspace $V_f$ of
\[
H^1_{\et,c}(Y_1(N_f)_{\overline{\Q}},\Ss_{k-2})\otimes L_{\mathfrak{P}}
\]
on which $T_\ell$ acts as multiplication by $a_\ell$ for all $\ell\nmid N_fp$ and $\langle d\rangle= \langle d\rangle^*$ acts as multiplication by $\chi_f(d)$ for all $d\in (\Z/N_f\Z)^\times$. If $N$ is any multiple of $N_f$, then the above subspace with $N_f$ replaced by $N$ gives rise to a representation $V_f(N)$ isomorphic (non-canonically) to a finite number of copies of $V_f$.

The dual $V_f^{\vee}={\mathrm{Hom}}(V_f,L_{\mathfrak{P}})$ 
 can be interpreted as the maximal quotient of 
\[
H^1_{\et}(Y_1(N_f)_{\overline{\Q}},\Ll_{k-2}(1))\otimes L_{\mathfrak{P}}
\]
on which the dual Hecke operator $T_\ell'$ acts as multiplication by $a_\ell$ for all $\ell\nmid N_fp$ and $\langle d\rangle'= \langle d\rangle_{*}$ acts as multiplication by $\chi_f(d)$ for all $d\in (\Z/N_f\Z)^\times$. 

Let $\cO$ be the ring of integers of $L_\mathfrak{P}$. In this paper we shall be mostly working with $V_f^\vee$ and the $G_{\Q}$-stable $\cO$-lattice $T_f^\vee\subset V_f^\vee$ defined as the image of 
$H^1_{\et}(Y_1(N_f)_{\overline{\Q}},\Ll_{k-2}(1))\otimes\cO$ in $V_f^\vee$.

\subsubsection{The $p$-ordinary case}\label{subsec:p-ord}

If $f$ is ordinary at $p$, i.e. $i_p(a_p)\in\cO^\times$, then the restriction of $V_f$ to $G_{\Q_p}\subset G_{\Q}$ is reducible, fitting into an exact sequence of $L_{\mathfrak{P}}[G_{\Q_p}]$-modules
\begin{equation*}
0\rightarrow V_f^{+}\rightarrow V_f\rightarrow V_f^{-}\rightarrow 0 
\end{equation*}
with $\dim_{L_\mathfrak{P}}V_f^{\pm}=1$, and with $G_{\Q_p}$-action on the subspace $V_f^+$ 
given by the unramified character sending  $\mathrm{Frob}_p$ to $\alpha_p$, the unit root of $x^2-a_px+\chi_f(p)p^{k-1}$. 
%
By duality, we also obtain an exact sequence for $V_f^{\vee}$ restricted to $G_{\Q_p}$
\begin{equation}
0\rightarrow V_f^{\vee,+}\rightarrow V_f^{\vee}\rightarrow V_f^{\vee,-}\rightarrow 0 
\end{equation}
with $V_f^{\vee,+}\simeq(V_f^-)^\vee(1-k)(\chi_f^{-1})$, and with the $G_{\Q_p}$-action on the quotient $V_{f}^{\vee,-}$ given by the unramified character sending arithmetic Frobenius $\Frob_p^{-1}$ to $\alpha_p$.
%

\subsection{Patched CM Hecke modules}\label{LLZ-K}

In this section, after explaining our conventions on Hecke characters, we recall the construction of certain patched CM Hecke modules from \cite[\S{5.2}]{LLZ-K}.

\subsubsection{Hecke characters and theta series }\label{theta}

Let $K$ be an imaginary quadratic field of discriminant $-D_K<0$  in which $p=\pp\ppbar$ splits, with $\pp$ the prime  above $p$ induced by $i_p$. 

We say that a Hecke character $\psi:\mathbb{A}_K^{\times}/K^{\times}\rightarrow \mathbb{C}^{\times}$ has infinity type $(a,b)\in\Z^2$ if $\psi_{\infty}(x_{\infty})=x_{\infty}^a\bar{x}_{\infty}^b$ for all $x_\infty\in K\otimes_{\Q}\mathbf{R}\cong\mathbf{C}$  under the identification induced by $i_\infty$. 
Then the character sending $x\mapsto\psi(x)x_{\infty}^{-a}\bar{x}_{\infty}^{-b}$ is a ray class character, hence it takes value in a finite extension $L/K$. For $\mathfrak{P}\mid\pp$ the prime of $L$ above $p$ induced by $i_p$, we define the \emph{$p$-adic avatar} $\psi_\mathfrak{P}:G_K\rightarrow L_\mathfrak{P}^\times$ of $\psi$ as follows. Let  $\text{rec}_K:\mathbb{A}_K^{\times}/K\rightarrow G_K^{\text{ab}}$ be the \emph{geometrically} normalized Artin reciprocity map. For $g\in G_K$, 
we take $x\in\mathbb{A}_K^{\times}$ such that $\text{rec}_K(x)=g\vert_{K^{\mathrm{ab}}}$ and define
\begin{equation}
        \psi_{\p}(g)=i_p\circ i_{\infty}^{-1}(\psi(x)x_{\infty}^{-a}\bar{x}_{\infty}^{-b})x_{\pa}^a x_{\bar{\pa}}^b.\nonumber
\end{equation}
Since there should be no confusion, in the following we shall also use $\psi$ to denote its $p$-adic avatar.

Let $\psi$ be a Hecke character of $K$ of infinity type $(-1,0)$, conductor dividing $\fkf$, and with $x\mapsto\psi(x)x_{\infty}$ taking values in a finite extension $L/K$. Viewing $\psi$ as an $L$-valued character on the group of fractional ideals of $K$ coprime to $\mathfrak{f}$, the theta series attached to $\psi$ is
\begin{equation*}
\theta_{\psi}=\sum_{(\mathfrak{a},\fkf)=1}\psi(\mathfrak{a})q^{N_{K/\Q}(\mathfrak{a})} \in S_2(\Gamma_1(N_\psi),\chi_\psi\epsilon_K)
\end{equation*}
where $N_\psi=N_{K/\Q}(\fkf)D_K$, $\chi_\psi$ is the unique Dirichlet character modulo $N_{K/\Q}(\fkf)$ such that 
$\psi((n))=n\chi_\psi(n)$ for all $n\in\Z$ with $(n,N_{K/\Q}(\fkf))=1$, and $\epsilon_K$ is the quadratic Dirichlet character attached to $K/\Q$. 
The cuspform $\theta_\psi$ is new of level $N_\psi$ if $\fkf$ is the exact conductor of $\psi$, and its $p$-adic $G_\Q$-representation satisfies
\[
V_{\theta_{\psi}}\cong{\mathrm{Ind}}^{\Q}_KL_{\mathfrak{P}}(\psi),
\quad\quad V_{\theta_{\psi}}^{\vee}\cong {\mathrm{Ind}}^{\Q}_KL_{\mathfrak{P}}(\psi^{-1}).
\]
%

\subsubsection{Hecke algebras and norm maps}\label{LLZrecall}

Let $\fkn\subset\cO_K$ be an ideal divisible by $\fkf$. Put $N=N_{K/\Q}(\fkn)D_K$. 
Let $K_\fkn$ be the ray class field of $K$ with conductor $\fkn$, and $H_\fkn={\mathrm{Gal}}(K_\fkn/K)$ be the ray class group of $K$ modulo $\fkn$. For an ideal $\mathfrak{k}$ of $K$ coprime to $\fkn$, let $[\mathfrak{k}]$ be the class of $\mathfrak{k}$ in $H_{\fkn}$.

We denote by $K(\fkn)$ the largest $p$-extension of $K$ contained in $K_\fkn$, so ${\mathrm{Gal}}(K(\fkn)/K)\cong H_{\fkn}^{(p)}$, where $H_{\fkn}^{(p)}$ is the largest $p$-power quotient of $H_{\fkn}$. 

\begin{prop}[{\cite[Prop.~3.2.1]{LLZ-K}}]\label{phi_n}
Let the subalgebra of ${\mathrm{End}}_{\Z}(H^1(Y_1(N)(\bC),\Z))$ generated by $\langle d\rangle'$ and $T_\ell'$ for all primes $\ell$ be denoted as $\mathbb{T}'(N)$. There exists a homomorphism $\phi_{\fkn}: \mathbb{T}'(N)\rightarrow \mathcal{O}[H_{\fkn}]$ 
defined by 
    \begin{align*}
        \phi_{\fkn}(T_\ell')&=\sum_{\fkl}[\fkl]\psi(\fkl),\\
        \phi_{\fkn}(\langle d\rangle ')&=\chi_\psi(d)\epsilon_{K}(d)[(d)],
    \end{align*}
where the sum is over the ideals $\mathfrak{l}\subset\cO_K$ with $\fkl\nmid\mathfrak{n}$ and $N_{K/\Q}(\fkl)=\ell$.
\end{prop}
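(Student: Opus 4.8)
\textbf{Proof proposal for Proposition~\ref{phi_n}.} The plan is to construct $\phi_{\fkn}$ by realising the CM abelian variety attached to $\psi$ inside the Jacobian of $Y_1(N)$ and reading off the action of the Hecke algebra via the theta correspondence. First I would recall that $\theta_\psi \in S_2(\Gamma_1(N_\psi),\chi_\psi\epsilon_K)$ has $\mathfrak{P}$-adic Galois representation $V_{\theta_\psi}^\vee \cong \Ind_K^\Q L_\mathfrak{P}(\psi^{-1})$, and that more generally, twisting $\psi$ by the finitely many ray class characters $\rho$ of $K$ of conductor dividing $\fkn$ produces the theta series $\theta_{\psi\rho}$, each of level dividing $N$. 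The space $H^1(Y_1(N)(\bC),\Z)$, after extending scalars, contains the sum $\bigoplus_\rho V_{\theta_{\psi\rho}}^\vee$ as an isotypic piece cut out by the CM forms with ``$\psi$-type'' character, and the induced representation $\Ind_K^\Q$ of the regular representation of $\Gal(K[\fkn]/K)$ twisted by $\psi^{-1}$ realises exactly $\mathcal{O}[H_\fkn]$ (after the appropriate $p$-adic completion) as the coefficient ring on which $\bT'(N)$ acts.

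The key computational step is then the identity of Hecke eigenvalues: for a prime $\ell \nmid N$, the dual operator $T_\ell'$ acts on the $\theta_{\psi\rho}$-component with eigenvalue $a_\ell(\theta_{\psi\rho}) = \sum_{\fkl\mid \ell}(\psi\rho)(\fkl)$, the sum over primes $\fkl$ of $\cO_K$ above $\ell$ not dividing $\fkn$ (when $\ell$ is inert this sum is empty, consistent with $a_\ell(\theta_{\psi\rho})=0$; when $\ell$ ramifies in $K$ one picks up the single prime above it). Packaging these eigenvalues over all $\rho$ simultaneously, the operator $T_\ell'$ acts on $\mathcal{O}[H_\fkn]$ as multiplication by $\sum_{\fkl}[\fkl]\psi(\fkl)$, since pairing $[\fkl]$ against the character $\rho$ recovers $\rho(\fkl)$. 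Similarly, for the diamond operator: $\langle d\rangle'$ acts on a form of nebentype $\chi_\psi\epsilon_K$ (the nebentype is the same for all the twists $\theta_{\psi\rho}$ when $\rho$ is unramified at the relevant places, or more carefully one tracks the central character) by $\chi_\psi(d)\epsilon_K(d)$, and the ``$H_\fkn$-part'' of the diamond action is translation by the class $[(d)]$ of the principal ideal, giving $\phi_{\fkn}(\langle d\rangle') = \chi_\psi(d)\epsilon_K(d)[(d)]$. This is where the bulk of the bookkeeping lies — matching the classical Hecke eigenvalue formula for CM newforms against the group-ring action, uniformly in the ray class character $\rho$.

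To make this precise I would follow \cite[\S3.2]{LLZ-K}: one defines $\phi_\fkn$ abstractly by the displayed formulas and must check it is a well-defined ring homomorphism, i.e.\ that it respects the relations in $\bT'(N)$. The relations among the $T_\ell'$ and $\langle d\rangle'$ (the Hecke recursions at ramified primes, commutativity, and the Euler factor identities) translate into identities in $\mathcal{O}[H_\fkn]$ that hold because the map $\fkl \mapsto [\fkl]\psi(\fkl)$ is multiplicative on ideals coprime to $\fkn$ and because the central character relation in the Hecke algebra matches $\chi_\psi\epsilon_K$ together with the ideal-class of $(d)$. Concretely, one verifies that the formula extends to $\phi_\fkn(T_n') = \sum_{\mathfrak{a}}[\mathfrak{a}]\psi(\mathfrak{a})$ summed over ideals of norm $n$ coprime to $\fkn$, which is consistent with the recursions precisely by the multiplicativity of Hecke operators and of the ideal norm.

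\textbf{Main obstacle.} The delicate point is the well-definedness — showing the map factors through the actual Hecke algebra $\bT'(N)$ rather than a free polynomial algebra — which requires knowing that the $\psi$-isotypic part of $H^1(Y_1(N)(\bC),\Z)$ is faithful for the image, equivalently that every relation among Hecke operators forced by their action on all CM forms $\theta_{\psi\rho}$ ($\rho$ ranging over ray class characters mod $\fkn$) is reflected in $\mathcal{O}[H_\fkn]$; this rests on the fact that $\mathcal{O}[H_\fkn]$, as a module for $\bT'(N)$, is exactly the patched/induced module $\Ind$ of $\psi^{-1}$ over the group ring, so its annihilator is controlled, and I would invoke the construction in \cite{LLZ-K} rather than re-derive it. Handling the primes $\ell$ dividing $\fkn$ (excluded from the sum) and ramified primes correctly — and tracking the precise nebentype/central character in the presence of the twist — is the part most prone to sign and conductor errors.
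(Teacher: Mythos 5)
The paper offers no proof of this statement at all: it is quoted directly from \cite[Prop.\,3.2.1]{LLZ-K}, and your sketch follows essentially the argument of that reference (define $\phi_{\fkn}$ by the displayed formulas and check it against the Hecke eigenvalue systems of the theta series attached to the twists $\psi\rho$, $\rho$ running over the characters of $H_{\fkn}$).

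One clarification, since you single it out as the main obstacle: no faithfulness statement about the CM-isotypic part of $H^1(Y_1(N)(\bC),\Z)$ is needed for well-definedness. Because $\mathcal{O}[H_{\fkn}]$ is $\mathcal{O}$-torsion-free, the map $\mathcal{O}[H_{\fkn}]\rightarrow\prod_{\rho}\overline{\Q}_p$ given by evaluating all characters $\rho$ of $H_{\fkn}$ is injective; and for each $\rho$ the composite of the proposed formulas with $\rho$ is exactly the eigenvalue system of an eigenform of level $N$ occurring in $H^1(Y_1(N)(\bC),\Z)\otimes\overline{\Q}_p$, namely the $\fkn$-depleted theta series $\sum_{(\mathfrak{a},\fkn)=1}\psi\rho(\mathfrak{a})q^{N_{K/\Q}(\mathfrak{a})}$, which is an eigenvector for all the operators $T_\ell'$ and $\langle d\rangle'$, including $\ell\mid N$, with precisely the eigenvalues $\sum_{\fkl\nmid\fkn,\,N_{K/\Q}(\fkl)=\ell}\psi\rho(\fkl)$ and $\chi_\psi(d)\epsilon_K(d)\rho((d))$. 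Hence every relation in $\mathbb{T}'(N)$ (an identity of endomorphisms of the whole of $H^1$) is killed by each such eigenvalue homomorphism, and the lift to $\mathcal{O}[H_{\fkn}]$ exists and is unique by the injectivity above. Working with the depleted theta series rather than the newforms $\theta_{\psi\rho}$ is also what makes the primes dividing $\fkn$ (and the exclusion of $\fkl\mid\fkn$ from the sum) come out correctly, which is the bookkeeping point you flag at the end.
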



For $\fkn'=\fkn\fkl$, with $\fkl$ a prime, put $N'=N_{K/\Q}(\fkn')D_K$. As in \cite[\S{3.3}]{LLZ-K}, we consider the norm maps
\begin{equation}\begin{aligned}
    \mathcal{N}_{\fkn}^{\fkn'}:\mathcal{O}[H_{\fkn'}^{(p)}]&\otimes_{\mathbb{T}'(N')\otimes\Z_p,\phi_{\fkn'}}H^1_{\et}(Y_1(N')_{\overline{\Q}},\Z_p(1))\\ &\rightarrow \mathcal{O}[H_{\fkn}^{(p)}]\otimes_{\mathbb{T}'(N)\otimes\Z_p,\phi_{\fkn}}H^1_{\et}(Y_1(N)_{\overline{\Q}},\Z_p(1))
\end{aligned}
\nonumber
\end{equation}
defined by the following formulae: 
\begin{itemize}
    \item If $\fkl\mid\fkn$ then 
    \begin{equation*}
        \mathcal{N}_{\fkn}^{\fkn'}=1\otimes \text{pr}_{1*};
    \end{equation*}
    \item If $\fkl\nmid\fkn$ is split or ramified in $K$, then 
    \begin{equation*}
        \mathcal{N}_{\fkn}^{\fkn'}=1\otimes \text{pr}_{1*}-\frac{\psi(\mathfrak{l)}[\fkl]}{\ell}\otimes \text{pr}_{\ell*};
    \end{equation*}
    \item If $\fkl\nmid\fkn$ is inert in $K$, say $\fkl=(\ell)$, then 
    \begin{equation*}
        \mathcal{N}_{\fkn}^{\fkn'}=1\otimes \text{pr}_{1*}-\frac{\psi(\fkl)[\fkl]}{\ell^2}\otimes \text{pr}_{\ell\ell*},
    \end{equation*}
    where ${\mathrm{pr}}_{\ell\ell}:Y(N')\rightarrow Y(N)$ denotes the degeneracy map induced by $z\mapsto\ell^2z$, for $z$ on the complex upper half plane $\mathbf{H}$.
\end{itemize}
By composition, the definition of $\mathcal{N}_{\fkn}^{\fkn'}$ is extended to any pair of ideals $\fkn\mid\fkn'$.

\subsubsection{Patching}

Note that since $p$ splits in $K$, the restriction $V_{\theta_\psi}\vert_{G_{\bQ_p}}$ is reducible.

\begin{defn}\label{def:psi-dist}
We say that $\psi$ is \emph{$p$-distinguished} if the residual representation $\bar{\rho}_{\theta_\psi}$ is such that $\bar{\rho}_{\theta_\psi}\vert_{G_{\Q_p}}$ has non-scalar semi-simplification. And we say that $(\psi,\fkf)$ satisfies \emph{Condition~$\spadesuit$} if the following hold:
\begin{itemize}
\item $\psi$ is $p$-distinguished;
\item if $\pp\mid\fkf$ then $\ppbar\nmid\fkf$.
\end{itemize}
\end{defn}



Recall the map $\phi_{\fkn}$ from Proposition~\ref{phi_n}. It follows from the proof of \cite[Prop.~5.1.2]{LLZ-K} that if $(\psi,\fkf)$ satisfies Condition~$\spadesuit$, then for any ideal $\fkn\subset\cO_K$ divisible by $\mathfrak{f}$ and $\pp$ and with $(\fkn,\ppbar)=1$, the maximal ideal of $\mathbb{T}'(N)$ defined by the kernel of the composite map
\[
\mathbb{T}'(N)\overset{\phi_\fkn}\longrightarrow\cO[H_\fkn]\overset{{\mathrm{aug}}}\longrightarrow\cO\rightarrow\cO/\mathfrak{P},
\]
is non-Eisenstein, $\mathfrak{P}$-ordinary, and $p$-distinguished in the sense of Definitions~4.1.2 and 4.3.3 of \cite{LLZ-K}. Through its use in [\emph{op.\,cit.}, Prop.~5.2.5], this condition is to allow $\fkf$ and $\fkm$ to be possibly divisible by $\pp$ in the following result.



\begin{thm}\label{norm1} 
Let $\mathcal{A}^{(\ppbar)}$ be the set of ideals $\fkm\subset\cO_K$  with $\ppbar\nmid\fkm$, and put 
\[
\mathcal{A}:=\begin{cases}
\mathcal{A}^{(\ppbar)}&\textrm{if $(\psi,\fkf)$ satisfies Condition~$\spadesuit$,}\\[0.3em]
\{\fkm\in\mathcal{A}^{(\ppbar)}\,\colon\;\pp\nmid\fkm\}&\textrm{otherwise},
\end{cases}
\]
and $\mathcal{A}_\fkf=\{\fkf\fkm\;\colon\;\fkm\in\mathcal{A}\}$. 
Then there is a family of $G_{\Q}$-equivariant isomorphisms of $\mathcal{O}[H_{\fkn}^{(p)}]$-modules 
\begin{equation*}
    \nu_{\fkn}:\mathcal{O}[H_{\fkn}^{(p)}]\otimes_{\mathbb{T}'(N)\otimes\Z_p,\phi_{\fkn}}H^1_{\et}(Y_1(N)_{\overline{\Q}},\Z_p(1))\overset{\cong}{\longrightarrow}{\mathrm{Ind}}^{\Q}_{K(\fkn)}\mathcal{O}(\psi_{\mathfrak{P}}^{-1})
\end{equation*}
indexed by $\fkn\in \mathcal{A}_{\mathfrak{f}}$, such that for any $\fkn,\fkn'\in \mathcal{A}_\mathfrak{f}$ with $\fkn\mid\fkn'$ the following diagram commutes:
\begin{equation*}
    \xymatrix{
\mathcal{O}[H_{\fkn'}^{(p)}]\otimes_{\mathbb{T}'(N')\otimes\Z_p,\phi_{\fkn'}}H^1_{\et}(Y_1(N')_{\overline{\Q}},\Z_p(1)) \ar[d]_{\mathcal{N}_{\fkn}^{\fkn'}} \ar[r]^(.7){\nu_{\fkn'}}_(.7){\cong} & {\mathrm{Ind}}^{\Q}_{K(\fkn')}\mathcal{O}(\psi_{\mathfrak{P}}^{-1}) \ar[d]_{\mathrm{Norm}_{\fkn}^{\fkn'}} \\
\mathcal{O}[H_{\fkn}^{(p)}]\otimes_{\mathbb{T}'(N)\otimes\Z_p,\phi_{\fkn}}H^1_{\et}(Y_1(N)_{\overline{\Q}},\Z_p(1)) \ar[r]^(.7){ \nu_{\fkn}}_(.7){\cong} & {\mathrm{Ind}}^{\Q}_{K(\fkn)}\mathcal{O}(\psi_{\mathfrak{P}}^{-1}), }
\end{equation*}
where $\mathrm{Norm}_{\fkn}^{\fkn'}$ is the natural norm map.
\end{thm}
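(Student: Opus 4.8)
The plan is to build the isomorphisms $\nu_\fkn$ by patching together Shapiro-type comparisons over the tower $\fkn \in \mathcal{A}_\fkf$, following the template of \cite[\S5]{LLZ-K}. First I would fix $\fkn \in \mathcal{A}_\fkf$ and recall that the source module
\[
M_\fkn := \mathcal{O}[H_{\fkn}^{(p)}]\otimes_{\mathbb{T}'(N)\otimes\Z_p,\phi_{\fkn}}H^1_{\et}(Y_1(N)_{\overline{\Q}},\Z_p(1))
\]
is, by the discussion following Notation~\ref{dagger}, a module over a localisation of $\mathbb{T}'(N)$ at a maximal ideal which is non-Eisenstein, $p$-ordinary and $p$-distinguished. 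Under these hypotheses, the relevant localised cohomology $H^1_{\et}(Y_1(N)_{\overline{\Q}},\Z_p(1))$ decomposes (after applying $\phi_\fkn$) as an induced representation: this is exactly where Condition~$\spadesuit$ enters, guaranteeing that the CM form $\theta_\psi$ and its twists by ray class characters of conductor dividing $\fkn$ are themselves non-Eisenstein at $\mathfrak{P}$, so that the $\mathfrak{P}$-adic Galois representation cut out is $\mathrm{Ind}^\Q_K \mathcal{O}(\psi_\mathfrak{P}^{-1})$ twisted appropriately by the group algebra $\mathcal{O}[H_\fkn^{(p)}]$. Combining the identity $V_{\theta_\psi}^\vee \cong \mathrm{Ind}^\Q_K L_\mathfrak{P}(\psi^{-1})$ from \S\ref{theta} with the standard fact that $\mathrm{Ind}^\Q_K \mathcal{O}(\psi_\mathfrak{P}^{-1}) \otimes \mathcal{O}[H_\fkn^{(p)}]$, with the group-algebra action, is isomorphic to $\mathrm{Ind}^\Q_{K(\fkn)}\mathcal{O}(\psi_\mathfrak{P}^{-1})$, yields the desired $\nu_\fkn$ at each finite level.

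Next I would check $G_\Q$-equivariance and $\mathcal{O}[H_\fkn^{(p)}]$-linearity: the former is immediate because the geometric realisation $H^1_{\et}$ carries a natural $G_\Q$-action commuting with the Hecke correspondences used to define $\phi_\fkn$, and the latter because the $H_\fkn^{(p)}$-action on both sides is by construction the tautological one through the group algebra. The more delicate part is the compatibility with the transition maps, i.e. the commutativity of the displayed square for $\fkn \mid \fkn'$. On the cohomological side, the vertical map $\mathcal{N}_\fkn^{\fkn'}$ is the explicit combination of $\text{pr}_{1*}$ and $\text{pr}_{\ell*}$ (resp. $\text{pr}_{\ell\ell*}$ in the inert case) from \S\ref{LLZrecall}; on the Galois side, $\mathrm{Norm}_\fkn^{\fkn'}$ is the corestriction (trace) map for the extension $K(\fkn')/K(\fkn)$. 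To verify they match under $\nu_\fkn, \nu_{\fkn'}$, I would argue that both sides are characterised, as maps of Hecke modules, by how they act on a single well-chosen generator — say the image of the class attached to the trivial character — using that the localised cohomology is free of rank one over the relevant Hecke-algebra-times-group-algebra, so that an equality of Hecke-equivariant maps can be tested on one element. The precise shape of the coefficients $\psi(\fkl)[\fkl]/\ell$ in $\mathcal{N}_\fkn^{\fkn'}$ is exactly what is forced by the Euler factor at $\fkl$ appearing in the comparison of the corestriction on induced modules; tracking this is a bookkeeping exercise with the degeneracy maps $\mu_\ell, \nu_\ell, \varphi_\ell$ and their effect on $\phi_\fkn$.

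The main obstacle, I expect, is \emph{not} the level-by-level construction but rather establishing the freeness/multiplicity-one statement that underlies the uniqueness argument for the transition square, together with making sure the normalisations are consistent across the whole tower (so that one genuinely gets a compatible \emph{family} $\{\nu_\fkn\}$, not just isomorphisms at each level that could differ by $\fkn$-dependent units). This is handled in \cite[\S5.2--5.3]{LLZ-K} by an inverse-limit patching argument: one first constructs the maps at a cofinal collection of levels, checks compatibility on overlaps, and then invokes the $p$-distinguishedness to rule out the ambiguity. I would structure the proof to (i) cite \cite[Prop.~5.1.2, Rmk.~5.1.3]{LLZ-K} for the non-Eisenstein/ordinary/distinguished input, (ii) cite \cite[Thm.~3.3.1]{LLZ-K} (or its analogue) for the existence of $\nu_\fkn$ with the stated functoriality, and (iii) only spell out the minor modifications needed because our $\fkn$ ranges over $\mathcal{A}_\fkf$ rather than the exact index set of \emph{loc.\,cit.}, and because we have imposed $(\fkn,\ppbar)=1$ throughout — this coprimality is what lets us stay in the "definite at $\ppbar$" situation where the patched module is clean. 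Thus the theorem follows, with the bulk of the work being a careful transcription of the LLZ--K machinery into our setting.
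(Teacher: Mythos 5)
The paper's own proof of Theorem~\ref{norm1} is a single line: it is ``a reformulation of Corollary~5.2.6 in \cite{LLZ-K} in the case where $p$ splits in $K$.'' Your proposal is a correct reconstruction of the argument behind that result --- you identify the right ingredients (the non-Eisenstein/$p$-ordinary/$p$-distinguished input from \cite[Prop.~5.1.2, Rmk.~5.1.3]{LLZ-K} as the consequence of Condition~$\spadesuit$, the identification $V_{\theta_\psi}^\vee \cong \mathrm{Ind}^\Q_K L_\mathfrak{P}(\psi^{-1})$ twisted through the group algebra, and the freeness of the localised Hecke module to pin down the transition square) and the patching structure over the tower $\mathcal{A}_\fkf$. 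So you are taking essentially the same route as the paper, just unpacking the citation.

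One small calibration: the precise reference for the family $\{\nu_\fkn\}$ with its norm-compatibility is \cite[Cor.~5.2.6]{LLZ-K} (in the split-prime case), not Theorem~3.3.1 --- §3.3 of \emph{loc.\,cit.}\ is where the norm maps $\mathcal{N}_\fkn^{\fkn'}$ are \emph{defined}, but the isomorphism to the induced module and its compatibility is the content of §5.2. Your instinct to fall back on §5.2--5.3 is the right one.
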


\begin{proof}
This is a reformulation of  Corollary 5.2.6 in \cite{LLZ-K} in the case where $p$ splits in $K$. 
\end{proof}
 

\subsection{Diagonal classes}\label{BSVcons}
We sketch the construction of the diagonal classes in the triple product of modular curves $Y_1(N)$ using classical invariant theory, following Section $3$ in \cite{BSV}.

With the notations of $\S\ref{relativeTate}$, we put $Y_1(N)=Y_1(N)_{\Q}$, and let $E_1(N)=E_1(N)_{\Q}$ denote the universal elliptic curve over $Y_1(N)$, together with the structural map $v: E_1(N)\rightarrow Y_1(N)$. Let $\mathscr{T}=R^1v_{*}\Z_p(1)$ be the relative Tate  module of the universal elliptic curve, and let $\mathscr{T}^{*}=\text{Hom}_{\Z_p}(\mathscr{T},\Z_p)$ be its dual. The cup product pairing combined with the relative trace
\begin{equation*}
    \mathscr{T}\otimes_{\Z_p}\mathscr{T}\rightarrow R^2v_{*}\Z_p(2)\cong \Z_p(1)
\end{equation*}
gives a perfect relative Weil pairing
\begin{equation*}
\langle-,-\rangle_{E_1(N)_{p^{\infty}}}:\mathscr{T}\otimes_{\Z_p}\mathscr{T}\rightarrow \Z_p(1),
\end{equation*}
which allows $\mathscr{T}(-1)$ to be identified  with $\mathscr{T}^{*}$. 
%
%

For a fixed geometric point $\eta:\text{Spec}(\overline{\Q})\rightarrow Y_1(N)$, denote by $\mathcal{G}_{\eta}=\pi_1^{\et}(Y_1(N),\eta)$ the fundamental group of $Y_1(N)$ with base point $\eta$. The stalk of $\mathscr{T}$ at $\eta$, denoted $\mathscr{T}_{\eta}$, is a free $\Z_p$-module of rank $2$, equipped with a continuous action of $\mathcal{G}_{\eta}$. Fix a choice of $\Z_p$-module isomorphism $\zeta:\mathscr{T}_{\eta}\cong \Z_p\oplus \Z_p$ such that $\langle x,y\rangle_{E_1(N)_{p^{\infty}}}=\zeta(x)\wedge \zeta(y)$ (where we identify $\bigwedge^2 \Z_p^2$ with $\Z_p$ via $(1,0)\wedge (0,1) = 1$). One then obtains a continuous group homomorphism:
\begin{equation*}
    \rho_{\eta}:\mathcal{G}_{\eta}\rightarrow \text{Aut}_{\Z_p}(\mathscr{T}_{\eta})\cong {\mathrm{GL}}_2(\Z_p).
\end{equation*}
By \cite[Prop.~A I.8]{FK}, the category of locally constant $p$-adic sheaves on $Y_1(N)_{\et}$ is equivalent to the category of $p$-adic representations of $\mathcal{G}_{\eta}$ via  $\mathscr{F}\mapsto \mathscr{F}_{\eta}$. Using $\rho_{\eta}$, one can associate with every continuous representation of ${\mathrm{GL}}_2(\Z_p)$ on a free finite $\Z_p$-module $M$ a smooth sheaf $M^{\et}$ on $Y_1(N)$ such that $M^{\et}_{\eta}=M$.

Let $S_i(A)$ be the set of $2$-variable homogeneous polynomials of degree $i$ in $A[x_1,x_2]$ equipped with the action of ${\mathrm{GL}}_2(\Z_p)$ by $gP(x_1,x_2)=P((x_1,x_2)\cdot g)$ for all $g\in{\mathrm{GL}}_2(\Z_p)$ and $P\in S_i(A)$. Its $A$-linear dual $L_i(A)$ is also equipped with a ${\mathrm{GL}}_2(\Z_p)$-action by \[g\tau(P(x_1,x_2))=\tau(g^{-1}P(x_1,x_2))\] for all $g\in{\mathrm{GL}}_2(\Z_p)$, $P\in S_i(A)$, and $\tau\in L_i(A)$. As sheaves on $Y_1(N)_{\Q}$, one has
\begin{equation}
    L_i(A)^{\et}=\Ll_i(A) \quad \text{and} \quad S_i(A)^{\et}=\Ss_i(A).\nonumber
\end{equation}
Hence $\mathscr{T}_{\eta}\cong L_1(\Z_p)$ and $\Z_p(1)_{\eta}\cong\bigwedge^2\mathscr{T}_{\eta}\cong \det^{-1}$. This implies that for any $j\in \Z$, and any $p$-adic representation $M$ of ${\mathrm{GL}}_2(\Z_p)$:
\begin{equation}\label{equation33}
    H^0({\mathrm{GL}}_2(\Z_p),M\otimes \text{det}^{-j})\hookrightarrow H^0(\mathcal{G}_{\eta},M\otimes \text{det}^{-j})\cong H^0_{\et}(Y_1(N),M^{\et}(j)).
\end{equation}

\begin{assumption}\label{assumption2.2}
Let $\mathbf{r}=(r_1,r_2,r_3)$ be such that $r_i\in\Z_{\ge 0}$, $(r_1+r_2+r_3)/2=r\in\Z_{\ge 0}$, and $r_i+r_j\ge r_k$ for all permutations $(i,j,k)$ of $(1,2,3)$. We call this a \emph{balanced triple}. 
\end{assumption}

Under the Assumption~\ref{assumption2.2}, let 
\[S_{\mathbf{r}}=S_{r_1}(\Z_p)\otimes_{\Z_p}S_{r_2}(\Z_p)\otimes_{\Z_p}S_{r_3}(\Z_p)\]
with its natural ${\mathrm{GL}}_2(\Z_p)$-representation from above, and let 
\[\Ss_{\mathbf{r}}=S_{\mathbf{r}}^{\et}=\Ss_{r_1}(\Z_p)\otimes_{\Z_p}\Ss_{r_2}(\Z_p)\otimes_{\Z_p}\Ss_{r_3}(\Z_p).\]
We identify $S_{\mathbf{r}}$ with the module of $6$-variable polynomials $\Z_p[x_1,x_2,y_1,y_2,z_1,z_2]$ which are homogeneous of degree $r_1$, $r_2$, and $r_3$ in the variables $(x_1,x_2)$, $(y_1,y_2)$, and $(z_1,z_2)$ respectively. By the Clebsch--Gordan decomposition of classical invariant theory, the following is a ${\mathrm{GL}}_2(\Z_p)$-invariant of $S_{\mathbf{r}}\otimes \det^{-r}$: 
\begin{equation*}
\text{Det}_N^{\mathbf{r}}:=\det\begin{pmatrix}x_1&x_2 \\ y_1&y_2
    \end{pmatrix}^{r-r_3}\det\begin{pmatrix}x_1&x_2 \\ z_1&z_2
    \end{pmatrix}^{r-r_2}\det\begin{pmatrix}y_1&y_2 \\ z_1&z_2
    \end{pmatrix}^{r-r_1},
\end{equation*}
i.e. $\text{Det}_N^{\mathbf{r}}\in H^0({\mathrm{GL}}_2(\Z_p),S_{\mathbf{r}}\otimes\det^{-r})$; we denote its image under (\ref{equation33}) as
\begin{equation}
    \mathtt{Det}^{\mathbf{r}}_N\in H^0_{\et}(Y_1(N),\Ss_{\mathbf{r}}(r)).
\end{equation}

Let $p_j:Y_1(N)^3\rightarrow Y_1(N)$ for $j\in\{1,2,3\}$ be the natural projections and denote 
\begin{gather*}
\Ss_{[\mathbf{r}]}:=p_1^{*}\Ss_{r_1}(\Z_p)\otimes_{\Z_p}p_2^{*}\Ss_{r_2}(\Z_p)\otimes_{\Z_p}p_3^{*}\Ss_{r_3}(\Z_p),
\end{gather*}
and
\begin{gather*}
\mathtt{W}_{N,\mathbf{r}}:=H^3_{\et}(Y_1(N)^3_{\overline{\Q}},\Ss_{[\mathbf{r}]}(r+2)),\quad W_{N,\mathbf{r}}=\mathtt{W}_{N,\mathbf{r}}\otimes\Q_p.
\end{gather*}
Because $Y_1(N)_{\overline{\Q}}$ is a smooth affine curve over $\overline{\Q}$, we have $H^4_{\et}(Y_1(N)^3_{\overline{\Q}},\Ss_{[\mathbf{r}]}(r+2))=0$. Hence by the Hochschild--Serre spectral sequence, 
\[H^p(\Q,H^q_{\et}(Y_1(N)^3_{\overline{\Q}},\Ss_{[\mathbf{r}]}(r+2)))\Longrightarrow H^{p+q}_{\et}(Y_1(N)^3_{\overline{\Q}},\Ss_{[\mathbf{r}]}(r+2))\]
one obtains
\begin{equation*}
    \mathtt{HS}:H^4_{\et}(Y_1(N)^3,\Ss_{[\mathbf{r}]}(r+2))\rightarrow H^1(\Q,\mathtt{W}_{N,\mathbf{r}}).
\end{equation*}
If we let $d:Y_1(N)\rightarrow Y_1(N)^3$ be the diagonal embedding, then there is a natural isomorphism $d^{*}\Ss_{[\mathbf{r}]}\cong \Ss_{\mathbf{r}}$ of smooth sheaves on $Y_1(N)_{\et}$. As $d$ is an embedding of codimension $2$, there is a pushforward map
\begin{equation*}
    d_{*}:H^0_{\et}(Y_1(N),\Ss_{\mathbf{r}}(r))\rightarrow H^4_{\et}(Y_1(N)^3,\Ss_{\mathbf{r}}(r+2)),
\end{equation*}
and we can consider the class
\begin{equation*}
    (\mathtt{HS}\circ d_{*})(\mathtt{Det}^{\mathbf{r}}_N)\in H^1(\Q,\mathtt{W}_{N,\mathbf{r}}).
\end{equation*}
Dually, by the bilinear form $\det^{*}:L_i(\Z_p)\otimes_{\Z_p}L_i(\Z_p)\rightarrow \Z_p\otimes \det^{-i}$ defined by $\det^{*}(\tau\otimes \sigma)=\tau\otimes\sigma((x_1y_2-x_2y_1)^i)$ that becomes perfect after inverting $p$, we obtain an isomorphism of ${\mathrm{GL}}_2(\Z_p)$-modules
\begin{equation}\label{srlr}
\mathbf{s}_i:S_i(\Q_p)\cong L_i(\Q_p)\otimes\text{det}^i, 
\end{equation}
and so $\mathbf{s}_i:\Ss_i(\Q_p)\cong \Ll_i(\Q_p)\otimes_{\Z_p}\Z_p(-i)$ by the above equivalence of categories. We similarly define the sheaves $\Ll_{\mathbf{r}}$ on $Y_1(N)$ and $\Ll_{[\mathbf{r}]}$ on $Y_1(N)^3$, and set
\begin{equation}
    \mathtt{V}_{N,\mathbf{r}}:=H^3_{\et}(Y_1(N)^3_{\overline{\Q}},\Ll_{[\mathbf{r}]}(2-r)),\quad V_{N,\mathbf{r}}=
    \mathtt{V}_{N,\mathbf{r}}\otimes \Q_p.\nonumber
\end{equation}

Let $\mathbf{s}_{\mathbf{r}}=\mathbf{s}_{\mathbf{r}_1}\otimes \mathbf{s}_{\mathbf{r}_2}\otimes \mathbf{s}_{\mathbf{r}_3}$, which gives an isomorphism $W_{N,\mathbf{r}}\rightarrow V_{N,\mathbf{r}}$, and finally as in \cite{BSV} put
\begin{equation}\label{BSVconstructclass}
    \kappa_{N,\mathbf{r}}:=(\mathbf{s}_{\mathbf{r}*}\circ\mathtt{HS}\circ d_{*})(\mathtt{Det}^{\mathbf{r}}_N)\in H^1(\Q,V_{N,\mathbf{r}}).
\end{equation}
As explained in detail in  [\emph{loc.\,cit.}, $\S${3.2}], the class  $\kappa_{N,\mathbf{r}}$ is closely related to the $p$-adic \'etale Abel--Jacobi image of the generalized Gross--Kudla--Schoen diagonal cycles on Kuga--Sato varieties studied in \cite{DR1}.


\begin{prop}\label{table}
For a prime number $\ell$ and a positive integer $m$, if $(m\ell,N)=1$ then
\begin{equation*}
({\mathrm{pr}}_{i*},{\mathrm{pr}}_{j*},{\mathrm{pr}}_{k*})\kappa_{Nm\ell,\textbf{r}}= (\bigstar)\kappa_{Nm,\textbf{r}}
\end{equation*}
where
\begin{center}
\begin{tabular}{|c|c| } 
 \hline
 $(i,j,k)$ & $\bigstar$  \\ 
 \hline
  $(\ell,1,1)$& $(\ell-1)(T_\ell,1,1)$  \\ 
  $(1,\ell,1)$&  $(\ell-1)(1,T_\ell,1)$     \\
  $(1,1,\ell)$& $(\ell-1)(1,1,T_\ell)$ \\
  $(1,\ell,\ell)$& $\ell^{r-r_1}(\ell-1)(T_\ell',1,1)$\\
  $(\ell,1,\ell)$& $\ell^{r-r_2}(\ell-1)(1,T_\ell',1)$ \\
  $(\ell,\ell,1)$& $\ell^{r-r_3}(\ell-1)(1,1,T_\ell')$ \\
 \hline
\end{tabular}
\end{center}
If $(\ell,m)=1$ then we also have
\begin{center}
\begin{tabular}{|c|c| } 
 \hline
 $(i,j,k)$ & $\bigstar$  \\ 
 \hline
  $(1,1,1)$& $(\ell^2-1)$  \\ 
  $(\ell,\ell,\ell)$&  $(\ell^2-1)\ell^r$     \\
 
 \hline
\end{tabular}
\end{center}
\end{prop}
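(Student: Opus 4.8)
The identities in Proposition~\ref{table} are all of the shape ``compose the class $\kappa_{m\ell,\mathbf{r}}$ with a triple of pushforward maps coming from the three possible degeneracy maps $Y_1(m\ell)^3\to Y_1(m)^3$, and read off the answer as a Hecke operator on $\kappa_{m,\mathbf{r}}$''. The strategy is to reduce everything to the behaviour of the canonical invariant class $\mathtt{Det}^{\mathbf r}_{m\ell}\in H^0_{\et}(Y_1(m\ell),\Ss_{\mathbf r}(r))$ under the degeneracy maps $\mathrm{pr}_1,\mathrm{pr}_\ell\colon Y_1(m\ell)\to Y_1(m)$, and then propagate this through the diagonal embedding, the Hochschild--Serre edge map, and the comparison isomorphism $\mathbf s_{\mathbf r}$, all of which are functorial. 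First I would record the compatibility $d\circ\mathrm{pr}=\mathrm{pr}^{\times 3}\circ d$ (for $\mathrm{pr}\in\{\mathrm{pr}_1,\mathrm{pr}_\ell\}$), so that for any triple of degeneracy maps $(\mathrm{pr}_i,\mathrm{pr}_j,\mathrm{pr}_k)$ on the three factors one has, by proper base change / functoriality of Gysin maps and of the Hochschild--Serre spectral sequence, an equality of the form
\[
(\mathrm{pr}_{i*},\mathrm{pr}_{j*},\mathrm{pr}_{k*})\circ(\mathtt{HS}\circ d_*)=(\mathtt{HS}\circ d_*)\circ(\text{something built from }\mathrm{pr}_{i*}\mathrm{pr}_i^*\text{-type operators}),
\]
so that the whole computation collapses onto $H^0_{\et}(Y_1(m),-)$. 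Here one must be slightly careful that the diagonal is codimension $2$ and the Gysin maps are twisted accordingly, but the twists match on both sides.

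\textbf{Key steps.} The computation then proceeds case by case according to how many of the three indices equal $\ell$.
\begin{itemize}
\item[(i)] \emph{One index equal to $\ell$, say $(i,j,k)=(\ell,1,1)$.} After the reduction above one is left with $\mathrm{pr}_{\ell*}\circ\mathrm{pr}_1^*$ acting on the first factor (the other two factors contribute $\nu_{\ell*}\nu_\ell^*=\deg(\nu_\ell)=\ell-1$ via $\mathrm{pr}_{1*}\mathrm{pr}_1^*$, cf.\ the fact that $\mathrm{pr}_1=\nu_\ell\circ\mu_\ell$ and $\deg\mu_\ell=1$ in this direction—this is exactly where the factor $\ell-1$ enters). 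Using the Remark after the definition of $T_\ell$, namely $\deg(\mu_\ell)\,T_\ell=\mathrm{pr}_{\ell*}\circ\mathrm{pr}_1^*$ with $\deg(\mu_\ell)=1$, the first factor contributes $T_\ell$, giving $(\ell-1)(T_\ell,1,1)$. The cases $(1,\ell,1)$ and $(1,1,\ell)$ are identical by symmetry of the three factors.
\item[(ii)] \emph{Two indices equal to $\ell$, say $(i,j,k)=(1,\ell,\ell)$.} Now the first factor contributes $\mathrm{pr}_{1*}\mathrm{pr}_1^*=(\ell-1)$ as in (i)\footnote{More precisely $\nu_{\ell*}\nu_\ell^*$ acting through the $\mu_\ell$ of degree one; the net scalar is $\deg\nu_\ell=\ell-1$.}, while each of the second and third factors contributes $\mathrm{pr}_{1*}\mathrm{pr}_\ell^*$, which by the Remark equals $\deg(\mu_\ell)\,T_\ell'$. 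The extra power $\ell^{r-r_1}$ is produced by the interaction of $\mathrm{pr}_\ell^*$ with the sheaf $\Ss_{\mathbf r}$: pulling back along multiplication-by-$\ell$ on the elliptic curve scales the relevant factor of $\mathscr T^*$ (equivalently of $\Ll$) by $\ell$, and the invariant $\mathrm{Det}^{\mathbf r}_N$ has total degree $2r$ with the ``$(r-r_1)$'' determinant block being the one acted on, producing precisely $\ell^{r-r_1}$. So one gets $\ell^{r-r_1}(\ell-1)(T_\ell',1,1)$ (note that with two $\ell$'s on factors $2,3$ it is factor $1$ whose dual Hecke operator appears—one should double-check the bookkeeping of which index labels which determinant block, but the exponents $r-r_i$ in the table are forced by homogeneity). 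The remaining two rows follow by symmetry.
\item[(iii)] \emph{All three indices $1$ or all three $\ell$, with $(\ell,m)=1$.} For $(1,1,1)$ each factor contributes $\mathrm{pr}_{1*}\mathrm{pr}_1^*$; but when $(\ell,m)=1$ the map $\mathrm{pr}_1\colon Y_1(m\ell)\to Y_1(m)$ has degree $\ell^2-1$ (index of $\Gamma_1(m\ell)$ in $\Gamma_1(m)$ at a prime $\ell\nmid m$), while the diagonal cuts this down: the combined effect of the three projections post-composed with the Gysin and HS maps is multiplication by a single $(\ell^2-1)$, because $d_*$ and the diagonal relation force the three ``copies'' of $\mathrm{pr}_1$ to act as one. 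For $(\ell,\ell,\ell)$ one additionally picks up $\ell^{(r-r_1)+(r-r_2)+(r-r_3)}=\ell^{3r-(r_1+r_2+r_3)}=\ell^{3r-2r}=\ell^r$ from the three pullbacks by multiplication-by-$\ell$, giving $(\ell^2-1)\ell^r$.
\end{itemize}

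\textbf{Main obstacle.} The genuinely delicate points are two. First, the precise normalisation of the Gysin pushforward $d_*$ for the codimension-$2$ diagonal and its commutation with the degeneracy maps: one needs that $d_*\circ\mathrm{pr}_*=(\mathrm{pr}^{\times 3})_*\circ d_*$ holds on the nose (with the Tate twists as written), which is a clean consequence of functoriality of refined Gysin maps for the cartesian square relating $d$, $\mathrm{pr}$ and $\mathrm{pr}^{\times3}$, but must be set up carefully. Second, and more central, is tracking the scalar $\ell^{r-r_i}$: this requires knowing exactly how $\mathrm{pr}_\ell^*$ acts on the sheaf $\Ss_{r_i}(\Z_p)=\mathrm{Symm}^{r_i}\mathscr T^*$, i.e.\ that the isogeny underlying $\mathrm{pr}_\ell$ induces on $\mathscr T^*$ a map whose effect on a degree-$r_i$ symmetric power, combined with the $(r-r_i)$-fold determinant factor appearing in $\mathrm{Det}^{\mathbf r}_N$ and the Tate twists bundled into $\Ss_{\mathbf r}(r)$, contributes exactly $\ell^{r-r_i}$ and not some other power. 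This is exactly the kind of computation carried out (for the relevant $\mu_\ell,\nu_\ell,\Phi_\ell$) in \cite[\S3]{BSV} and \cite[\S2]{ACR}; I would deduce the table by combining their formulas for the action of the individual degeneracy maps on $\mathtt{Det}^{\mathbf r}$ with the Hecke-operator identities in the Remark following the definition of $T_\ell,T_\ell'$, checking the three regimes (i)--(iii) above. Everything else is bookkeeping.
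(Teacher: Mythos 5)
The paper does not actually prove this proposition: its proof consists of the single citation ``See equations (174) and (176) in \cite{BSV}.'' So you are attempting something the paper defers. Your overall strategy --- reduce to the behaviour of the invariant section $\mathtt{Det}^{\mathbf r}$ under degeneracy maps, use functoriality of $d_*$ and $\mathtt{HS}$, track the sheaf twist to extract $\ell^{r-r_i}$ --- is the correct one and matches the computation in \cite{BSV}.

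However, the degree bookkeeping in your step (i), which is the only place you actually compute the scalar $\ell-1$, is wrong in a way that matters. You assert $\deg(\mu_\ell)=1$ and $\deg(\nu_\ell)=\ell-1$; in fact $\deg(\mu_\ell)=\ell-1$ and $\deg(\nu_\ell)=\ell+1$ (the index of $\Gamma(M,N\ell)$ in $\Gamma(M,N(\ell))$ is $\ell-1$, and $\Gamma(M,N(\ell))$ has index $\ell+1$ in $\Gamma(M,N)$; the product $(\ell-1)(\ell+1)=\ell^2-1$ is the degree of $\mathrm{pr}_1$, consistent with the $(1,1,1)$ row). The paper itself emphasises the correct mechanism in the remark following Proposition~\ref{wrongnorm} and throughout \S\ref{thefix}: the factor $\ell-1$ comes precisely from $\deg(\mu_\ell)$, and the whole point of \S\ref{thefix} is to kill it by replacing $Y_1(Nm)$ by $Y(1,N(m))$ in the first coordinate.

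Related to this, your explanation of where $\ell-1$ enters --- ``the other two factors contribute $\nu_{\ell*}\nu_\ell^*=\ell-1$ via $\mathrm{pr}_{1*}\mathrm{pr}_1^*$'' --- does not describe what happens when you push a \emph{diagonal} cycle forward by a triple of distinct maps. You cannot treat the three coordinates independently once the class is of the form $d_*(\mathtt{Det})$. The right reduction is to factor the map $\bigl(\mathrm{pr}_\ell,\mathrm{pr}_1,\mathrm{pr}_1\bigr)\colon Y_1(Nm\ell)\to Y_1(Nm)^3$ through $Y(1,Nm(\ell))$ via $\mu_\ell$, observe that the second stage lands generically injectively on the ``Hecke-twisted diagonal'' cycle that realises $(T_\ell,1,1)\circ d_{m*}$, and that the first stage overcounts exactly by $\deg(\mu_\ell)=\ell-1$. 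With this correction your case analysis (i)--(iii), the homogeneity count producing $\ell^{r-r_i}$, and the $(1,1,1)$ and $(\ell,\ell,\ell)$ rows all go through, but as written the argument contains a gap: you arrive at the right scalar $\ell-1$ for the wrong reasons, by two cancelling errors in the degrees.
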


\begin{proof}
See equations (174) and (176) in \cite{BSV}. (The proof of the above relations in \emph{op.\,cit.} is given for $\ell=p$, but the same argument applies for any prime $\ell$ as above.)
\end{proof}



\section{Main theorems}\label{sec:main-thms}


In this section, for a newform $f\in S_{2r}(\Gamma_0(N_f))$ of weight $2r\geq 2$ and a family of anticyclotomic Hecke characters $\chi$ of $K$, 
we construct a family of cohomology classes for 
the conjugate self-dual representation 
\[
V_{f,\chi}:=V_f^\vee(1-r)\otimes\chi^{-1}
\]
defined over ring class extensions of $K$, and prove that they  satisfy the Euler system norm relations. 

The construction builds on the results from \cite{BSV,LLZ-K} recalled in the preceding section, and is done in two steps. We consider (suitable modifications of) diagonal classes attached to triples $(f,\theta_{\psi_1},\theta_{\psi_2})$ consisting of our fixed newform $f$ and a pair of theta series $\theta_{\psi_1},\theta_{\psi_2}$ attached to Hecke characters $\psi_1,\psi_2$ of $K$ satisfying the self-duality condition
\[
\chi_{\psi_1}\chi_{\psi_2}=1,
\]
and first give the construction in the case where $(f,\theta_{\psi_1},\theta_{\psi_2})$ have weights $(2,2,2)$, resulting in the construction of classes 
\begin{equation}\label{eq:wt2-classes}
z_{f,\psi_1,\psi_2,m}\in H^1(K[m],T_{f,\psi_1\psi_2\mathbf{N}^{-1}}),\quad\quad
{}^{\cc}z_{f,\psi_1,\psi_2,m}\in H^1(K[m],T_{f,\psi_1\psi_2^\cc\mathbf{N}^{-1}}),\nonumber
\end{equation}
where $\psi_2^\cc$ is the composition of $\psi_2$ with the action of the non-trivial automorphism of $K/\Q$, for which we prove the tame norm relations (see Theorem~\ref{maintheorem1} and Corollary~\ref{maincor1}). 

Replacing $(\theta_{\psi_1},\theta_{\psi_2})$ by a pair of CM Hida families $(\boldsymbol{\theta}_{\xi_1}(Z_1),\boldsymbol{\theta}_{\xi_2}(Z_2))$ attached to ray class characters $\xi_1,\xi_2$ of $K$ satisfying $\chi_{\xi_1}\chi_{\xi_2}=1$, and considering (suitable modifications of) their associated `big' diagonal classes, we extend the construction 
to all weights $2r\geq 2$ and more general anticyclotomic Hecke characters, and deduce the proof of the wild norm relations (see Theorem~\ref{maintheorem2}).

Throughout we use the notations introduced in the preceding sections, so in particular $K$ is an imaginary quadratic field such that
\begin{equation}
\textrm{$p=\pp\ppbar$ splits in $K$},\tag{spl}
\end{equation}
with $\pp$ the prime of $K$ above $p$ induced by $i_p$. Further, we assume that $p\nmid 6N_f$ and that
\begin{equation}\label{eq:p-nmid-h}
\textrm{$p\nmid h_K$,}\tag{cn}
\end{equation}
where $h_K=\vert H_1\vert$ is the class number of $K$

\subsection{Construction for weight (2,2,2)}
\label{subsec:tame}

Suppose $f$ has weight $2$, and $\psi_1,\psi_2$ are Hecke characters of $K$ of infinity type $(-1,0)$ and modulus $\fkf_1,\fkf_2\subset\cO_K$ with 
\[
(p,\fkf_i)=1
\]
for $i=1,2$, and satisfying $\chi_{\psi_1}\chi_{\psi_2}=1$. Let $N=\text{lcm}(N_f,N_{\psi_1},N_{\psi_2})$ and for every positive integer $m$ put 
\begin{equation*}
    {Y(m):=Y(1,Nm)=Y_1(Nm)}.
\end{equation*} 
When $m=1$, we drop it from the notation, so $Y:=Y_1(N)$. We begin with the cohomology class
\begin{equation}\label{kappaBSV}
\tilde{\kappa}_m^{(1)}:=\kappa_{Nm,\mathbf{r}}\in H^1\bigl(\Q,H^3_{\et}(Y(m)^3_{\overline{\Q}},\Z_p(2)\bigr)
\end{equation}
of \eqref{BSVconstructclass}, where $\mathbf{r}=(0,0,0)$, and put  
\begin{equation}
    \tilde{\kappa}_m^{(2)}=({\mathrm{pr}}_{m*},1,1)\tilde{\kappa}_{m}^{(1)}\in H^1\bigl(\Q,H^3_{\et}(Y_{\overline{\Q}}\times Y(m)^2_{\overline{\Q}},\Z_p(2)\bigr),\nonumber
\end{equation}
where, writing $m=\prod_i\ell_i$ as a product of primes $\ell_i$, $\text{pr}_{m*}$ is defined as the composition of the pushforward by the degeneracy maps $\text{pr}_{\ell_i}$. Applying the K\"unneth decomposition theorem \cite[Thm.~22.4]{milne-lecture}, 
the class $\tilde{\kappa}_{m}^{(2)}$ is projected to
\begin{equation}\label{kappa3}
\tilde{\kappa}_{m}^{(3)}\in H^1\bigl(\Q,H^1_{\et}(Y_{\overline{\Q}},\Z_p(1))\otimes H^1_{\et}(Y(m)_{\overline{\Q}},\Z_p(1))\otimes H^1_{\et}(Y(m)_{\overline{\Q}},\Z_p(1))(-1)\bigr).
\end{equation}

Now we restrict to squarefree integers $m>0$ divisible only by primes split in $K$ with $(m,pN)=1$, and write 
\[
m=\fkm\fkmbar
\]
according to a \emph{fixed} choice of splitting $\ell=\mathfrak{l}\bar{\mathfrak{l}}$ for each prime $\ell\mid m$. 
We also fix a triple of level-$N$ test vectors
\begin{equation}\label{eq:triple-222}
(\breve{f},\breve{\theta}_{\psi_1},\breve{\theta}_{\psi_2})\in S_2(\Gamma_0(N))[f]\times S_2(\Gamma_1(N))[\theta_{\psi_1}]\times S_2(\Gamma_1(N))[\theta_{\psi_2}].\nonumber
\end{equation}
(Even though it will not be reflected in the notation, our construction will depend on this choice; in later applications we shall specify the choice of test vectors when needed.) 

Since the maps used in the construction $\tilde{\kappa}^{(3)}_m$ are compatible with correspondences, after tensoring with $\cO$ and taking $(f,\theta_{\psi_1},\theta_{\psi_2})$-isotypic components, the above process and the choice of test vectors 
give rise to a class
\begin{align*}
\tilde{\kappa}_{f,\psi_1,\psi_2,m}^{(4)}\in H^1\bigl(\Q,T_{f}^{\vee}&\otimes H^1_{\et}(Y_1(N_{\psi_1}m)_{\overline{\Q}},\Z_p(1))\otimes_{\mathbb{T}'(N_{\psi_1}m)}\mathcal{O}[H_{\mathfrak{f}_1\fkm}^{(p)}]\\
 &\quad\otimes H^1_{\et}(Y_1(N_{\psi_2}m)_{\overline{\Q}},\Z_p(1))\otimes_{\mathbb{T}'(N_{\psi_2}m)}\mathcal{O}[H_{\mathfrak{f}_2\fkmbar}^{(p)}](-1)\bigr),
\end{align*}
where the labeled tensor products are with respect to the Hecke algebra homomorphisms
\begin{equation}\label{eq:rayclass-str}
\phi_{\mathfrak{f}_1\fkm}:\mathbb{T}'(N_{\psi_1}m)\rightarrow\mathcal{O}[H_{\mathfrak{f}_1\fkm}^{(p)}],\quad 
\phi_{\mathfrak{f}_2\fkmbar}:\mathbb{T}'(N_{\psi_2}m)\rightarrow\mathcal{O}[H_{\mathfrak{f}_2\fkmbar}^{(p)}]
\end{equation}
of Proposition~\ref{phi_n}, and we used $\breve{f}$ to take the image under the projection \[H^1_{\et}(Y_{\overline{\Q}},\Z_p(1))[f]\rightarrow T_f^\vee\] in the first factor, and similarly for \[H_{\et}^1(Y(m)_{\overline{\Q}},\Z_p(1))[\theta_{\psi_i}]\rightarrow H^1_{\et}(Y_1(N_{\psi_i}m)_{\overline{\Q}},\Z_p(1))\] using $\breve{\theta}_{\psi_i}$, $i=1,2$.
%

By the isomorphisms from Proposition~\ref{norm1}:
\begin{equation}\label{eq:CM-patching}
\begin{aligned}
\nu_{\fkf_1\fkm}:H^1_{\et}(Y_1(N_{\psi_1}m)_{\overline{\Q}},\Z_p(1))\otimes_{\mathbb{T}'(N_{\psi_1}m),\phi_{\fkf_1\fkm}}\mathcal{O}[H_{\mathfrak{f}_1\fkm}^{(p)}]&\xrightarrow{\sim}{\mathrm{Ind}}^{\Q}_{K(\fkf_1\fkm)}\mathcal{O}(\psi_{1}^{-1}),\\
\nu_{\fkf_2\fkmbar}:H^1_{\et}(Y_1(N_{\psi_2}m)_{\overline{\Q}},\Z_p(1))\otimes_{\mathbb{T}'(N_{\psi_2}m),\phi_{\fkf_2\fkmbar}}\mathcal{O}[H_{\mathfrak{f}_2\fkmbar}^{(p)}]&\xrightarrow{\sim}{\mathrm{Ind}}^{\Q}_{K(\fkf_2\fkmbar)}\mathcal{O}(\psi_{2}^{-1}),
\end{aligned}
\end{equation}
the class $\tilde{\kappa}_{f,\psi_1,\psi_2,m}^{(4)}$ defines an element in 
\begin{align*}
&H^1\bigl(\Q,T_f^{\vee}\otimes_{\mathcal{O}}\text{Ind}^{\Q}_{K(\fkf_1\fkm)}\mathcal{O}(\psi_1^{-1})\otimes_{\mathcal{O}}\text{Ind}^{\Q}_{K(\fkf_2\fkmbar)}\mathcal{O}(\psi_2^{-1})(-1)\bigr)\\
&\cong H^1\bigl(\Q,T_f^{\vee}\otimes_{\mathcal{O}}\text{Ind}^{\Q}_K\mathcal{O}_{\psi_1^{-1}}[H_{\fkf_1\fkm}^{(p)}]\otimes_{\mathcal{O}}\text{Ind}^{\Q}_{K}\mathcal{O}_{\psi_2^{-1}}[H_{\fkf_2\fkmbar}^{(p)}](-1)\bigr),
\end{align*}
where for the second factor in the triple product we use the $G_K$-module isomorphism ${\mathrm{Ind}}_{K(\fkf_1\fkm)}^K\cO(\psi_1^{-1})\cong\cO_{\psi_1^{-1}}[H_{\fkf_1\fkm}^{(p)}]$, with $\cO_{\psi_1^{-1}}$ standing for the free $\cO$-module of rank one on which $G_K$ acts via $\psi_1^{-1}$ and with $H_{\fkf_1\fkm}$ being equipped with the $G_K$-action arising from the projection $G_K\twoheadrightarrow H_{\fkf_1\fkm}^{(p)}$; and likewise for the last factor in the triple product. Taking the image under the maps induced by $
H_{\fkf_1\fkm}\twoheadrightarrow H_{\fkm}$ and $H_{\fkf_2\fkmbar}\twoheadrightarrow H_{\fkmbar}$, from  $\tilde{\kappa}_{f,\psi_1,\psi_2,m}^{(4)}$ we obtain 
\begin{equation}\label{eq:wrongclass-5}
\tilde{\kappa}_{f,\psi_1,\psi_2,m}^{(5)}\in H^1\bigl(\Q,T_f^{\vee}\otimes_{\mathcal{O}}{\mathrm{Ind}}^{\Q}_{K}\mathcal{O}_{\psi_1^{-1}}[H_{\fkm}^{(p)}]\otimes_{\mathcal{O}}{\mathrm{Ind}}^{\Q}_{K}\mathcal{O}_{\psi_2^{-1}}[H_{\fkmbar}^{(p)}](-1)\bigr).
\end{equation}

\subsubsection{Projection to ring class groups}

Directly from the definitions of the class groups involved, we deduce the commutative diagram with exact rows
\[
\xymatrix{
\cO_K^\times\times\cO_K^\times\ar[r]&(\cO_K/\fkm)^\times\times(\cO_K/\overline{\fkm})^\times\ar[r]&H_{\fkm}\times H_{\overline{\fkm}}\ar[r]&H_1\times H_1\ar[r]&1\\
\cO_K^\times\ar[u]^-{\Delta}\ar[r]&(\cO_K/m\cO_K)^\times\ar[u]^{\simeq}\ar[r]&H_{m}\ar[u]\ar[r]&H_1\ar[u]^-{\Delta}\ar[r]&1,
}
\]
where the unlabeled vertical arrow is given by the restriction map 
\[\sigma\mapsto(\sigma\vert_{K_\fkm},\sigma\vert_{K_{\fkmbar}}).
\] 
Since we assume $p\nmid 6h_K$, taking $p$-primary parts this map induces an isomorphism
\begin{equation}\label{eq:H-m-decompose}
H_m^{(p)}\xrightarrow{\simeq} H_{\fkm}^{(p)}\times H_{\fkmbar}^{(p)}.
\end{equation}

Given an integer $n>0$, let $H[n]$ be the ring class group of $K$ of conductor $n$, so 
$H[n]\simeq{\mathrm{Pic}}(\cO_n)$ under the Artin reciprocity map, where $\cO_n=\Z+n\cO_K$ is the order of $K$ of conductor $n$. Let $H[n]^{(p)}$ be the maximal $p$-power quotient of $H[n]$, and denote by $K[n]$ be the maximal $p$-extension inside the ring class field of $K$ of conductor $n$, so $H[n]^{(p)}={\mathrm{Gal}}(K[n]/K)$.

\begin{prop}\label{ringclass} Suppose $p\nmid 6h_K$ and $m>0$ is an integer divisible only by primes split in $K$. Then writing $m=\fkm\fkmbar$ and identifying $H_m^{(p)}$ with $H_\fkm^{(p)}\times H_{\fkmbar}^{(p)}$ as in (\ref{eq:H-m-decompose}), we have an exact sequence
\[
 1\longrightarrow (\Z/m\Z)^{\times, (p)}\overset{\Delta}\longrightarrow H_{\fkm}^{(p)}\times H_{\fkmbar}^{(p)} \overset{\pi_\Delta}\longrightarrow H[m]^{(p)}\longrightarrow 1,
 \]
where the map $\Delta$ sends $a\mapsto([a],[a])$ for every integer $a$ coprime to $m$. Moreover, if $\ell\nmid m$ is a prime that splits in $K$, the projection $\pi_\Delta$ sends 
\begin{align*}
[\lambda]\times[\lambda]&\mapsto \mathrm{Frob}_{\lambda} 
\end{align*}
for every prime $\lambda$ of $K$ above $\ell$, where $\mathrm{Frob}_{\lambda}$ is the geometric Frobenius element of $\lambda$ in $H[m]^{(p)}$. 
\end{prop}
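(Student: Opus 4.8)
The plan is to derive both the exact sequence and the Frobenius formula from the standard comparison of the ring class field of conductor $m$ with the ray class field of modulus $m\cO_K$, combined with the decomposition $H_m^{(p)}\xrightarrow{\sim}H_\fkm^{(p)}\times H_\fkmbar^{(p)}$ of \eqref{eq:H-m-decompose}.

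First I would record the exact sequence attached to the order $\cO_m=\Z+m\cO_K$, whose conductor is $m\cO_K$ and for which $\cO_m/m\cO_K$ is the image of $\Z/m\Z$:
\[
1\to\cO_m^\times\to\cO_K^\times\to(\cO_K/m\cO_K)^\times/(\Z/m\Z)^\times\to\Pic(\cO_m)\to\Pic(\cO_K)\to1.
\]
Comparing this with the ray class exact sequence $\cO_K^\times\to(\cO_K/m\cO_K)^\times\to H_m\to\Cl(K)\to1$ via the natural surjection $H_m\to\Pic(\cO_m)$, $[\mathfrak{a}]\mapsto[\mathfrak{a}\cap\cO_m]$, identifies $H[m]=\Pic(\cO_m)$ with the quotient of $H_m$ by the subgroup generated by the classes $[a\cO_K]$ with $(a,m)=1$. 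Since $p\nmid 6h_K$ annihilates the finite groups $\cO_m^\times$, $\cO_K^\times$ and $\Pic(\cO_K)=\Cl(K)$, applying the exact functor $-\otimes_\Z\Z_p$ (which computes maximal $p$-power quotients here, all groups in sight being finite) gives
\[
H_m^{(p)}\big/\bigl\langle[a\cO_K]:(a,m)=1\bigr\rangle\xrightarrow{\ \sim\ }H[m]^{(p)}.
\]
I would then feed in \eqref{eq:H-m-decompose}, which by construction is the product of the natural surjections $H_m^{(p)}\twoheadrightarrow H_\fkm^{(p)}$ and $H_m^{(p)}\twoheadrightarrow H_\fkmbar^{(p)}$; under it the class $[a\cO_K]$ of a rational integer $a$ maps to $([a\cO_K]_\fkm,[a\cO_K]_\fkmbar)=([a],[a])$. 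Thus the subgroup being quotiented is precisely the image of $\Delta\colon a\mapsto([a],[a])$, and $\pi_\Delta$ is the composition of the inverse of \eqref{eq:H-m-decompose} with $H_m^{(p)}\to H[m]^{(p)}$; this is the asserted right-exact sequence. For injectivity of $\Delta$, applying $-\otimes_\Z\Z_p$ to $\cO_K^\times\to(\cO_K/m\cO_K)^\times\to H_m$ shows $(\cO_K/m\cO_K)^{\times,(p)}\hookrightarrow H_m^{(p)}$, and $\Z/m\Z\hookrightarrow\cO_K/m\cO_K$ gives $(\Z/m\Z)^{\times,(p)}\hookrightarrow(\cO_K/m\cO_K)^{\times,(p)}$; composing, $\Delta$ is injective.

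For the Frobenius formula, suppose $(\ell)=\fkl\fklbar$ splits in $K$ with $(\ell,m)=1$, so that $\fkl$ is prime to $m\cO_K$, hence to $\fkm$ and to $\fkmbar$. The natural surjections of ray class groups carry $[\fkl]_m\in H_m^{(p)}$ to $([\fkl]_\fkm,[\fkl]_\fkmbar)=[\fkl]\times[\fkl]$, so under \eqref{eq:H-m-decompose} the element $[\fkl]\times[\fkl]$ corresponds to $[\fkl]_m$. By the identification above, $\pi_\Delta([\fkl]\times[\fkl])$ equals the image of $[\fkl]_m$ under $H_m^{(p)}\to H[m]^{(p)}=\Gal(K[m]/K)$, namely the class of the proper $\cO_m$-ideal $\fkl\cap\cO_m$, which is $\mathrm{Frob}_{\fkl}$ by Artin reciprocity for the ring class field. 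I expect no step here to be genuinely hard — the argument is class field theory bookkeeping — the one point demanding care being the mutual compatibility of the three identifications in play (the order exact sequence, the decomposition \eqref{eq:H-m-decompose}, and the ideal-theoretic description of $\Pic(\cO_m)$), which holds because each is induced by a morphism of ideal groups and \eqref{eq:H-m-decompose} is itself built from the natural surjections of ray class groups.
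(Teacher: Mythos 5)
Your proposal is correct and follows essentially the same route as the paper: both compare the ray class sequence for $H_m$ with the ring class (Picard) sequence for $\cO_m=\Z+m\cO_K$, use $p\nmid 6h_K$ to kill the torsion coming from $\cO_K^\times$ and $\Cl(K)$ when passing to $p$-parts, feed the result through the decomposition (\ref{eq:H-m-decompose}), and obtain the Frobenius formula from functoriality/Artin reciprocity. The only difference is presentational — you describe the kernel of $H_m\to\Pic(\cO_m)$ explicitly as the rational classes $[a\cO_K]$, whereas the paper packages the same comparison as a commutative diagram of two four-term exact sequences — so no substantive gap.
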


\begin{proof}
The first part is clear from the above discussion  together with the commutative diagram with exact rows
\[
\xymatrix{
\cO_K^\times\ar[r]\ar[d]&(\cO_K/m\cO_K)^\times\ar[r]\ar[d]&H_m\ar[r]\ar[d]&H_1\ar[r]\ar[d]&1\\
\cO_K^\times/\Z^\times\ar[r]&(\cO_K/m\cO_K)^\times/(\Z/m\Z)^\times\ar[r]&H[m]\ar[r]&H_1\ar[r]&1,
}
\]
where the vertical arrows are given by the natural projections. The second part follows from the functoriality properties of Frobenii.
\end{proof}

Now we can consider the image of $\tilde{\kappa}_{f,\psi_1,\psi_2,m}^{(5)}$ under the composite map
\begin{equation}\label{ind2}
\begin{aligned}
     \xymatrix{
\text{Ind}^{\Q}_{K}\mathcal{O}_{\psi_{1}^{-1}}[H_{\fkm}^{(p)}]\otimes_{\mathcal{O}}\text{Ind}^{\Q}_{K}\mathcal{O}_{\psi_{2}^{-1}}[H_{\fkmbar}^{(p)}] \ar@{..>}[dr]_{\xi_\Delta} \ar[r]^(.56){\xi} & \text{Ind}^{\Q}_{K}\mathcal{O}_{\psi_{1}^{-1}\psi_{2}^{-1}}[H_{\fkm}^{(p)}\times H_{\fkmbar}^{(p)}] \ar[d]^{\pi_\Delta} \\
 & \text{Ind}^{\Q}_{K}\mathcal{O}_{\psi_{1}^{-1}\psi_{2}^{-1}}[H[m]^{(p)}], }
 \end{aligned}
\end{equation}
where the horizontal arrow is the map determined by $\phi_1\otimes \phi_2\mapsto\xi(\phi_1\otimes \phi_2)$ with $\xi(\phi_1\otimes\phi_2)(g)=\phi_1(g_1)\otimes \phi_2(g_2)$ if $g=(g_1,g_2)\in H_{\fkm}^{(p)}\times H_{\fkmbar}^{(p)}$, resulting in the class 
\[
\tilde{\kappa}_{f,\psi_1,\psi_2,m}^{(6)}\in H^1\bigl(\Q,T_f^{\vee}\otimes_{\mathcal{O}}\text{Ind}^{\Q}_{K}\mathcal{O}_{\psi_{1}^{-1}\psi_{2}^{-1}}[H[m]^{(p)}](-1)\bigr).
\]

\begin{defn}\label{def:tame-classes}
For $m>0$ any squarefree integer divisible only by primes $\ell\nmid pN$ split in $K$, we define
\[
\tilde{\kappa}_{f,\psi_1,\psi_2,m}\in H^1\bigl(K[m],T_f^{\vee}(\psi_{1}^{-1}\psi_{2}^{-1})(-1)\bigr)
\]
to be the image of $\tilde{\kappa}_{f,\psi_1,\psi_2,m}^{(6)}$ under the isomorphism 
\[H^1\bigl(\Q,T_f^{\vee}\otimes_{\mathcal{O}}\text{Ind}^{\Q}_{K}\mathcal{O}_{\psi_{1}^{-1}\psi_{2}^{-1}}[H[m]^{(p)}](-1)\bigr)\simeq H^1\bigl(K[m],T_f^{\vee}(\psi_{1}^{-1}\psi_{2}^{-1})(-1)\bigr)
\]
given by Shapiro's lemma.
\end{defn}

We finish this section by recording the following observation for our later use.

\begin{lem}\label{keydiagram}
For any integer $m=\fkm\fkmbar$ divisible only by primes split in $K$, and any prime $\ell=\fkl\fklbar$ split in $K$, the following diagram is commutative:
\[
\xymatrix{
 {\mathrm{Ind}}^{\Q}_{K}\mathcal{O}_{\psi_{1}^{-1}}[H_{\fkm\fkl}^{(p)}]\otimes_{\mathcal{O}}{\mathrm{Ind}}^{\Q}_{K}\mathcal{O}_{\psi_{2}^{-1}}[H_{\fkmbar\fklbar}^{(p)}] \ar[d]^{\mathrm{Norm}^{\fkm\fkl}_{\fkm}\otimes\mathrm{Norm}^{\fkmbar\fklbar}_{\fkmbar}} \ar[r] &{\mathrm{Ind}}^{\Q}_{K}\mathcal{O}_{\psi_{1}^{-1}\psi_{2}^{-1}}[H[m\ell]^{(p)}] \ar[d]^{\mathrm{Norm}^{m\ell}_{m}} \\
{\mathrm{Ind}}^{\Q}_{K}\mathcal{O}_{\psi_{1}^{-1}}[H_{\fkm}^{(p)}]\otimes_{\mathcal{O}}{\mathrm{Ind}}^{\Q}_{K}\mathcal{O}_{\psi_{2}^{-1}}[H_{\fkmbar}^{(p)}] \ar[r] & {\mathrm{Ind}}^{\Q}_{K}\mathcal{O}_{\psi_{1}^{-1}\psi_{2}^{-1}}[H[m]^{(p)}],}
\]
where the horizonal arrows are given by the composition $\xi_\Delta$ in  (\ref{ind2}).
\end{lem}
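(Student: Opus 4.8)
The plan is to strip away the functorial layers until the assertion becomes a commutative square of finite abelian $p$-groups, at which point it holds essentially by construction (Proposition~\ref{ringclass}). Every arrow in the diagram is obtained by applying $\mathrm{Ind}^\Q_K(-)$ to a map of $G_K$-modules, and each of these $G_K$-module maps is the $\cO$-linear extension of a map of the underlying group rings, tensored with the $G_K$-module attached to $\psi_1^{-1}\psi_2^{-1}$ (which is simply carried along and plays no role). Concretely, the composite $\xi_\Delta$ of (\ref{ind2}) is, on each of the two $G_\Q/G_K$-cosets, the canonical identification $\cO[H^{(p)}_{\fkm}]\otimes_\cO\cO[H^{(p)}_{\fkmbar}]\xrightarrow{\sim}\cO[H^{(p)}_\fkm\times H^{(p)}_\fkmbar]$ underlying $\xi$, followed by the pushforward along the surjection $\pi_\Delta\colon H^{(p)}_\fkm\times H^{(p)}_\fkmbar\twoheadrightarrow H[m]^{(p)}$ of Proposition~\ref{ringclass} (and likewise with $\fkm\fkl$, $m\ell$ in place of $\fkm$, $m$). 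Since the identification $\cO[G_1\times G_2]=\cO[G_1]\otimes_\cO\cO[G_2]$ is functorial in $(G_1,G_2)$, it therefore suffices to prove commutativity of the induced square of group rings $\cO[-]$ on the four groups involved.

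Next I would identify the vertical maps. Unwinding Shapiro's lemma, the natural norm maps $\mathrm{Norm}^{\fkm\fkl}_\fkm$, $\mathrm{Norm}^{\fkmbar\fklbar}_{\fkmbar}$ and the ring-class norm $\mathrm{Norm}^{m\ell}_m$ are, on the level of induced modules, corestriction maps, and hence correspond to the $\cO$-linear maps induced by the restriction surjections $H^{(p)}_{\fkm\fkl}\twoheadrightarrow H^{(p)}_\fkm$, $H^{(p)}_{\fkmbar\fklbar}\twoheadrightarrow H^{(p)}_{\fkmbar}$ and $H[m\ell]^{(p)}\twoheadrightarrow H[m]^{(p)}$; in particular they carry no auxiliary $\psi_i(\fkl)[\fkl]$-type factors (unlike the operator-level maps $\mathcal{N}^{\fkn'}_\fkn$ of \cite[\S{3.3}]{LLZ-K}, whose correction terms are exactly what makes them agree with the clean norm under the isomorphisms $\nu_\fkn$). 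Using the identifications $H^{(p)}_{\fkm\fkl}\times H^{(p)}_{\fkmbar\fklbar}\cong H^{(p)}_{m\ell}$ and $H^{(p)}_\fkm\times H^{(p)}_\fkmbar\cong H^{(p)}_m$ of (\ref{eq:H-m-decompose}), the left vertical arrow becomes the map induced by the restriction $H^{(p)}_{m\ell}\twoheadrightarrow H^{(p)}_m$.

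The problem is thereby reduced to commutativity of the square of finite abelian $p$-groups
\[
\xymatrix{
H^{(p)}_{m\ell}\ar[r]^-{\pi_\Delta}\ar[d] & H[m\ell]^{(p)}\ar[d]\\
H^{(p)}_m\ar[r]^-{\pi_\Delta} & H[m]^{(p)},
}
\]
with vertical arrows the restriction maps. By (the proof of) Proposition~\ref{ringclass}, $\pi_\Delta$ is the quotient by the image of $\Delta\colon(\Z/n\Z)^{\times,(p)}\to H^{(p)}_n$ for $n\in\{m,m\ell\}$, and these presentations are visibly compatible with the restriction maps together with the reduction $(\Z/m\ell\Z)^{\times,(p)}\to(\Z/m\Z)^{\times,(p)}$; alternatively, both $H^{(p)}_{m\ell}$ and $H^{(p)}_m$ are generated by the classes $[\fkl']\times[\fkl']$ of split primes $\fkl'$ coprime to $m\ell$, which Proposition~\ref{ringclass} sends to $\mathrm{Frob}_{\fkl'}$ in $H[m\ell]^{(p)}$ resp.\ $H[m]^{(p)}$, and restricting $\mathrm{Frob}_{\fkl'}\in H[m\ell]^{(p)}$ to $K[m]$ gives $\mathrm{Frob}_{\fkl'}\in H[m]^{(p)}$. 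Either way the square commutes, which proves the lemma.

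The only genuinely delicate point is the bookkeeping in the second step: one must check that the norm maps appearing in the statement — which live between the small ray-class group rings $\cO[H^{(p)}_{\fkm}]$, i.e.\ after the projections $H^{(p)}_{\fkf_1\fkm}\twoheadrightarrow H^{(p)}_\fkm$ built into $\tilde{\kappa}^{(5)}_{f,\psi_1,\psi_2,\fkm}$ — are indeed the clean corestriction maps and descend to the restriction maps of ray-class groups. This is precisely the content of \cite[\S{3.3} and Cor.\,5.2.6]{LLZ-K} in the split case, which was already invoked in the statement of Theorem~\ref{norm1}; everything else is formal.
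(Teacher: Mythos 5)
Your proposal is correct, and it is the natural elaboration of the paper's one-line proof (``This is clear from the explicit description of the maps involved''): you strip the functorial layers, identify the vertical norm maps via Shapiro's lemma with the group-ring maps induced by the restriction surjections on class groups, and reduce to the manifestly commutative square of quotient maps $H_n^{(p)}\twoheadrightarrow H[n]^{(p)}$ for $n\in\{m,m\ell\}$ established in Proposition~\ref{ringclass}. The only small imprecision is the opening claim that every arrow arises from $\mathrm{Ind}_K^\Q$ applied to a $G_K$-module map --- the horizontal arrows are rather Mackey projections out of a tensor product of two inductions --- but you implicitly correct this in the next sentence (``on each of the two $G_\Q/G_K$-cosets''), and the functoriality argument goes through unchanged.
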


\begin{proof}
This is clear from the explicit description of the maps involved.
\end{proof}

\subsection{Proof of the tame norm relations}

Let $m>0$ be any integer for which we have the class $\tilde{\kappa}_{f,\psi_1,\psi_2,m}$ of Definition \ref{def:tame-classes}. 



\begin{prop}\label{wrongnorm}
Let $\ell=\fkl\fklbar$ be a prime split in $K$ and coprime to $mpN$. Then
\begin{align*}
\Norm_{K[m]}^{K[m\ell]}(\tilde{\kappa}_{f,\psi_1,\psi_2,m\ell})=(\ell-1)\bigg(&a_\ell(f)-\frac{\psi_1(\fkl)\psi_2(\fkl)}{\ell}([\fkl]\times [\fkl])\\-\frac{\psi_1(\fklbar)\psi_2(\fklbar)}{\ell}([\fklbar]\times [\fklbar])
   &+(1-\ell)\frac{\psi_1(\fkl)\psi_2(\fklbar)}{\ell^2}([\fkl]\times[\fklbar])\bigg)
    (\tilde{\kappa}_{f,\psi_1,\psi_2,m}).
\end{align*}
\end{prop}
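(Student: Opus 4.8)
The plan is to reduce the assertion to the norm relations already available for the diagonal classes $\kappa_{N,\mathbf{r}}$ (Proposition~\ref{table}) together with the norm-compatibility of the patched CM Hecke modules (Theorem~\ref{norm1}), and finally to the combinatorics of the projection $\xi_\Delta$ in \eqref{ind2}. Concretely, unwinding Definition~\ref{def:tame-classes}, the class $\widetilde{\kappa}_{f,\psi_1,\psi_2,\fkm\fkl}$ is obtained from $\tilde{\kappa}_{m\ell}^{(1)}=\mathbf{s}_{\mathbf{r}*}\circ\mathtt{HS}\circ d_*(\mathtt{Det}_{Nm\ell}^{\mathbf{r}})$ (with $\mathbf{r}=(0,0,0)$) by applying $(\mathrm{pr}_{m\ell*},1,1)$, K\"unneth projection onto the $(f,\theta_{\psi_1},\theta_{\psi_2})$-component, the isomorphisms $\nu_{\fkf_1\fkm\fkl}$, $\nu_{\fkf_2\fkmbar\fklbar}$, and the map $\xi_\Delta$. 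Since $\fkl$ is split of norm $\ell$ coprime to $mp$, the corestriction ${\rm Norm}_{K[m]}^{K[m\ell]}$ corresponds under Shapiro's lemma to the norm map ${\rm Norm}_m^{m\ell}$ on $\mathcal{O}_{\psi_1^{-1}\psi_2^{-1}}[H[m\ell]^{(p)}]$. First I would use Lemma~\ref{keydiagram} to push this norm map back across $\xi_\Delta$, so that it becomes $\mathrm{Norm}^{\fkm\fkl}_{\fkm}\otimes\mathrm{Norm}^{\fkmbar\fklbar}_{\fkmbar}$ on the product of induced modules.

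Next I would invoke Theorem~\ref{norm1} for each of $\psi_1$ and $\psi_2$: the isomorphisms $\nu_{\fkn}$ intertwine $\mathrm{Norm}_{\fkn}^{\fkn'}$ with the Hecke-module norm maps $\mathcal{N}_\fkn^{\fkn'}$ of \S\ref{LLZrecall}. So applying $\nu_{\fkf_1\fkm}^{-1}\otimes\nu_{\fkf_2\fkmbar}^{-1}$, the computation of ${\rm Norm}_{K[m]}^{K[m\ell]}(\widetilde{\kappa}_{f,\psi_1,\psi_2,\fkm\fkl})$ becomes the computation of $(\mathcal{N}_{\fkf_1\fkm}^{\fkf_1\fkm\fkl}\otimes\mathcal{N}_{\fkf_2\fkmbar}^{\fkf_2\fkmbar\fklbar})$ applied to the K\"unneth projection of $\tilde{\kappa}_{m\ell}^{(2)}$. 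Because $\fkl$ is split, both of these are of the form $1\otimes\mathrm{pr}_{1*}-\tfrac{\psi_i(\fkl)[\fkl]}{\ell}\otimes\mathrm{pr}_{\ell*}$ (for the $\fkl$-component of $\psi_1$ and the $\fklbar$-component of $\psi_2$, keeping in mind $\fkm$ is prime to $\fkl\fklbar$). Expanding the tensor product of these two two-term expressions yields four terms, each of which is a product of degeneracy-map pushforwards $(\mathrm{pr}_{m*}\text{ or }\mathrm{pr}_{m\ell*},\,1\text{ or }\mathrm{pr}_{\ell*},\,1\text{ or }\mathrm{pr}_{\ell*})$ acting on $\tilde{\kappa}_{m\ell}^{(1)}$, decorated by the corresponding scalars $1$, $-\psi_1(\fkl)[\fkl]/\ell$, $-\psi_2(\fkl)[\fkl]/\ell$ (noting that $\psi_2$ is evaluated at the $\fklbar$-component, which contributes $\psi_2(\fklbar)[\fklbar]$ after identifying $H_{\fkmbar\fklbar}$-classes), and $\psi_1(\fkl)\psi_2(\fklbar)[\fkl][\fklbar]/\ell^2$.

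I would then evaluate each of the four degeneracy-map products using Proposition~\ref{table}. Since $\mathbf{r}=(0,0,0)$ we have $r=r_1=r_2=r_3=0$, so all the powers of $\ell$ in the table collapse to $1$: the term $(1,1,1)$ (after the extra $\mathrm{pr}_{m*}$ in the first slot which only changes level $Nm\ell\to Nm$ in the first factor, not $Nm\ell\to N(m\ell)$; here I must be careful to track whether each factor's level drops by $\ell$) contributes a factor $(\ell^2-1)$ or, in the slots where a single $\mathrm{pr}_{\ell*}$ appears, factors $(\ell-1)T_\ell$ or $(\ell-1)T_\ell'$. After acting by the Hecke correspondence $\breve{f}$-projection, $T_\ell'$ becomes multiplication by $a_\ell(f)$ on $T_f^\vee$ (recall $T_f^\vee$ is cut out by $T_\ell'=a_\ell$), and the $T_\ell$-terms must be rewritten using the relation between $T_\ell$ and $T_\ell'$ on $T_f^\vee$. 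Matching these against the four scalars above, and using that $[\fkl]\times[\fkl]$, $[\fklbar]\times[\fklbar]$, $[\fkl]\times[\fklbar]$ are the images of the relevant class-group elements under $\xi$, should reproduce exactly the four-term expression in the statement, with the overall $(\ell-1)$ factored out; the coefficient $(1-\ell)$ on the mixed term $[\fkl]\times[\fklbar]$ arises because that cross-term picks up $(\ell^2-1)=(\ell-1)(\ell+1)$ from one table entry minus $(\ell-1)\cdot 2\ell$-type contributions, or more precisely from combining a $(1,1,1)$-type term on one index with a $\mathrm{pr}_{\ell*}$ on another — I would reconcile the exact bookkeeping here. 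The main obstacle I anticipate is precisely this last reconciliation: keeping straight, across the two separate CM variables, which degeneracy maps act on which of the three factors of $Y(m\ell)^3$, correctly handling the asymmetry introduced by the initial $({\rm pr}_{m\ell *},1,1)$ in the definition of $\tilde{\kappa}^{(2)}$, and converting every $T_\ell$ appearing into the $a_\ell(f)$-eigenvalue form — this is a finite but delicate diagram-and-scalar chase rather than a conceptual difficulty, and I would organize it by first recording the effect of each of the four degeneracy-map triples on $\tilde{\kappa}_{m\ell}^{(1)}$ in a small table mirroring Proposition~\ref{table}, then assembling the linear combination.
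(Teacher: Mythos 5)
Your strategy is the same as the paper's: push the norm across $\xi_\Delta$ via Lemma~\ref{keydiagram}, transfer to the Hecke-module norms $\mathcal{N}$ via Theorem~\ref{norm1}, expand the tensor of the two two-term split-prime norm formulas into four degeneracy-map triples acting on $\tilde{\kappa}^{(1)}_{m\ell}$, evaluate each using Proposition~\ref{table}, and then project to the $f$-isotypic piece. Two of the points you flag as needing reconciliation are exactly where your bookkeeping as written goes astray. First, after factoring $\tpr_{m\ell*}=\tpr_{m*}\circ\tpr_{\ell*}$, the \emph{first} slot already carries a $\tpr_{\ell*}$ in every one of the four triples, so the relevant rows of Proposition~\ref{table} are $(\ell,1,1),(\ell,\ell,1),(\ell,1,\ell),(\ell,\ell,\ell)$ with $\bigstar$ equal to $(\ell-1)(T_\ell,1,1)$, $(\ell-1)(1,1,T_\ell')$, $(\ell-1)(1,T_\ell',1)$, and $(\ell^2-1)$ respectively (all the $\ell^{r-r_i}$-powers collapse since $\mathbf{r}=\boldsymbol{0}$) --- not the $(1,1,1)$ row contributing $(\ell^2-1)$ as you suggested. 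Second, and this is the step not made explicit in your sketch, to finish you must apply Proposition~\ref{phi_n} to the $T_\ell'$ in slots two and three: on the patched CM module for $\psi_i$, the operator $T_\ell'$ acts as $\phi_{\fkf_i\cdot}(T_\ell')=\psi_i(\fkl)[\fkl]+\psi_i(\fklbar)[\fklbar]$. Substituting this into the four-term linear combination and collecting the $[\fkl]\times[\fklbar]$ contributions, the coefficient becomes $-\tfrac{1}{\ell}-\tfrac{1}{\ell}+\tfrac{\ell+1}{\ell^2}=\tfrac{1-\ell}{\ell^2}$, which is exactly the $(1-\ell)$ factor you were uncertain how to reconcile; the $[\fkl]\times[\fkl]$ and $[\fklbar]\times[\fklbar]$ terms fall out directly with coefficient $-1/\ell$. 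With these two corrections your plan reproduces the paper's proof.
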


\begin{proof}
In the notations of Theorem \ref{norm1}, for any $\fkn=\fkf\fkm\in\mathcal{A}_\fkf$ put
\[
H^1(\psi,\fkf\fkm):=H^1_{\et}(Y_1(N_{\psi}m)_{\overline{\Q}},\Z_p(1))\otimes_{\mathbb{T}'(N_{\psi}m)}\mathcal{O}[H_{\mathfrak{f}\fkm}^{(p)}].
\]
Then from Lemma \ref{keydiagram} we have the following commutative diagram:
\begin{equation}\label{norm5}
\begin{aligned}
    \xymatrix{
H^1\bigl(\Q,T_f^\vee\otimes H^1(\psi_1,\fkf_1\fkm\fkl)\otimes H^1(\psi_2,\fkf_2\fkmbar\fklbar)(-1)\bigr) \ar[d]^{1\otimes \mathcal{N}^{\fkm\fkl}_{\fkm}\otimes\mathcal{N}^{\fkmbar\fklbar}_{\fkmbar}} \ar[r] &H^1\bigl(K[m\ell],T_f^{\vee}(\psi_{1}^{-1}\psi_{2}^{-1})(-1)\bigr) \ar[d]^{\mathrm{Norm}^{m\ell}_{m}} \\
 H^1\bigl(\Q,T_f^\vee\otimes H^1(\psi_1,\fkf_1\fkm)\otimes H^1(\psi_2,\fkf_2\fkmbar)(-1)\bigr)
\ar[r] & H^1\bigl(K[m],T_f^{\vee}(\psi_{1}^{-1}\psi_{2}^{-1})(-1)\bigr),}
\end{aligned}
\end{equation}
where, for $\fkm'\in\{\fkm\fkl,\fkm\}$, the horizontal arrows are given by the maps induced by the projections
\begin{align*}
H^1(\psi_1,\fkf_1\fkm')&\xrightarrow{\nu_{\fkf_1\fkm'}}{\mathrm{Ind}}_{K(\fkf_1\fkm')}^\Q\cO(\psi_1^{-1})\simeq {\mathrm{Ind}}_K^\Q\cO_{\psi_1^{-1}}[H_{\fkf_1\fkm'}^{(p)}]\longrightarrow 
{\mathrm{Ind}}_K^\Q\cO_{\psi_1^{-1}}[H_{\fkm'}^{(p)}]\\
H^1(\psi_2,\fkfbar_1\fkmbar')&\xrightarrow{\nu_{\fkf_2\fkmbar'}}{\mathrm{Ind}}_{K(\fkf_2\fkmbar')}^\Q\cO(\psi_2^{-1})\simeq {\mathrm{Ind}}_K^\Q\cO_{\psi_2^{-1}}[H_{\fkf_2\fkmbar'}^{(p)}]\longrightarrow {\mathrm{Ind}}_K^\Q\cO_{\psi_2^{-1}}[H_{\fkmbar'}^{(p)}]
\end{align*}
from Theorem~\ref{norm1} together with the map $\xi_\Delta$ in \eqref{ind2}. Now, tracing through the definitions we compute:
\begin{align*}
&(1\otimes\mathcal{N}_{\fkm}^{\mathfrak{ml}}\otimes\mathcal{N}_{\fkmbar}^{\fkmbar\fklbar})(\tilde{\kappa}^{(2)}_{m\ell})\\
&=(1\otimes\mathcal{N}_{\fkm}^{\mathfrak{ml}}\otimes\mathcal{N}_{\fkmbar}^{\fkmbar\fklbar})(\tpr_{m\ell*},1,1)(\tilde{\kappa}^{(1)}_{m\ell}) \\
&=(\tpr_{m*},1,1)(\text{pr}_{\ell*}\otimes \mathcal{N}_{\fkm}^{\mathfrak{ml}}\otimes\mathcal{N}_{\fkmbar}^{\fkmbar\fklbar})(\tilde{\kappa}^{(1)}_{m\ell})\\
&=(\tpr_{m*},1,1)\biggl(\tpr_{\ell*}\times(1\otimes \text{pr}_{1*}-\frac{\psi_1(\fkl)[\fkl]}{\ell}\otimes \text{pr}_{\ell*})\times(1\otimes \text{pr}_{1*}-\frac{\psi_2(\bar{\mathfrak{l}})[\bar{\mathfrak{l}}]}{\ell}\otimes \text{pr}_{\ell*})\biggr)(\tilde{\kappa}^{(1)}_{m\ell})\\
&=(\tpr_{m*},1,1)\biggl((\text{pr}_{\ell*},{\mathrm{pr}}_{1*},{\mathrm{pr}}_{1*})-\frac{\psi_1(\fkl)[\fkl]}{\ell}(\text{pr}_{\ell*},\text{pr}_{\ell*},{\mathrm{pr}}_{1*})-\frac{\psi_2(\fklbar)[\fklbar]}{\ell}(\text{pr}_{\ell*},{\mathrm{pr}}_{1*},\text{pr}_{\ell*})\\
&\quad+\frac{\psi_1(\fkl)\psi_2(\fklbar)}{\ell^2}([\fkl]\times[\fklbar])(\text{pr}_{\ell*},\text{pr}_{\ell*},\text{pr}_{\ell*})\biggr)(\tilde{\kappa}^{(1)}_{m\ell}).
\end{align*}
Together with Proposition \ref{table}, we thus obtain
\begin{align*}
(1\otimes\mathcal{N}_{\fkm}^{\mathfrak{ml}}\otimes\mathcal{N}_{\fkmbar}^{\fkmbar\fklbar})(\tilde{\kappa}^{(2)}_{m\ell})
&=(\ell-1)(\tpr_{m*},1,1)\bigg((T_\ell,1,1)-\frac{\psi_1(\fkl)[\fkl]}{\ell}(1,1,T_\ell') \\ &-(1,T_\ell',1)\frac{\psi_2(\fklbar)[\fklbar]}{\ell}\quad+\frac{\psi_1(\fkl)\psi_2(\fklbar)}{\ell^2}([\fkl]\times[\fklbar])(\ell+1)\bigg)(\tilde{\kappa}^{(1)}_{m})\\
&=(\ell-1)\bigg((T_\ell,1,1)-\frac{\psi_1(\fkl)[\fkl]}{\ell}(1,1,T_\ell')\\ &-(1,T_\ell',1)\frac{\psi_2(\fklbar)[\fklbar]}{\ell}\quad+\frac{\psi_1(\fkl)\psi_2(\fklbar)}{\ell^2}([\fkl]\times[\fklbar])(\ell+1)\bigg)(\tilde{\kappa}^{(2)}_{m}),
\end{align*}
and from this it follows that
\begin{align*}
&(1\otimes\mathcal{N}_{\fkm}^{\mathfrak{ml}}\otimes\mathcal{N}_{\fkmbar}^{\fkmbar\fklbar})(\tilde{\kappa}^{(5)}_{f,\psi_1,\psi_2,m\ell})\\
     &=(\ell-1)\bigg(a_\ell(f)-\frac{\psi_1(\fkl)[\fkl]}{\ell}(\psi_2(\fkl)[\fkl]+\psi_2(\fklbar)[\fklbar])-(\psi_1(\fkl)[\fkl]+\psi_1(\fklbar)[\fklbar])\frac{\psi_2(\fklbar)[\fklbar]}{\ell}\\
     &\quad+\frac{\psi_1(\fkl)\psi_2(\fklbar)}{\ell^2}([\fkl]\times[\fklbar])(\ell+1)\bigg)(\tilde{\kappa}^{(5)}_{f,\psi_1,\psi_2,m})\\
     &=(\ell-1)\bigg(a_\ell(f)-\frac{\psi_1(\fkl)\psi_2(\fkl)}{\ell}([\fkl]\times [\fkl])-\frac{\psi_1(\fklbar)\psi_2(\fklbar)}{\ell}([\fklbar]\times [\fklbar])\\
     &\quad+(1-\ell)\frac{\psi_1(\fkl)\psi_2(\fklbar)}{\ell^2}([\fkl]\times[\fklbar])\bigg)(\tilde{\kappa}^{(5)}_{f,\psi_1,\psi_2,m}).
\end{align*} 
In light of the commutative diagram (\ref{norm5}), this yields the result.
\end{proof}

\begin{rmk}
The appearance of the factor $(\ell-1)$ in Proposition~\ref{wrongnorm} can be traced back to the relations $\text{deg}(\mu_\ell) T_\ell=\text{pr}_{\ell*}\circ \text{pr}_1^{*}$ and $\text{deg}(\mu_\ell)T_\ell'=\text{pr}_{1*}\circ\text{pr}_\ell^{*}$, 
i.e., it is caused by the degeneracy map $\mu_\ell$.  In the next subsection we shall get rid of this extra factor.
\end{rmk}

\begin{rmk} We want to emphasize that Proposition~\ref{wrongnorm} is the key result for the construction of our anticyclotomic Euler system for $T_f^\vee(\psi_1^{-1}\psi_2^{-1})(-1)$.
In fact, with the factor $(\ell-1)$ stripped out, the term on the right-hand side of Proposition~\ref{wrongnorm} can be massaged to agree with the local Euler factor at $\fkl$ of the Galois representation $[T_f^\vee(\psi_1^{-1}\psi_2^{-1})(-1)]^\vee(1)=T_f(\psi_1\psi_2)(2)$, giving the correct norm relations. 
\end{rmk}

\subsubsection{Removing the extra factor $(\ell-1)$}\label{thefix} 

Adapting an idea from \cite[$\S{1.4}$]{DR2}, we now introduce a modification of the classes $\tilde{\kappa}_{f,\psi_1,\psi_2,m}$ for which we can prove an analogue of Proposition \ref{wrongnorm} without the extra factor $(\ell-1)$. 

We begin by noting that for any prime $\ell\nmid N$ the degeneracy maps ${\mathrm{pr}}_1,{\mathrm{pr}}_\ell:Y_1(N\ell)\rightarrow Y_1(N)$ can be factored as
\[
\xymatrix{
Y_1(N\ell) \ar[dr]^{\text{pr}_1} \ar[d]_{\mu_\ell}  \\
Y(1,N(\ell))\ar[r]_{\pi_1} & Y_1(N) } \quad\quad\quad
\xymatrix{
Y_1(N\ell) \ar[dr]^{\text{pr}_\ell} \ar[d]_{\mu_\ell}  \\
Y(1,N(\ell))\ar[r]_{\pi_\ell} & Y_1(N), }
\]
where 
$\pi_1$ and $\pi_\ell$ are a non-Galois coverings of degree $\ell+1$, and we recall that $\mu_\ell$ is a cyclic Galois covering of degree $\ell-1$.

Denote by 
\begin{equation}\label{eq:diamond-Dm}
D_m=\{(\langle d\rangle,\langle d\rangle): d\in (\Z/Nm\Z)^{\times},\; d\equiv 1\pmod{N}\}
\end{equation}
the set of diamond operators acting diagonally and freely on $Y_1(Nm)^2$, and set 
\[
W_1(Nm)=(Y_1(Nm)\times Y_1(Nm))/D_m.
\]
Let $\tilde{\kappa}_m^{(1)}$ be as in \eqref{kappaBSV}, and let 
\[
\kappa_{m}^{(1)}\in H^1\bigl(\Q,H^3_{\et}(Y(1,N(m))_{\overline{\Q}}\times W_1(Nm)_{\overline{\Q}},\Z_p)(2)\bigr)
\]
be the image of $(\mu_{m*},1,1)(\tilde{\kappa}_m^{(1)})$ under the
natural map induced by the projection \[Y_1(Nm)^2\rightarrow W_1(Nm),\] which is an \'etale morphism of degree $\phi(m)=\vert(\Z/m\Z)^\times\vert$. 
Thus, the class $\kappa_m^{(1)}$ is defined by the relation
\begin{equation}\label{def-relation}
(\mu_{m*},d_{m*})\tilde{\kappa}^{(1)}_{m}=\phi(m)\kappa^{(1)}_m.
\end{equation}

\begin{prop}\label{correcttable}
For any prime number $\ell$ and positive integer $m$ such that $(m,\ell)=1$ and $(m\ell,N)=1$, we have
\begin{equation*}
    (\pi_{i*},{\mathrm{pr}}_{j*},{\mathrm{pr}}_{k*})\kappa_{m\ell}^{(1)}= (\bigstar)\kappa_{m}^{(1)},
\end{equation*}
where
\begin{center}
\begin{tabular}{|c|c|c|c| } 
 \hline
 $(i,j,k)$ & $\bigstar$  & $(i,j,k)$ & $\bigstar$ \\ 
 \hline
  $(\ell,1,1)$& $(T_\ell,1,1)$ &  $(\ell,1,\ell)$& $(1,T_\ell',1)$  \\ 
  $(1,\ell,1)$&  $(1,T_\ell,1)$  & $(\ell,\ell,1)$& $(1,1,T_\ell')$   \\
  $(1,1,\ell)$& $(1,1,T_\ell)$ & $(1,1,1)$& $(\ell+1)$ \\
  $(1,\ell,\ell)$& $(T_\ell',1,1)$ &  $(\ell,\ell,\ell)$&  $(\ell+1)$  \\
 \hline
\end{tabular}
\end{center}
\end{prop}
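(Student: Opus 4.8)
The plan is to deduce this clean table from the ``dirty'' table in Proposition \ref{table} by exploiting the defining relation \eqref{def-relation}. Recall that $\kappa^{(1)}_m$ is characterized by $(\mu_{m*},d_{m*})\tilde\kappa^{(1)}_m=\phi(m)\kappa^{(1)}_m$, and that on the first factor $\mathrm{pr}_1=\pi_1\circ\mu_\ell$ and $\mathrm{pr}_\ell=\pi_\ell\circ\mu_\ell$ (so $\pi_{i*}$ for $i\in\{1,\ell\}$ is the pushforward along the degree-$(\ell+1)$ covering, while $\mu_{\ell*}$ accounts for the degree-$(\ell-1)$ piece). First I would fix a prime $\ell$ with $(\ell,m)=1$ and $(m\ell,pN)=1$ and write $\phi(m\ell)=\phi(m)(\ell-1)$; the strategy is to apply $(\pi_{i*},\mathrm{pr}_{j*},\mathrm{pr}_{k*})$ to both sides of \eqref{def-relation} for $m\ell$ in place of $m$, push everything down to the appropriate product of curves, and compare with Proposition \ref{table} term by term, at which point the spurious $(\ell-1)$ cancels against the ratio $\phi(m\ell)/\phi(m)$.

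Concretely, the key step is the compatibility between the covering maps and the quotient maps $d_m$, $d_{m\ell}$. On the second and third factors, $\mathrm{pr}_{j*}$ and $\mathrm{pr}_{k*}$ are degeneracy pushforwards $Y_1(N m\ell)\to Y_1(Nm)$; I would check that under the quotients $d_m$ these assemble into a map $W_1(Nm\ell)\to W_1(Nm)$ compatible with $d_{m\ell*}$ and with the diamond relation defining $W_1$, using that the diamond operators in $D_m$ vs.\ $D_{m\ell}$ are related by the natural reduction $(\Z/Nm\ell\Z)^\times\to(\Z/Nm\Z)^\times$ and that $\mu_{m\ell}=\mu_\ell\circ\mu_m$ up to the relevant identifications of levels. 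On the first factor, similarly, $\pi_i\circ\mu_{m\ell}$ factors through $\mu_m$ followed by the appropriate degeneracy map, so $(\pi_{i*},\mathrm{pr}_{j*},\mathrm{pr}_{k*})\circ(\mu_{m\ell*},d_{m\ell*})$ can be rewritten as $(\mu_{m*},d_{m*})$ composed with the ``downstairs'' operators $(\mathrm{pr}_{i*},\mathrm{pr}_{j*},\mathrm{pr}_{k*})$ appearing in Proposition \ref{table} (with a factor $\ell-1$ coming from $\deg\mu_\ell$, exactly as flagged in the Remark after Proposition \ref{wrongnorm}). Feeding in each row of the first table of Proposition \ref{table}—e.g.\ $(\ell,1,1)$ gives $(\ell-1)(T_\ell,1,1)$, and the $(1,\ell,\ell)$-type rows carry extra factors $\ell^{r-r_i}$ which here all equal $1$ since $\mathbf r=(0,0,0)$—and dividing by $\phi(m\ell)=\phi(m)(\ell-1)$ yields precisely the entries of the claimed table; the last two rows $(1,1,1)$ and $(\ell,\ell,\ell)$ come from the second table of Proposition \ref{table}, where $(\ell^2-1)/(\ell-1)=\ell+1$.

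The main obstacle I anticipate is the bookkeeping in the first paragraph's compatibility claim: one must verify that pushforward along $\pi_i$ (a \emph{non-Galois} covering of degree $\ell+1$) interacts correctly with the quotient by diamond operators and with the $\mu$-towers at the various levels, and in particular that no stray factors of $\phi$-type or $\gcd$-type appear when passing between $Y(1,N(m\ell))$, $Y_1(Nm\ell)$, and their quotients $W_1$. This is essentially a diagram-chase with degeneracy and quotient maps—routine but delicate—after which the numerical cancellations are immediate. I would organize it by first proving a single clean lemma: that $(\pi_{i*},\mathrm{pr}_{j*},\mathrm{pr}_{k*})\kappa^{(1)}_{m\ell}$ equals $\tfrac{1}{\ell-1}$ times the image of $(\mathrm{pr}_{i*},\mathrm{pr}_{j*},\mathrm{pr}_{k*})\big((\mu_{m*},d_{m*})\tilde\kappa^{(1)}_m\big)$ under the natural map to $H^1$ of the relevant triple product, and then quote Proposition \ref{table} and \eqref{def-relation}.
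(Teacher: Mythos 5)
Your proposal is correct and follows essentially the same route as the paper's proof: one computes $(\mu_{m*},d_{m*})({\rm pr}_{i*},{\rm pr}_{j*},{\rm pr}_{k*})\tilde{\kappa}^{(1)}_{m\ell}$ in two ways—via the commutation $(\pi_{i*},{\rm pr}_{j*},{\rm pr}_{k*})\circ(\mu_{m\ell*},d_{m\ell*})=(\mu_{m*},d_{m*})\circ({\rm pr}_{i*},{\rm pr}_{j*},{\rm pr}_{k*})$ together with \eqref{def-relation} at level $m\ell$, and via Proposition~\ref{table} together with \eqref{def-relation} at level $m$—and then cancels $\phi(m\ell)/\phi(m)=\ell-1$. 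Your handling of the $(\ell-1)$, the trivially unit $\ell^{r-r_i}$ factors (since $\mathbf{r}=(0,0,0)$), and the $(\ell^2-1)/(\ell-1)=\ell+1$ rows matches the intended argument, which the paper only spells out for $(i,j,k)=(\ell,1,1)$.
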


\begin{proof}
Directly from the definitions we find
\begin{align*}
(\mu_{m*},d_{m*})({\mathrm{pr}}_{\ell*},{\mathrm{pr}}_{1*},{\mathrm{pr}}_{1*})\tilde{\kappa}_{m\ell}^{(1)}&
=(\pi_{\ell*},{\mathrm{pr}}_{1*},{\mathrm{pr}}_{1*})(\mu_{m\ell*},d_{m\ell*})\tilde{\kappa}_{m\ell}^{(1)}\\
&=\phi(m\ell)(\pi_{\ell*},{\mathrm{pr}}_{1*},{\mathrm{pr}}_{1*})\kappa_{m\ell}^{(1)},
\end{align*}
while on the other hand, from Proposition \ref{table} and (\ref{def-relation}) we have
\begin{align*}
(\mu_{m*},d_{m*})({\mathrm{pr}}_{\ell*},{\mathrm{pr}}_{1*},{\mathrm{pr}}_{1*})\tilde{\kappa}_{m\ell}^{(1)}&=(\mu_{m*},d_{m*})(\ell-1)(T_\ell,1,1)\tilde{\kappa}_m^{(1)}\\
&=\phi(m)(\ell-1)(T_\ell,1,1)\kappa_m^{(1)}.
\end{align*}

Since $\phi(m\ell)=(\ell-1)\phi(m)$ according to our assumptions, this shows the result in the case $(i,j,k)=(\ell,1,1)$ and the other cases are shown in the same manner.
\end{proof}

Now we want to proceed as above to obtain from the new $\kappa_m^{(1)}$ a construction of classes satisfying the correct norm relations (i.e., without the factor $\ell-1$). This requires a careful study of the \'etale cohomology of the quotient $Y(1,N(m))\times W_1(Nm)$.

We begin with the Hochschild--Serre spectral sequence:
\begin{equation*}\begin{aligned}
     E_2^{p,q}=H^p\bigl(D_m,H^q_{\et,c}(Y(1,N(m))_{\overline{\Q}}\times Y_1(Nm)^2_{\overline{\Q}},\Z_p)\bigr)\\ \Rightarrow H^{p+q}_{\et,c}\bigl(Y(1,N(m))_{\overline{\Q}}\times W_1(Nm)_{\overline{\Q}},\Z_p\bigr).
\end{aligned}
\end{equation*}
This yields the exact sequence
\begin{equation}\label{eq:HS}
    E\longrightarrow H^3_{\et,c}\bigl(Y(1,N(m))_{\overline{\Q}}\times W_1(Nm)_{\overline{\Q}},\Z_p\bigr)\xrightarrow{(1,d_m^{*})} E_2^{0,3}\xrightarrow{d_2^{0,3}} E_2^{2,2},
\end{equation}
where $E$ is naturally identified with a subquotient of $E_2^{1,2}\oplus E_2^{2,1}$. Thus, we see that the difference between the two middle pieces are classes coming from $H^q_{\et,c}(Y(1,N(m))_{\overline{\Q}}\times Y_1(Nm)^2_{\overline{\Q}},\Z_p)$ with $0\leq q\le 2$. From the K\"unneth decomposition \cite[Thm.~22.4]{milne-lecture}, each of these classes will have a factor from either $H^0_{\et,c}(Y(1,N(m))_{\overline{\Q}},\Z_p)$ or $H^0_{\et,c}(Y_1(Nm)_{\overline{\Q}},\Z_p)$. These vanish because {\'e}tale cohomology of affine smooth curves with compact support vanishes in degree zero.
Hence, we obtain an isomorphism
\begin{equation}\label{dmpullback}
    H^3_{\et,c}\bigl(Y(1,N(m))_{\overline{\Q}}\times W_1(Nm)_{\overline{\Q}},\Z_p\bigr)\xrightarrow{(1,d_m^{*})}H^3_{\et,c}\bigl(Y(1,N(m))_{\overline{\Q}}\times Y_1(Nm)^2_{\overline{\Q}},\Z_p)\bigr)^{D_m}.\nonumber
\end{equation}

As in the proof of \cite[Lem.~1.8]{DR2}, Poincar\'e duality implies from (\ref{eq:HS}) that the following map
\begin{equation}\label{dmpushforward}
    H^3_{\et}\bigl(Y(1,N(m))_{\overline{\Q}}\times W_1(Nm)_{\overline{\Q}},\Z_p\bigr)\xleftarrow{(1,d_{m*})} H^3_{\et}\bigl(Y(1,N(m))_{\overline{\Q}}\times Y_1(Nm)^2_{\overline{\Q}},\Z_p)\bigr)_{D_m}
\end{equation}
is also an isomorphism.

Therefore from (\ref{dmpushforward}) and the K\"unneth decomposition, it follows that we get a natural map
\begin{multline}\label{eq:loc-I}
     H^3_{\et}(Y(1,N(m))_{\overline{\Q}}\times W_1(Nm)_{\overline{\Q}},\Z_p) \xrightarrow{(1,d_{m*}^{-1})} H^1_{\et}(Y(1,N(m))_{\overline{\Q}},\Z_p)\otimes \\ H^1_{\et}( Y_1(Nm)_{\overline{\Q}},\Z_p)\otimes_{D_m} H^1_{\et}(Y_1(Nm)_{\overline{\Q}},\Z_p)).
\end{multline}


Now we put $\kappa_m^{(2)}:=(\pi_{m*},1,1)\kappa_m^{(1)}$, and define
\[
\kappa_m^{(3)}\in H^1\bigl(\Q,H^1_{\et}(Y_1(N)_{\overline{\Q}},\Z_p)\otimes  H^1_{\et}( Y_1(Nm)_{\overline{\Q}},\Z_p)\otimes_{D_m} H^1_{\et}(Y_1(Nm)_{\overline{\Q}},\Z_p)(-1)\bigr),
\]
to be the image of $\kappa_m^{(2)}$ under the map (\ref{eq:loc-I}). 

Note that taking $D_m^{(p)}$-coinvariants (where $D_m^{(p)}$ denotes the $p$-part of $D_m$) is compatible with the map $\xi_\Delta$ in (\ref{ind2}), since 
by Theorem \ref{phi_n}  
for $(\langle d\rangle,\langle d\rangle)\in D_m^{(p)}$ we have $\phi_{\fkm}(\langle d\rangle ')\times\phi_{\fkmbar}(\langle d\rangle ')=[d]\times[d]\in H_{\fkm}^{(p)}\times H_{\fkmbar}^{(p)}$, and this is in the kernel of $\pi_\Delta$. 
Thus, applying to $\kappa_m^{(3)}$ the same process we used above to go from $\tilde{\kappa}_m^{(3)}$ to the class $\tilde{\kappa}_{f,\psi_1,\psi_2,m}$ of Definition \ref{def:tame-classes}, we obtain  
\[
{\kappa}_{f,\psi_1,\psi_2,m}\in H^1\bigl(K[m],T_f^{\vee}(\psi_{1}^{-1}\psi_2^{-1})(-1)\bigr).
\]



\begin{prop}\label{correctnorm}
Suppose $m$ is a positive squarefree integer divisible only by primes $q\nmid pN$ split in $K$. Let $\ell\nmid pmN$ be a prime split in $K$. Then
\begin{align*}
\Norm_{K[m]}^{K[m\ell]}({\kappa}_{f,\psi_1,\psi_2,m\ell})=\bigg(&a_\ell(f)-\frac{\psi_1(\fkl)\psi_2(\fkl)}{\ell}([\fkl]\times [\fkl])-\frac{\psi_1(\fklbar)\psi_2(\fklbar)}{\ell}([\fklbar]\times [\fklbar])\\
   &+(1-\ell)\frac{\psi_1(\fkl)\psi_2(\fklbar)}{\ell^2}([\fkl]\times[\fklbar])\bigg)
    ({\kappa}_{f,\psi_1,\psi_2,m}).
\end{align*}
\end{prop}

\begin{proof}
After the above discussion, the same calculation as in the proof of Proposition \ref{wrongnorm} applies, replacing the use of Proposition \ref{table} by Proposition \ref{correcttable}.
\end{proof}

Therefore, we arrive at the following theorem:

\begin{thm}\label{maintheorem1}
Suppose $p>3$ is a prime split in $K$ with $p\nmid h_K$. Let $m=\fkm\fkmbar$ run over the squarefree integers divisible only by primes split in $K$ and coprime to $pN$. 
Then there exists a collection of cohomology classes
\[
z_{f,\psi_1,\psi_2,m}\in H^1\bigl(K[m],T_f^{\vee}(\psi_{1}^{-1}\psi_{2}^{-1})(-1)\bigr)
\]
such that for every prime $\ell=\fkl\fklbar$ split in $K$ with $(\ell,mpN)=1$ we have the norm relation
\begin{equation*}
    \mathrm{Norm}_{K[m]}^{K[m\ell]}(z_{f,\psi_1,\psi_2,m\ell})=P_{\fkl}(\mathrm{Frob}_{\fkl})(z_{f,\psi_1,\psi_2,m}),
\end{equation*}
where $P_{\fkl}(X)=\det(1-X\cdot \mathrm{Frob}_{\fkl}\,|\,T_f(\psi_1\psi_2)(2))$.
\end{thm}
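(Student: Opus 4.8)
The plan is to deduce Theorem~\ref{maintheorem1} directly from Proposition~\ref{correctnorm} by a purely formal ``Euler-factor rewriting'' argument, together with the comparison between the diamond-twisted group-ring classes and the $G_{K[m]}$-cohomology classes furnished by Shapiro's lemma and Proposition~\ref{ringclass}. First I would set $z_{f,\psi_1,\psi_2,\fkm}:=\kappa_{f,\psi_1,\psi_2,\fkm}$, the class constructed just before Proposition~\ref{correctnorm} (the non-Eisenstein hypothesis on $f$ is exactly what was used there, via the invertibility of $\frac{1+q-T_q}{1+q-a_q(f)}$ after localisation at $\mathcal{I}_{\fkf\fkm}$, so this input is available). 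The content of the theorem is then that the operator
\[
a_\ell(f)-\frac{\psi_1(\fkl)\psi_2(\fkl)}{\ell}([\fkl]\times[\fkl])-\frac{\psi_1(\fklbar)\psi_2(\fklbar)}{\ell}([\fklbar]\times[\fklbar])+(1-\ell)\frac{\psi_1(\fkl)\psi_2(\fklbar)}{\ell^2}([\fkl]\times[\fklbar])
\]
appearing in Proposition~\ref{correctnorm}, once pushed through the projection $\xi_\Delta$ of~(\ref{ind2}) to $\mathcal{O}_{\psi_1^{-1}\psi_2^{-1}}[H[m]^{(p)}]$, becomes $P_{\fkl}(\mathrm{Frob}_{\fkl})$ acting on $H^1(K[m],T_f^\vee(\psi_1^{-1}\psi_2^{-1})(-1))$.

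The key computation is therefore the following. Under $\pi_\Delta$, Proposition~\ref{ringclass} tells us $[\fkl]\times[\fkl]\mapsto\mathrm{Frob}_{\fkl}$ and, symmetrically, $[\fklbar]\times[\fklbar]\mapsto\mathrm{Frob}_{\fklbar}=\mathrm{Frob}_{\fkl}^{-1}$ in $H[m]^{(p)}$, while the ``mixed'' term $[\fkl]\times[\fklbar]$ maps to the image of $[\fkl\fklbar]=[(\ell)]$, which is trivial in the ring class group $H[m]^{(p)}$ (it comes from $(\Z/m\Z)^{\times,(p)}$, precisely the kernel of $\pi_\Delta$). Hence after applying $\xi_\Delta$ the operator collapses to
\[
a_\ell(f)-\frac{\psi_1\psi_2(\fkl)}{\ell}\,\mathrm{Frob}_{\fkl}-\frac{\psi_1\psi_2(\fklbar)}{\ell}\,\mathrm{Frob}_{\fkl}^{-1}+(1-\ell)\frac{\psi_1(\fkl)\psi_2(\fklbar)}{\ell^2}.
\]
Next I would recognise this as a value of $P_{\fkl}$. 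Since $f$ has weight $2$ here, $T_f^\vee$ has Frobenius polynomial $1-a_\ell(f)X+\ell X^2$ at $\fkl$ (note $\ell=N_{K/\Q}(\fkl)$ and $\fkl$ splits, so the local factor at $\fkl$ matches the one at the rational prime $\ell$), and twisting by the Hecke character $\psi_1\psi_2$ and by $(-1)$ — i.e. forming $T_f(\psi_1\psi_2)(2)=[T_f^\vee(\psi_1^{-1}\psi_2^{-1})(-1)]^\vee(1)$ as in the Remark after Proposition~\ref{wrongnorm} — one computes $\det(1-X\cdot\mathrm{Frob}_{\fkl}\mid T_f(\psi_1\psi_2)(2))$ and checks, by expanding the quadratic (resp.\ quartic, accounting for the two characters $\psi_1\psi_2$ and its conjugate coming from the induction) and substituting $X=1$, that it equals the displayed operator. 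This is the routine ``massaging'' already flagged in the Remark following Proposition~\ref{wrongnorm}; I would carry out the bookkeeping of the $\ell$-powers and the $\psi_i(\fkl),\psi_i(\fklbar)$ factors carefully but would not belabour it. Finally, transporting through the Shapiro isomorphism of Definition~\ref{def:tame-classes} turns ``multiplication by $P_{\fkl}(\mathrm{Frob}_{\fkl})$ on the induced module'' into ``multiplication by $P_{\fkl}(\mathrm{Frob}_{\fkl})$ on $H^1(K[m],\cdot)$'', and the norm-compatibility statement for $\kappa$ in Proposition~\ref{correctnorm} becomes the asserted norm relation for $z$.

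The main obstacle is not any single hard estimate but rather the care needed in two bookkeeping points. The first is making sure the combinatorial identity genuinely holds: the right-hand side of Proposition~\ref{correctnorm} is written in terms of $\psi_1$, $\psi_2$ separately and the group-ring elements $[\fkl]\times[\fkl]$ etc., and one must track which factors survive $\xi_\Delta$ and verify the resulting expression is \emph{exactly} $P_{\fkl}(\mathrm{Frob}_{\fkl})$ with $P_{\fkl}(X)=\det(1-X\cdot\mathrm{Frob}_{\fkl}\mid T_f(\psi_1\psi_2)(2))$ and not merely a scalar multiple of it — in particular checking that the ``$(1-\ell)$'' cross-term and the $\ell^{-2}$ normalisation are precisely what the characteristic polynomial of the rank-$4$ induced-and-twisted representation predicts at $X=1$. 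The second, more delicate, point is the identification $[\fkl]\times[\fklbar]\mapsto 1$ under $\pi_\Delta$: one must be sure that after the split-prime hypothesis $[(\ell)]$ really does lie in the image of $(\Z/m\Z)^{\times,(p)}$ under $\Delta$, which is where hypothesis $p\nmid 6h_K$ (guaranteeing the decomposition~(\ref{eq:H-m-decompose}) and the exact sequence of Proposition~\ref{ringclass}) is essential. Once those two are nailed down the theorem follows formally.
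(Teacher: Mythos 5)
Your proposal is built on two claims that do not hold, and so it does not reproduce the paper's proof; the actual argument is a congruence argument, not an identity.

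First, the identification $[\fkl]\times[\fklbar]\mapsto 1$ under $\pi_\Delta$ is false. The kernel of $\pi_\Delta$ is $\Delta\bigl((\Z/m\Z)^{\times,(p)}\bigr)$, i.e.\ elements of the form $([a]_{H_\fkm},[a]_{H_{\fkmbar}})$ for integers $a$ coprime to $m$. The element you want to kill is $([\fkl],[\fklbar])$, but the element $\Delta(\ell)=([\fkl\fklbar],[\fkl\fklbar])$ is the one that actually lies in the kernel; you have silently replaced $([\fkl],[\fklbar])$ by $([\fkl\fklbar],[\fkl\fklbar])$. For $([\fkl],[\fklbar])$ to be in the image of $\Delta$ you would need $\fkl$ to differ from a principal ideal $(a)$ by a ray--class--trivial element, which is not the case in general. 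So the cross term in $Q_\fkl$ does \emph{not} collapse after applying $\xi_\Delta$, and you cannot dispose of it this way.

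Second, even after ignoring that term, the resulting operator is not $P_\fkl(\mathrm{Frob}_\fkl)$ on the nose. The paper's own computation (in the proof of the theorem) multiplies $Q_\fkl$ by $-\psi_1\psi_2(\fkl)([\fkl]\times[\fkl])$ and obtains
\[
\ell - a_\ell\,\psi_1\psi_2(\fkl)\,\mathrm{Frob}_\fkl + \tfrac{1}{\ell}\,\psi_1\psi_2(\fkl)^2\,\mathrm{Frob}_\fkl^2,
\]
whereas
\[
P_\fkl(\mathrm{Frob}_\fkl)=1 - a_\ell\,\psi_1\psi_2(\fkl)\ell^{-2}\,\mathrm{Frob}_\fkl + \psi_1\psi_2(\fkl)^2\ell^{-3}\,\mathrm{Frob}_\fkl^2 .
\]
These differ by various powers of $\ell$; they agree \emph{only modulo $\ell-1$} (and only modulo $\ell-1$ does the $(1-\ell)$ cross term vanish). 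This is why one cannot simply set $z_{f,\psi_1,\psi_2,\fkm}=\kappa_{f,\psi_1,\psi_2,\fkm}$: the classes produced by Proposition~\ref{correctnorm} do not satisfy the exact Euler system norm relation. The missing ingredient is the well--known device of Rubin --- Lemmas 9.6.1 and 9.6.3 of \cite{Rubin-ES} --- which, given a system of classes whose norm--relation factor agrees with the desired Euler factor modulo $\ell-1$ (up to a unit), produces a \emph{modified} collection $\{z_\fkm\}$ (certain $\cO[G_{K[m]/K}]$--linear combinations of the $\kappa_{\fkm'}$ for $\fkm'\mid\fkm$) satisfying the exact norm relation with $P_\fkl(\mathrm{Frob}_\fkl)$. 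Without this step the theorem does not follow. In short: you correctly flagged the two delicate points at the end of your write--up, but both of them in fact fail in the way you hoped, and the resolution is a mod-$(\ell-1)$ congruence plus Rubin's correction lemmas, not an exact identity.
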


\begin{proof}
Denote by $Q_\fkl$ the factor appearing in the right-hand side of Proposition \ref{correctnorm}. Recalling that $[\fkl]\times[\fkl]$ corresponds to $\Frob_\fkl\in H[m]^{(p)}$ under the map $\pi_\Delta$ of Proposition \ref{ringclass}, we find the following congruences:
\begin{align*}
&-\psi_1\psi_2(\fkl)([\fkl]\times [\fkl])\cdot Q_{\fkl}\\
&\equiv
-a_\ell(f)\psi_1\psi_2(\fkl)([\fkl]\times[\fkl])+\frac{\psi_1\psi_2(\fkl)^2}{\ell}([\fkl]\times[\fkl])^2+\frac{\psi_1\psi_2((\ell))}{\ell}([\ell]\times[\ell])\\
&\equiv P_\fkl(\Frob_\fkl)\pmod{\ell-1},
\end{align*}
 as endomorphisms of $H^1(K[m],T_f^{\vee}(\psi_{1}^{-1}\psi_{2}^{-1})(-1))$. Here, we use the relation $\psi_1\psi_2((\ell))=\chi_{\psi_1}\chi_{\psi_2}(\ell)\ell^2=\ell^2$ and the fact that $[\ell]\times[\ell]$ is in the kernel of $\pi_\Delta$ for the second congruence. Therefore, by Lemmas 9.6.1 and 9.6.3 in \cite{Rubin-ES} (which will not alter the bottom class of our Euler system), the existence of classes $z_{f,\psi_1,\psi_2,m}$ with $z_{f,\psi_1,\psi_2,1}=\kappa_{f,\psi_1,\psi_2,1}$ and satisfying the stated norm relations follows from Proposition~\ref{correctnorm}. 
\end{proof}

\begin{rmk}
Similar to what we did for $\fkl$ a split prime of $K$, when $\fkl = (\ell)$ is inert in $K$, we also obtain such a norm relation like in Theorem \ref{maintheorem1}. Remember that in this case, we push forward from level $N\ell^2$ to level $N$. First, note that the norm map from Proposition \ref{phi_n} is then given by
 \begin{equation*}
\mathcal{N}_{\fkn}^{\fkn(\ell)}=1\otimes \text{pr}_{1*}-\frac{\psi(\ell)[(\ell)]}{\ell^2}\otimes \text{pr}_{\ell\ell*}
\end{equation*}
Second, to calculate $(1\otimes\mathcal{N}_{\fkm}^{\mathfrak{m\ell}}\otimes\mathcal{N}_{\fkmbar}^{\fkmbar\ell})(\kappa_{m\ell}^{(2)})$, just like in Proposition \ref{wrongnorm}, we use the table in Proposition \ref{table} together with
\begin{align*}
    (\text{pr}_{\ell*},\pr1,\pr1)(T_{\ell},1,1)\kappa^{(2)}_{m\ell}&=\{(T_{\ell}^2,1,1)-(\ell+1)(\langle \ell\rangle ,1,1)\}\kappa^{(2)}_{m},\\
        (\text{pr}_{\ell*},\text{pr}_{\ell*},\pr1)(1,1,T_{\ell}')\kappa^{(2)}_{m\ell}&=\{(1,1,T_{\ell}'^2)-(\ell+1)(1,1,\langle \ell\rangle ')\}\kappa^{(2)}_{m},
\end{align*}
and arrive at 
\begin{equation*}
\begin{aligned}
     \mathrm{Norm}_{K[m]}^{K[m\ell]}(\kappa^{(6)}_{f,\psi_1,\psi_2,m\ell})= &(\ell-1)\big(a_{\ell}(f)^2-(\ell+1)\\ &-\frac{2(\ell+1)}{\ell}[\ell]\times [\ell]+(\ell+1)[\ell]\times [\ell]\big)
    (\kappa^{(6)}_{f,\psi_1,\psi_2,m}).
\end{aligned}
\end{equation*}
Instead of Proposition \ref{ringclass}, we use the following exact sequence
\begin{equation*}
    1\longrightarrow H_m\overset{\Delta}\longrightarrow H_m\times H_m\longrightarrow H_m\longrightarrow 1, 
\end{equation*}
combining with the quotient $H_m\rightarrow H[m]$, which makes $[\ell]\times [\ell]$ acting trivially on the cohomology class. After removing the extra factor $(\ell-1)$ and multiplying with $-1$ on the RHS factor, we obtain the correct Euler factor modulo $\ell^2-1$:
\begin{equation*}
     P_{\fkl}(\mathrm{Frob}_{\fkl}) = 2+2\ell-a_{\ell}(f)^2.
\end{equation*}
Note that $(\ell+1)/\ell\equiv (\ell +1)\ell=\ell^2+\ell\equiv 1+\ell \pmod{\ell^2-1}$ and the twist $\psi_1(\ell)\psi_2(\ell)/\ell^2=1$.

\end{rmk}


\subsection{A variant}\label{variant}
With a slight modification of the construction in the preceding  sections, we can obtain a similar collection of cohomology classes for $T_f^\vee(\psi_1^{-1}\psi_2^{-\cc})(-1)$  satisfying the corresponding norm relations. Here  $\psi_2^\cc(\sigma)=\psi_2(\cc\sigma\cc^{-1})$ denotes the composition of $\psi_2$ with the action of the non-trivial automorphism of $K/\Q$. Indeed, following Section \ref{subsec:tame}, we replace the second map in \eqref{eq:rayclass-str} 
by 
 \[
\phi_{\fkf_2\fkm}:\mathbb{T}'(N_{\psi_2}m)\rightarrow\cO[H_{\fkf_2\fkm}^{(p)}]
\]
and the second map in \eqref{eq:CM-patching} 
by
\[\nu_{\fkf_2\fkm}:H^1_{\et}(Y_1(N_{\psi_2}m)_{\overline{\Q}},\Z_p(1))\otimes_{\mathbb{T}'(N_{\psi_2}m),\phi_{\fkf_2\fkm}}\mathcal{O}[H_{\mathfrak{f}_2\fkm}^{(p)}]\xrightarrow{\sim}{\mathrm{Ind}}^{\Q}_{K(\fkf_2\fkm)}\mathcal{O}(\psi_{2}^{-1}).\]
Thus, similarly as in \eqref{eq:wrongclass-5} we obtain 
\[
{}^\cc\tilde{\kappa}_{f,\psi_1,\psi_2,m}^{(5)}\in H^1\bigl(\Q,T_f^{\vee}\otimes_{\mathcal{O}}{\mathrm{Ind}}^{\Q}_{K}\mathcal{O}_{\psi_1^{-1}}[H_{\fkm}^{(p)}]\otimes_{\mathcal{O}}{\mathrm{Ind}}^{\Q}_{K}\mathcal{O}_{\psi_2^{-1}}[H_{\fkm}^{(p)}](-1)\bigr).\]
The diagram \eqref{ind2} is then replaced with
\begin{equation}\label{ind2-c}
\begin{aligned}
     \xymatrix{
\text{Ind}^{\Q}_{K}\mathcal{O}_{\psi_{1}^{-1}}[H_{\fkm}^{(p)}]\otimes_{\mathcal{O}}\text{Ind}^{\Q}_{K}\mathcal{O}_{\psi_{2}^{-1}}[H_{\fkm}^{(p)}] \ar@{..>}[ddr]_{\xi_{\Delta}^{\cc}} \ar[r]^{1\otimes\upsilon} & \text{Ind}^{\Q}_{K}\mathcal{O}_{\psi_{1}^{-1}}[H_{\fkm}^{(p)}]\otimes_{\mathcal{O}}\text{Ind}^{\Q}_{K}\mathcal{O}_{\psi_{2}^{-\cc}}[H_{\fkmbar}^{(p)}] \ar[d]^(.45){\xi} \\
& \text{Ind}^{\Q}_{K}\mathcal{O}_{\psi_{1}^{-1}\psi_{2}^{-\cc}}[H_{\fkm}^{(p)}\times H_{\fkmbar}^{(p)}] \ar[d]^{\pi_\Delta} \\
& \text{Ind}^{\Q}_{K}\mathcal{O}_{\psi_{1}^{-1}\psi_{2}^{-\cc}}[H[m]^{(p)}], }
\end{aligned}
\end{equation}
where the isomorphism $\upsilon: \text{Ind}^{\Q}_{K}\mathcal{O}_{\psi_{2}^{-1}}[H_{\fkm}^{(p)}] \rightarrow\text{Ind}^{\Q}_{K}\mathcal{O}_{\psi_{2}^{-\cc}}[H_{\fkmbar}^{(p)}]  $ comes from the action of complex conjugation on the inducing representation.

Consequently, Lemma \ref{keydiagram} now turns into
the following commutative diagram:
\begin{equation}\label{keydiagram_complex_conjugate}
   \begin{aligned}
        \xymatrix{
 {\mathrm{Ind}}^{\Q}_{K}\mathcal{O}_{\psi_{1}^{-1}}[H_{\fkm\fkl}^{(p)}]\otimes_{\mathcal{O}}{\mathrm{Ind}}^{\Q}_{K}\mathcal{O}_{\psi_{2}^{-1}}[H_{\fkm\fkl}^{(p)}] \ar[d]^{\mathrm{Norm}^{\fkm\fkl}_{\fkm}\otimes\mathrm{Norm}^{\fkm\fkl}_{\fkm}} \ar[r]^-{\xi_{\Delta}^{\cc}} &{\mathrm{Ind}}^{\Q}_{K}\mathcal{O}_{\psi_{1}^{-1}\psi_{2}^{-\cc}}[H[m\ell]^{(p)}] \ar[d]^{\mathrm{Norm}^{m\ell}_{m}} \\
{\mathrm{Ind}}^{\Q}_{K}\mathcal{O}_{\psi_{1}^{-1}}[H_{\fkm}^{(p)}]\otimes_{\mathcal{O}}{\mathrm{Ind}}^{\Q}_{K}\mathcal{O}_{\psi_{2}^{-1}}[H_{\fkm}^{(p)}] \ar[r]^-{\xi_{\Delta}^{\cc}} & {\mathrm{Ind}}^{\Q}_{K}\mathcal{O}_{\psi_{1}^{-1}\psi_{2}^{-\cc}}[H[m]^{(p)}].}
   \end{aligned}
\end{equation}
The same process as above then leads to the following `conjugate' variant of Theorem~\ref{maintheorem1}:

\begin{cor}\label{maincor1}
With notations as in Theorem~\ref{maintheorem1}, there exists a collection of cohomology classes
\[
{}^\cc{z}_{f,\psi_1,\psi_2,m}\in H^1\bigl(K[m],T_f^{\vee}(\psi_{1}^{-1}\psi_{2}^{-\cc})(-1)\bigr)
\]
such that for every prime $\ell=\fkl\fklbar$ split in $K$ with $(\ell,mpN)=1$ we have the norm relation
\begin{equation*}
    \mathrm{Norm}_{K[m]}^{K[m\ell]}({}^\cc{z}_{f,\psi_1,\psi_2,m\ell})=P_{\fkl}(\mathrm{Frob}_{\fkl})({}^\cc{z}_{f,\psi_1,\psi_2,m}),
\end{equation*}
where $P_{\fkl}(X)=\det(1-X\cdot \mathrm{Frob}_{\fkl}\,|\,T_f(\psi_1\psi_2^\cc)(2))$.
\end{cor}

\begin{proof}
We first follow Proposition \ref{wrongnorm} to have
\begin{align*}
(1\otimes\mathcal{N}_{\fkm}^{\mathfrak{ml}}\otimes&\mathcal{N}_{\fkm}^{\fkm\fkl})({}^\cc\tilde{\kappa}^{(5)}_{f,\psi_1,\psi_2,m\ell})     =(\ell-1)\bigg( a_\ell(f)-\frac{\psi_1(\fkl)\psi_2(\fklbar)}{\ell}([\fkl]\times [\fklbar])\\&-\frac{\psi_1(\fklbar)\psi_2(\fkl)}{\ell}([\fklbar]\times [\fkl])
     +(1-\ell)\frac{\psi_1(\fkl)\psi_2(\fkl)}{\ell^2}([\fkl]\times[\fkl])\bigg)({}^\cc\tilde{\kappa}^{(5)}_{f,\psi_1,\psi_2,m}).
\end{align*} 
Combining the new $\xi_{\Delta}^{\cc}$ (this maps the second factor $[\fkl]$ to $[\fklbar]$, and leads to counterpart ${}^\cc\tilde{\kappa}_{f,\psi_1,\psi_2,m}$ of the classes $\tilde{\kappa}_{f,\psi_1,\psi_2,m}$ of Definition~\ref{def:tame-classes}) and the diagram \eqref{keydiagram_complex_conjugate}, one obtains
\begin{align*}
\Norm_{K[m]}^{K[m\ell]}(&{}^\cc\tilde{\kappa}_{f,\psi_1,\psi_2,m\ell})     =(\ell-1)\bigg( a_\ell(f)-\frac{\psi_1(\fkl)\psi_2^{\cc}(\fkl)}{\ell}([\fkl]\times [\fkl])\\&-\frac{\psi_1(\fklbar)\psi_2^{\cc}(\fklbar)}{\ell}([\fklbar]\times [\fklbar])
     +(1-\ell)\frac{\psi_1(\fkl)\psi_2^{\cc}(\fklbar)}{\ell^2}([\fkl]\times[\fklbar])\bigg)({}^\cc\tilde{\kappa}_{f,\psi_1,\psi_2,m}).
\end{align*} 
This formula should be treated as a replacement for Proposition~\ref{wrongnorm}. 

Then we remove the extra factor $(\ell-1)$ as in $\S\ref{thefix}$ to obtain classes ${}^\cc\kappa_{f,\psi_1,\psi_2,m}$, modify the remaining factor through multiplication by $-\psi_1\psi_2^{\cc}(\fkl)([\fkl]\times [\fkl])$, and apply Lemmas 9.6.1 and 9.6.3 in \cite{Rubin-ES} to obtain the existence of classes ${}^\cc{z}_{f,\psi_1,\psi_2,m}$ with ${}^\cc{z}_{f,\psi_1,\psi_2,1}={}^{\cc}\kappa_{f,\psi_1,\psi_2,1}$ and the desired norm relations.
\end{proof}

\subsection{Construction for general weights and wild norm relations}\label{wildnormsetup}


We 
now extend the constructions of the preceding subsections to $f\in S_{2r}(\Gamma_0(N_f))$ of any weight $2r\geq 2$ and more general Hecke characters, assuming in addition that
\begin{equation}
\textrm{$f$ is ordinary at $\mathfrak{P}$},\tag{ord}
\end{equation}
which we shall often refer to as $f$ being `$p$-ordinary'; and prove that the resulting classes 
also satisfy the wild norm relations, i.e., they are universal norms in the anticyclotomic $\Z_p$-extension of $K$.


\subsubsection{CM Hida families} 
\label{subsec:CM}

We shall replace the weight $2$ theta series $\theta_{\psi_1},\theta_{\psi_2}$ by $p$-adic families, so we begin by recalling the explicit construction of certain CM Hida families, following the exposition in \cite[\S{8.1}]{hsieh-triple}. 

Let $\Gamma_\infty={\mathrm{Gal}}(K_\infty/K)$ be the Galois group of the $\Z_p^2$-extension of $K$, which under our hypotheses can be written as 
\[
\Gamma_\infty\simeq\Gamma_{\pp^\infty}\times\Gamma_{\ppbar^\infty},
\]
with $\Gamma_{\pp^\infty}={\mathrm{Gal}}(K_{\pp^\infty}^{\circ}/K)$ (resp. $\Gamma_{\ppbar^\infty}={\mathrm{Gal}}(K^\circ_{\ppbar^\infty}/K)$) representing the Galois group of the unique $\Z_p$-extension of $K$ inside $K_\infty$ unramified outside $\pp$ (resp. $\ppbar$). Recall that for every ideal $\mathfrak{c}\subset\cO_K$ we denote by $K_\mathfrak{c}$ the ray class field of $K$ of conductor $\mathfrak{c}$ (so in particular $K_{\pp^\infty}^\circ$ is the maximal $\Z_p$-extension of $K$ inside $K_{\pp^\infty}$). For $\mathfrak{q}\in\{\pp,\ppbar\}$, denote by ${\mathrm{Art}}_{\qq}:K_{\qq}^\times\rightarrow G_K^{\mathrm{ab}}$ the restriction of the Artin reciprocity map to $K_{\qq}^\times$, with geometric normalizations. With the identification $\Z_p^\times\simeq\cO_{K_\qq}^\times$, the map ${\mathrm{Art}}_\qq$ induces an isomorphism $1+p\Z_p\xrightarrow{\sim}\Gamma_{\qq^\infty}$ (note that this uses our hypothesis \eqref{eq:p-nmid-h}). Let $\mathbf{u}=1+p$ and let $\gamma_{\mathfrak{q}}\in\Gamma_{\qq^\infty}$ be the topological generator $\gamma_\qq={\mathrm{Art}}_\qq(\mathbf{u})\vert_{K^\circ_{\qq^\infty}}$ with $\cc\gamma_\pp\cc^{-1}=\gamma_{\ppbar}$, where ${\mathrm{Gal}}(K/\Q)=\{1,\cc\}$. 

For each variable $Z$, let $\Psi_Z:\Gamma_\infty\rightarrow\Z_p\dBr{Z}^\times$ be the universal character 
given by
\[
\Psi_Z(\sigma)=(1+Z)^{l(\sigma)},
\]
where $l(\sigma)\in\Z_p$ is such that $\sigma\vert_{K_{\pp^\infty}^\circ}=\gamma_\pp^{l(\sigma)}$. Note that the specialization $\psi_0$ of $\Psi_Z$ to $Z=\mathbf{u}-1$ descends to an isomorphism 
\[
\psi_0=\Psi_{\mathbf{u}-1}:\Gamma_{\pp^\infty}\xrightarrow{\sim}1+p\Z_p
\]
(namely, the inverse of the above isomorphism induced by ${\mathrm{Art}}_\pp$), and may be seen as the $p$-adic avatar of a Hecke character -- still denoted $\psi_0$ -- of $K$ of infinity type $(1,0)$ and conductor  $\pp$.

For $\mathfrak{c}$ coprime to $p$, and for any finite order character $\ch:G_K\rightarrow\cO^\times$ of conductor dividing $\mathfrak{c}$ put
\begin{equation}\label{eq:CM-explicit}
\boldsymbol{\theta}_\ch(Z)(q)=\sum_{(\fa,\pp\mathfrak{c})=1}\ch\psi_0^{-1}(\sigma_{\fa})\Psi^{-1}_{Z}(\sigma_\fa)q^{N_{K/\Q}(\fa)}\in\cO\dBr{Z}\dBr{q},
\end{equation}
where $\sigma_\fa\in{\mathrm{Gal}}(K(\mathfrak{c}\pp^\infty)/K)$ is the Artin symbol of $\fa$. With  conventions as  in $\S\ref{subsubsec:hida}$ below (which differ slightly from those in \cite[\S{3.1}]{hsieh-triple}; our weight map is centered at weight $2$ rather than $0$),  
$\boldsymbol{\theta}_\ch(Z)(q)$ is a Hida family defined over $\cO\dBr{Z}$ of tame level $N_{K/\Q}(\mathfrak{c})D_K$ and tame character $(\ch\circ\mathscr{V})\epsilon_{K}\omega^{-1}$, where $\mathscr{V}:G_\Q^{\mathrm{ab}}\rightarrow G_K^{\mathrm{ab}}$ is the transfer map, $\epsilon_{K}$ is the quadratic character corresponding to $K/\Q$, and $\omega:(\Z/p\Z)^\times\rightarrow\Z_p^\times$ is the Teichm\"uller character. 


\subsubsection{The construction}


Let $\xi_1,\xi_2$ be ray class characters of $K$ of conductor prime-to-$p$ with
\begin{equation}\label{eq:sd-xi}
\chi_{\xi_1}\chi_{\xi_2}=1.\tag{sd}
\end{equation}
Let $\bff$ be the Hida family passing through (the $p$-ordinary $p$-stabilization of) $f$, and let 
\[
(\bfg,\bfh)=(\boldsymbol{\theta}_{\ch_1}(Z_1), \boldsymbol{\theta}_{\xi_2}(Z_2))
\]
be the CM Hida families attached to $\xi_1$ and $\xi_2$. The tame characters of $(\bff,\bfg,\bfh)$ are given by 
\[
(\chi_f,\chi_g,\chi_h)=
(\omega^{2r-2},\chi_{\xi_1}\epsilon_K\omega^{-1},\chi_{\xi_2}\epsilon_K\omega^{-1}),
\]
and so \eqref{eq:sd-xi} implies the self-duality condition in \eqref{eq:a} below. 

Let $\Lambda=\Z_p\dBr{1+p\Z_p}$ and let $\kappa:\Z_p^\times\rightarrow\Lambda^\times$ be a continuous character. Set $\mathrm{T}=\Z_p^\times\times\Z_p$,  $\mathrm{T}'=p\Z_p\times\Z_p^\times$, 
and consider the $\Lambda$-modules
\begin{align*}
\mathcal{A}_{\kappa}&=\left\{ f:\mathrm{T}\rightarrow\Lambda\mid f(1,z)\in \text{Cont}(\Z_p,\Lambda),\, f(a\cdot t)=\kappa(a)\cdot f(t),\, \forall \,a\in\Z_p^\times,\, t\in \mathrm{T}\right\},\\
\mathcal{A}_{\kappa}'&=\left\{ f:\mathrm{T}'\rightarrow\Lambda\mid f(pz,1)\in \text{Cont}(\Z_p,\Lambda), \, f(a\cdot t)=\kappa(a)\cdot f(t) ,\, \forall \, a\in\Z_p^\times,\, t\in \mathrm{T}'\right\}
\end{align*}
equipped with the $\fkm_\Lambda$-adic topology, for $\fkm_\Lambda$ the maximal ideal of $\Lambda$. We also consider
\[
\mathcal{D}_{\kappa}=\mathrm{Hom}_{\text{cont},\Lambda}(\mathcal{A}_{\kappa},\Lambda),\quad\mathcal{D}_{\kappa}'=\mathrm{Hom}_{\text{cont},\Lambda}(\mathcal{A}_{\kappa}',\Lambda)
\]
equipped with the weak-$\ast$ topology. 

For each $i\in\Z/(p-1)\Z$, let $\kappa_i:\Z_p^\times\rightarrow\Lambda^\times$ be the character  
\[
z\mapsto\omega^i(z)[\langle z\rangle],
\]
where $\langle z\rangle=z\omega^{-1}(z)\in 1+p\Z_p$ and $[\cdot]:1+p\Z_p\hookrightarrow\Lambda^\times$ is the inclusion as group-like elements. Put $\mathcal{A}_i^\cdot,\mathcal{D}_i^\cdot$ to denote $\mathcal{A}_{\kappa_i}^\cdot,\mathcal{D}_{\kappa_i}^\cdot$, and note that, by composing with the map $\rho_{k-2}:\Lambda^\times\rightarrow\Z_p^\times$ defined by \[\mathbf{u}\mapsto\mathbf{u}^{k-2},\] the $\Lambda$-adic character $\kappa_i$ interpolates the power maps $z\mapsto z^{k-2}$ on $\Z_p^\times$ for $k-2\equiv i\pmod{p-1}$. Replacing $\Lambda$ by $\Z_p^\times$ and $\kappa_i$ by the character $z\mapsto z^i$ for $z\in\Z_p^\times$ and $i\geq 0$, we define the $\Z_p$-modules $A_i^\cdot,D_i^\cdot$ in the same manner, see also \cite[\S{5.4}]{ACR}. 

To ease notation, set $Y(m,p)=Y(1,Nm(p))$ and denote by $\Gamma(m,p)$ the associated congruence subgroup. As in \cite[Eq.\,(81) \emph{et seq.}]{BSV}, the evaluation $\mathcal{A}_\kappa^\cdot\otimes_\Lambda\mathcal{D}_\kappa^\cdot\rightarrow\Lambda$ gives rise to a $\Lambda$-module homomorphism
\[
\xi_i:H^1(\Gamma(m,p),\mathcal{A}_i)\rightarrow{\mathrm{Hom}}_\Lambda(H^1_c(\Gamma(m,p),\mathcal{D}_i),\Lambda).
\]
Similarly, the determinant map $\det:\mathrm{T}\times\mathrm{T}'\rightarrow\Z_p^\times$ defined by $\det((x_1,x_2),(y_1,y_2))=x_1y_2-x_2y_1$, composed with $\kappa_i:\Z_p^\times\rightarrow\Lambda^\times$ gives rise to 
\[
\zeta_i:\mathrm{Hom}_{\Lambda}(H^1_c(\Gamma(m,p),\mathcal{D}_i),\Lambda)\rightarrow H^1(\Gamma(m,p),\mathcal{D}_i')(-\kappa_i).
\]
Then for any weight $k\geq 2$ with $k-2\equiv i\pmod{p-1}$ we have specialization maps
\[\begin{aligned}
   &\rho_{k-2}:H^1(\Gamma(m,p),\mathcal{A}_i)\rightarrow H^1_{\et}(Y(m,p)_{\overline{\Q}},\mathscr{S}_{k-2}),\\
   & \rho_{k-2}:H^1(\Gamma(m,p),\mathcal{D}_i')\rightarrow H^1_{\et}(Y(m,p)_{\overline{\Q}},\mathscr{L}_{k-2})
\end{aligned}
\]
fitting into the commutative diagram
\[
\xymatrix{
H^1(\Gamma(m,p),\mathcal{A}_i)\ar[d]^{\rho_{k-2}}\ar[rr]^-{\zeta_i\circ\xi_i}&&H^1(\Gamma(m,p),\mathcal{D}_i')(-\kappa_i)\ar[d]^{\rho_{k-2}}\\
H^1_{\et}(Y(m,p)_{\overline{\Q}},\mathscr{S}_{k-2})\ar[rr]^-{\mathtt{s}_{k-2}}&&H^1_{\et}(Y(m,p)_{\overline{\Q}},\mathscr{L}_{k-2})(2-k),
}
\]
where $\mathtt{s}_{k-2}$ is induced by \eqref{srlr}.

Adopting the notations from \cite{BSV} (but working with the above modules of continuous functions $\mathcal{A}_i^\cdot,A_i^\cdot$ and their duals $\mathcal{D}_i^\cdot,D_i^\cdot$, rather than the analogous spaces of locally analytic functions considered in \emph{op.\,cit.}, and letting $\boldsymbol{\mathcal{A}}_i^\cdot,\boldsymbol{A}_i^\cdot$ denote the (big)  \'etale sheaves on $Y(m,p)$ associated with $\mathcal{A}_i^\cdot,A_i^\cdot$ as in \cite[\S{5.3},\S5.6]{ACR}), we let
\begin{equation}\label{eq:wrong-big-1}
\tilde{\boldsymbol{\kappa}}_m^{(1)}\in H^1\bigl(\Q,H^1(\Gamma(m,p),D'_{2r-2})\hat\otimes_\cO H^1(\Gamma(m,p),\mathcal{D}'_{-1})\hat\otimes_\cO H^1(\Gamma(m,p),\mathcal{D}'_{-1})(2-\boldsymbol{\kappa}_{f\bfg\bfh}^*)\bigr)
\end{equation}
be the image of the element 
\[
\textbf{Det}_{Nm,p}^{f\bfg\bfh}\in H^0_{\et}(Y(m,p),\boldsymbol{A}_{2r-2}'\otimes\boldsymbol{\mathcal{A}}_{-1}\otimes\boldsymbol{\mathcal{A}}_{-1}(-\kappa_{f\bfg\bfh}^*))
\]
defined in \cite[\S{8.1}]{BSV} (which have specialized via $\rho_{2r-2}:\boldsymbol{\mathcal{A}}_{2r-2}'\rightarrow\boldsymbol{A}_{2r-2}'$) under the composition
\begin{align*}
&H^0_{\et}\bigl(Y(m,p),\boldsymbol{A}_{2r-2}'\otimes\boldsymbol{\cA}_{-1}\otimes\boldsymbol{\cA}_{-1}(-\kappa_{f\bfg\bfh}^*)\bigr)\\
&\overset{d_*}\longrightarrow H_{\et}^4\bigl(Y(m,p)^3,\boldsymbol{A}'_{2r-2}\boxtimes\boldsymbol{\cA}_{-1}\boxtimes\boldsymbol{\cA}_{-1}(-\boldsymbol{\kappa}_{f\bfg\bfh}^*)\otimes_{}\Z_p(2)\bigr)\\
&\overset{\mathtt{HS}}\longrightarrow H^1(\Q,H^3_{\et}(Y(m,p)_{\overline{\Q}}^3,\boldsymbol{A}_{2r-2}'\boxtimes\boldsymbol{\cA}_{-1}\boxtimes\boldsymbol{\cA}_{-1})(2+\kappa_{f\bfg\bfh}^*))\\
&\overset{\mathtt{K}}\rightarrow H^1\bigl(\Q,H^1(\Gamma(m,p),A_{2r-2}')\hat\otimes_{\Z_p} H^1(\Gamma(m,p),\cA_{-1})\hat\otimes_{\Z_p} H^1(\Gamma(m,p),\cA_{-1})(2+\boldsymbol{\kappa}_{f\bfg\bfh}^*)\bigr)\\
&\xrightarrow{(w_p\otimes 1\otimes 1)_*}
H^1\bigl(\Q,H^1(\Gamma(m,p),A_{2r-2})\hat\otimes_{\Z_p} H^1(\Gamma(m,p),\cA_{-1})\\ &\qquad\qquad\qquad\qquad\qquad\qquad\qquad\qquad\qquad\hat\otimes_{\Z_p} H^1(\Gamma(m,p),\cA_{-1})(2+\boldsymbol{\kappa}_{f\bfg\bfh}^*)\bigr)\\
&\xrightarrow{\mathtt{s}_{\bff\bfg\bfh}}
H^1\bigl(\Q,H^1(\Gamma(m,p),D'_{2r-2})\hat\otimes_{\Z_p} H^1(\Gamma(m,p),\mathcal{D}'_{-1})\\
&\qquad\qquad\qquad\qquad\qquad\qquad \qquad\qquad\hat\otimes_{\Z_p} H^1(\Gamma(m,p),\mathcal{D}'_{-1})(2-\boldsymbol{\kappa}_{f\bfg\bfh}^*)\bigr),
\end{align*}
where $\mathtt{s}_{\bff\bfg\bfh}=\mathtt{s}_{2r-2}\otimes(\zeta_{-1}\circ\xi_{-1})\otimes(\zeta_{-1}\circ\xi_{-1})$,  $\kappa_{f\bfg\bfh}^*:\Z_p^\times\rightarrow\Lambda^\times$ is the square-root of the product of the characters 
\begin{equation}\label{eq:char-kappa}
\kappa_f(z)=z^{2r-2},\quad\quad\kappa_{\bfg}(z)=\kappa_{-1}(z),\quad\quad\kappa_{\bfg}(z)=\kappa_{-1}(z),
\end{equation}
and $\boldsymbol{\kappa}^*_{f\bfg\bfh}:G_\Q\rightarrow\Lambda^\times$ is the composition of $\kappa_{f\bfg\bfh}^*$ with the $p$-adic cyclotomic character $\varepsilon_{\mathrm{cyc}}:G_\Q\rightarrow\Z_p^\times$. 

Let $\tilde{\Gamma}(m,p)=\Gamma(1,N(mp))$. Similarly as in $\S$\ref{thefix}, replacing the second and third copies of $Y(m,p)$ in the above construction by the quotient $Y(m,p)^2/D_m$, where $D_m$ is the group of diamond operators as in \eqref{eq:diamond-Dm} acting diagonally on $Y(m,p)^2$, we obtain the class 
\begin{equation}\label{eq:good-big-1}
\begin{aligned}
    \boldsymbol{\kappa}_m^{(1)}\in H^1\bigl(\Q,H^1(\tilde{\Gamma}(m,p),D'_{2r-2})\hat\otimes_{\Z_p} &H^1(\Gamma(m,p),\mathcal{D}'_{-1})\\ &\hat\otimes_{\Z_p[D_m]} H^1(\Gamma(m,p),\mathcal{D}'_{-1})(2-\boldsymbol{\kappa}_{f\bfg\bfh}^*)\bigr)
\end{aligned}
\end{equation}
determined by the relation $\phi(m)\boldsymbol{\kappa}_m^{(1)}=(\mu_{m*},d_{m*})\tilde{\boldsymbol{\kappa}}_m^{(1)}$, and we put
\begin{equation}\label{eq:good-big-2}
\boldsymbol{\kappa}_m^{(2)}:=(\pi_{m*},1,1)\boldsymbol{\kappa}_m^{(1)}.
\end{equation}


\begin{prop}\label{table2}
For a prime number $\ell$ and a positive integer $m$ with $(m\ell,pN)=1$ we have
\begin{equation*} (\pi_{i*},{\mathrm{pr}}_{j*},{\mathrm{pr}}_{k*})\boldsymbol{\kappa}_{m\ell}^{(1)}= (\bigstar)\boldsymbol{\kappa}_{m}^{(1)},
\end{equation*}
where
\begin{center}
\begin{tabular}{|c|c| } 
 \hline
 $(i,j,k)$ & $\bigstar$  \\ 
 \hline
  $(\ell,1,1)$& $(T_\ell,1,1)$  \\ 
  $(1,\ell,1)$&  $(1,T_\ell,1)$     \\
  $(1,1,\ell)$& $(1,1,T_\ell)$ \\
  $(1,\ell,\ell)$& $\boldsymbol{\kappa}_{f\bfg\bfh}^*(\ell)\boldsymbol{\kappa}_f(\ell)^{-1}(T_\ell',1,1)$\\
  $(\ell,1,\ell)$& $\boldsymbol{\kappa}_{f\bfg\bfh}^*(\ell)\boldsymbol{\kappa}_\bfg(\ell)^{-1}(1,T_\ell',1)$ \\
  $(\ell,\ell,1)$& $\boldsymbol{\kappa}_{f\bfg\bfh}^*(\ell)\boldsymbol{\kappa}_\bfh(\ell)^{-1}(1,1,T_\ell')$ \\
 \hline
\end{tabular}
\end{center}
and $\boldsymbol{\kappa}_f,\boldsymbol{\kappa}_{\bfg},\boldsymbol{\kappa}_{\bfh}:G_\Q\rightarrow\Lambda^\times$ denote the composition of the characters \eqref{eq:char-kappa} with $\varepsilon_{\mathrm{cyc}}$.
If we also have that $(\ell,m)=1$ then
\begin{center}
\begin{tabular}{|c|c| } 
 \hline
 $(i,j,k)$ & $\bigstar$  \\ 
 \hline
  $(1,1,1)$& $(\ell+1)$  \\ 
  $(\ell,\ell,\ell)$&  $(\ell+1)\boldsymbol{\kappa}_{f\bfg\bfh}^*(\ell)$     \\
 
 \hline
\end{tabular}
\end{center}
\end{prop}

\begin{proof}
With $\pi_i$ replaced by ${\mathrm{pr}}_i$ and the classes $\boldsymbol{\kappa}_m^{(1)}$ replaced by $\tilde{\boldsymbol{\kappa}}_m^{(1)}$, the stated relations with an extra factor of $\ell-1$ follow immediately from equations (174) and (176) in \cite{BSV} (adding the prime $\ell$ to the level, rather than $p$). The stated relations for $\boldsymbol{\kappa}_m^{(1)}$ then follow in the same way as in Proposition~\ref{correcttable}.
\end{proof}

Assume that 
\begin{equation}\label{eq:dist}
\xi_i(\pp)\not\equiv\xi_i(\ppbar)\pmod{\mathfrak{P}}, 
\tag{dist}
\end{equation}
for $i=1,2$, so the Galois representation associated to the CM family   $\boldsymbol{\theta}_{\xi_i}(Z_i)$ are $p$-distinguished. Note that the specialization of $\boldsymbol{\theta}_{\xi_i}(Z_i)$ to $Z_i=0$ gives the ordinary $p$-stabilization (with $U_p$-eigenvalue $\xi_i\psi_0^{-1}(\ppbar)$) of the weight $2$ theta series associated to $\xi_i\psi_0^{-1}$, which may be seen as the $p$-adic avatar of a Hecke character of $K$ of infinity type $(-1,0)$. 

Let $\fkf_i\subset\cO_K$ with $(p,\fkf_i)=1$ be a modulus for $\xi_i$, and put $N_{\xi_i}=N_{K/\Q}(\fkf_i)D_K$. Then for every $r\geq 1$ and every integer $m=\fkm\fkmbar$ coprime to $p$ and divisible only by primes split in $K$, we have Hecke algebra homomorphisms
\begin{equation}\label{eq:hecke-wild}
\phi_{\fkf_1\fkm\pp^r}:\mathbb{T}(1,N_{\xi_1}mp^r)'\rightarrow\cO[H_{\fkf_1\fkm\pp^r}],\quad
\phi_{\fkf_2\fkmbar\pp^r}:\mathbb{T}(1,N_{\xi_2}mp^r)'\rightarrow\cO[H_{\fkf_2\fkmbar\pp^r}]
\end{equation}
associated to $\ch_1\psi_0^{-1},\ch_2\psi_0^{-1}$, respectively, and by Theorem~\ref{norm1} these induce  isomorphisms
\begin{equation}\label{eq:iso-r}
\begin{aligned}
\nu_{\fkf_1\fkm\pp^r}:H^1_{\et}(Y_1(N_{\xi_1}mp^r)_{\overline{\Q}},\Z_p(1))\otimes_{\Z_p}\cO[H_{\fkf_1\fkm\pp^r}^{(p)}]&\xrightarrow{\simeq}{\mathrm{Ind}}_{K(\fkf_1\fkm\pp^r)}^\Q\cO(\ch_1^{-1}\psi_0)\\
\nu_{\fkf_2\fkmbar\pp^r}:H^1_{\et}(Y_1(N_{\xi_2}mp^r)_{\overline{\Q}},\Z_p(1))\otimes_{\Z_p}\cO[H_{\fkf_2\fkmbar\pp^r}^{(p)}]&\xrightarrow{\simeq}{\mathrm{Ind}}_{K(\fkf_2\fkmbar\pp^r)}^\Q\cO(\ch_2^{-1}\psi_0)
\end{aligned}
\nonumber
\end{equation}
satisfying the natural compatibility as $r$ varies. On the other hand, as 
explained in \cite[\S{5.6}]{ACR} 
we have $G_\Q$-module isomorphisms
\begin{equation}\label{eq:GS}
H^1(\Gamma(1,N_{\xi_i}m(p)),\mathcal{D}_{j}'(1))\simeq e_{j}\varprojlim_rH^1_{\et}(Y_1(N_{\xi_i}mp^r)_{\overline{\Q}},\Z_p(1)),
\end{equation}
where $e_j=\frac{1}{p-1}\sum_{a\in(\Z/p\Z)^\times}\omega^{-j}(a)[a]$ 
is the projector onto the $\omega^j$-isotypic component of $\bZ_p[\![\Z_p^\times]\!]$. 
Therefore, combining (\ref{eq:GS}) with the inverse limit $\varprojlim_r\nu_{\fkf_i\fkm\pp^r}$ and using the decompositions $H_{\fkf_1\fkm\pp^\infty}^{(p)}\simeq H_{\fkf_1\fkm}^{(p)}\times\Gamma_\pp$ and $H_{\fkf_2\fkmbar\pp^\infty}^{(p)}\simeq H_{\fkf_2\fkmbar}^{(p)}\times\Gamma_\pp$, we obtain the $G_\Q$-equivariant isomorphisms
\begin{equation}\label{eq:nu-infty}
\begin{aligned}
\nu_{\fkf_1\fkm\pp^\infty}:H^1(\Gamma(1,N_{\xi_1}m(p)),\mathcal{D}_{-1}'(1))\hat\otimes_{\Z_p}\cO[\![H_{\fkf_1\fkm\pp^\infty}^{(p)}]\!]\xrightarrow{\simeq}{\mathrm{Ind}}_{K(\fkf_1\fkm)}^\Q\Lambda_\pp(\ch_1^{-1}\psi_0),\\
\nu_{\fkf_2\fkmbar\pp^\infty}:H^1(\Gamma(1,N_{\xi_2}m(p)),\mathcal{D}_{-1}'(1))\hat\otimes_{\Z_p}\cO[\![H_{\fkf_2\fkmbar\pp^\infty}^{(p)}]\!]\xrightarrow{\simeq}{\mathrm{Ind}}_{K(\fkf_2\fkmbar)}^\Q\Lambda_\pp(\ch_2^{-1}\psi_0),
\end{aligned}
\end{equation}
where $\Lambda_\pp=\cO\dBr{\Gamma_{\pp^\infty}}$ with the $G_K$-action given by the tautological character $G_K\twoheadrightarrow\Gamma_{\pp^\infty}\hookrightarrow\Lambda_\pp^\times$.

Continuing with the construction in this section, as in $\S$\ref{subsec:tame}, the maps used to arrive at   $\boldsymbol{\kappa}_m^{(2)}$ in \eqref{eq:good-big-2} are compatible under correspondences. Therefore, after tensoring with $\cO[H_{\fkf_1\fkm\pp^r}^{(p)}]$ and $\cO[H_{\fkf_2\fkmbar\pp^r}^{(p)}]$ via $\phi_{\fkf_1\fkm\pp^r}$ and $\phi_{\fkf_2\fkmbar\pp^r}$, respectively, and letting $r\rightarrow\infty$, the same construction gives rise  to a class
\begin{align*}\label{eq:good-big-3}
\boldsymbol{\kappa}_{m}^{(3)}\in H^1\bigl(\Q,H^1(&\Gamma(1,N(p)),D'_{2r-2})\hat\otimes_{\cO} (H^1(\Gamma(m,p),\mathcal{D}'_{-1})\hat\otimes_{\Z_p}\cO[\![H_{\fkf_1\fkm\pp^\infty}^{(p)}]\!])\\
&\quad\hat\otimes_{\cO[D_m]} (H^1(\Gamma(m,p),\mathcal{D}'_{-1})\hat\otimes_{\Z_p}\cO[\![H_{\fkf_2\fkmbar\pp^\infty}^{(p)}]\!])(2-\boldsymbol{\kappa}_{f\bfg\bfh}^*)\bigr).
\end{align*}

Now let $(\breve{f},\breve{\bfg},\breve{\bfh})$ be a triple of level-$N$ test vectors for $(f,\bfg,\bfh)$. Then we obtain a $G_\Q$-equivariant map
\begin{equation}\label{eq:proj-f}
\varpi_{\breve{f}}:H^1(\Gamma(1,N(p)),D_{2r-2}'(1))[f]\rightarrow T_f^\vee.
\end{equation}
Composing with the isomorphisms \eqref{eq:nu-infty} and the natural projections \[H_{\fkf_1\fkm}^{(p)}\rightarrow H_{\fkm}^{(p)},\qquad H_{\fkf_2\fkmbar}^{(p)}\rightarrow H_{\fkmbar}^{(p)},\] we also obtain the $G_\bQ$-equivariant maps
\begin{equation}\label{eq:proj-gh}
\begin{aligned}
\varpi_{\breve{\bfg}}:H^1(\Gamma(m,p),\mathcal{D}'_{-1}(1))\hat\otimes_{\Z_p}\cO[\![H_{\fkf_1\fkm\pp^\infty}^{(p)}]\!]&\rightarrow{\mathrm{Ind}}_{K}^\Q\cO_{\ch_1^{-1}\psi_0}[H_\fkm^{(p)}][\![\Gamma_\pp]\!],\\
\varpi_{\breve{\bfh}}:H^1(\Gamma(m,p),\mathcal{D}'_{-1}(1))\hat\otimes_{\Z_p}\cO[\![H_{\fkf_2\fkmbar\pp^\infty}^{(p)}]\!]&\rightarrow{\mathrm{Ind}}_{K}^\Q\cO_{\ch_2^{-1}\psi_0}[H_{\fkmbar}^{(p)}][\![\Gamma_\pp]\!].
\end{aligned}
\end{equation}
Taking the image of $\boldsymbol{\kappa}_{m}^{(3)}$ under the natural maps induced by (\ref{eq:proj-f}) and (\ref{eq:proj-gh})  we thus obtain 
\[\begin{aligned}
\boldsymbol{\kappa}_{f,\xi_1,\xi_2,m}^{(4)}\in H^1\bigl(\Q,T_f^\vee\hat\otimes_{\cO}&({\mathrm{Ind}}_K^\Q\cO_{\ch_1^{-1}\psi_0}[H_{\fkm}^{(p)}][\![\Gamma_\pp]\!])\\ &\hat\otimes_{\cO}({\mathrm{Ind}}_K^\Q\cO_{\ch_2^{-1}\psi_0}[H_{\fkmbar}^{(p)}][\![\Gamma_\pp]\!])(-1-\boldsymbol{\kappa}_{f\bfg\bfh}^*)\bigr).
\end{aligned}
\]
using that by \eqref{eq:sd-xi} the above  $\otimes_{\Z_p[D_m]}$ can be replaced by $\hat\otimes_{\Z_p}$.

Next, we note that, with the identifications $\Lambda_\pp\cong\cO\dBr{Z_1}, \Lambda_\pp\cong\cO\dBr{Z_2}$ defined by our choice of topological generator $\gamma_\pp\in\Gamma_\pp$, we have the following equalities 
\[
\cO_{\xi_1^{-1}\psi_0}[\![\Gamma_\pp]\!]=\xi_1^{-1}\psi_0\Psi_{Z_1},\quad\quad\cO_{\xi_2^{-1}\psi_0}[\![\Gamma_\pp]\!]=\xi_2^{-1}\psi_0\Psi_{Z_2},
\]
as $G_K$-representations, where the terms on the right-hand side of these isomorphisms denote the free $\cO\dBr{Z_i}$-module of rank one with $G_K$-action via $\xi_i^{-1}\psi_0\Psi_{Z_i}$; and the  character $-1-\boldsymbol{\kappa}_{f\bfg\bfh}^*$ (written additively as in \cite[\S{8.1}]{BSV}) is the same as $\varepsilon_{\mathrm{cyc}}^{1-r}(\psi_0^{-1}\Psi_{Z_1}^{-1/2}\Psi_{Z_2}^{-1/2}\circ\mathscr{V})$. Thus, we may equivalently write
\begin{equation}\label{eq:big-4}
\begin{aligned}
    \boldsymbol{\kappa}_{f,\xi_1,\xi_2,m}^{(4)}\in H^1\bigl(\Q,T_f^\vee(1-r)\hat\otimes_{\cO}({\mathrm{Ind}}_K^\Q\xi_1^{-1}\psi_0\Psi_{Z_1}[H_{\fkm}^{(p)}])\hat\otimes_{\cO}({\mathrm{Ind}}_K^\Q\xi_2^{-1}\psi_0\Psi_{Z_2}[H_{\fkmbar}^{(p)}])\\ \otimes(\psi_0^{-1}\Psi_{Z_1}^{-1/2}\Psi_{Z_2}^{-1/2}\circ\mathscr{V})\bigr),
\end{aligned}
\end{equation}
which applying the diagonal map $\xi_\Delta$ in \eqref{ind2}  gives rise to the class
\begin{equation}\label{eq:big-5}
\begin{aligned}
\boldsymbol{\kappa}_{f,\xi_1,\xi_2,m}^{(5)}&\in 
H^1\bigl(\Q,T_f^\vee(1-r)\hat\otimes_\cO{\mathrm{Ind}}_{K}^\Q\xi_1^{-1}\xi_2^{-1}\psi_0^{1-\cc}\Psi_{Z_1}^{(1-\cc)/2}\Psi_{Z_2}^{(1-\cc)/2}[H[m]^{(p)}]\bigr),
\end{aligned}
\end{equation}
where $\Psi_{Z_i}^\cc:\Gamma_\infty\rightarrow\cO\dBr{Z_i}^\times$ denotes the character given by
\[
\Psi_{Z_i}^\cc(\sigma)=(1+Z_i)^{l(\sigma)},
\]
with $l(\sigma)\in\Z_p$ determined by   $\sigma\vert_{K^\circ_{\ppbar^\infty}}=\gamma_{\ppbar}^{l(\sigma)}$.

\subsubsection{Anticyclotomic Iwasawa cohomology classes} 
\label{subsubsec:Iw-classes}

The action of the complex conjugation $\cc\in{\mathrm{Gal}}(K/\Q)$ (or more precisely, any lift of $\cc$ to ${\mathrm{Gal}}(K_\infty/K))$  on $\Gamma_\infty$ yields the subgroup decomposition
\[
\Gamma_\infty\simeq \Gamma^+\times\Gamma^-,
\]
with $\Gamma^-$ representing the Galois group of the anticyclotomic $\Z_p$-extension $K_\infty^-/K$. We can then identify $\Gamma_{\pp^\infty}\cong \Gamma^-$  by mapping the topological generator $\gamma_\pp$ to $\gamma_\pp^{1/2}\gamma_{\ppbar}^{-1/2}=:\gamma_-\in\Gamma^-$. Puting $V_1:=(1+Z_1)^{1/2}-1$, we have $\cO\dBr{\Gamma^-}\cong\cO\dBr{V_1}$. The character
\[
\Psi_{V_1}^{1-\cc}=\Psi_{Z_1}^{(1-\cc)/2}:\Gamma_\infty\rightarrow\cO\dBr{V_1}^\times
\]
factors through $\Gamma^-$ and is identified with the tautological character $\Gamma^-\hookrightarrow\cO\dBr{\Gamma^-}^\times$. Thus setting $Z_2=0$ in $\eqref{eq:big-5}$, by Shapiro's lemma the class $\boldsymbol{\kappa}_{f,\xi_1,\xi_2,m}^{(5)}$ finally gives rise to
\begin{equation}\label{eq:big-6}
\begin{aligned}
\boldsymbol{\kappa}_{f,\xi_1,\xi_2,m}^{(6)}&\in
H^1_{\mathrm{Iw}}(K[mp^\infty],T_f^\vee(1-r)\otimes\xi_1^{-1}\xi_2^{-1}\psi_0^{1-\cc}),
\end{aligned}
\end{equation}
where $H^1_{\mathrm{Iw}}(K[mp^\infty],T)$ denotes the limit $\varprojlim_sH^1(K[mp^s],T)$ with respect to corestriction. Thus, we arrive at the following key result.

\begin{thm}\label{maintheorem2}
Let $f\in S_{2r}(\Gamma_0(N_f))$ be a $p$-ordinary newform of weight $2r\geq 2$, let $K$ be an imaginary quadratic field satisfying \eqref{eq:spl} and \eqref{eq:p-nmid-h}, let $\xi_1,\xi_2$ be ray class characters of $K$ satisfying \eqref{eq:sd-xi} and \eqref{eq:dist} with moduli $\fkf_1,\fkf_2\subset\cO_K$, and suppose $(p,N_f\fkf_1\fkf_2)=1$.  Let $\phi:\Gamma^-\rightarrow\cO^\times$ be the $p$-adic avatar of an anticyclotomic Hecke character of $K$ of infinity type $(-j,j)$ with $j\in\Z$, and consider the $G_K$-representations
\[
T_{f,\xi_1\xi_2\phi}=T_f^\vee(1-r)\otimes\xi_1^{-1}\xi_2^{-1}\phi^{-1},\quad\quad T_{f,\xi_1\xi_2^\cc\phi}=T_f^\vee(1-r)\otimes\xi_1^{-1}\xi_2^{-\cc}\phi^{-1}.
\]
Let $m=\fkm\fkmbar$ run over the squarefree integers divisible only by primes split in $K$ and coprime to $pN$, where $N={\mathrm{lcm}}(N_f,N_{\xi_1},N_{\xi_2})$.  
Then there exists collections of Iwasawa cohomology classes
\[
\mathbf{z}_{f,\xi_1,\xi_2,\phi,m}\in H_{\mathrm{Iw}}^1\bigl(K[mp^\infty],T_{f,\xi_1\xi_2\phi}\bigr),\quad{}^\cc\mathbf{z}_{f,\xi_1,\xi_2,\phi,m}\in H_{\mathrm{Iw}}^1\bigl(K[mp^\infty],T_{f,\xi_1\xi_2^\cc\phi}\bigr)
\]
such that for every prime $\ell=\fkl\fklbar$ split in $K$ with $(\ell,mpN)=1$ we have the norm relations
\[
\mathrm{Norm}_{K[m]}^{K[m\ell]}(\mathbf{z}_{f,\xi_1,\xi_2,\phi,m\ell})=P_{\fkl}(\mathrm{Frob}_{\fkl})(\mathbf{z}_{f,\xi_1,\xi_2,\phi,m}),
\]
where $P_{\fkl}(X)=\det(1-X\cdot\mathrm{Frob}_{\fkl}\,|\,T_{f,\xi_1\xi_2\phi}^\vee(1))$, and
\[
\mathrm{Norm}_{K[m]}^{K[m\ell]}({}^\cc\mathbf{z}_{f,\xi_1,\xi_2,\phi,m\ell})=P^\cc_{\fkl}(\mathrm{Frob}_{\fkl})({}^\cc\mathbf{z}_{f,\xi_1,\xi_2,\phi,m}),
\]
where $P^\cc_{\fkl}(X)=\det(1-X\cdot \mathrm{Frob}_{\fkl}\,|\,T_{f,\xi_1\xi_2^\cc\phi}^\vee(1))$.
\end{thm}

\begin{proof}
For $\phi=\psi_0^{\cc-1}$ (which factor through $\Gamma^-$ and corresponds to the $p$-adic avatar of a Hecke character of $K$ of infinity type $(-1,1)$), the construction of the classes $\mathbf{z}_{f,\xi_1,\xi_2,\phi,m}$ satisfying the stated norm relations follows from a direct adaptation of the proof of Theorem~\ref{maintheorem1} applied to the classes $\boldsymbol{\kappa}_{f,\xi_1,\xi_2,m}^{(6)}$ of \eqref{eq:big-6} using Proposition~\ref{table2}; the construction of $\mathbf{z}_{f,\xi_1,\xi_2,\phi,m}$ for general $\phi$ then following by twisting by $\phi^{-1}\psi_0^{\cc-1}$ using \cite[Thm.~6.3.5]{Rubin-ES}.

The construction of the `conjugate' variant classes  ${}^\cc\mathbf{z}_{f,\xi_1,\xi_2,\phi,m}$ follows from an adaptation of the construction described in Section~\ref{variant}. Indeed, replacing the homomorphism $\phi_{\fkf_2\fkmbar\pp^r}$ in \eqref{eq:hecke-wild} by
\[
\phi_{\fkf_2\fkm\pp^r}:\mathbb{T}(1,N_{\xi_2}mp^r)'\rightarrow\cO[H_{\fkf_2\fkm\pp^r}],
\]
we arrive at the $G_\Q$-equivariant isomorphism
\[
\nu_{\fkf_2\fkm\pp^\infty}:H^1(\Gamma(1,N_{\xi_2}m(p)),\mathcal{D}_{-1}'(1))\hat\otimes_{\Z_p}\cO[\![H_{\fkf_2\fkm\pp^\infty}^{(p)}]\!]\xrightarrow{\simeq}{\mathrm{Ind}}_{K(\fkf_2\fkm)}^\Q\Lambda_\pp(\ch_2^{-1}\psi_0),
\]
and in the same manner as above from the classes $\boldsymbol{\kappa}_m^{(2)}$ in \eqref{eq:good-big-2} we obtain classes
\begin{equation}\label{eq:4-c}
\begin{aligned}
{}^\cc\boldsymbol{\kappa}_{f,\xi_1,\xi_2,m}^{(4)}\in H^1\bigl(\Q,T_f^\vee(1-r)\otimes({\mathrm{Ind}}_K^\Q\xi_1^{-1}\psi_0\Psi_{Z_1}[H_{\fkm}^{(p)}])\hat\otimes({\mathrm{Ind}}_K^\Q\xi_2^{-1}\psi_0\Psi_{Z_2}[H_{\fkm}^{(p)}])\\
\otimes(\psi_0^{-1}\Psi_{Z_1}^{-1/2}\Psi_{Z_2}^{-1/2}\circ\mathscr{V})\bigr).
\end{aligned}
\end{equation}
Applying to these the `conjugate' map $\xi_\Delta^\cc$ in \eqref{ind2-c} we obtain
\[
{}^\cc\boldsymbol{\kappa}_{f,\xi_1,\xi_2,m}^{(5)}\in 
H^1\bigl(\Q,T_f^\vee(1-r)\otimes_\cO{\mathrm{Ind}}_{K}^\Q\xi_1^{-1}\xi_2^{-\cc}\Psi_{Z_1}^{(1-\cc)/2}\Psi_{Z_2}^{(\cc-1)/2}[H[m]^{(p)}]\bigr),
\]
which after setting $Z_2=0$ result in classes
\[
{}^{\cc}\boldsymbol{\kappa}_{f,\xi_1,\xi_2,m}\in H^1_{\mathrm{Iw}}(K[mp^\infty],T_f^\vee(1-r)\otimes\xi_1^{-1}\xi_2^{-\cc}).
\]
Applying the argument in the proof of Theorem~\ref{maintheorem1} to these classes yields the construction of ${}^\cc\mathbf{z}_{f,\xi_1,\xi_2,\phi,m}$ for $\phi=1$, and the construction for general $\phi$ then follows again by twisting (by $\phi^{-1}$, in this case).
\end{proof}

\begin{rmk}\label{rem:diag-components}
Denote by $h_2$ the weight $2$ specialization of $\bfh=\boldsymbol{\theta}_{\xi_2}(Z_2)$ obtained by setting $Z_2=0$, put
\begin{equation}\label{eq:dec-components}
\begin{aligned}
\mathbb{V}^\dagger&=T_f^\vee(1-r)\hat\otimes_{\cO}({\mathrm{Ind}}_K^\Q\xi_1^{-1}\psi_0\Psi_{Z_1})\hat\otimes_{\cO}({\mathrm{Ind}}_K^\Q\xi_2^{-1}\psi_0)\otimes(\psi_0^{-1}\Psi_{Z_1}^{-1/2}\circ\mathscr{V})\\
&\cong\bigl(T_f^\vee(1-r)\otimes\xi_1^{-1}\xi_2^{-1}\psi_0^{1-\cc}\Psi_{V_1}^{1-\cc}\bigr)\oplus\bigl(T_f^\vee(1-r)\otimes\xi_1^{-1}\xi_2^{-{\cc}}\Psi_{V_1}^{1-\cc}\bigr)
\end{aligned}
\end{equation}
and let $\kappa(\breve{f},\breve{\bfg},\breve{h}_2)\in H^1(\Q,\mathbb{V}^\dagger)$ be the projection associated to the level-$N$ test vectors $(\breve{f},\breve{\bfh},\breve{h}_2)$ of the corresponding specialization of the $(\bff,\bfg,\bfh)$-isotypic component of the class $\boldsymbol{\kappa}_m^{(1)}$ in \eqref{eq:wrong-big-1}  for $m=1$. Then, writing
\begin{equation}\label{eq:comp-base}
\kappa(\breve{f},\breve{\bfg},\breve{h}_2)=(\kappa_1(\breve{f},\breve{\bfg},\breve{h}_2),\kappa_2(\breve{f},\breve{\bfg},\breve{h}_2))
\end{equation}
according to the decomposition
\begin{align*}
&H^1(\Q,\mathbb{V}^\dagger)\\
&\cong H^1(\Q,T_f^\vee(1-r)\hat\otimes_\cO\xi_1^{-1}\xi_2^{-1}\psi_0^{1-\cc}\Psi_{V_1}^{1-\cc})\oplus H^1(\Q,T_f^\vee(1-r)\hat\otimes_{\cO}{\mathrm{Ind}}_K^\Q\xi_1^{-1}\xi_2^{-\cc}\Psi_{V_1}^{1-\cc})\\
&\cong H^1_{\mathrm{Iw}}(K[p^\infty],T_{f,\xi_1\xi_2\psi_0^{\cc-1}})\oplus H^1_{\mathrm{Iw}}(K[p^\infty],T_{f,\xi_1\xi_2^{\cc}})
\end{align*}
from \eqref{eq:dec-components} and Shapiro's lemma, we see directly from the proof of Theorem~\ref{maintheorem2} that
\begin{equation}\label{eq:comp=base}
(\kappa_1(\breve{f},\breve{\bfg},\breve{h}_2),\kappa_2(\breve{f},\breve{\bfg},\breve{h}_2))=(\mathbf{z}_{f,\xi_1,\xi_2,\psi_0^{\cc-1},1},{}^\cc\mathbf{z}_{f,\xi_1,\xi_2,1,1}).
\end{equation}
\end{rmk}

\section{Anticyclotomic Euler systems}\label{anticycltomicES}

In this section we show that the systems of Iwasawa cohomology classes constructed in Theorem~\ref{maintheorem2}, which form anticyclotomic Euler systems in the sense of Jetchev--Nekov\'a{\v r}--Skinner \cite{JNS}, land in certain Selmer groups defined in the style of Greenberg \cite{Greenberg55}. We then record the bounds on these Selmer groups that follow by applying the machinery of \cite{JNS}  to our construction.


\subsection{Selmer groups}

Let $f\in S_{2r}(\Gamma_0(N_f))$ be a $p$-ordinary newform of weight $2r\geq 2$ with $p\nmid N_f$, and $K$ be an imaginary quadratic field in which $p=\pp\ppbar$ splits. Let $\chi$ be an anticyclotomic Hecke character of $K$ of infinity type $(-j,j)$, and consider the conjugate self-dual $G_K$-representation 
\[
V_{f,\chi}:=V_f^\vee(1-r)\otimes\chi^{-1}.
\]

Given a prime $v\mid p$ of $K$ and a $G_{K_v}$-stable subspace $\mathscr{F}_v^+(V_{f,\chi})\subset V_{f,\chi}$, we put 
\[\mathscr{F}_v^-(V_{f,\chi})=V_{f,\chi}/\mathscr{F}_v^+(V_{f,\chi}).\]

\begin{defn}
Let $L$ be a finite extension of $K$, and fix $\mathscr{F}=\{\mathscr{F}_v^+(V_{f,\chi})\}_{v\vert p}$. The associated \emph{Greenberg Selmer group} $\Sel_{\mathscr{F}}(L,V_{f,\chi})$ is defined by 
\begin{equation*}
    \Sel_{\mathscr{F}}(L,V_{f,\chi}):={{\mathrm{ker}}}\biggl\{H^1(L,V_{f,\chi})\rightarrow\prod_{w}\frac{H^1(L_w,V_{f,\chi})}{H^1_{\mathscr{F}}(L_w,V_{f,\chi})}
    \biggr\},
\end{equation*}
where $w$ runs over the finite primes of $L$, and the local conditions are given by
\[
H^1_{\mathscr{F}}(L_w,V_{f,\chi})=\begin{cases}
{{\mathrm{ker}}}\bigl\{H^1(L_w,V_{f,\chi})\rightarrow H^1(L_{w}^{\ur},V_{f,\chi})\bigr\} & \textrm {if $w\nmid p$},  \\[0.3em]
{{\mathrm{ker}}}\bigl\{H^1(L_w,V_{f,\chi})\rightarrow H^1(L_{w},\mathscr{F}_{v}^-(V_{f,\chi}))\bigr\} & \textrm{if $w\mid v\mid p$}.   
\end{cases}
\]

For any lattice $T_{f,\chi}\subset V_{f,\chi}$, we let $H^1_{\mathscr{F}}(L_w,T_{f,\chi})$ be the inverse image of $H^1_{\mathscr{F}}(L_w,V_{f,\chi})$ under the natural map $H^1(L_w,T_{f,\chi})\rightarrow H^1(L_w,V_{f,\chi})$, and define $\Sel_{\mathscr{F}}(L,T_{f,\chi})$ in the same manner; and given any $\Z_p$-extension $L_\infty=\bigcup_nL_n$ of $L$, we put
\[
\Sel_{\mathscr{F}}(L_\infty,T_{f,\chi}):=\varprojlim_n\Sel_{\mathscr{F}}(L_n,T_{f,\chi}),
\]
with limit with respect to corestriction, and also put 
\[\Sel_{\mathscr{F}}(L_\infty,V_{f,\chi}):=
\Sel_{\mathscr{F}}(L_\infty,T_{f,\chi})\otimes_{\Z_p}\Q_p\] (which is independent of the chosen $T_{f,\chi}$).
\end{defn}

We shall be particularly interested in the following two instances of these definitions:
\begin{itemize}
    \item The \emph{relaxed-strict Selmer group} $\Sel_{\relstr}(L,V_{f,\chi})$ obtained by taking
    \[
    \mathscr{F}_v^+(V_{f,\chi})=\begin{cases}
    V_{f,\chi}&\textrm{if $v=\pp$,}\\[0.3em]
    0&\textrm{if $v=\ppbar$.}
    \end{cases}
    \]
    \item The \emph{ordinary Selmer group} $\Sel_{\ord}(L,V_{f,\chi})$. Since $f$ is $p$-ordinary, upon restriction to $G_{\Q_p}\subset G_{\Q}$ the Galois representation $V_f^\vee$ fits into a short exact sequence
    \[
    0\rightarrow V_f^{\vee,+}\rightarrow V_f^\vee\rightarrow V_f^{\vee,-}\rightarrow 0
    \]
    with $V_{f}^{\vee,\pm}$ one-dimensional, and with the $G_{\Q_p}$-action on $V_{f}^{\vee,-}$ being unramified (see $\S$\ref{subsec:p-ord}). Then $\Sel_{\ord}(L,V_{f,\chi})$ is the Greenberg Selmer group defined by
    \begin{equation}\label{eq:fil+}
    \mathscr{F}_v^+(V_{f,\chi})=V_{f,\chi}^+:=V_f^{\vee,+}(1-r)\otimes\chi^{-1}
    \end{equation}
    for all $v\mid p$. 
\end{itemize}


Following \cite{BK}, we also define the Bloch--Kato Selmer group $\Sel_{\mathrm{BK}}(L,V_{f,\chi})$ by
\[
\Sel_{\mathrm{BK}}(L,V_{f,\chi}):={{\mathrm{ker}}}\biggl\{H^1(L,V_{f,\chi})\rightarrow\prod_w\frac{H^1(L_w,V_{f,\chi})}{H^1_f(L_w,V_{f,\chi})}\biggr\},
\]
where as before $w$ runs over the finite primes of $L$, and the local conditions are given by
\[
H^1_{f}(L_w,V_{f,\chi})=\begin{cases}
{{\mathrm{ker}}}\bigl\{H^1(L_w,V_{f,\chi})\rightarrow H^1(L_{w}^{\ur},V_{f,\chi})\bigr\} & \textrm {if $w\nmid p$},  \\[0.3em]
{{\mathrm{ker}}}\bigl\{H^1(L_w,V_{f,\chi})\rightarrow H^1(L_{w},V_{f,\chi}\otimes\mathbf{B}_{\mathrm{cris}})\bigr\} & \textrm{if $w\mid p$},   
\end{cases}
\]
with $\mathbf{B}_{\mathrm{cris}}$ being Fontaine's crystalline period ring. The local conditions $H^1_f(L_w,T_{f,\chi})\subset H^1(L_w,T_{f,\chi})$ are then defined by propagation. 

For our later convenience, we now recall the well-known relation between these different Selmer groups. Here we shall adopt the convention that the $p$-adic cyclotomic character has Hodge--Tate weight $-1$. Thus, since $\chi$ has infinity type $(-j,j)$ (see $\S$\ref{theta} for our convention regarding infinity types), the $p$-adic avatar of $\chi$ has Hodge--Tate weight $j$ at $\pp$ and $-j$ at $\ppbar$. Since it will suffice for our applications, we suppose $j\geq 0$.

\begin{lem}\label{lem:BK-Gr}
Suppose $f$ has weight $2r\geq 2$. Then for any finite extension $L$ of $K$ we have
\begin{equation}
\Sel_{\mathrm{BK}}(L,V_{f,\chi})=\begin{cases}
\Sel_{\relstr}(L,V_{f,\chi}) &\textrm{if $j\geq r$},\\[0.3em]
\Sel_{\ord}(L,V_{f,\chi}) &\textrm{if $0\leq j<r$}.
\end{cases}\nonumber
\end{equation}
\end{lem}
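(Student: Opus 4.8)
## Proof Plan

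The statement is a purely local claim at the primes above $p$: away from $p$ the local conditions defining $\mathrm{Sel}_{\mathrm{BK}}$, $\mathrm{Sel}_{\relstr}$, and $\mathrm{Sel}_{\ord}$ all coincide (they are the unramified condition), so it suffices to check, for each prime $w$ of $L$ above $p$, that $H^1_f(L_w, V_{f,\chi}) = H^1_{\mathscr{F}}(L_w, V_{f,\chi})$ with $\mathscr{F}$ the relaxed-strict filtration when $j \geq k/2$ and the ordinary filtration when $0 \leq j < k/2$. The plan is therefore to compute the Hodge--Tate weights of the relevant pieces of $V_{f,\chi}$ restricted to the decomposition groups at $\pp$ and $\ppbar$, and then invoke the standard dictionary between the Bloch--Kato condition and Greenberg-style conditions for representations whose Hodge--Tate weights all lie on one side of $0$.

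First I would recall the Hodge--Tate weights. With the convention that the cyclotomic character has Hodge--Tate weight $-1$, the representation $V_f^\vee$ has Hodge--Tate weights $\{0, k-1\}$ (or $\{1-k, 0\}$ depending on normalization — I would fix the one consistent with $V_f^{\vee,+} \simeq (V_f^-)^\vee(1-k)(\chi_f^{-1})$ as in $\S$\ref{subsec:p-ord}, so that the sub $V_f^{\vee,+}$ carries the weight $k-1$ and the unramified quotient $V_f^{\vee,-}$ carries weight $0$). Twisting by $1-k/2$ shifts both by $k/2-1$, and twisting by $\chi^{-1}$ adds the Hodge--Tate weight of $\chi^{-1}$, which is $-j$ at $\pp$ and $+j$ at $\ppbar$ (as recorded in the paragraph preceding the lemma). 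I would then tabulate the four numbers: at $\pp$, the sub $V_{f,\chi}^+$ has weight $k/2 - j$ and the quotient has weight $-k/2 + 1 - j$ wait — I must be careful; let me instead just say: I would compute that when $j \geq k/2$ all Hodge--Tate weights of $V_{f,\chi}|_{G_{K_\pp}}$ are negative and all those of $V_{f,\chi}|_{G_{K_\ppbar}}$ are positive (or $\geq 0$), while when $0 \leq j < k/2$ the ordinary filtration $V_{f,\chi}^+$ at each prime above $p$ is precisely the step that isolates the positive Hodge--Tate weight from the non-positive one.

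Next I would apply the known comparison of local conditions. For a de Rham representation $W$ of $G_{\Q_p}$ with all Hodge--Tate weights $< 0$ (in this convention), one has $H^1_f(\Q_p, W) = H^1(\Q_p, W)$, i.e. the crystalline condition is the relaxed (full) condition; dually, if all Hodge--Tate weights are $> 0$ then $H^1_f(\Q_p, W) = 0$, the strict condition. This is the standard computation of $\dim H^1_f$ via $\dim H^1_f = \dim D_{\mathrm{dR}}(W)/D^+_{\mathrm{dR}}(W) + \dim H^0(\Q_p, W)$; I would cite \cite{BK} (Blasius, or Fontaine--Perrin-Riou) for it. Applying this at $\pp$ and $\ppbar$ in the case $j \geq k/2$ gives exactly the relaxed-strict local condition, hence $\mathrm{Sel}_{\mathrm{BK}} = \mathrm{Sel}_{\relstr}$. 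In the case $0 \leq j < k/2$, I would instead use that for an ordinary (reducible) local representation sitting in $0 \to W^+ \to W \to W^- \to 0$ with $W^+$ de Rham of positive Hodge--Tate weights and $W^-$ unramified (Hodge--Tate weight $0$), the Bloch--Kato condition $H^1_f(\Q_p, W)$ coincides with $\ker(H^1(\Q_p, W) \to H^1(\Q_p, W^-))$ — one shows $H^1_f(\Q_p, W^+) = H^1(\Q_p, W^+)$ and $H^1_f(\Q_p, W^-) = H^1_{\mathrm{ur}}(\Q_p, W^-) = \ker(H^1(\Q_p,W^-)\to H^1(\Q_p^{\mathrm{ur}},W^-))$, then chase the diagram relating the two. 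This is exactly the content of $V_{f,\chi}^+$ in \eqref{eq:fil+}, so $\mathrm{Sel}_{\mathrm{BK}} = \mathrm{Sel}_{\ord}$.

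The main obstacle — really the only place requiring care — is bookkeeping the Hodge--Tate weights and the normalization conventions so that the inequality $j \gtrless k/2$ falls out on the correct side, and checking that at the prime where $V_{f,\chi}^+$ looks "wrong" (e.g. at $\ppbar$ in the ordinary case, where $\chi^{-1}$ contributes a positive weight already) the filtration step genuinely separates positive from non-positive weights. I would handle this by writing out the $2\times 2$ table of weights explicitly for $V_{f,\chi}|_{G_{K_\pp}}$ and $V_{f,\chi}|_{G_{K_\ppbar}}$ in terms of $k$ and $j$, then reading off the two cases. Everything else is the standard Bloch--Kato local computation and a routine diagram chase over the finitely many primes above $p$, together with the trivial observation that the conditions away from $p$ are identical by definition.
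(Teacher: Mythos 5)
Your proposal is correct and follows the same route as the paper: compute the Hodge--Tate weights of the rank-one pieces $V_{f,\chi}^\pm$ at $\pp$ and $\ppbar$, then apply the standard Panchishkin-type identification of $H^1_f$ with the local condition cut out by the filtration step whose weights are all negative (the paper cites Nekov\'a\v{r} and Flach for this, where you cite Bloch--Kato and Fontaine--Perrin-Riou, but the content is the same). The case split $j\gtrless k/2$ then falls out exactly as you describe, so this is essentially the paper's proof.
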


\begin{proof}
Combining the results of \cite[(3.1)-(3.2)]{nekovarCRM} and \cite[Lem.\,2, p.\,125]{flach-CT}, for every prime $w\mid v\mid p$ of $L/K/\Q$  we have 
\[
H^1_f(L_w,V_{f,\chi})={\mathrm{im}}\bigl\{H^1(L_w,{\mathrm{Fil}}_v^1(V_{f,\chi}))\rightarrow H^1(L_w,V_{f,\chi})\bigr\},
\]
where ${\mathrm{Fil}}_v^1(V_{f,\chi})\subset V_{f,\chi}$  is a $G_{K_v}$-stable subspace (assuming it exists) such that the Hodge--Tate weights of ${\mathrm{Fil}}_v^1(V_{f,\chi})$ (resp. $V_{f,\chi}/{\mathrm{Fil}}_v^1(V_{f,\chi})$) are all $<0$  (resp. $\geq 0$).

Now, the Hodge--Tate weights of $V_{f,\chi}^+$ 
and $V_{f,\chi}^-:=V_{f,\chi}/V_{f,\chi}^{+}$ at the primes of $K$ above $p$ are given by:
\begin{center}
\begin{tabular}{c|c|c|}
\cline{2-3}
  & $V_{f,\chi}^+$ & $V_{f,\chi}^-$ \\
 \hline
\multicolumn{1}{|c|}{HT weight at $\pp$} & $-j-r$ & $-j-1+r$  \\
 \hline
 \multicolumn{1}{|c|}{HT weight at $\ppbar$} & $j-r$ & $j-1+r$  \\
 \hline
\end{tabular}
\end{center}
Hence, ${\mathrm{Fil}}_\pp^1(V_{f,\chi})=V_{f,\chi}$ and ${\mathrm{Fil}}_{\ppbar}^1(V_{f,\chi})=0$ when $j\geq r$, and ${\mathrm{Fil}}_\pp^1(V_{f,\chi})={\mathrm{Fil}}_{\ppbar}^1(V_{f,\chi})=V_{f,\chi}^+$ when $0\leq j<r$, which yield the equalities in the lemma.
\end{proof}

For  
 a choice of Galois stable subspaces $\mathscr{F}=\{\mathscr{F}_v^+(V_{f,\chi})\}_{v\vert p}$ and 
 \[A_{f,\chi}:={\mathrm{Hom}}_{\Z_p}(T_{f,\chi},\mu_{p^\infty}),\]
 we define the associated \emph{dual Selmer group} $\Sel_{\mathscr{F}^*}(L,A_{f,\chi})$ by
\[
\Sel_{\mathscr{F}^*}(L,A_{f,\chi}):={{\mathrm{ker}}}\biggl\{H^1(L,A_{f,\chi})\rightarrow\prod_{w}\frac{H^1(L_w,A_{f,\chi})}{H^1_{\mathscr{F}^*}(L_w,A_{f,\chi})}\biggr\},
\]
where $H^1_{\mathscr{F}^*}(L_w,A_{f,\chi})$ is the orthogonal complement of $H^1_{\mathscr{F}}(L_w,T_{f,\chi})$ under local Tate duality
\[
H^1(L_w,T_{f,\chi})\times H^1(L_w,A_{f,\chi})\rightarrow\Q_p/\Z_p.
\]
In particular, we find that:
\begin{itemize}
\item The dual Selmer group of $\Sel_{\relstr}(L,T_{f,\chi})$ consists of classes that are unramified outside $p$ and have the strict (resp. relaxed) condition at the primes $w\vert\pp$ (resp. $w\vert\ppbar)$; we shall denote this by $\Sel_{\strrel}(L,A_{f,\chi})$.
\item The dual Selmer group of $\Sel_{\ord}(L,T_{f,\chi})$ consists of classes that are unramified outside $p$, and land in the image of the natural map
\[
H^1(L_w,\mathscr{F}_v^+(A_{f,\chi}))\rightarrow H^1(L_w,A_{f,\chi}),\quad\textrm{$\mathscr{F}_v^+(A_{f,\chi}):={\mathrm{Hom}}_{\Z_p}(\mathscr{F}_v^-(T_{f,\chi}),\mu_{p^\infty})$,}
\]
for $w\mid v\mid p$; 
we shall denote this by $\Sel_{\ord}(L,A_{f,\chi})$.
\end{itemize}

\subsection{Local conditions} 

We now determine the Selmer groups where the classes $\mathbf{z}_{f,\xi_1,\xi_2,\phi,m}$ and  ${}^\cc\mathbf{z}_{f,\xi_1,\xi_2,\phi,m}$ of Theorem~\ref{maintheorem2} live.

\begin{thm}
\label{mainthm2-localcond}
For any $\xi_1,\xi_2,\phi$, and $m$ as in Theorem~\ref{maintheorem2}, we have the inclusion
\[
\mathbf{z}_{f,\xi_1,\xi_2,\phi,m}\in\Sel_{\relstr}(K[mp^\infty],T_{f,\xi_1\xi_2\phi}),
\]
\[
{}^{\cc}\mathbf{z}_{f,\xi_1,\xi_2,\phi,m}\in\Sel_{\ord}(K[mp^\infty],T_{f,\xi_1\xi_2^\cc\phi}).
\]
%
\end{thm}

\begin{proof}
With notations as in the proof of Theorem~\ref{maintheorem2}, it suffices to check the result for $\mathbf{z}_{f,\xi_1,\xi_2,\psi_0^{\cc-1},m}$ and ${}^\cc\mathbf{z}_{f,\xi_1,\xi_2,1,m}$; the result for arbitrary $\phi$ then follows by twisting. 

We begin by explaining the case $m=1$. 
Letting $\mathbb{V}^\dagger$ and $\kappa(\breve{f},\breve{\boldsymbol{\bfg}},\breve{h}_2)\in H^1(\Q,\mathbb{V}^\dagger)$ be as in Remark~\ref{rem:diag-components}, by \eqref{eq:comp=base} we need to show the inclusions
\begin{equation}\label{eq:incl-m=1}
\kappa_1(\breve{f},\breve{\boldsymbol{\bfg}},\breve{h}_2)\in\Sel_{\relstr}(K[p^\infty],T_{f,\xi_1\xi_2\psi_0^{\cc-1}}),\quad\quad
\kappa_2(\breve{f},\breve{\boldsymbol{\bfg}},\breve{h}_2)\in\Sel_{\ord}(K[p^\infty],T_{f,\xi_1\xi_2}).
\end{equation}

It follows from \cite[Cor.~8.2]{BSV} that the class  $\kappa(\breve{f},\breve{\boldsymbol{\bfg}},\breve{h}_2)$ lands in the balanced Selmer group $\Sel^{\bal}(\Q,\mathbb{V}^\dagger)$ of Definition~\ref{def:Sel-bu} below. Restricted to $G_K$, the $G_\Q$-representation $\mathbb{V}^\dagger$ decomposes as
\begin{equation}\label{eq:split-V}
\mathbb{V}^\dagger\vert_{G_K}=\bigl(T_f^\vee(1-r)\otimes\ch_1^{-1}\ch_2^{-1}\psi_0^{1-\cc}\Psi_{V_1}^{1-\cc}\bigr)\oplus\bigl(T_f^\vee(1-r)\otimes\ch_1^{-1}\ch_2^{-\cc}\Psi_{V_1}^{1-\cc}\bigr),
\end{equation}
where $V_1=(1+Z_1)^{1/2}-1$, and 
from 
we readily find that the local condition $\mathscr{F}_p^{\bal}(\mathbb{V}^\dagger)$ at $p$ cutting out the balanced Selmer group corresponds to
\begin{equation}\label{eq:bal-pp}
\begin{split}
\mathscr{F}_\pp^{\bal}(\mathbb{V}^\dagger\vert_{G_K})&=\bigl(T_f^\vee(1-r)\otimes\ch_1^{-1}\ch_2^{-1}\psi_0^{1-\cc}\Psi_{V_1}^{1-\cc}\bigr)\\
\mathscr{F}_{\ppbar}^{\bal}(\mathbb{V}^\dagger\vert_{G_K})&=\{0\}
\end{split}
\begin{split}
&\oplus\bigl(T_f^{\vee,+}(1-r)\otimes\ch_1^{-1}\ch_2^{-\cc}\Psi_{V_1}^{1-\cc}\bigr),\\
&\oplus\bigl(T_f^{\vee,+}(1-r)\otimes\ch_1^{-1}\ch_2^{-\cc}\Psi_{V_1}^{1-\cc}\bigr),
\end{split}
\end{equation}
showing that the classes in \eqref{eq:incl-m=1} satisfy the right local conditions at the primes above $p$. 

For the finite  primes $w\nmid p$, we can give an argument that applies to all $m$. Since $V_{f,\xi_1\xi_2\psi_0^{\cc-1}}$ is conjugate self-dual and pure of weight $-1$, we see that 
\[
H^0(K[mp^s]_w,V_{f,\xi_1\xi_2\psi_0^{\cc-1}})=H^2(K[mp^s]_w,V_{f,\xi_1\xi_2\psi_0^{\cc-1}})=0
\]
for all $s\geq 0$, and therefore $H^1(K[mp^s]_w,V_{f,\xi_1\xi_2\psi_0^{\cc-1}})=0$ by Tate's local Euler characteristic formula. This shows that  $H^1(K[mp^s]_w,T_{f,\xi_1\xi_2\psi_0^{\cc-1}})$ is torsion, and as a result the inclusion
\[
{\mathrm{res}}_w(\mathbf{z}_{f,\xi_1,\xi_2,\psi_0^{\cc-1},m})\in \varprojlim_s H^1_f(K[mp^s]_w,T_{f,\xi_1\xi_2\psi_0^{\cc-1}})
\]
follows automatically. A similar argument shows that the classes  ${}^\cc\mathbf{z}_{f,\xi_1,\xi_2,m}$ are unramified at the primes outside $p$, thereby concluding the proof of the inclusions \eqref{eq:incl-m=1} and hence the result for $m=1$. 

It remains to show that for general $m$, the classes in the statement satisfy the claimed local condition at the primes above $p$. Specializing the class $\boldsymbol{\kappa}_{f,\xi_1,\xi_2,m}^{(4)}$ in \eqref{eq:big-4} to $Z_2=0$, it suffices to show the result for $m=1$ with $\xi_1$ replaced by $\xi_1\eta$ and $\xi_2$ replaced by $\xi_2\eta'$ for any two characters  $\eta:H_{\fkm}^{(p)}\rightarrow\mu_{p^\infty}, \eta':H_{\fkmbar}^{(p)}\rightarrow\mu_{p^\infty}$ with inverse characters. We obtain classes $\kappa_{f,\xi_1\eta,\xi_2\eta',m}\in H^1(\Q,\mathbb{V}^\dagger_{\eta,\eta'})$, where
\[
\mathbb{V}^\dagger_{\eta,\eta'}:=T_f^\vee(1-r)\otimes({\mathrm{Ind}}_K^\Q\xi_1^{-1}\eta\psi_0\Psi_{Z_1})\otimes({\mathrm{Ind}}_K^\Q\xi_2^{-1}\eta'\psi_0)\otimes(\psi_0^{-1}\Psi_{Z_1}^{-1/2}\circ\mathscr{V}),
\]
landing in $\Sel^{\bal}(\Q,\mathbb{V}_{\eta,\eta'}^\dagger)$ as a consequence of \cite[Cor.~8.2]{BSV}. Since the map $\xi_\Delta$ in \eqref{ind2} has the effect of projecting onto the first direct summand in the decomposition 
\begin{equation}\label{eq:split-V-eta}
\mathbb{V}_{\eta,\eta'}^\dagger\vert_{G_K}=\bigl(T_f^\vee(1-r)\otimes\ch_1^{-1}\ch_2^{-1}\eta\eta'\psi_0^{1-\cc}\Psi_{V_1}^{1-\cc}\bigr)\oplus\bigl(T_f^\vee(1-r)\otimes\ch_1^{-1}\ch_2^{-\cc}\eta(\eta')^\cc\Psi_{V_1}^{1-\cc}\bigr),
\end{equation}
from the description of $\mathscr{F}_\pp^{\bal}(\mathbb{V}_{\eta,\eta'}^\dagger\vert_{G_K})$ and $\mathscr{F}^{\bal}_{\ppbar}(\mathbb{V}_{\eta,\eta'}^\dagger\vert_{G_K})$ analogous to \eqref{eq:bal-pp}, and letting  $\eta$ and $\eta'$ vary, the inclusions
\[
{\mathrm{res}}_w(\mathbf{z}_{f,\xi_1,\xi_2,\psi_0^{\cc-1},m})\in\begin{cases}
H^1_{\mathrm{Iw}}(K[mp^\infty]_w,T_{f,\xi_1\xi_2\psi_0^{\cc-1}})&\textrm{if $w\mid\pp$,}\\[0.2em]
\{0\}&\textrm{if $w\mid\ppbar$}
\end{cases}
\]
follow, concluding the proof of the first inclusion in the theorem. Finally, the inclusions
\[
{\mathrm{res}}_w({}^\cc\mathbf{z}_{f,\xi_1,\xi_2,m})\in H^1_{\mathrm{Iw}}(K[mp^\infty],\mathscr{F}_w^+(T_{f,\xi_1\xi_2}))
\]
for all $w\mid p$ can be shown in the same manner, now specialising the class ${}^\cc\boldsymbol{\kappa}_{f,\xi_1,\xi_2,m}^{(4)}$ in \eqref{eq:4-c} to $Z_2=0$ and characters $\nu,\nu':H_\fkm^{(p)}\rightarrow\mu_{p^\infty}$ with inverse central characters, and using the fact that the map $\xi_\Delta^\cc$ in \eqref{ind2-c} has the effect of projecting onto the second direct summand in the decomposition  \eqref{eq:split-V-eta}.
\end{proof}

\subsection{Applying the general machinery} 

In this section we give some direct arithmetic applications that follow by applying to the classes of Theorem~\ref{maintheorem2}  the general Euler system machinery of Jetchev--Nekov\'a{\v r}--Skinner  \cite{JNS}. Later in the paper, by exploiting the relation between the bottom class of our Euler systems and special values of complex and $p$-adic $L$-functions, we shall deduce from these results applications to the Bloch--Kato conjecture and the anticyclotomic Iwasawa main conjecture.

For $\xi_1,\xi_2,\phi$ and $m$ as in Theorem~\ref{maintheorem2}, denote by
\begin{align*}
z_{f,\xi_1,\xi_2,\phi,m}&\in\Sel_{\relstr}(K[m],T_{f,\xi_1\xi_2\phi}),\\
{}^\cc z_{f,\xi_1,\xi_2,\phi,m}&\in\Sel_{\ord}(K[m],T_{f,\xi_1\xi_2^\cc\phi}),
\end{align*}
the image of $\mathbf{z}_{f,\xi_1,\xi_2,\phi,m}, {}^\cc\mathbf{z}_{f,\xi_1,\xi_2,\phi,m}$ under the projections
\begin{align*}
\Sel_{\relstr}(K[mp^\infty],T_{f,\xi_1\xi_2\phi})&\rightarrow\Sel_{\relstr}(K[m],T_{f,\xi_1\xi_2\phi}),\\
\Sel_{\ord}(K[mp^\infty],T_{f,\xi_1\xi_2^\cc\phi})&\rightarrow\Sel_{\ord}(K[m],T_{f,\xi_1\xi_2^\cc\phi}),
\end{align*}
respectively, and put (recalling that we assume \eqref{eq:p-nmid-h}, so $K[1]=K$)
\begin{align*}
z_{f,\xi_1,\xi_2,\phi}:=z_{f,\xi_1,\xi_2,\phi,1}&\in\Sel_{\relstr}(K,T_{f,\xi_1\xi_2\phi}),\\
{}^\cc z_{f,\xi_1,\xi_2,\phi}:=
{}^\cc z_{f,\xi_1,\xi_2,\phi,1}&\in\Sel_{\ord}(K,T_{f,\xi_1\xi_2^\cc\phi}).
\end{align*}

\subsubsection{Rank one results}

\begin{thm}\label{thm:rank-1-general}
Let the hypotheses be as in Theorem~\ref{maintheorem2}. Assume also that $f$ is not of CM-type. Then the following hold:
\begin{itemize}
\item[(I)] If $z_{f,\xi_1,\xi_2,\phi}$ is non-torsion, then $\Sel_{\relstr}(K,V_{f,\xi_1\xi_2\phi})$ is one-dimensional.
\item[(II)] If ${}^\cc z_{f,\xi_1,\xi_2,\phi}$ is non-torsion, then $\Sel_{\ord}(K,V_{f,\xi_1\xi_2^\cc\phi})$ is one-dimensional.
\end{itemize}
\end{thm}

\begin{proof}
By Theorem~\ref{maintheorem2} and Theorem~\ref{mainthm2-localcond}, the system of classes 
\begin{equation}\label{eq:ES-rel-str}
\bigl\{z_{f,\xi_1,\xi_2,\phi,m}\in\Sel_{\relstr}(K[m],T_{f,\xi_1\xi_2\phi})\bigr\}_m
\end{equation}
forms an anticyclotomic Euler system in the sense of Jetchev--Nekov\'a{\v r}--Skinner \cite{JNS} for the relaxed-strict Greenberg Selmer group. 
Hence from their general results\footnote{See also \cite[\S{8.1}]{ACR} for an exposition of the relevant results from \cite{JNS}, which at the time of writing is not publicly available yet.} the one-dimensionality of $\Sel_{\relstr}(K,V_{f,\xi_1\xi_2\phi})$ is implied by the nonvanishing of $z_{f,\xi_1,\xi_2,\phi}\in\Sel_{\relstr}(K,V_{f,\xi_1\xi_2\phi})$ provided the $G_K$-representation $V:=V_{f,\xi_1\xi_2\phi}$ satisfies the following hypotheses:
\begin{itemize}
    \item[(i)] $V$ is absolutely irreducible;
    \item[(ii)] There is an element $\sigma\in G_K$ fixing $K(\mu_{p^\infty},(\cO_K^\times)^{1/p^\infty})$ such that $V/(\sigma-1)V$ is one-dimensional;
    \item[(iii)] There is an element $\gamma\in G_K$ fixing $K(\mu_{p^\infty},(\cO_K^\times)^{1/p^\infty})$ such that $V^{\gamma=1}=0$.
\end{itemize}

Since $f$ is not of CM-type, hypotheses (i)--(iii) follow easily from Momose's big image results \cite{Momo} as in \cite[Prop.~7.1.4]{LLZ-K}, whence part (I) of the theorem holds; the proof of part (II) is the same.
\end{proof}




\subsubsection{Iwasawa-theoretic results}


Recall that $K_\infty^-$ denotes the anticyclotomic $\Z_p$-extension of $K$, and put $\Lambda_K^-=\cO\dBr{{\mathrm{Gal}}(K_\infty^-/K)}$. Let
\begin{align*}
\mathbf{z}_{f,\xi_1,\xi_2,\phi}:=\mathbf{z}_{f,\xi_1,\xi_2,\phi,1}&\in\Sel_{\relstr}(K_\infty^-,T_{f,\xi_1\xi_2\phi}),\\
{}^{\cc}\mathbf{z}_{f,\xi_1,\xi_2,\phi}:={}^\cc\mathbf{z}_{f,\xi_1,\xi_2,\phi,1}&\in\Sel_{\ord}(K_\infty^-,T_{f,\xi_1\xi_2^\cc\phi})
\end{align*}
be the bottom classes of the systems $\{\mathbf{z}_{f,\xi_1,\xi_2,\phi,m}\}_m$ and $\{{}^\cc\mathbf{z}_{f,\xi_1,\xi_2,\phi,m}\}_m$ from Theorem~\ref{maintheorem2}, where the inclusions follow from Theorem~\ref{mainthm2-localcond}.

\begin{notation}\label{notation-bigimage}
As in \cite[\S{7.1}]{LLZ-K}, we shall say that $f$ \emph{has big image} if the image of $G_\Q$ in ${\mathrm{Aut}}_{\cO}(T_f^\vee)$ contains a conjugate of $\mathrm{SL}_2(\Z_p)$.
\end{notation}

We also note that, by a theorem of Ribet \cite{Ribet-glasgow}, if $f$ is not of CM-type, then it has big image at $\mathfrak{P}$ all but finitely many primes $\mathfrak{P}$ of $L$.

Put 
\[
X_{\strrel}(K_\infty^-,A_{f,\xi_1\xi_2\phi})={\mathrm{Hom}}_{\Z_p}\biggl(\varinjlim_n\Sel_{\strrel}(K_n^-,A_{f,\xi_1\xi_2\phi}),\Q_p/\Z_p\biggr),
\]
where $K_n^-$ denotes the subextension of $K_\infty^-$ of with $[K_n^-:K]=p^n$, and put
\[
X_{\ord}(K_\infty^-,A_{f,\xi_1\xi_2\phi})={\mathrm{Hom}}_{\Z_p}\biggl(\varinjlim_n\Sel_{\ord}(K_n^-,A_{f,\xi_1\xi_2\phi}),\Q_p/\Z_p\biggr).
\]

The next result can be seen as a divisibility towards an anticyclotomic Iwasawa main conjecture `without $L$-functions'.

\begin{thm}\label{thm:IMC-general}
Let the hypotheses be as in Theorem~\ref{maintheorem2}, and assume in addition that $f$ has big image. Then the following hold:
\begin{itemize}
\item[(I)] If $\mathbf{z}_{f,\xi_1,\xi_2,\phi}$ is non-torsion, then $X_{\strrel}(K_\infty^-,A_{f,\xi_1\xi_2\phi})$ and $\Sel_{\relstr}(K_\infty^-,T_{f,\xi_1\xi_2\phi})$ both have $\Lambda_K^-$-rank one, and we have the divisibility
\[
{\mathrm{char}}_{\Lambda_K^-}(X_{\strrel}(K_\infty^-,A_{f,\xi_1\xi_2\phi})_{\mathrm{tors}})\supset{\mathrm{char}}_{\Lambda_K^-}\biggl(\frac{\Sel_{\relstr}(K_\infty^-,T_{f,\xi_1\xi_2\phi})}{\Lambda_K^-\cdot\mathbf{z}_{f,\xi_1,\xi_2,\phi}}\biggr)^2
\]
in $\Lambda_K^-$. 
\item[(II)] If ${}^{\cc}\mathbf{z}_{f,\xi_1,\xi_2,\phi}$ is non-torsion, then $X_{\ord}(K_\infty^-,A_{f,\xi_1\xi_2^\cc\phi})$ and $\Sel_{\ord}(K_\infty^-,T_{f,\xi_1\xi_2^\cc\phi})$ both have $\Lambda_K^-$-rank one, and we have the divisibility
\[
{\mathrm{char}}_{\Lambda_K^-}(X_{\ord}(K_\infty^-,A_{f,\xi_1\xi_2^\cc\phi})_{\mathrm{tors}})\supset{\mathrm{char}}_{\Lambda_K^-}\biggl(\frac{\Sel_{\ord}(K_\infty^-,T_{f,\xi_1\xi_2^\cc\phi})}{\Lambda_K^-\cdot{}^{\cc}\mathbf{z}_{f,\xi_1,\xi_2,\phi}}\biggr)^2
\]
in $\Lambda_K^-$. 
\end{itemize}
Here, in both $({\mathrm{I}})$ and $({\mathrm{II}})$, the subscript ${\mathrm{tors}}$ denotes the $\Lambda_K^-$-torsion submodule.
\end{thm}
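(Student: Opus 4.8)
\textbf{Proof plan for Theorem~\ref{thm:IMC-general}.}
The plan is to deduce this statement as a direct application of the anticyclotomic Euler system machinery of Jetchev--Nekov\'a{\v r}--Skinner \cite{JNS}, exactly as in the proof of Theorem~\ref{thm:rank-1-general}, but now invoking the full Iwasawa-theoretic output of \emph{op.\,cit.} rather than merely its descent-to-$K$ corollary. By Theorem~\ref{maintheorem2} together with Proposition~\ref{propselmer2}, the collection
\[
\bigl\{\mathbf{z}_{f,\psi_1,\psi_2,\fkm}\in{\rm Sel}_{\relstr}(K[mp^\infty],T_{f,\psi_1,\psi_2})\;\colon\;\fkm\in\mathcal{N}\bigr\}
\]
is a $\Lambda_K^-$-adic anticyclotomic Euler system (in the sense of \cite{JNS}) for the conjugate self-dual representation $T_{f,\psi_1,\psi_2}$ with the relaxed-strict local conditions at $p$, and similarly $\{{}^\cc\mathbf{z}_{f,\psi_1,\psi_2,\fkm}\}$ is one for $T_{f,\psi_1,\psi_2^\cc}$ with the ordinary-ordinary local conditions. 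First I would verify that the hypotheses required by the main theorem of \cite{JNS} in the $\Lambda_K^-$-adic setting are satisfied: the self-duality and purity of $V_{f,\psi_1,\psi_2}$ (which give the vanishing of local cohomology away from $p$, already used in Proposition~\ref{propselmer2}), the residual irreducibility and big-image hypotheses Hyp$(\bm\sigma)$, Hyp$(\bm\gamma)$, Hyp$(\bm\zeta)$, and the orthogonality of the Greenberg local conditions at $p$ under local Tate duality. As in \cite[Prop.\,7.1.4]{LLZ-K}, since $f$ is not of CM-type and has big image at $\mathfrak P$, Momose's results \cite{Momo} (or Ribet's \cite{Ribet-glasgow}) supply these big-image inputs; the orthogonality at $p$ of the relaxed-strict conditions with the strict-relaxed conditions (resp.\ of the ordinary conditions with their duals) is immediate from the definitions, and away from $p$ it is \cite[Prop.\,1.4.2]{Rubin-ES}.

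The second step is to feed the non-torsion hypothesis on $\mathbf{z}_{f,\psi_1,\psi_2}$ (the image in ${\rm Sel}_{\relstr}(K_\infty^-,T_{f,\psi_1,\psi_2})$ of the conductor-$(1)$ class under corestriction) into the \cite{JNS} machinery. Their general theorem then yields simultaneously: that ${\rm Sel}_{\relstr}(K_\infty^-,T_{f,\psi_1,\psi_2})$ has $\Lambda_K^-$-rank exactly one; that the dual Selmer group $X_{\strrel}(K_\infty^-,A_{f,\psi_1,\psi_2})$ has $\Lambda_K^-$-rank exactly one; and the divisibility of characteristic ideals
\[
{\rm char}_{\Lambda_K^-}\bigl(X_{\strrel}(K_\infty^-,A_{f,\psi_1,\psi_2})_{\rm tors}\bigr)\supset{\rm char}_{\Lambda_K^-}\biggl(\frac{{\rm Sel}_{\relstr}(K_\infty^-,T_{f,\psi_1,\psi_2})}{\Lambda_K^-\cdot\mathbf{z}_{f,\psi_1,\psi_2}}\biggr)^{2}
\]
relating the torsion part of the dual Selmer module to the square of the index of the Euler system class. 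The square arises, as usual in the Kolyvagin/Jetchev--Nekov\'a{\v r}--Skinner formalism, from the symmetry forced by conjugate self-duality together with global duality. Part (II) is proved identically, replacing $\psi_2$ by $\psi_2^\cc$, the relaxed-strict conditions by the ordinary-ordinary conditions, and the Euler system $\{\mathbf{z}_{f,\psi_1,\psi_2,\fkm}\}$ by $\{{}^\cc\mathbf{z}_{f,\psi_1,\psi_2,\fkm}\}$; Proposition~\ref{propselmer2} guarantees the latter lies in ${\rm Sel}_{\ord}(K[mp^\infty],T_{f,\psi_1,\psi_2^\cc})$, and the hypotheses on the Galois representation $V_{f,\psi_1,\psi_2^\cc}$ are verified in the same way.

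I expect the main point requiring care to be the bookkeeping around the local conditions at $p$: one must check that the relaxed-strict (resp.\ ordinary-ordinary) conditions cut out precisely the Greenberg-style Selmer structure to which \cite{JNS} applies, and that their orthogonal complements are the strict-relaxed (resp.\ ordinary-ordinary) structures appearing in the dual Selmer modules $X_{\strrel}$ and $X_{\ord}$. This is essentially the content of Proposition~\ref{propselmer2} and the discussion of dual Selmer groups preceding it, so the verification is routine but must be done with the Hodge--Tate weight computations of Lemma~\ref{lem:BK-Gr} in mind. A secondary technical point is the passage, via \cite[Thm.\,6.3.5]{Rubin-ES} as in the proof of Theorem~\ref{maintheorem2}, between the twisted representation $T_{f,\psi_1,\psi_2}\otimes\kappa_{\rm ac}^{(k_1+k_2-4)/2}$ in which the classes are literally constructed and the self-dual $T_{f,\psi_1,\psi_2}$ to which the Euler system formalism is applied; but since this twist is by a character of the anticyclotomic group it does not affect the rank or divisibility statements, only the precise identification of the Iwasawa modules involved.
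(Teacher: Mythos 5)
Your proposal is correct and follows essentially the same route as the paper: apply the $\Lambda_K^-$-adic Euler system machinery of Jetchev--Nekov\'a{\v r}--Skinner to the system of classes coming from Theorem~\ref{maintheorem2} and Proposition~\ref{propselmer2}, and verify the required big-image hypotheses (residual irreducibility, Hyp$(\bm\sigma)$, Hyp$(\bm\gamma)$/Hyp$(\bm\zeta)$) from the assumption that $f$ has big image at $\mathfrak P$, as in \cite[Prop.\,7.1.6]{LLZ-K}. The paper's own proof is slightly terser, simply listing the three hypotheses to be checked and citing \cite{LLZ-K}, whereas you additionally spell out the orthogonality of the local conditions at $p$ and the twist bookkeeping via \cite[Thm.\,6.3.5]{Rubin-ES}; these are correct but treated as implicit in the paper.
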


\begin{proof}
By Theorem~\ref{maintheorem2} and Theorem~\ref{mainthm2-localcond}, the system of classes
\begin{equation}\label{eq:ES-Lambda}
\bigl\{\mathbf{z}_{f,\xi_1,\xi_2,\phi,m}\in\Sel_{\relstr}(K[mp^\infty],T_{f,\xi_1\xi_2\phi})\bigr\}_m
\end{equation}
forms a $\Lambda_K^-$-adic anticyclotomic Euler system in the sense of \cite{JNS} for the relaxed-strict Selmer group. Thus by the general results of \emph{op.\,cit.} (see also \cite[\S{8.1}]{ACR} for a summary), the non-torsionness of $\mathbf{z}_{f,\xi_1,\xi_2,\phi}$ implies the conclusions in part (I) provided the $G_K$-module $T=T_{f,\xi_1\xi_2\phi}$ satisfies the following hypotheses:
\begin{itemize}
    \item[(i)] $\bar{T}:=T/\mathfrak{P}T$ is absolutely irreducible;
    \item[(ii)] There is an element $\sigma\in G_K$ fixing $K(\mu_{p^\infty},(\cO_K^\times)^{1/p^\infty})$ such that $T/(\sigma-1)T$ is free of rank $1$ over $\cO$;
    \item[(iii)] There is an element $\gamma\in G_K$ fixing $K(\mu_{p^\infty},(\cO_K^\times)^{1/p^\infty})$ and acting as multiplication by a scalar $a_\gamma\neq 1$ on $\bar{T}$.
\end{itemize}
These are easily checked under our assumption that $f$ has big image  (see \cite[Prop.~7.1.6]{LLZ-K}). This shows part (I) of the theorem, and part (II) follows in the same manner.
\end{proof}

\newpage
\part{Applications}
\label{part:App}

\section{Preliminaries}\label{sec:prelim-2}


In this section, we briefly review the unbalanced triple product $p$-adic $L$-function constructed in \cite{hsieh-triple} and  their associated Selmer groups. 
We also recall from \cite{BSV} the explicit reciprocity law for  diagonal classes. 


\subsection{Triple product \texorpdfstring{$p$}{p}-adic \texorpdfstring{$L$}{L}-function}\label{subsec:triple}

\subsubsection{Hida families} 
\label{subsubsec:hida}

Let $\cR$ be a normal domain, finite flat over 
\[
\Lambda:=\cO\dBr{1+p\Z_p},
\] 
where $\cO$ is the ring of integers of a finite extension of $L_\mathfrak{P}$ of $\Q_p$. (Here, as in $\S\ref{sec:main-thms}$,  $L_\mathfrak{P}$ denotes the completion of a number field $L$ at a prime $\mathfrak{P}$ above $p$ induced by our fixed embedding $i_p:\overline{\bQ}\hookrightarrow\overline{\bQ}_p$.) For an  integer $N>0$ with $p\nmid N$, and a Dirichlet character $\chi:(\Z/Np\Z)^\times\rightarrow\cO^\times$, we denote by $S^o(N,\chi,\cR)\subset\cR\dBr{q}$ the space of ordinary $\cR$-adic cusp forms of tame level $N$ and branch character $\chi$ as defined in \cite[\S{3.1}]{hsieh-triple}. 

Denote by $\mathfrak{X}_\cR^+\subset{\mathrm{Spec}}\,\cR(\overline{\Q}_p)$ the set of \emph{arithmetic points} of $\cR$, consisting of the ring homomorphisms $Q:\cR\rightarrow\overline{\Q}_p$ such that $Q\vert_{1+p\Z_p}$ is given by $z\mapsto z^{k_Q-2}\epsilon_Q(z)$ for some $k_Q\in\Z_{\geq 2}$ called the \emph{weight of $Q$} and $\epsilon_Q(z)\in\mu_{p^\infty}$. (Note that here we center the weight map at weight $2$, rather than weight $0$ as done in \emph{loc.\,cit.}). As in  \cite[\S{3.1}]{hsieh-triple}, we say that $\boldsymbol{f}=\sum_{n=1}^\infty a_n(\bfff)q^n\in S^o(N,\chi,\cR)$ is a \emph{primitive Hida family} if for every $Q\in\mathfrak{X}_\cR^+$ the specialization $\boldsymbol{f}_Q$ gives the $q$-expansion of a $\mathfrak{P}$-ordinary $p$-stabilized newform of weight  $k_Q$ and tame conductor $N$. Attached to such $\bfff$ we let $\mathfrak{X}_{\cR}^{\mathrm{cls}}$ be the set of ring homomorphisms $Q$ as above with $k_Q\in\Z_{\geq 1}$ such that $\bfff_Q$ is the $q$-expansion of a classical modular form (thus $\mathfrak{X}_\cR^{\mathrm{cls}}$ contains $\mathfrak{X}_\cR^+$).

For $\bfff$ a primitive Hida family of tame level $N$, we let 
\begin{equation}\label{eq:big-Gal-rep}
\rho_{\bfff}:G_\Q\rightarrow{\mathrm{Aut}}_{\cR}(V_\bfff)\simeq{\mathrm{GL}}_2(\cR)
\end{equation}
denote the associated Galois representation, which here we take to be the \emph{dual} of that in \cite[\S{3.2}]{hsieh-triple}; in particular, the determinant of $\rho_\bfff$ is $\chi_\cR\cdot\varepsilon_{\mathrm{cyc}}$ in the notations of \emph{loc.\,cit.}, where $\varepsilon_{\mathrm{cyc}}$ is the $p$-adic cyclotomic character. A priori, $\rho_{\bff}$ is just realized over in the fraction field ${\mathrm{Frac}}(\cR)$, but we shall always assume that its associated residual representation $\bar{\rho}_{\bff}:G_\Q\rightarrow{\mathrm{GL}}_2(\kappa_{\cR})$, where $\kappa_{\cR}$ denotes the residue field of $\cR$, is absolutely irreducible, in which case an integral model as in \eqref{eq:big-Gal-rep} can always be found.

Restricting to $G_{\Q_p}$, the Galois representation $V_\bfff$ fits into a short exact sequence
\[
0\rightarrow V_\bfff^+\rightarrow V_\bfff\rightarrow V_\bfff^-\rightarrow 0,
\]
where the quotient $V_\bfff^-$ is free of rank one over $\cR$, with the $G_{\Q_p}$-action given by the unramified character sending an arithmetic Frobenius $\Frob_p^{-1}$ to $a_p(\bfff)$, see \cite[Thm.~2.2.2]{wiles88}. 

Denote by $\bT(N,\cR)$ the Hecke algebra acting on $\oplus_\chi S^o(N,\chi,\cR)$, with $\chi$ running over the Dirichlet characters modulo $Np$. Associated with $\bfff$ there is a $\cR$-algebra homomorphism 
\[
\lambda_{\bfff}:\bT(N,\cR)\rightarrow\cR
\] 
factoring through a local component $\bT_{\fkm}$. Following \cite{hida-AJM}, we define the \emph{congruence ideal} $C(\bfff)$ of $\bfff$ by
\[
C(\bfff):=\lambda_{\bfff}({\mathrm{Ann}}_{\bT_\fkm}({\mathrm{ker}}\,\lambda_\bfff))\subset\cR.
\] 
If $\bar{\rho}_{\bfff}$ is absolutely irreducible and $p$-distinguished, it follows from the results of \cite{Wiles} and \cite{hida-AJM} that $C(\bfff)$ is generated by a nonzero element $\eta_{\bfff}^{cong}\in\cR$.

\subsubsection{Triple products of Hida families}\label{subsubsec:triple-hida}

Let
\[
(\bff,\bfg,\bfh)\in S^o(N_f,\chi_f,\cR_f)\times S^o(N_g,\chi_g,\cR_g)\times S^o(N_h,\chi_h,\cR_h)
\]
be a triple of primitive Hida families with 
\begin{equation}\label{eq:a}
\textrm{$\chi_f\chi_g\chi_h=\omega^{2a}$ for some $a\in\Z$,}\tag{sd-triple}
\end{equation} 
where $\omega$ is the Teichm\"uller character. Put 
\[
\mathcal{R}=\cR_f\hat\otimes_{\cO}\cR_g\hat\otimes_{\cO}\cR_h,
\] 
which is a finite extension of the three-variable Iwasawa algebra $\Lambda\hat\otimes_{\cO}\Lambda\hat\otimes_{\cO}\Lambda$. 

Let $\mathfrak{X}_{\mathcal{R}}^+\subset{\mathrm{Spec}}\,\mathcal{R}(\overline{\Q}_p)$ be the weight space of $\mathcal{R}$ given by
\[
\mathfrak{X}_{\mathcal{R}}^+:=\left\{\underline{Q}=(Q_0,Q_1,Q_2)\in\mathfrak{X}_{\cR_f}^+\times\mathfrak{X}_{\cR_g}^{\mathrm{cls}}\times\mathfrak{X}_{\cR_h}^{\mathrm{cls}}\;:\;k_{Q_0}+k_{Q_1}+k_{Q_2}\equiv 0\pmod{2}\right\}.
\] 
This can be written as the disjoint union $\mathfrak{X}_{\mathcal{R}}^+=\mathfrak{X}_{\mathcal{R}}^{\bal}\sqcup\mathfrak{X}_{\mathcal{R}}^{\bff}\sqcup\mathfrak{X}_{\mathcal{R}}^{\bfg}\sqcup\mathfrak{X}_{\mathcal{R}}^{\bfh}$, where
\begin{align*}
\mathfrak{X}_{\mathcal{R}}^{\bal}&:=\left\{\underline{Q}\in\mathfrak{X}_{\mathcal{R}}^+\;:\;\textrm{$k_{Q_0}+k_{Q_1}+k_{Q_2}> 2k_{Q_i}$ for all $i=0,1,2$}\right\}
\end{align*}
is the set of \emph{balanced} weights, i.e. where each weight $k_{Q_i}$ is smaller than the sum of the other two, and
\begin{align*}
\mathfrak{X}_{\mathcal{R}}^\bff&:=\left\{\underline{Q}\in\mathfrak{X}_{\mathcal{R}}^+\;:\;\textrm{$k_{Q_0}\geq k_{Q_1}+k_{Q_2}$}\right\},\\
\mathfrak{X}_{\mathcal{R}}^\bfg&:=\left\{\underline{Q}\in\mathfrak{X}_{\mathcal{R}}^+\;:\;\textrm{$k_{Q_1}\geq k_{Q_0}+k_{Q_2}$}\right\},\\
\mathfrak{X}_{\mathcal{R}}^\bfh&:=\left\{\underline{Q}\in\mathfrak{X}_{\mathcal{R}}^+\;:\;\textrm{$k_{Q_2}\geq k_{Q_0}+k_{Q_1}$}\right\},
\end{align*}
are the sets of $\bff$-, $\bfg$-, and $\bfh$-\emph{unbalanced} weights, respectively.

Let $\mathbf{V}=V_\bff\hat\otimes_{\cO}V_{\bfg}\hat\otimes_{\cO}V_{\bfh}$ be the triple tensor product Galois representation attached to $(\bff,\bfg,\bfh)$. Write the determinant of $\mathbf{V}$ in the form $\det\mathbf{V}=\mathcal{X}^2\varepsilon_{\mathrm{cyc}}$ (note that this is possible by (\ref{eq:a}) and $p>2$), and put  
\begin{equation}\label{eq:crit-twist}
\Vdag:=\mathbf{V}\otimes\mathcal{X}^{-1},
\end{equation}
which is a self-dual twist of $\mathbf{V}$. 

\subsubsection{Unbalanced triple product $p$-adic $L$-function}

Define the rank four $G_{\Q_p}$-invariant subspace $\mathscr{F}_p^\bff(\Vdag)$ of $\Vdag$ by
\begin{equation}\label{eq:unb-intro}
\mathscr{F}_p^\bff(\Vdag):=V_{\bff}^+\hat\otimes_{\cO}V_\bfg\hat\otimes_{\cO}V_{\bfh}\otimes\mathcal{X}^{-1},
\end{equation}
and for any $\underline{Q}=(Q_0,Q_1,Q_2)\in\mathfrak{X}_{\mathcal{R}}^\bff$  denote by $\mathscr{F}_p^\bff(\VQdag)\subset\VQdag$  the corresponding specializations. 

For a rational prime $\ell$, let $\varepsilon_\ell(\VQdag)$ be the epsilon factor attached to the local representation $\VQdag\vert_{G_{\Q_\ell}}$ 
(see \cite[p.\,21]{tate-background}), and assume that 
\begin{equation}\label{eq:+1}
\textrm{for some $\underline{Q}\in\mathfrak{X}_{\mathcal{R}}^\bff$, we have $\varepsilon_\ell(\VQdag)=+1$ for all prime factors $\ell$ of $N_f N_g N_h$.}\tag{sgn}
\end{equation}
As explained in \cite[\S{1.2}]{hsieh-triple}, it is known that condition (\ref{eq:+1}) is independent of $\underline{Q}$, and it implies that the sign in the functional equation for the triple product $L$-function (with center at $s=0$) 
\[
L(\VQdag,s)
\] 
is $+1$ (resp. $-1$) for all $\underline{Q}\in\mathfrak{X}_{\mathcal{R}}^\bff\cup\mathfrak{X}_{\mathcal{R}}^\bfg\cup\mathfrak{X}_{\mathcal{R}}^\bfh$ (resp. $\underline{Q}\in\mathfrak{X}_{\mathcal{R}}^{\bal}$).


\begin{thm}\label{thm:hsieh-triple}
Let $(\bff,\bfg,\bfh)$ be a triple of primitive Hida families 
satisfying conditions \eqref{eq:a} and \eqref{eq:+1}. Assume in addition that
\begin{itemize}
\item $\gcd(N_f,N_g,N_h)$ is square-free;
\item $\bar{\rho}_{\bff}$ is absolutely irreducible and $p$-distinguished;
\end{itemize}
and fix a generator $\eta_{\bff}^{cong}$ of the congruence ideal of $\bff$. Then there exists a unique element 
\[
\mathscr{L}_p^{\bff}(\bff,\bfg,\bfh)\in\mathcal{R}
\]
such that for all $\underline{Q}=(Q_0,Q_1,Q_2)\in\mathfrak{X}_{\mathcal{R}}^{\bff}$ of weight $(k_0,k_1,k_2)$ with $\epsilon_{Q_0}=1$ we have
\[
\bigl(\mathscr{L}_p^{\bff}(\bff,\bfg,\bfh)(\underline{Q})\bigr)^2=\Gamma_{\VQdag}(0)\cdot\frac{L(\VQdag,0)}{(\sqrt{-1})^{2k_{0}}\cdot\Omega_{\bff_{Q_0}}^2}\cdot\mathcal{E}_p(\mathscr{F}_p^{\bff}(\VQdag))\cdot\prod_{q\in\Sigma_{\mathrm{exc}}}(1+q^{-1})^2,
\]
where:
\begin{itemize}
\item $\Gamma_{\VQdag}(0)=\Gamma_{\bC}(c_{\underline{Q}})\Gamma_\bC(c_{\underline{Q}}+2-k_1-k_2)\Gamma_\bC(c_{\underline{Q}}+1-k_1)\Gamma_\bC(c_{\underline{Q}}+1-k_2)$, with 
\[
c_{\underline{Q}}=(k_0+k_1+k_2-2)/2
\] 
and $\Gamma_\bC(s)=2(2\pi)^{-s}\Gamma(s)$;
\item $\Omega_{\bff_{Q_0}}$ is the canonical period
\[
\Omega_{\bff_{Q_0}}:=(-2\sqrt{-1})^{k_0+1}\cdot\frac{\Vert\bff_{Q_0}^\circ\Vert_{\Gamma_0(N_f)}^2}{\eta_{\bff_{Q_0}}^{cong}}\cdot\Bigl(1-\frac{\chi_{f}'(p)p^{k_0-1}}{\alpha_{Q_0}^2}\Bigr)\Bigl(1-\frac{\chi_{f}'(p)p^{k_0-2}}{\alpha_{Q_0}^2}\Bigr),
\]
with $\bff_{Q_0}^\circ\in S_{k_0}(\Gamma_0(N_{f}))$ the newform of conductor $N_f$ associated with $\bff_{Q_0}$, $\chi_f'$ the prime-to-$p$ part of $\chi_f$, and $\alpha_{Q_0}$ the specialization of $a_p(\bff)\in\cR_f^\times$ at $Q_0$;
\item $\mathcal{E}_p(\mathscr{F}_p^{\bff}(\VQdag))$ is the modified $p$-Euler factor
\[
\mathcal{E}_p(\mathscr{F}_p^{\bff}(\VQdag)):=\frac{L_p(\mathscr{F}_p^{\bff}(\VQdag),0)}{\varepsilon_p(\mathscr{F}_p^{\bff}(\VQdag))\cdot L_p(\VQdag/\mathscr{F}_p^{\bff}(\VQdag),0)}\cdot\frac{1}{L_p(\VQdag,0)},
\]
\end{itemize}
and $\Sigma_{\mathrm{exc}}$ is an explicitly defined subset of the prime factors of $N_f N_g N_h$, \cite[p.~416]{hsieh-triple}.
\end{thm}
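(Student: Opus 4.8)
The plan is to deduce the existence and interpolation property of $\mathscr{L}_p^{\bff,\eta_\bff}(\bff,\bfg,\bfh)$ directly from the construction of the unbalanced triple product $p$-adic $L$-function in \cite{hsieh-triple}, checking only that our normalisations match. First I would recall that in \emph{loc.\,cit.} Hsieh constructs, under exactly the stated hypotheses (the gcd of the three tame levels squarefree, $\bar\rho_\bff$ absolutely irreducible and $p$-distinguished, plus the local sign condition \eqref{eq:+1}), a three-variable $p$-adic $L$-function attached to the choice of a generator $\eta_\bff$ of the congruence ideal of $\bff$; the square of its value at an $\bff$-unbalanced classical point $\underline{Q}$ with trivial wild character computes the central critical value $L(\VQdag,0)$ divided by the square of the canonical period $\Omega_{\bff_{Q_0}}$, up to the archimedean $\Gamma$-factor, the modified $p$-Euler factor, and the correction factor at the exceptional primes. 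The content of the theorem as stated here is a bookkeeping translation: our $V_\bff$ is the \emph{dual} of Hsieh's (as flagged in \S\ref{subsubsec:hida}), so I would track the induced change on $\Vdag$ and on the subspace $\mathscr{F}_p^\bff(\Vdag)$ in \eqref{eq:unb-intro}, verify that the self-dual twist by $\mathcal{X}^{-1}$ in \eqref{eq:crit-twist} matches the one used in \cite{hsieh-triple}, and confirm that the center of the functional equation lands at $s=0$ in our conventions.

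The key steps, in order, are: (1) cite the construction of $\mathscr{L}_p^{\bff,\eta_\bff}(\bff,\bfg,\bfh)\in\mathcal{R}$ from \cite{hsieh-triple}, noting that $\mathcal{R}=\cR_f\hat\otimes_\cO\cR_g\hat\otimes_\cO\cR_h$ is the relevant coefficient ring and that uniqueness follows from density of $\mathfrak{X}_{\mathcal{R}}^\bff$ in ${\rm Spec}\,\mathcal{R}$ together with $\mathcal{R}$ being a domain; (2) match the archimedean period and $\Gamma$-factor: identify $c_{\underline{Q}}=(k_0+k_1+k_2-2)/2$ as the relevant shift, and check that $\Gamma_{\VQdag}(0)$ is the product of the four $\Gamma_\bC$-factors dictated by the Hodge numbers of the $\bff$-unbalanced range; (3) match the canonical period $\Omega_{\bff_{Q_0}}$, including the two Euler-factor corrections $\bigl(1-\chi_f'(p)p^{k_0-1}/\alpha_{Q_0}^2\bigr)$ and $\bigl(1-\chi_f'(p)p^{k_0-2}/\alpha_{Q_0}^2\bigr)$, with the definition in \cite[\S{3}]{hsieh-triple}, using that $\eta_\bff$ specialises to $\eta_{\bff_{Q_0}}$; (4) match the modified $p$-Euler factor $\mathcal{E}_p(\mathscr{F}_p^\bff(\VQdag))$ with Hsieh's, which is where the duality convention on $V_\bff$ and the precise shape of $\mathscr{F}_p^\bff(\Vdag)$ enter; (5) identify $\Sigma_{\rm exc}$ with the subset of bad primes defined on \cite[p.\,416]{hsieh-triple}.

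\textbf{Main obstacle.} I expect the delicate point to be step (4) — reconciling the modified $p$-Euler factor and the local sign condition \eqref{eq:+1} under our dualised convention for $V_\bff$. Passing to the dual reverses the roles of the sub and quotient in the ordinary filtration $0\to V_\bff^+\to V_\bff\to V_\bff^-\to 0$, so one must be careful that $\mathscr{F}_p^\bff(\Vdag)=V_\bff^+\hat\otimes V_\bfg\hat\otimes V_\bfh\otimes\mathcal{X}^{-1}$ is indeed the rank-four Panchishkin subspace for which $L_p(\mathscr{F}_p^\bff(\VQdag),0)$ and $L_p(\VQdag/\mathscr{F}_p^\bff(\VQdag),0)$ behave as in the interpolation formula, and that \eqref{eq:+1} is the $\varepsilon$-factor condition in the same normalisation (geometric vs.\ arithmetic Frobenius, and the shift placing the center at $0$). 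Everything else is a transcription of \cite{hsieh-triple} once these conventions are pinned down; in fact, since the statement is explicitly attributed to \cite{hsieh-triple}, the proof can be quite short, essentially: ``This is \cite[Thm.\,A]{hsieh-triple} (or the relevant numbered result there), taking into account that our $V_\bff$ is the contragredient of Hsieh's and adjusting the period and Euler factors accordingly.''
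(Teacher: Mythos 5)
Your proposal is correct and matches the paper's proof, which is exactly the one-line citation you anticipate: ``This is Theorem~A in \cite{hsieh-triple}, which in fact proves a more general interpolation formula.'' The normalisation bookkeeping you outline (dualised $V_\bff$, period, modified Euler factor) is a sensible elaboration of what that citation tacitly assumes, but the paper does not spell it out.
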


\begin{proof}
This is Theorem~A in \cite{hsieh-triple}, which in fact proves a more general interpolation formula. 
\end{proof}


\begin{rmk}\label{rmk:harris-kudla}
The construction of the $p$-adic $L$-function
$\mathscr{L}_p^{\bff}(\bff,\bfg,\bfh)$ is based on Hida's $p$-adic Rankin--Selberg method \cite{hidaII}, and the proof of the above exact interpolation formula relies on a suitable choice of test vectors $(\breve{\bff}^\star,\breve{\bfg}^\star,\breve{\bfh}^\star)$ for  $(\bff,\bfg,\bfh)$ of level $N={\mathrm{lcm}}(N_f,N_g,N_h)$.  
In general (without any additional hypotheses on $\bar{\rho}_{\bfff}$), for any choice $(\breve{\bff},\breve{\bfg},\breve{\bfh})$ of level-$N$ test vectors, Hida's method produces an element 
\begin{equation}\label{eq:general-triple}
\mathscr{L}_p^{\bff}(\breve{\bff},\breve{\bfg},\breve{\bfh})\in{\mathrm{Frac}}(\cR_f)\hat\otimes_\cO\cR_g\hat\otimes_\cO\cR_h,
\end{equation}
and by virtue of the proof of Jacquet's conjecture by  Harris--Kudla \cite{harris-kudla} (see also \cite[Rem.~4.8]{DR1}), for any fixed $\underline{Q}_0\in\mathcal{X}_{\mathcal{R}}^\bff$, if the central $L$-value $L(\mathbf{V}_{\underline{Q}_0}^\dagger,0)$ is nonzero, one can find $(\breve{\bff},\breve{\bfg},\breve{\bfh})$ such that 
\[
\mathscr{L}_p^{\bff}(\breve{\bff},\breve{\bfg},\breve{\bfh})(\underline{Q}_0)\neq 0.
\]
In terms of \eqref{eq:general-triple}, under the hypotheses on $\bar{\rho}_f$ in Theorem~\ref{thm:hsieh-triple}, the $p$-adic $L$-function $\mathscr{L}_p^{\bff}(\bff,\bfg,\bfh)$  is given by  $\eta_\bff^{cong}\cdot\mathscr{L}_p^\bff(\breve{\bff}^\star,\breve{\bfg}^\star,\breve{\bfh}^\star)$.
\end{rmk}

\subsection{Triple product Selmer groups}
\label{subsec:tripleSelmer}

Let $\Vdag=\mathbf{V}\otimes\mathcal{X}^{-1}$ be the self-dual twist of the Galois representation associated to a triple of primitive Hida families $(\bff,\bfg,\bfh)$ 
satisfying (\ref{eq:a}).

\begin{defn}\label{def:local-p}
Put
\[
\mathscr{F}^{\bal}_p(\Vdag)
:=\bigl(V_{\bff}\otimes V_{\bfg}^+\otimes V_{\bfh}^++V_{\bff}^+\otimes V_{\bfg}\otimes V_{\bfh}^++V_{\bff}^+\otimes V_{\bfg}^+\otimes V_{\bfh}\bigr)\otimes\mathcal{X}^{-1},
\]
and define the \emph{balanced local condition} $\rH^1_{\bal}(\Q_p,\Vdag)$ by 
\[
\rH^1_{\bal}(\Q_p,\Vdag):={\mathrm{im}}\bigl(\rH^1(\Q_p,\mathscr{F}_p^{\bal}(\Vdag))\rightarrow\rH^1(\Q_p,\Vdag)\bigr).
\]
As in \eqref{eq:unb-intro}, put $\mathscr{F}_p^{\unb}(\Vdag):=\bigl(V_{\bff}^+\otimes V_{\bfg}\otimes V_{\bfh}\bigr)\otimes\mathcal{X}^{-1}$, and define the \emph{$\bff$-unbalanced local condition} $\rH^1_{\unb}(\Q_p,\Vdag)$ by
\[
\rH^1_{\bff}(\Q_p,\Vdag):={\mathrm{im}}\bigl(\rH^1(\Q_p,\mathscr{F}_p^{\bff}(\Vdag))\rightarrow\rH^1(\Q_p,\Vdag)\bigr).
\]
\end{defn}

It is easy to see that the maps appearing in these definitions are injective, and in the following we shall use this to identify $\rH^1_{\any}(\Q_p,\Vdag)$ with $\rH^1(\Q_p,\mathscr{F}_p^\any(\Vdag))$ for $\any\in\{{\bal},\unb\}$.

\begin{defn}\label{def:Sel-bu}
Let $\any\in\{{\bal},\unb\}$, and define the Selmer group $\Sel^\any(\Q,\Vdag)$ by
\[
\Sel^\any(\Q,\Vdag):={\mathrm{ker}}\biggl\{\rH^1(\Q,\Vdag)\rightarrow\frac{\rH^1(\Q_p,\Vdag)}{\rH^1_\any(\Q_p,\Vdag)}\times\prod_{v\neq p}\rH^1(\Q_v^{\mathrm{nr}},\Vdag)\biggr\}.
\]
We call $\Sel^{\bal}(\Q,\Vdag)$ (resp. $\Sel^{\bff}(\Q,\Vdag)$) the \emph{balanced} (resp. \emph{$\bff$-unbalanced}) Selmer group.
\end{defn}

Let $\Adag={\mathrm{Hom}}_{\Z_p}(\Vdag,\mu_{p^\infty})$ and for $\any\in\{{\bal},\bff\}$ define $\rH^1_{\any^*}(\Q_p,\Adag)\subset\rH^1(\Q_p,\Adag)$ to be the orthogonal complement of $\rH^1_\any(\Q_p,\Vdag)$ under the local Tate duality
\[
\rH^1(\Q_p,\Vdag)\times\rH^1(\Q_p,\Adag)\rightarrow\Q_p/\Z_p.
\]
We then define the balanced and $\bff$-unbalanced Selmer groups with coefficients in $\Adag$ by
\[
\Sel^{\any}(\Q,\Adag):={\mathrm{ker}}\biggl\{\rH^1(\Q,\Adag)\rightarrow\frac{\rH^1(\Q_p,\Adag)}{\rH_{\any^*}^1(\Q_p,\Adag)}\times\prod_{v\neq p}\rH^1(\Q_v^{\mathrm{nr}},\Adag)\biggr\},
\]
and let \[X^{\any}(\Q,\Adag)={\mathrm{Hom}}_{\Z_p}(\Sel^{\any}(\Q,\Adag),\Q_p/\Z_p)\] denote the Pontryagin dual of $\Sel^{\any}(\Q,\Adag)$.

\subsection{Explicit reciprocity law}
\label{subsec:diag}

We continue to denote by $(\bff,\bfg,\bfh)$ a triple of primitive Hida families as in $\S\ref{subsubsec:triple-hida}$ satisfying (\ref{eq:a}), and put $N={\mathrm{lcm}}(N_f,N_g,N_h)$. Let
\begin{equation}\label{eq:3-diag}
\kappa(\bff,\bfg,\bfh)\in\rH^1(\Q,\Vdag(\mathscr{N}))
\end{equation}
be the big diagonal class constructed in \cite[\S{8.1}]{BSV}, where $\Vdag(\mathscr{N})$ denotes a free $\mathcal{R}$-module isomorphic to finitely many copies of $\Vdag$. 

\begin{rmk}
By construction,  $\kappa(\bff,\bfg,\bfh)$ is  the same as the  $(\bff,\bfg,\bfh)$-isotypic projection of the class ${\boldsymbol{\kappa}}_m^{(1)}$ in (\ref{eq:good-big-1}) with $m=1$. 
\end{rmk}

The definition of the Selmer groups in $\S\ref{subsec:tripleSelmer}$ extends immediately to $\Vdag(\mathscr{N})$, and by Corollary~8.2 in \emph{loc.\,cit.} one knows that $\kappa(\bff,\bfg,\bfh)\in\Sel^{\bal}(\Q,\Vdag(\mathscr{N}))$.

Put 
\[
\mathscr{F}_p^{3}(\Vdag)=V_\bff^+\hat\otimes_{\cO} V_{\bfg}^+\hat\otimes_{\cO} V_{\bfh}^+\otimes\mathcal{X}^{-1}\subset\Vdag.
\] 
Then clearly $\mathscr{F}_p^3(\Vdag)\subset\mathscr{F}_p^{\bal}(\Vdag)$, with quotient given by
\begin{equation}\label{eq:gr2}
\mathscr{F}^{\bal}_p(\Vdag)/\mathscr{F}_p^3(\Vdag)\cong
\mathbf{V}_{\bff}^{\bfg\bfh}\oplus\mathbf{V}_{\bfg}^{\bff\bfh}\oplus\mathbf{V}_{\bfh}^{\bff\bfg},
\end{equation}
where 
\begin{equation}\label{eq:3-summands}
\begin{aligned}
\mathbf{V}_{\bff}^{\bfg\bfh}&=V_\bff^-\hat\otimes_{\cO} V_{\bfg}^+\hat\otimes_{\cO} V_{\bfh}^+\otimes\mathcal{X}^{-1},\\
\mathbf{V}_{\bfg}^{\bff\bfh}&=V_\bff^+\hat\otimes_{\cO} V_{\bfg}^-\hat\otimes_{\cO} V_{\bfh}^+\otimes\mathcal{X}^{-1},\\
\mathbf{V}_{\bfh}^{\bff\bfg}&=V_\bff^+\hat\otimes_{\cO} V_{\bfg}^+\hat\otimes_{\cO} V_{\bfh}^-\otimes\mathcal{X}^{-1}.
\end{aligned}
\end{equation}
We similarly define the level-$N$ version $\mathscr{F}^{\bal}_p(\Vdag)(\mathscr{N})$, $\mathbb{V}_{\bff}^{\bfg\bfh}(\mathscr{N})$, etc..

\subsubsection{Three-variable reciprocity law} 

As explained in \cite[\S{7.3}]{BSV} (see also \cite[\S{5.1}]{DR3}), for every choice of level-$N$ test vectors $(\breve{\bff},\breve{\bfg},\breve{\bfh})$ for $(\bff,\bfg,\bfh)$ one can deduce from results 
in \cite{KLZ} the construction of an injective three-variable $p$-adic regulator map with pseudo-null cokernel
\begin{equation}\label{eq:Log}
{\mathrm{Log}}^\bff_{(\breve{\bff},\breve{\bfg},\breve{\bfh})}:\rH^1(\Q_p,\mathbf{V}_{\bff}^{\bfg\bfh}(\mathscr{N}))\rightarrow C(\bff)^{-1}\cR_f\hat\otimes_{\cO}\cR_g\hat\otimes_{\cO}\cR_h,
\end{equation}
where $C(\bff)\subset\cR_f$ is the congruence ideal of $\bff$, characterized by the property that for all $\mathfrak{Z}\in\rH^1(\Q_p,\mathbf{V}_{\bff}^{\bfg\bfh}(\mathscr{N}))$ and all points $\underline{Q}=(Q_0,Q_1,Q_2)\in\mathfrak{X}_{\mathcal{R}}^{+}$ of weight $(k_0,k_1,k_2)$ with $\epsilon_{Q_i}=1$ ($i=0,1,2$) we have
\begin{align*}
{\mathrm{Log}}^\bff_{(\breve{\bff},\breve{\bfg},\breve{\bfh})}&(\mathfrak{Z})(\underline{Q})=(p-1)\alpha_{Q_0}\biggl(1-\frac{\beta_{Q_0} \alpha_{Q_1} \alpha_{Q_2}}{p^{c_{\underline{Q}}}}\biggr)\biggl(1-\frac{\alpha_{Q_0} \beta_{Q_1}\beta_{Q_2}}{p^{c_{\underline{Q}}}}\biggr)^{-1}\\
&\quad\times\begin{cases}
\frac{(-1)^{c_{\underline{Q}}-k_0}}{(c_{\underline{Q}}-k_0)!}\cdot\left\langle{\mathrm{Log}}_p(\mathfrak{Z}_{\underline{Q}}),\eta_{\breve{\bff}_{Q_0}}\otimes\omega_{\breve{\bfg}_{Q_1}}\otimes\omega_{\breve{\bfh}_{Q_2}}\right\rangle_{\mathrm{dR}}, &\textrm{if $\underline{Q}\in\mathfrak{X}_{\mathcal{R}}^{\bal}$,}\\[0.5em]
(k_0-c_{\underline{Q}}-1)!\cdot\left\langle{\mathrm{exp}}_p^*(\mathfrak{Z}_{\underline{Q}}),\eta_{\breve{\bff}_{Q_0}}\otimes\omega_{\breve{\bfg}_{Q_1}}\otimes\omega_{\breve{\bfh}_{Q_2}}\right\rangle_{\mathrm{dR}},&\textrm{if $\underline{Q}\in\mathfrak{X}_{\mathcal{R}}^{\bff}$.}
\end{cases}
\end{align*}

Here, 
\begin{itemize}
\item $c_{\underline{Q}}=(k_0+k_1+k_2-2)/2$ is as in Theorem~\ref{thm:hsieh-triple}; 
\item $\alpha_{Q_0}$ denotes the specialization of $a_p(\bff)$ at $Q_0$ and we put $\beta_{Q_0}=\chi_f'(p)p^{k_0-1}\alpha_{Q_0}^{-1}$; and $(\alpha_{Q_1},\beta_{Q_1})$ (resp. $(\alpha_{Q_2},\beta_{Q_2})$) are defined likewise with  $\bfg$ (resp. $\bfh$) in place of $\bff$;
\item ${\mathrm{Log}}_p$ and ${\mathrm{exp}}^*_p$ are the Bloch--Kato logarithm and dual exponential maps as reviewed in \cite[pp.\,51-52]{BSV}; 
\item $\eta_{\breve{\bff}_{Q_0}}$ (resp. $\omega_{\breve{\bfg}_{Q_1}}, \omega_{\breve{\bfh}_{Q_2}}$) is the differential attached to $\breve{\bff}_{Q_0}$ (resp. $\breve{\bfg}_{Q_1},\breve{\bfh}_{Q_2}$) as in \cite[Eq.~(30)]{BSV} (resp. \cite[Eq.~(34)]{BSV}); and 
\item $\langle-,-\rangle_{\mathrm{dR}}$ denotes the de Rham pairing of \cite[Eq.~(32)]{BSV}.
\end{itemize}

Denote by ${\mathrm{res}}_p(\kappa(\bff,\bfg,\bfh))_{\bff}$ the image of $\kappa(\bff,\bfg,\bfh)$ under natural maps
\begin{equation}\label{eq:map-ERL}
\begin{aligned}
\Sel^{\bal}(\Q,\Vdag(\mathscr{N}))&\xrightarrow{{\mathrm{res}}_p}\rH^1(\Q_p,\mathscr{F}_p^{\bal}(\Vdag(\mathscr{N})))\\ &\rightarrow\rH^1(\Q_p,\mathscr{F}_p^{\bal}(\Vdag(\mathscr{N}))/\mathscr{F}_p^3(\Vdag(\mathscr{N})))
\rightarrow\rH^1(\Q_p,\mathbf{V}_{\bff}^{\bfg\bfh}(\mathscr{N}))
\end{aligned}
\end{equation}
arising from the restriction at $p$ and the projection onto the first direct summand in (\ref{eq:gr2}).

\begin{thm}
\label{thm:ERL}
Let $(\bff,\bfg,\bfh)$ be a triple of primitive Hida families satisfying \eqref{eq:a}. Then for every triple $(\breve{\bff},\breve{\bfg},\breve{\bfh})$ of level-$N$ test vectors  for $(\bff,\bfg,\bfh)$ we have
\[
{\mathrm{Log}}^{\bff}_{(\breve{\bff},\breve{\bfg},\breve{\bfh})}({\mathrm{res}}_p(\kappa(\bff,\bfg,\bfh))_\bff)=\mathscr{L}_p^{\bff}(\breve{\bff},\breve{\bfg},\breve{\bfh}),
\]
where $\mathscr{L}_p^{\bff}(\breve{\bff},\breve{\bfg},\breve{\bfh})$ 
is as in \eqref{eq:general-triple}. 
\end{thm}

\begin{proof}
This is Theorem~A in \cite{BSV} (see also \cite[Thm.\,5.1]{DR3}).
\end{proof}

\begin{rmk}\label{rmk:harris-kudla-diag}
In particular, if $\bar{\rho}_f$ is absolutely irreducible and $p$-distinguished, then Theorem~\ref{thm:ERL} gives
\begin{equation}\label{eq:ERL-f}
\eta_{\bff}^{cong}\cdot{\mathrm{Log}}^{\bff}_{(\breve{\bff}^\star,\breve{\bfg}^\star,\breve{\bfh}^\star)}({\mathrm{res}}_p(\kappa(\bff,\bfg,\bfh))_\bff)=\mathscr{L}_p^{\bff}(\bff,\bfg,\bfh),
\end{equation}
where $\mathscr{L}_p^{\bff}(\bff,\bfg,\bfh)$ is as in Theorem~\ref{thm:hsieh-triple}. For the proof of the arithmetic  applications in this paper, in addition to \eqref{eq:ERL-f}  we shall use its counterpart in the $\bfg$-unbalanced case.

\end{rmk}

\subsubsection{Iwasawa--Greenberg main conjectures}
\label{subsec:IMC}

Let $(\bff,\bfg,\bfh)$ be a triple of primitive Hida families as in Theorem~\ref{thm:hsieh-triple}, and assume that the associated ring $\mathcal{R}$ is  regular. As explained in \cite[\S{7.3}]{ACR}, the following result can be seen as the equivalence between two different formulation of the Iwasawa main conjecture in the style of Greenberg \cite{Greenberg55} for the $p$-adic deformation $\Vdag$. 


For every triple of level-$N$ test vectors $(\breve{\bff},\breve{\bfg},\breve{\bfh})$ for $(\bff,\bfg,\bfh)$, let $\kappa(\breve{\bff},\breve{\bfg},\breve{\bfh})$ denote the image of the class $\kappa({\bff},{\bfg},{\bfh})$ in \eqref{eq:3-diag} under the resulting projection $\rH^1(\Q,\Vdag(\mathscr{N}))\rightarrow\rH^1(\Q,\Vdag)$; thus $\kappa(\breve{\bff},\breve{\bfg},\breve{\bfh})\in{\mathrm{Sel}}_{\bal}(\Q,\Vdag)$.

\begin{prop}\label{prop:equiv}
Suppose $\mathscr{L}_p^{\bff}(\breve{\bff},\breve{\bfg},\breve{\bfh})$ is nonzero. Then following statements ${\mathrm{(I)}}$ and ${\mathrm{(II)}}$ are equivalent:
\begin{enumerate}
\item[(I)] The modules ${\mathrm{Sel}}^{\unb}(\Q,\Vdag)$ and $X^{\unb}(\Q,\Adag)$ are both $\mathcal{R}$-torsion, and 
\[
{\mathrm{char}}_\mathcal{R}\bigl(X^{\unb}(\Q,\Adag)\bigr)=\bigl(\mathscr{L}_p^\unb(\breve{\bff},\breve{\bfg},\breve{\bfh})^2\bigr)
\]
in $\mathcal{R}\otimes_{\Z_p}\Q_p$.

\item[(II)] 
The modules ${\mathrm{Sel}}^{\mathrm{bal}}(\Q,\Vdag)$ and $X^{\mathrm{bal}}(\Q,\Adag)$ have both $\mathcal{R}$-rank one, and
\[
{\mathrm{char}}_{\mathcal{R}}\bigl(X^{\mathrm{bal}}(\Q,\Adag)_{\mathrm{tors}}\bigr)={\mathrm{char}}_{\mathcal{R}}\biggl(\frac{{\mathrm{Sel}}^{\mathrm{bal}}(\Q,\Vdag)}{\mathcal{R}\cdot\kappa(\breve{\bff},\breve{\bfg},\breve{\bfh})}\biggr)^2
\]
in $\mathcal{R}\otimes_{\Z_p}\Q_p$, where the subscript ${\mathrm{tors}}$ denotes the $\mathcal{R}$-torsion submodule.
\end{enumerate}
The same equivalence holds with equalities replaced by the same one-sided divisibilities.
\end{prop}

\begin{proof}
This follows from Theorem~\ref{thm:ERL} and global duality in the same way as the proof of \cite[Thm.~7.15]{ACR}. See \cite{Lai-PhD} for the details in the stated level of generality.
%
\end{proof}

\begin{rmk}
A divisibility towards these Iwasawa main conjectures is obtained in \cite{ACR} in the case where $\bff$ has CM. The Iwasawa-theoretic results in the next two sections also yield a divisibility in these main conjectures, in the case where both $\bfg$ and $\bfh$ have CM by $K$; we leave the formulation of the precise result to the interested reader. A divisibility result is also obtained in \cite{Do-biquadratic} in the case where $\bfg$ and $\bfh$ have CM by different imaginary quadratic fields, and in \cite{GKC-cm} cases of the full conjecture are obtained when $\bff$, $\bfg$, and $\bfh$ have all CM by the same $K$.
\end{rmk}

\section{Definite case}\label{sec:definite}

In this section we deduce our applications to the Bloch--Kato conjecture and the Iwasawa main conjecture for anticyclotomic twists of $f/K$ in the case where $\epsilon(f/K)=+1$.

Throughout this section, we let $f=\sum_{n=1}^\infty a_nq^n\in S_{2r}(\Gamma_0(N_f))$, with $p\nmid N_f$, be a $p$-ordinary newform of weight $2r\geq 2$ defined over $\cO$, and $K$ be an imaginary quadratic field satisfying \eqref{eq:spl} and \eqref{eq:p-nmid-h}.

\subsection{Anticyclotomic \texorpdfstring{$p$}{p}-adic \texorpdfstring{$L$}{L}-functions}\label{subsec:Lp-def}


Recall that $\Gamma^-={\mathrm{Gal}}(K_\infty^-/K)$ denotes the Galois group of the anticyclotomic $\bZ_p$-extension of $K$, and $\gamma_-\in\Gamma^-$ is a topological generator. Write 
\[
N_f=N^+ N^-
\] 
with $N^+$ (resp. $N^-$) divisible only by primes which are split (resp. inert) in $K$, and fix an ideal $\mathfrak{N}^+\subset\cO_K$ with $\cO_K/\mathfrak{N}^+\iso\bZ/N^+\bZ$. 



\begin{thm}\label{thm:BD-theta}
Let $\chi_0$ be an $\cO$-valued ring class character of $K$ of conductor $c\cO_K$, and suppose: 
\begin{itemize}
	\item[(i)] {} $(pN_f,cD_K)=1$,
    \item[(ii)] {} $N^-$ is the squarefree product of an odd number of primes.
\end{itemize}
Then there exists a unique element $\Theta_p^{\BD}(f/K,\chi_0)(W)\in\cO\dBr{W}$ such that for every character $\phi$ of $\Gamma^-$ of infinity type $(-j,j)$ with $0\leq j<r$ and conductor $p^n$, we have
\begin{align*}
    \Theta_p^{\BD}(f/K,\chi_0)^2(\phi(\gamma_-)-1)=\frac{p^{(2r-1)n}}{\alpha_p^{2n}}\cdot&\Gamma(r+j)\Gamma(r-j)\cdot\mathcal{E}_p(f,\chi_0\phi)^{2}\\ \times &\frac{L(f/K,\chi_0\phi,r)}{(2\pi)^{2r}\cdot \Omega_{f,N^-}}\cdot u_K^2\sqrt{D_K}\chi_0\phi(\sigma_{\mathfrak{N}^+})\cdot\eps_p,
\end{align*}
where:
\begin{itemize}
\item $\alpha_p\in\cO^\times$ is the $p$-adic unit root of $x^2-a_px+p^{2r-1}$,
\item 
$\mathcal{E}_p(f,\chi_0\phi)=
\begin{cases}
(1-\alpha_p^{-1}p^{r-1}\chi_0\phi(\pp))(1-\alpha_p p^{r-1}\chi_0\phi(\overline{\pp}))&
\textrm{if $n=0$,}\\[0.2em] 
1&\textrm{if $n>0$},
\end{cases}$

\item $\Omega_{f,N^-}=2^{2r}\cdot\Vert f\Vert_{\Gamma_0(N_f)}^2\cdot\eta_{f,N^-}^{-1}$ is the \emph{Gross period} of $f$ (see \cite[p.\,524]{hsieh-triple}),
\item $u_K=\vert\cO_K^\times\vert/2$, 
\item $\sigma_{\mathfrak{N}^+}\in\Gamma_\infty^-$ is Artin symbol of $\mathfrak{N}^+$, 
\item $\eps_p\in\{\pm 1\}$ is the local root number of $f$ at $p$. 
\end{itemize}
\end{thm}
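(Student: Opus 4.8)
The plan is to realise $\Theta_p^{\rm BD}(f/K,\chi)$ as a specialisation of the $\bff$-unbalanced triple product $p$-adic $L$-function $\mathscr{L}_p^{\bff,\eta_\bff}(\bff,\bfg,\bfh)$ from Theorem~\ref{thm:hsieh-triple}, applied to the triple $(\bff,\bfg,\bfh)=(\bff,\boldsymbol{\theta}_{\chi_1},\boldsymbol{\theta}_{\chi_2})$ in which $\bff$ is the Hida family through $f$ and $(\bfg,\bfh)$ are the CM Hida families of $\S\ref{subsec:CM}$ attached to suitable finite-order characters $\chi_1,\chi_2$ of $K$ chosen so that $\chi_1\chi_2$ restricted to $G_K$ recovers $\chi$ (together with the auxiliary anticyclotomic variable). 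The factorisation
\[
\mathbf{V}_{\bff}\hat\otimes\mathbf{V}_{\bfg}\hat\otimes\mathbf{V}_{\bfh}\big|_{G_K}\cong \bigl(V_{\bff}\otimes\Ind_K^\Q(\cdots)\bigr)\cdots
\]
of the triple tensor product over $G_K$ (coming from $\mathbf{V}_{\bfg}=\Ind_K^\Q\Psi_{1},\ \mathbf{V}_{\bfh}=\Ind_K^\Q\Psi_2$ and the Artin formalism $\Ind\psi_1\otimes\Ind\psi_2\simeq\Ind(\psi_1\psi_2)\oplus\Ind(\psi_1\psi_2^\cc)$, exactly as in \eqref{intro:dec-V}) shows that the restriction of the triple-product $L$-function to the relevant anticyclotomic line factors as a product of two Rankin--Selberg $L$-functions $L(f/K,\chi\phi,s)\cdot L(f/K,\chi'\phi,s)$; the second factor is made to be a nonzero constant (or absorbed) by the choice of $\chi_1,\chi_2$. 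One then restricts the three-variable $\mathscr{L}_p^{\bff,\eta_\bff}$ to the one-dimensional anticyclotomic subvariety, obtaining an element of $\cO\dBr{W}$, and defines $\Theta_p^{\rm BD}(f/K,\chi)$ to be (a suitable square root / renormalisation of) it.

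The key steps, in order, are: (1) fix the auxiliary data — choose $\chi_1,\chi_2$ ring class (or ray class) characters with $\chi_1\chi_2|_{G_K}=\chi\mathbf{N}^{?}$, check that the self-duality hypothesis \eqref{eq:a} and the local sign condition \eqref{eq:+1} hold (here hypothesis (ii), that $N^-$ is a product of an odd number of inert primes, is precisely what forces $\varepsilon_\ell(\mathbf{V}^\dagger)=+1$ at all bad $\ell$ and puts us in the $\bff$-unbalanced/definite regime), and verify $\bar\rho_\bff$ is absolutely irreducible and $p$-distinguished so that Theorem~\ref{thm:hsieh-triple} applies with the Gross-period normalisation; (2) specialise the interpolation formula of Theorem~\ref{thm:hsieh-triple} at points $\underline{Q}=(Q_0,Q_1,Q_2)$ with $\bff$ at weight $k=2r$ and $\bfg,\bfh$ at weight $1$ (the CM families specialise to weight-one theta series at the relevant arithmetic points), matching $\underline{Q}$ to an anticyclotomic character $\phi$ of infinity type $(j,-j)$, $0\le j<r$; (3) compute that the archimedean factor $\Gamma_{\mathbf{V}_{\underline{Q}}^\dagger}(0)$ collapses to $\Gamma(r)^2$ up to the explicit powers of $2\pi$, that the modified $p$-Euler factor $\mathcal{E}_p(\mathscr{F}_p^\bff(\mathbf{V}_{\underline{Q}}^\dagger))$ becomes $\mathcal{E}_p(f,\chi\phi)$ times $p^{(2r-1)n}\alpha_p(f)^{-2n}$ (the latter from the $\epsilon_{Q_0}$-twist / conductor-$p^n$ part of $\phi$), and that the period $\Omega_{\bff_{Q_0}}$ matches the Gross period $\Omega_{f^\circ,N^-}=2^{2r}\Vert f^\circ\Vert^2\eta_{f,N^-}^{-1}$ up to the $p$-stabilisation Euler factors already built into $\mathcal{E}_p$; (4) identify the residual Rankin--Selberg $L$-value, the Atkin--Lehner/period constants $u_K^2\sqrt{D_K}$, the toric-period twist $\chi\phi(\sigma_{\mathfrak N^+})$, and the local root number $\eps_p$ by matching with the explicit Waldspurger/Ichino formula underlying Hsieh's construction; (5) uniqueness follows since anticyclotomic characters of infinity type $(j,-j)$, $0\le j<r$, and $p$-power conductor are Zariski-dense in $\Spec\cO\dBr{W}$.

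The main obstacle I expect is Step~(3)--(4): the precise bookkeeping of fudge factors — reconciling Hsieh's canonical period $\Omega_{\bff_{Q_0}}$ with the Gross period $\Omega_{f^\circ,N^-}$, pinning down the local constants at primes dividing $N^- $ and at $p$ (where the root number $\eps_p$ and the $\Sigma_{\rm exc}$-factors $\prod_q(1+q^{-1})^2$ in Theorem~\ref{thm:hsieh-triple} must be shown to cancel or reorganise correctly), and correctly tracking the square: Theorem~\ref{thm:hsieh-triple} computes $(\mathscr{L}_p^{\bff,\eta_\bff})^2$, and one must extract a canonical square root defined over $\cO\dBr{W}$, which requires knowing the relevant Gross--Zagier-type $L$-value is a perfect square in the Hecke field (a consequence of the Waldspurger-type factorisation, but needing care about signs and normalisations). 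Everything else is a matching of interpolation properties, which is routine given the density of arithmetic points; I would present the argument by first stating the choice of $(\bfg,\bfh)$, then the factorisation of Galois representations over $G_K$, then quoting Theorem~\ref{thm:hsieh-triple} and reading off each factor, and finally invoking density for existence and uniqueness of $\Theta_p^{\rm BD}(f/K,\chi)$.
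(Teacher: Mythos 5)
The paper does not prove this theorem; it is quoted directly as Theorem A of Chida--Hsieh \cite{ChHs1} (and Hung \cite{hung}), who construct $\Theta_p^{\rm BD}(f/K,\chi)$ \emph{directly} as a theta element — an explicit weighted sum of Gross points on the definite Shimura set attached to the quaternion algebra of discriminant $N^-$ — and then establish the interpolation formula from Gross's special value formula in higher weight. Nothing about triple products enters that construction.

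Your proposal runs in exactly the opposite direction from the paper's logic, and is circular within it. Proposition~\ref{prop:factor-L-def} is the statement that $\mathscr{L}_p^{\unb,\eta_\bff}(f,\bfg_1,\bfg_2)$ factors as $\pm\mathbf{w}\cdot\Theta_p^{\rm BD}(f/K,\chi_1\chi_2)(W_1)\cdot\Theta_p^{\rm BD}(f/K,\chi_1\chi_2^\cc)(W_2)\cdot\eta_{f^\circ}/\eta_{f^\circ,N^-}$; proving \emph{that} requires Theorem~\ref{thm:BD-theta} (the interpolation formula for the two $\Theta_p^{\rm BD}$ factors) as input, so you cannot recover $\Theta_p^{\rm BD}$ from the triple-product $L$-function without already knowing it. Concretely, to isolate one factor of $\cO\dBr{S_1,S_2}$ you would need to divide by the other factor, which is itself a $p$-adic $L$-function whose nonvanishing at your chosen specialisation is precisely what the interpolation formula of $\Theta_p^{\rm BD}$ would tell you. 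Beyond the circularity, two of the "bookkeeping" issues you flag are not routine: (a) the period discrepancy $\eta_{f^\circ}/\eta_{f^\circ,N^-}$ between Hsieh's canonical period and the Gross period is a genuine congruence-number comparison for two different quaternion algebras (it is \emph{not} a $p$-adic unit in general and is tracked as an explicit extra factor in Proposition~\ref{prop:factor-L-def}); and (b) Theorem~\ref{thm:hsieh-triple} only controls $(\mathscr{L}_p^{\bff,\eta_\bff})^2$, and passing to a square root in $\cO\dBr{W}$ requires Waldspurger/Gross-type integrality of the individual toric periods — which is exactly the content of the Chida--Hsieh construction, not a consequence of Hsieh's triple-product machinery.

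In short: the statement is an external input to the paper, proved by a direct quaternionic theta-element construction; it cannot be derived from Theorem~\ref{thm:hsieh-triple} without circularity, and the period- and square-root bookkeeping you identify as the "main obstacle" is in fact the mathematical heart of the cited works rather than a matter of matching normalisations.
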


\begin{proof}
This is Theorem~A in \cite{ChHs1} (as extended in \cite[Thm.~A]{hung} for $c>1$), extending and refining a construction in \cite{BDmumford-tate} in weight $2$. 
\end{proof}



\subsection{Factorization of triple product \texorpdfstring{$p$}{p}-adic \texorpdfstring{$L$}{L}-functions}\label{subsec:factor-L-def}

%

Let $\bff\in S^o(N_f,\omega^{2r-2},\cR)$ be the primitive Hida family specialising to the ordinary $p$-stabilization of $f$ at an arithmetic point $Q_0\in\mathfrak{X}_\cR^+$ of weight $2r$.  Let $\ch_1,\ch_2$ be ray class characters of $K$ of conductors dividing the ideals $\mathfrak{f}_1,\mathfrak{f}_2\subset\cO_K$ coprime to $p$  satisfying \eqref{eq:sd-xi}, and let
\begin{equation}\label{eq:gg*}
(\bfg,\bfh)=(\boldsymbol{\theta}_{\ch_1}(Z_1),\boldsymbol{\theta}_{\ch_2}(Z_2))\in\cO\dBr{Z_1}\dBr{q}\times\cO\dBr{Z_2}\dBr{q}
\end{equation}
be the CM Hida families 
attached to $\ch_1,\ch_2$ as in \eqref{eq:CM-explicit}. Then  $(\bff,\bfg,\bfh)$ satisfies conditions (\ref{eq:a}) and (\ref{eq:+1}). Assume also that  
\begin{equation}\label{eq:irred}
\textrm{$\bar{\rho}_f$ is absolutely irreducible and $p$-distinguished},\tag{irr-dist}
\end{equation}
so the hypotheses in Theorem~\ref{thm:hsieh-triple} are satisfied. The ensuing $\bff$-unbalanced triple product $p$-adic $L$-function $\mathscr{L}_p^{\bff}(\bff,\bfg,\bfh)$ is an element in $\mathcal{R}=\cR\hat\otimes_{\cO}\cO\dBr{Z_1}\hat\otimes_{\cO}\cO\dBr{Z_2}\simeq\cR\dBr{Z_1,Z_2}$, and in the following we let 
\begin{equation}\label{eq:triple-2var}
\mathscr{L}_p^{\unb}(f,\bfg,\bfh)\in\cO\dBr{Z_1,Z_2}
\end{equation}
denote its image  
under the natural map $\cR\dBr{Z_1,Z_2}\rightarrow\cO\dBr{Z_1,Z_2}$ defined by $Q_0$. 
More generally (in particular, without assuming \eqref{eq:irred}), for any choice of level-$N$ test vectors, we let
\begin{equation}\label{eq:genera-triple-2var}
\mathscr{L}_p^\bff(\breve{f},\breve{\bfg},\breve{\bfh})\in L_\mathfrak{P}\otimes_{\cO}\cO\dBr{Z_1,Z_2}
\end{equation}
be the image of \eqref{eq:general-triple} under the map induced by $Q_0$.

\begin{prop}\label{prop:factor-L-def}
Assume that $N^-$ is the square-free product of an odd number of primes. Set
\[
S_i=\mathbf{u}^2(1+Z_i)-1
\]
for $i=1,2$, and
\begin{align*}
    W_1&=\mathbf{u}^{-1}(1+S_1)^{1/2}(1+S_2)^{1/2}-1,\quad\quad
    W_2=(1+S_1)^{1/2}(1+S_2)^{-1/2}-1.
\end{align*}
Then 
\[
\mathscr{L}_p^{\unb}(f,\bfg,\bfh)(S_1,S_2)=
\pm\mathbf{w}^{}\cdot\Theta_p^{\BD}(f/K,\ch_1\ch_2)(W_1)\cdot\Theta_p^{\BD}(f/K,\ch_1\ch_2^\cc)(W_2)\cdot\frac{\eta_{f}^{cong}}{\eta_{f,N^-}},
\]
where $\mathbf{w}$ is a unit in $\cO\dBr{Z_1,Z_2}\otimes_{\Z_p}\Q_p$.
\end{prop}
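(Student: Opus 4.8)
The plan is to prove the factorisation by comparing the interpolation formulae of both sides at a Zariski-dense set of arithmetic specialisations. First I would recall that by Theorem~\ref{thm:hsieh-triple} the square of $\mathscr{L}_p^{\unb,\eta_\bff}(f,\bfg_1,\bfg_2)(S_1,S_2)$ interpolates, up to the explicit archimedean factor $\Gamma_{\VQdag}(0)$, the modified $p$-Euler factor $\mathcal{E}_p(\mathscr{F}_p^\bff(\VQdag))$, the local factors at $\Sigma_{\rm exc}$, and the central value $L(\VQdag,0)/\Omega_{\bff_{Q_0}}^2$ as $(S_1,S_2)$ runs over points corresponding to CM specialisations of $\bfg_1$ and $\bfg_2$ at weights $k_1,k_2$; at such a point the underlying Galois representation $\VQdag|_{G_K}$ decomposes, via the fact that $V_{\bfg_i}\cong\Ind_K^\Q(\text{character})$ together with the self-duality hypothesis (\ref{eq:sd-chars}), as a sum of two induced representations. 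Concretely, as in (\ref{intro:dec-V}) one has the Artin-formalism factorisation of the triple-product $L$-function into $L(f/K,\ch_1\ch_2\phi,\ast)\cdot L(f/K,\ch_1\ch_2^\cc\phi,\ast)$ for the relevant anticyclotomic twist $\phi$. The change of variables $W_1,W_2$ in terms of $S_1,S_2$ is exactly the one that matches the two anticyclotomic directions coming from $\Psi_{S_1},\Psi_{S_2}$ against the anticyclotomic variable $W$ in $\Theta_p^{\rm BD}$; this is forced by the relation $\boldsymbol{\theta}_{\ch_i}(S_i)$ being built from $\Psi^{-1}_{\mathbf{v}^{-1}(1+S_i)-1}$ in $\S\ref{subsec:CM}$.

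Next I would carry out the archimedean and $p$-adic bookkeeping. On the archimedean side, one checks that the product $\Gamma(r)^2\Gamma(r)^2$ of Gamma factors appearing in the two copies of $\Theta_p^{\rm BD}(f/K,\ast)(W_i)^2$ matches $\Gamma_{\VQdag}(0)$ at the CM specialisation — here $c_{\underline Q}=(2r+k_1+k_2-2)/2$ and the four factors $\Gamma_\bC(c_{\underline Q}),\Gamma_\bC(c_{\underline Q}+2-k_1-k_2),\Gamma_\bC(c_{\underline Q}+1-k_1),\Gamma_\bC(c_{\underline Q}+1-k_2)$ collapse to $\Gamma$-values at $r$ after using that $\chi_1\chi_2$ has infinity type matching the weights. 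On the $p$-adic side, one checks that $\mathcal{E}_p(\mathscr{F}_p^\bff(\VQdag))$ together with the factors $p^{(2r-1)n}/\alpha_p(f)^{2n}$ from the conductor of $\phi$ matches $\mathcal{E}_p(f,\chi_1\chi_2\phi)^2\cdot\mathcal{E}_p(f,\chi_1\chi_2^\cc\phi)^2$ from the two BD factors, since the Panchishkin subspace $\mathscr{F}_p^\bff(\VQdag)=V_\bff^+\otimes V_{\bfg}\otimes V_\bfh\otimes\mathcal X^{-1}$ restricted to $G_K$ splits into two rank-two pieces each contributing one BD-type $p$-Euler factor. Finally, the periods: the Gross period $\Omega_{f^\circ,N^-}=2^{2r}\Vert f^\circ\Vert^2\eta_{f,N^-}^{-1}$ appearing in $\Theta_p^{\rm BD}$ differs from the canonical period $\Omega_{\bff_{Q_0}}$ appearing in Theorem~\ref{thm:hsieh-triple} (which uses $\eta_{f^\circ}$, the full congruence number) precisely by the ratio $\eta_{f^\circ}/\eta_{f^\circ,N^-}$, which is why that ratio appears on the right-hand side; the discrepancy $\eta_{f,N^-}$ versus $\eta_{f^\circ,N^-}$ is absorbed because $f$ is the $p$-stabilisation of $f^\circ$ and the $p$-stabilisation contributes the unit factors already accounted for in $\Omega_{\bff_{Q_0}}$'s definition. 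The unit $\mathbf{w}\in\cO\dBr{S_1,S_2}^\times$ collects the terms $u_K^2\sqrt{D_K}$, the root-number factors $\chi\phi(\sigma_{\mathfrak N^+})$ and $\eps_p$, and the factors $\prod_{q\in\Sigma_{\rm exc}}(1+q^{-1})^2$ — all of which are specialisation-independent or interpolated by a unit power series — together with the sign ambiguity $\pm$.

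Having matched the interpolation formulae on a Zariski-dense subset of $\mathfrak X_{\cO\dBr{S_1,S_2}}^+$, I would conclude that $\mathscr{L}_p^{\unb,\eta_\bff}(f,\bfg_1,\bfg_2)^2$ and the square of the right-hand side agree as elements of $\cO\dBr{S_1,S_2}$; since $\cO\dBr{S_1,S_2}$ is a UFD and both sides are genuine power series (not merely their squares), one extracts a square root and fixes the overall sign $\pm$ and the unit $\mathbf{w}$, noting that $\Theta_p^{\rm BD}$ and $\mathscr{L}_p^\bff$ are each only characterised up to the square by their defining interpolation, so the equality of square roots holds up to a sign which is exactly the $\pm$ in the statement. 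One subtlety worth flagging: the interpolation range for $\Theta_p^{\rm BD}$ in Theorem~\ref{thm:BD-theta} requires $0\le j<r$ and the interpolation range for $\mathscr{L}_p^\bff$ requires $\underline Q\in\mathfrak X_{\mathcal R}^\bff$, i.e. $k_{Q_0}\ge k_{Q_1}+k_{Q_2}$; one must check that after the change of variables these two ranges have Zariski-dense intersection — they do, since imposing $2r\ge k_1+k_2$ while letting the anticyclotomic twist $\phi$ vary over all conductors still leaves infinitely many points, which suffices for density in the two-variable algebra.

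The hard part will be the precise normalisation matching — tracking the powers of $2$, the factors of $\sqrt{D_K}$ and $u_K$, the interaction between the geometric versus arithmetic normalisation of the Artin map used to define $\Psi_S$ versus $\gamma_-$, and above all verifying that the ratio of the canonical period $\Omega_{\bff_{Q_0}}$ (with its Euler-factor corrections $(1-\chi_f'(p)p^{k_0-1}\alpha_{Q_0}^{-2})(1-\chi_f'(p)p^{k_0-2}\alpha_{Q_0}^{-2})$) to the Gross period $\Omega_{f^\circ,N^-}$ is exactly $\eta_{f^\circ,N^-}/\eta_{f^\circ}$ up to the harmless units collected in $\mathbf w$. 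This is essentially a careful but routine comparison along the lines of \cite[\S8]{hsieh-triple} and the factorisations in \cite{cas-hsieh1}; I would organise it by first proving the factorisation of the squares and only at the end extracting the square root, since the squared identity is where Theorem~\ref{thm:hsieh-triple} and Theorem~\ref{thm:BD-theta} apply verbatim.
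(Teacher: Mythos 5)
Your proof essentially reconstructs the argument the paper defers to: the paper's own proof is a one-line citation of Hsieh's Proposition~8.1 in \cite{hsieh-triple} (which treats the case $\ch_2=\ch_1^{-1}$), and your plan — factor $L(\VQdag,0)$ via Artin formalism at CM specialisations, match the archimedean, Euler, and period factors, then extract a square root using Zariski density of the interpolation set inside the $\bff$-unbalanced region $k_1+k_2\le 2r$ — is exactly Hsieh's argument, correctly identified and adapted. The splitting of $\mathscr{F}_p^\bff(\VQdag)\vert_{G_K}$ into two rank-two pieces, the density check, and the period ratio producing $\eta_{f^\circ}/\eta_{f^\circ,N^-}$ are all sound.

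One imprecision worth correcting: the four $\Gamma_\bC$-factors making up $\Gamma_{\VQdag}(0)$ do not simply collapse to $\Gamma$-values at $r$ at a CM specialisation of weight $(2r,k_1,k_2)$. They sit at the arguments $r\pm\bigl((k_1+k_2)/2-1\bigr)$ and $r\pm(k_1-k_2)/2$, and must be matched against the $j$-dependent archimedean data implicit in the two Bertolini--Darmon interpolations at the anticyclotomic twists of infinity types $(j_1,-j_1)$ and $(j_2,-j_2)$ with $j_1=(k_1+k_2-2)/2$, $j_2=(k_1-k_2)/2$ (not against two literal copies of $\Gamma(r)^2$). The comparison does close after the $(2\pi)$-powers and period normalisations are tracked, but it is not the naive $\Gamma(r)^4$ identity you sketch. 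Since you already flag the archimedean bookkeeping as the hard part, this is a matter of precision rather than a gap.
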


\begin{proof} 	
This is an immediate extension of Proposition 8.1 in \cite{hsieh-triple}, where the case $\ch_2=\ch_1^{-1}$ is treated (and where the weight map is centered at $0$ rather than $2$, accounting for the change of variables from $Z_i$ to $S_i)$. The need to invert $p$ in the above equality arises from the term $\prod_{q\in\Sigma_{\mathrm{exc}}}(1+q^{-1})^2$ in Theorem~\ref{thm:hsieh-triple}. 
%
%
\end{proof}


\subsection{Selmer group decompositions}
\label{subsec:Selmer-decomposition-def}

Assume further that the characters $\xi_1$ and $\xi_2$ satisfy \eqref{eq:dist}, so the associated big Galois representations $V_{\bfg}$ and $V_{\bfh}$ are such that
\begin{equation}\label{eq:CM-ind}
V_{\bfg}\cong{\mathrm{Ind}}_K^\Q(\xi_1^{-1}\psi_0\Psi_{Z_1}),\quad\quad
V_{\bfh}\cong{\mathrm{Ind}}_K^\Q(\xi_2^{-1}\psi_0\Psi_{Z_2}),
\end{equation}
where $\Psi_{Z_i}$ and $\psi_0=\Psi_{\mathbf{u}-1}$ are as in $\S\ref{subsec:CM}$.

Recall from $\S\ref{subsec:Galreps}$ that the $p$-adic Galois representation $V_f^\vee$ associated to $f$ satisfies $\det(V_f^\vee)=\varepsilon_{\mathrm{cyc}}^{2r-1}$. On the other hand,  $\det(V_{\bfg}\otimes V_{\bfh})=\psi_0^2\Psi_{Z_1}\Psi_{Z_2}\circ\mathscr{V}$. Thus, writing $\Vsdag$ for the specialization of $\mathbf{V}^\dagger$ to $Q_0$ we find
\begin{equation}\label{eq:dec-V-first}
\begin{aligned}
\Vsdag&\simeq T_f^{\vee}\otimes({\mathrm{Ind}}_K^\Q\ch_1^{-1}\psi_0\Psi_{Z_1})\otimes({\mathrm{Ind}}_K^\Q\ch_2^{-1}\psi_0\Psi_{Z_2})\otimes\varepsilon_{\mathrm{cyc}}^{1-r}(\psi_0^{-1}\Psi_{Z_1}^{-1/2}\Psi_{Z_2}^{-1/2}\circ\mathscr{V})\\
&\simeq\bigl(T_f^{\vee}(1-r)\otimes{\mathrm{Ind}}_{K}^\Q(\ch_1^{-1}\ch_2^{-1}\psi_0^{1-\cc}\Psi_{V_1}^{1-\cc})\bigr)\oplus\bigl(T_f^{\vee}(1-r)\otimes{\mathrm{Ind}}_K^\Q(\ch_1^{-1}\ch_2^{-\cc}\Psi_{W_2}^{1-\cc})\bigr),
\end{aligned}
\end{equation}
where 
we put
\begin{equation}\label{eq:V-W}
V_1=(1+Z_1)^{1/2}(1+Z_2)^{1/2}-1=\mathbf{u}^{-1}(1+W_1)-1,
\end{equation}
(previously, in \S \ref{subsubsec:Iw-classes}, we let $Z_2=0$ so $V_1=(1+Z_1)^{1/2}-1$) and $W_1,W_2$ are as in Proposition~\ref{prop:factor-L-def}. 
In particular, since \eqref{eq:V-W} gives
\begin{equation}\label{eq:V-W-psi}
\Psi_{W_1}^{1-\cc}=\Psi_{\mathbf{u}-1}^{1-\cc}\Psi_{V_1}^{1-\cc}=\psi_0^{1-\cc}\Psi_{V_1}^{1-\cc}
\end{equation}
(see $\S\ref{subsec:CM}$ for the second equality), we get
\begin{equation}\label{eq:shapiro}
\rH^1(\Q,\Vsdag)\simeq\rH^1(K,T_f^{\vee}(1-r)\otimes\ch_1^{-1}\ch_2^{-1}\Psi_{W_1}^{1-\cc})\oplus\rH^1(K,T_f^{\vee}(1-r)\otimes\ch_1^{-1}\ch_2^{-\cc}\Psi_{W_2}^{1-\cc})
\end{equation}
by Shapiro's lemma.

\begin{prop}\label{prop:factor-S}
Under (\ref{eq:shapiro}), the balanced Selmer group $\Sel^{\bal}(\Q,\Vsdag)$ decomposes as
\begin{align*}
\Sel^{\bal}(\Q,\Vsdag)\simeq\Sel_{\relstr}(K,&T_f^{\vee}(1-r)\otimes\ch_1^{-1}\ch_2^{-1}\Psi_{W_1}^{1-\cc})\\ &\oplus\Sel_{\ord}(K,T_f^{\vee}(1-r)\otimes\ch_1^{-1}\ch_2^{-\cc}\Psi_{W_2}^{1-\cc}),
\end{align*}
and the $\unb$-unbalanced Selmer group $\Sel^{\unb}(\Q,\Vsdag)$ decomposes as
\begin{align*}
\Sel^{\unb}(\Q,\Vsdag)\simeq
\Sel_{\ord}(K,&T_f^{\vee}(1-r)\otimes\ch_1^{-1}\ch_2^{-1}\Psi_{W_1}^{1-\cc})\\ &\oplus\Sel_{\ord}(K,T_f^{\vee}(1-r)\otimes\ch_1^{-1}\ch_2^{-\cc}\Psi_{W_2}^{1-\cc}).\end{align*}
\end{prop}

\begin{proof}
From (\ref{eq:dec-V-first}) we see that the balanced local condition is given by
\begin{equation}\label{eq:bal-def}
\begin{aligned}
\mathscr{F}^{\bal}_p(\Vsdag)
\simeq\bigl(T_f^{\vee}(1-r)\otimes\ch_1^{-1}\ch_2^{-1}\Psi_{W_1}^{1-\cc}\bigr)&\oplus\bigl(T_f^{\vee,+}(1-r)\otimes\ch_1^{-1}\ch_2^{-\cc}\Psi_{W_2}^{1-\cc}\bigr)\\
&\oplus\bigl(T_f^{\vee,+}(1-r)\otimes\ch_1^{-\cc}\ch_2^{-1}\Psi_{W_2}^{\cc-1}\bigr).
\end{aligned}
\end{equation}

Put \[\widetilde{\mathbf{V}}_{Q_0}^\dagger=\bigl(T_f^{\vee}(1-r)\otimes\ch_1^{-1}\ch_2^{-1}\Psi_{W_1}^{1-\cc}\bigr)\oplus\bigl(T_f^{\vee}(1-r)\otimes\ch_1^{-1}\ch_2^{-\cc}\Psi_{W_2}^{1-\cc}\bigr),\]
so by (\ref{eq:dec-V-first}) we have
\begin{equation}\label{eq:shapiro-bis}
\rH^1(\Q,\mathbf{V}_{Q_0}^\dagger)\simeq\rH^1(K,\widetilde{\mathbf{V}}_{Q_0}^\dagger).
\end{equation}
Then from (\ref{eq:bal-def}) we obtain
\begin{equation}\label{eq:bal-shapiro}
\begin{split}
\mathscr{F}^{\bal}_\pp(\widetilde{\mathbf{V}}_{Q_0}^\dagger)&\simeq\bigl(T_f^{\vee}(1-r)\otimes\ch_1^{-1}\ch_2^{-1}\Psi_{W_1}^{1-\cc}\bigr)\\
\mathscr{F}^{\bal}_{\ppbar}(\widetilde{\mathbf{V}}_{Q_0}^\dagger)&\simeq\{0\}
\end{split}
\begin{split}
&\oplus\bigl(T_f^{\vee,+}(1-r)\otimes\ch_1^{-1}\ch_2^{-\cc}\Psi_{W_2}^{1-\cc}\bigr),\\
&\oplus \bigl( T_f^{\vee,+}(1-r)\otimes\ch_1^{-1}\ch_2^{-\cc}\Psi_{W_2}^{1-\cc}\bigr),\nonumber
\end{split}
\end{equation}
%
which yields the claimed description of $\Sel^{\bal}(\Q,\Vsdag)$. 

Similarly, we find that the $\unb$-unbalanced local condition is given by
\begin{equation}
\begin{aligned}
\mathscr{F}^{\unb}_\pp(\widetilde{\mathbf{V}}_{Q_0}^\dagger)&\simeq\bigl(T_f^{\vee,+}(1-r)\otimes\ch_1^{-1}\ch_2^{-1}\Psi_{W_1}^{1-\cc}\bigr)\oplus\bigl(T_f^{\vee,+}(1-r)\otimes\ch_1^{-1}\ch_2^{-\cc}\Psi_{W_2}^{1-\cc}\bigr),\\
\mathscr{F}^{\unb}_{\ppbar}(\widetilde{\mathbf{V}}_{Q_0}^\dagger)&\simeq\bigl(T_f^{\vee,+}(1-r)\otimes\ch_1^{-1}\ch_2^{-1}\Psi_{W_1}^{1-\cc}\bigr)\oplus\bigl(T_f^{\vee,+}(1-r)\otimes\ch_1^{-1}\ch_2^{-\cc}\Psi_{W_2}^{1-\cc}\bigr),\nonumber
\end{aligned}
\end{equation}
from where the claimed description of $\Sel^\unb(\Q,\Vsdag)$ follows.
\end{proof}

As a consequence, we also obtain the following decomposition for the Selmer groups with coefficients in $\mathbf{A}_{Q_0}^\dagger={\mathrm{Hom}}_{\Z_p}(\Vsdag,\mu_{p^\infty})$, mirroring in the case of $\Sel^{\unb}(\Q,\mathbf{A}_{Q_0}^\dagger)$ the factorization of $p$-adic $L$-functions in Proposition~\ref{prop:factor-L-def}.

\begin{cor}\label{cor:factor-S}
The balanced Selmer group $\Sel^{\bal}(\Q,\mathbf{A}_{Q_0}^\dagger)$ decomposes as
\begin{align*}
\Sel^{\bal}(\Q,\mathbf{A}_{Q_0}^\dagger)&\simeq\Sel_{\strrel}(K,A_f(r)\otimes\ch_1\ch_2\Psi_{W_1}^{\cc-1})\oplus\Sel_{\ord}(K,A_f(r)\otimes\ch_1\ch_2^\cc\Psi_{W_2}^{\cc-1}),
\end{align*}
where $A_f(r)={\mathrm{Hom}}_{\Z_p}(T_f^\vee(1-r),\mu_{p^\infty})$.

The $\unb$-unbalanced Selmer group $\Sel^{\unb}(\Q,\mathbf{A}_{Q_0}^\dagger)$ decomposes as
\begin{align*}
\Sel^{\unb}(\Q,\mathbf{A}_{Q_0}^\dagger)&\simeq
\Sel_{\ord}(K,A_f(r)\otimes\ch_1\ch_2\Psi_{W_1}^{\cc-1})\oplus\Sel_{\ord}(K,A_f(r)\otimes\ch_1\ch_2^\cc\Psi_{W_2}^{\cc-1}).
\end{align*}
\end{cor}

\begin{proof}
This is immediate from Proposition~\ref{prop:factor-S} and local Tate duality.
\end{proof}

\subsection{Explicit reciprocity law}
\label{subsec:ERL-def}

Now, we put
\[
\mathbb{V}^\dagger=\Vsdag\otimes_{\cO\dBr{Z_1,Z_2}}\cO\dBr{Z_1,Z_2}/(Z_2), 
\]
where we let $h_2$ be the weight $2$ CM form obtained by specialising $\bfh=\boldsymbol{\theta}_{\xi_2}(Z_2)$ to $Z_2=0$, and let 
\begin{equation}\label{eq:diag1}
\kappa(f,\bfg,h_2)\in\rH^1(\Q,\mathbb{V}^\dagger(\mathscr{N}))
\end{equation}
be the resulting $1$-variable specialization of the big diagonal class $\kappa(\bff,\bfg,\bfh)$ in (\ref{eq:3-diag}). 
%
Likewise, we denote by $\mathscr{L}_p^{\bff}(f,\bfg,h_2)$ the image of (\ref{eq:triple-2var}) in $\cO\dBr{Z_1,Z_2}/(Z_2)\simeq\cO\dBr{Z_1}$, and similarly for any choice of level-$N$ test vectors we let 
\[
\mathscr{L}_p^{\bff}(\breve{f},\breve{\bfg},\breve{h}_2)\in L_{\mathfrak{P}}\otimes_{\cO}\cO\dBr{Z_1}
\]
be the natural image of \eqref{eq:genera-triple-2var}. 
Since we have the inclusion $\kappa(f,\bfg,h_2)\in\Sel^{\bal}(\Q,\VVdag(\mathscr{N}))$ as a consequence of \cite[Cor.~8.2]{BSV}, we can write
\begin{equation}\label{eq:dec-kato1}
\kappa(f,\bfg,h_2)=(\kappa_1(f,\bfg,h_2),\kappa_2(f,\bfg,h_2))
\end{equation}
according to the decomposition from Proposition~\ref{prop:factor-S};  in particular, we have
\begin{equation}\label{eq:kap-1}
\kappa_1(f,\bfg,h_2)\in\Sel_{\relstr}(K,T_f^{\vee}(1-r)\otimes\ch_1^{-1}\ch_2^{-1}\Psi_{W_1}^{1-\cc}(\mathscr{N})).
\end{equation}

Let  $\mathfrak{X}_{\cO\dBr{W_1}}^+$ be the set of ring homomorphisms $Q\in{\mathrm{Spec}}(\cO\dBr{W_1})(\overline{\Q}_p)$ with $Q(1+W_1)=\zeta_Q\mathbf{u}^{j_Q}$ for some $\zeta_Q\in\mu_{p^\infty}$ and $j_Q\in\Z_{\geq 0}$, and for any $\cO\dBr{W_1}$-module $M$ we let $M_Q$ denote the corresponding  specialization. Write 
\[
T_f^{\vee,-}:=T_f^\vee/T_f^{\vee,+},
\]
%
and denote by $p_f^-:T_f^{\vee}(1-r)\rightarrow T_f^{\vee,-}(1-r)$ the natural projection. Let $c_f=\eta_f^{cong}\in\mathcal{O}$ be the congruence number of $f$. 

With a slight abuse, we shall refer to as a triple of `level-$N$' test vectors for $(\breve{f},\breve{\bfg},\breve{h}_2)$ the triple obtained by specialising level-$N$ test vectors $(\breve{\boldsymbol{\bff}},\breve{\boldsymbol{\bfg}},\breve{\boldsymbol{\bfh}})$ for $(\boldsymbol{\bff},\boldsymbol{\bfg},\boldsymbol{\bfh})$.

\begin{thm}\label{thm:ERL-def}
For every triple $(\breve{f},\breve{\bfg},\breve{h}_2)$ of level-$N$ test vectors for $(f,\bfg,h_2)$ there is an injective $\cO\dBr{W_1}$-module homomorphism with pseudo-null cokernel
\[
{\mathrm{Log}}^f_{\pp,(\breve{f},\breve{\bfg},\breve{h}_2)}:\rH^1(K_\pp,T_f^{\vee,-}(1-r)\otimes\ch_1^{-1}\ch_2^{-1}\Psi_{W_1}^{1-\cc}(\mathscr{N}))\rightarrow c_f^{-1}\cO\dBr{W_1}
\]
such that for all $\mathfrak{Z}\in\rH^1(K_\pp,T_f^{\vee,-}(1-r)\otimes\ch_1^{-1}\ch_2^{-1}\Psi_{W_1}^{1-\cc}(\mathscr{N}))$ and $Q\in\mathfrak{X}_{\cO\dBr{W_1}}^{+}$ with $0\leq j_Q< r$ we have
\[
{\mathrm{Log}}^f_{\pp,(\breve{f},\breve{\bfg},\breve{h}_2)}(\mathfrak{Z})_Q=c_Q\cdot\left\langle{\mathrm{exp}}^*_p(\mathfrak{Z}_{Q}),\eta_{\breve{f}}\otimes\omega_{\breve{\bfg}_{Q'}}\otimes\omega_{h_2}\right\rangle_{\mathrm{dR}},
\]
where $c_Q$ is an explicit nonzero constant, and $Q'\in{\mathrm{Spec}}(\cO\dBr{Z_1})(\overline{\Q}_p)$ is the weight $2j_Q$ specialization given by $Q'(1+Z_1)=\zeta_Q^2\mathbf{u}^{2j_Q-2}$. Moreover, we have 
the \emph{explicit reciprocity law}
\[
{\mathrm{Log}}^f_{\pp,(\breve{f},\breve{\bfg},\breve{h}_2)}\bigl(p_f^-({\mathrm{res}}_\pp(\kappa_1(f,\bfg,h_2)))\bigr)(W_1)=\mathscr{L}_p^{\unb}(\breve{f},\breve{\bfg},\breve{h}_2)(S_1),
\]
where $S_1=\mathbf{u}^2(1+Z_1)-1=(1+W_1)^2-1$.
\end{thm}

\begin{proof}
In terms of (\ref{eq:dec-V-first}), we find that 
\[
\mathscr{F}_p^3(\VVdag)=T_f^{\vee,+}(1-r)\otimes\ch_1^{-1}\ch_2^{-1}\psi_0^{1-\cc}\Psi_{V_1}^{1-\cc}
=T_f^{\vee,+}(1-r)\otimes\ch_1^{-1}\ch_2^{-1}\Psi_{W_1}^{1-\cc}.
\] 
Together with (\ref{eq:bal-def}), this gives the decomposition
\begin{align*}
\mathscr{F}_p^{\bal}(\VVdag)/\mathscr{F}_p^3(\VVdag)&\cong
\bigl(T_f^{\vee,-}(1-r)\otimes\ch_1^{-1}\ch_2^{-1}\Psi_{W_1}^{1-\cc}\bigr)\\
&\quad\oplus\bigl(T_f^{\vee,+}(1-r)\otimes\ch_1^{-1}\ch_2^{-\cc}\Psi_{W_2}^{1-\cc}\bigr)\oplus\bigl(T_f^{\vee,+}(1-r)\otimes\ch_1^{-\cc}\ch_2^{-1}\Psi_{W_2}^{\cc-1}\bigr),
\end{align*}
with the terms in the direct sum corresponding to $\mathbb{V}_{f}^{\bfg h_2}$, $\mathbb{V}_{h_2}^{f\bfg}$, and $\mathbb{V}_{\bfg}^{fh_2}$ from (\ref{eq:gr2}), respectively, and where $W_2=(1+Z_1)^{1/2}-1$ is as in Proposition~\ref{prop:factor-L-def} (with $Z_2=0$).

Thus we find that under the first isomorphism of Proposition~\ref{prop:factor-S}, the composite map in (\ref{eq:map-ERL}) corresponds to the projection to $\Sel_{\relstr}(K,T_f^{\vee}(1-r)\otimes\ch_1^{-1}\ch_2^{-1}\Psi_{W_1}^{1-\cc})$ (the first factor in that decomposition) composed with the natural map
\begin{align*}
\Sel_{\relstr}(K,T_f^{\vee}(1-r)\otimes\ch_1^{-1}\ch_2^{-1}\Psi_{W_1}^{1-\cc})\xrightarrow{{\mathrm{res}}_\pp}
&\rH^1(K_\pp,T_f^{\vee}(1-r)\otimes\ch_1^{-1}\ch_2^{-1}\Psi_{W_1}^{1-\cc})\\
&\xrightarrow{p_f^-}
\rH^1(K_\pp,T_f^{\vee,-}(1-r)\otimes\ch_1^{-1}\ch_2^{-1}\Psi_{W_1}^{1-\cc}),
\end{align*}
and so under the corresponding isomorphisms we have \[{\mathrm{res}}_p(\kappa(f,\bfg,h_2))_f=p_f^-({\mathrm{res}}_\pp(\kappa_1(f,\bfg,h_2)))\] in
\[
\rH^1(\Q_p,\mathbb{V}_f^{\bfg h_2}(\mathscr{N}))\cong\rH^1(K_\pp,T_f^{\vee,-}(1-r)\otimes\ch_1^{-1}\ch_2^{-1}\Psi_{W_1}^{1-\cc}(\mathscr{N})).
\] 
Finally, the construction of ${\mathrm{Log}}^f_{\pp,(\breve{f},\breve{\bfg},\breve{h}_2)}$ is deduced from a specialization of the $3$-variable $p$-adic regulator map ${\mathrm{Log}}^{\bff}_{(\breve{\bff},\breve{\bfg},\breve{\bfh})}$ in \eqref{eq:Log} by the same argument as in \cite[Prop.~7.3]{ACR}, and the stated explicit reciprocity law then follows from Theorem~\ref{thm:ERL}.
\end{proof}


\subsection{On the Bloch--Kato conjecture in rank 0}
\label{subsec:BK-def}

In this section we deduce our first applications to the Bloch--Kato conjecture in analytic rank zero for the twisted $G_K$-representation
\[
V_{f,\chi}:=V_f^\vee(1-r)\otimes\chi^{-1}.
\]

Denote by $K[c]$ the ring class field of $K$ of conductor $c$. 
If $\chi$ is a Hecke character of conductor $c\cO_K$, then its $p$-adic avatar is a locally algebraic character of ${\mathrm{Gal}}(K[cp^\infty]/K)$. The Galois group $\Gamma^-={\mathrm{Gal}}(K_\infty^-/K)$ of the anticyclotomic $\Z_p$-extension of $K$ arises as the maximal $\Z_p$-free quotient of ${\mathrm{Gal}}(K[cp^\infty]/K)$. Fix a (non-canonical) splitting
\begin{equation}\label{eq:split-c}
{\mathrm{Gal}}(K[cp^\infty]/K)\simeq\Delta_c\times\Gamma^-,
\end{equation}
where $\Delta_c$ is the torsion subgroup of ${\mathrm{Gal}}(K[cp^\infty]/K)$. Then every character of $\Delta_c$ can be viewed as the $p$-adic avatar of a ring class character of $K$ of conductor dividing $cp^s\cO_K$ for sufficiently large $s$. If $\chi$ is as above, we then write $\chi=\chi_t\cdot\chi_{w}$ according to the decomposition (\ref{eq:split-c}).

\begin{thm}\label{thm:BK-def}
Let $f\in S_{2r}(\Gamma_0(N_f))$, with $p\nmid N_f$, be a $p$-ordinary newform of weight $2r\geq 2$, let $K$ be an imaginary quadratic field satisfying $\eqref{eq:spl}$ and $\eqref{eq:p-nmid-h}$, and let $\chi$ be an anticyclotomic Hecke character of conductor $c\cO_K$ and infinity type $(-j,j)$, $j\geq 0$. 
Assume that:
\begin{itemize}
    \item $N^-$ is a square-free product of an odd number of primes;
    \item $(pN_f,cD_K)=1$;
    \item $\chi_t$ has conductor prime-to-$p$;
    \item $\bar{\rho}_f$ is absolutely irreducible and $p$-distinguihed;
    \item $p>2r-2$;
    \item $f$ is not of CM-type.
\end{itemize}
Then
\[
L(f/K,\chi,r)\neq 0\quad\Longrightarrow\quad\Sel_{\mathrm{BK}}(K,V_{f,\chi})=0,
\] 
and hence the Bloch--Kato conjecture for $V_{f,\chi}$ holds in analytic rank zero.
\end{thm}

\begin{proof}
We begin by noting that for $j\geq r$ the sign in the functional equation of $L(f/K,\chi,s)$ is $-1$ (and so $L(f/K,\chi,r)=0$, in which case there is nothing to show),
so without loss of generality below we assume that $0\leq j<r$. 

Write $\chi_t=\alpha/\alpha^\cc$ with $\alpha$ a ray class character of $K$ of conductor $\fkf\subset\cO_K$ prime-to-$p$ (as is possible by e.g. \cite[Lem.\,6.9]{DR2} or \cite[Lem.\,5.31]{hida-HMI}  
and our assumption on $\chi_t$). For a prime $q\neq p$ split in $K$ and an auxiliary ring class character $\beta$ of $q$-power conductor (both to be further specified below), we consider the setting of $\S$\ref{subsec:factor-L-def} with the CM Hida families
$(\bfg,\bfh)=(\boldsymbol{\theta}_{\ch_1}(Z_1),\boldsymbol{\theta}_{\ch_2}(Z_2))$ for the ray class characters
\[
\ch_1:=\beta\alpha,\quad\quad\ch_2:=\beta^{-1}\alpha^{-\cc}.
\]
Using $\xi_1\xi_2=\chi_t$ and $\xi_1\xi_2^\cc=\beta^2$, when specialized to $Z_2=0$, the factorization in Proposition~\ref{prop:factor-L-def} becomes
\[
\mathscr{L}_p^{\unb}(f,\bfg,h_2)(S_1)=
\pm\mathbf{w}^{}\cdot\Theta_p^{\BD}(f/K,\chi_t)(W_1)\cdot\Theta_p^{\BD}(f/K,\beta^2)(W_2)\cdot\frac{\eta_{f}^{cong}}{\eta_{f,N^-}},
\]
where $S_1=\mathbf{u}^2(1+Z_1)-1$, $W_1=\mathbf{u}(1+Z_1)^{1/2}-1$, $W_2=(1+Z_1)^{1/2}-1$, and $\mathbf{w}$ is a unit in $\cO\dBr{Z_1}\otimes_{\Z_p}\Q_p$. 
By \cite[Thm.~D]{ChHs1} we may take $q$ and $\beta$ so that $\Theta_p^{\BD}(f/K,\beta^2)(W_2)$ is a unit in $\cO\dBr{W_2}$, and with such a choice the explicit reciprocity law of Theorem~\ref{thm:ERL-def} can be rewritten as
\[
{\mathrm{Log}}^f_{\pp,(\breve{f}^\star,\breve{\bfg}^\star,\breve{h}_2^\star)}\bigl(p_f^-({\mathrm{res}}_\pp(\kappa_1(f,\bfg,h_2)))\bigr)(W_1)=\pm\mathbf{w}'\cdot\Theta_p^{\BD}(f/K,\chi_t)(W_1)
\]
with $\mathbf{w}'$ a unit in $\cO\dBr{W_1}\otimes_{\Z_p}\Q_p$ and  $(\breve{f}^\star,\breve{\bfg}^\star,\breve{h}_2)$ the triple of level-$N$ test vectors from Theorem~\ref{thm:hsieh-triple}. 

Denote by $Q\in\mathfrak{X}_{\cO\dBr{W_1}}^{+}$ the specialization $W_1\mapsto\zeta_Q\mathbf{u}^{j}-1$  ($\zeta_Q\in\mu_{p^\infty}$)  corresponding to $\chi_w$, in the sense that
\[
\chi_w=\Psi_{W_1}^{\cc-1}\vert_{W_1=\zeta_Q\mathbf{u}^j-1}=\psi_0^{\cc-1}\Psi_{V_1}^{\cc-1}\vert_{V_1=\zeta_Q\mathbf{u}^{j-1}-1}.
\]
Then from the above together with Theorem~\ref{thm:BD-theta} and Theorem~\ref{thm:ERL-def} we find
\begin{equation}\label{eq:L-implies-def}
\begin{aligned}
L(f/K,\chi,r)\neq 0\quad&\Longrightarrow\quad\Theta_p^{\BD}(f/K,\chi_t)(\chi_w(\gamma_-)-1)\neq 0\\
&\Longrightarrow\quad p_f^-({\mathrm{res}}_\pp(\kappa_1(\breve{f}^\star,\breve{\bfg}^\star,\breve{h}_2^\star)_Q))\neq 0,
\end{aligned}
\end{equation}
where $\kappa_1(\breve{f}^\star,\breve{\bfg}^\star,\breve{h}_2^\star)$
denotes the image of the class $\kappa_1(f,\bfg,h_2)$ in \eqref{eq:kap-1} under the projection
\begin{align*}
\Sel_{\relstr}(K,T_f^{\vee}(1-r)\otimes\chi_t^{-1}\Psi_{W_1}^{1-\cc}(\mathscr{N}))&\rightarrow\Sel_{\relstr}(K,T_f^{\vee}(1-r)\otimes\chi_t^{-1}\Psi_{W_1}^{1-\cc})\\
&=\Sel_{\relstr}(K,T_f^{\vee}(1-r)\otimes\chi_t^{-1}\psi_0^{1-\cc}\Psi_{V_1}^{1-\cc})
\end{align*}
associated to $(\breve{f}^\star,\breve{\bfg}^\star,\breve{h}_2^\star)$.

As 
noted in Remark~\ref{rem:diag-components}, the class $\kappa_1(\breve{f}^\star,\breve{\bfg}^\star,\breve{h}_2^\star)$ is the bottom class of the anticyclotomic Euler system $\{\mathbf{z}_{f,\xi_1,\xi_2,\psi_0^{\cc-1},m}\}_m$ of Theorem~\ref{maintheorem2} for $T_{f,\xi_1\xi_2\psi_0^{\cc-1}}$ (and the given choice of level-$N$ test vectors). Therefore, letting ${\mathrm{tw}}_{V_1,\psi_0^{\cc-1}\chi_w^{-1}}(\kappa_1(\breve{f}^\star,\breve{\bfg}^\star,\breve{h}_2^\star))$ denote the image of $\kappa_1(\breve{f}^\star,\breve{\bfg}^\star,\breve{h}^\star_2)$ under the `twisting' map
\[
\Sel_{\relstr}(K,T_f^{\vee}(1-r)\otimes\chi_t^{-1}\psi_0^{1-\cc}\Psi_{V_1}^{1-\cc})\rightarrow\Sel_{\relstr}(K,T_f^{\vee}(1-r)\otimes\chi^{-1}\Psi_{V_1}^{1-\cc})
\]
induced by the change of variables $V_1\mapsto\zeta_Q^{-1}\mathbf{u}^{1-j}(1+V_1)-1$, it follows that \[{\mathrm{tw}}_{V_1,\psi_0^{\cc-1}\chi_w^{-1}}(\kappa_1(\breve{f}^\star,\breve{\bfg}^\star,\breve{h}_2^\star))\] is the bottom class of the twisted Euler system of Theorem~\ref{maintheorem2}
\begin{equation}\label{eq:tw-ES-def}
\bigl\{\mathbf{z}_{f,\chi,m}\bigr\}_m:=\bigl\{\mathbf{z}_{f,\xi_1,\xi_2,\psi_0^{\cc-1},m}\otimes\psi_0^{\cc-1}\chi_w^{-1}\bigr\}_m
\end{equation}
for $T_{f,\xi_1\xi_2\psi_0^{\cc-1}}\otimes\psi_0^{\cc-1}\chi_w^{-1}=T_{f,\chi}$. 

Since the class $\kappa_1(\breve{f}^\star,\breve{\bfg}^\star,\breve{h}_2^\star)_Q$ in \eqref{eq:L-implies-def} is the same as the image of the bottom class $\mathbf{z}_{f,\chi,1}$ of the system \eqref{eq:tw-ES-def} under natural map
\[
\Sel_{\relstr}(K,T_f^{\vee}(1-r)\otimes\chi^{-1}\Psi_{V_1}^{1-\cc})\cong\Sel_{\relstr}(K_\infty^-,T_{f,\chi})\rightarrow\Sel_{\relstr}(K,T_{f,\chi}),
\]
from Theorem~\ref{thm:rank-1-general} 
we deduce that $\Sel_{\relstr}(K,V_{f,\chi})$ is one-dimensional, spanned by $\kappa_1(\breve{f}^\star,\breve{\bfg}^\star,\breve{h}_2^\star)_Q$. 
Since we have in fact shown that $p_f^-({\mathrm{res}}_\pp(\kappa_1(\breve{f}^\star,\breve{\bfg}^\star,\breve{h}_2^\star)_Q))\neq 0$, from the global duality exact sequence
\begin{align*}
0\rightarrow\Sel_{\Ord,{\mathrm{str}}}(K,V_{f,\chi})
\rightarrow&\Sel_{\relstr}(K,V_{f,\chi})
\xrightarrow{{\mathrm{res}}_\pp}\frac{H^1(K_\pp,V_{f,\chi})}{H^1_{\Ord}(K_\pp,V_{f,\chi})}\\
&\rightarrow\Sel_{{\mathrm{rel}},\Ord}(K,V_{f,\chi})^\vee\rightarrow\Sel_{\relstr}(K,V_{f,\chi})^\vee\rightarrow 0,
\end{align*}
we deduce that $\Sel_{{\mathrm{rel}},\Ord}(K,V_{f,\chi})$ is also one-dimensional and spanned by $\kappa_1(\breve{f}^\star,\breve{\bfg}^\star,\breve{h}_2^\star)_Q$ (hence equal to $\Sel_{\relstr}(K,V_{f,\chi})$). Finally, from another global duality exact sequence 
\begin{align*}
0\rightarrow\Sel_{\ord}(K,V_{f,\chi})
\rightarrow &\Sel_{\mathrm{rel,ord}}(K,V_{f,\chi})
\xrightarrow{{\mathrm{res}}_\pp}\frac{H^1(K_\pp,V_{f,\chi})}{H^1_{\Ord}(K_\pp,V_{f,\chi})}\\
&\rightarrow\Sel_{\ord}(K,V_{f,\chi})^\vee\rightarrow\Sel_{\mathrm{ord,str}}(K,V_{f,\chi})^\vee\rightarrow 0,
\end{align*}
we deduce the vanishing of $\Sel_{\ord}(K,V_{f,\chi})$; 
since by Lemma~\ref{lem:BK-Gr}, for $0\leq j<r$ the latter group agrees with $\Sel_{\mathrm{BK}}(K,V_{f,\chi})$, this yields the result.
\end{proof}

\subsection{On the Iwasawa main conjecture}
\label{subsec:IMC-def}

Our next application is to a divisibility in the anticyclotomic Iwasawa main conjecture for modular forms in the definite setting. 

For any eigenform $f$ of weight $2r\geq 2$ and an anticyclotomic Hecke character $\chi$,
put 
\[
A_{f,\chi}={\mathrm{Hom}}_{\Z_p}(T_f^\vee(1-r)\otimes\chi^{-1},\mu_{p^\infty}),
\]
and writing $\chi=\chi_t\cdot\chi_w$ as in Theorem~\ref{thm:BK-def}, let $\Theta_p^{\BD}(f/K,\chi)$ denote the image of the $p$-adic $L$-function $\Theta_p^{\BD}(f/K,\chi_t)$ of Theorem~\ref{thm:BD-theta} attached to the ring class character $\chi_t$ under the twisting homomorphism
${\mathrm{tw}}_{\chi_w}:\cO\dBr{W_1}\rightarrow\cO\dBr{W_1}$ given by $W_1\mapsto\chi_w(\gamma_-)(1+W_1)-1$. 

\begin{thm}\label{thm:IMC-def}
Let the hypotheses be as in Theorem~\ref{thm:BK-def}, and assume in addition that $f$ has big image. 
Then $\Sel_{\ord}(K_\infty^-,A_{f,\chi})$ is cotorsion over $\Lambda_K^-$, and we have the divisibility
\[
{\mathrm{char}}_{\Lambda_K^-}\bigl(\Sel_{\ord}(K_\infty^-,A_{f,\chi})^\vee\bigr)\supset\bigl(\Theta_p^{\BD}(f/K,\chi)^2\bigr)
\]
in $\Lambda_K^-\otimes_{\Z_p}\Q_p$.
\end{thm}

\begin{proof}
Repeating the argument in the proof of Theorem~\ref{thm:BK-def}, we arrive at the equality
\begin{equation}\label{eq:ERL-def}
{\mathrm{Log}}^f_{\pp,(\breve{f}^\star,\breve{\bfg}^\star,\breve{h}_2^\star)}\bigl(p_f^-({\mathrm{res}}_\pp(\kappa_1(f,\bfg,h_2)))\bigr)(W_1)=\pm\mathbf{w}'\cdot\Theta_p^{\BD}(f/K,\chi_t)(W_1)
\end{equation}
with $\mathbf{w}'$ a unit in $\cO\dBr{W_1}\otimes_{\Z_p}\Q_p$. 

It follows from Vatsal's result \cite[Thm.~1.1]{vatsal-special} (as extended   
in \cite[Thm.~C]{ChHs1} and \cite[Thm.~B]{hung} to higher weights) that the $p$-adic $L$-function $\Theta_p^{\BD}(f/K,\chi_t)(W_1)$ is nonzero. Thus, letting 
\[
\kappa_1(\breve{f}^\star,\breve{\bfg}^\star,\breve{h}_2^\star)\in\Sel_{\relstr}(K,T_f^{\vee}(1-r)\otimes\chi_t^{-1}\psi_0^{1-\cc}\Psi_{V_1}^{1-\cc}) 
\]
be as in the proof of Theorem~\ref{thm:BK-def}, from \eqref{eq:ERL-def} 
 we conclude that $\kappa_1(\breve{f}^\star,\breve{\bfg}^\star,\breve{h}_2)$ is non-torsion. 

As noted in the proof of Theorem~\ref{thm:BK-def}, the twisted class ${\mathrm{tw}}_{V_1,\psi_0^{\cc-1}\chi_w^{-1}}(\kappa_1(\breve{f}^\star,\breve{\bfg}^\star,\breve{h}_2^\star))$ is the bottom class of the Euler system $\{\mathbf{z}_{f,\chi,m}\}_m$ for $T_{f,\chi}$ constructed in Theorem~\ref{maintheorem2}. 
Hence from Theorem~\ref{thm:IMC-general} we deduce that $\Sel_{\relstr}(K,T_{f,\chi})$ and $X_{\strrel}(K,A_{f,\chi})$ have both $\Lambda_K^-$-rank one, and we have the divisibility
\begin{equation}\label{eq:IMC-div-relstr}
{\mathrm{char}}_{\Lambda_{K}^-}\bigl(X_{\strrel}(K,A_{f,\chi})_{\mathrm{tors}}\bigr)\supset{\mathrm{char}}_{\Lambda_K^-}\biggl(\frac{\Sel_{\relstr}(K,T_{f,\chi})}{\Lambda_K^-\cdot{\mathrm{tw}}_{V_1,\psi_0^{\cc-1}\chi_w^{-1}}(\kappa_1(\breve{f}^\star,\breve{\bfg}^\star,\breve{h}_2^\star))}\biggr)^2
\end{equation}
in $\Lambda_K^-$. Since from \eqref{eq:ERL-def} we deduce an explicit reciprocity law relating 
\[
{\mathrm{res}}_\pp({\mathrm{tw}}_{V_1,\psi_0^{\cc-1}\chi_w^{-1}}(\kappa_1(\breve{f}^\star,\breve{\bfg}^\star,\breve{h}_2^\star)))=
{\mathrm{res}}_\pp({\mathrm{tw}}_{W_1,\chi_w^{-1}}(\kappa_1(\breve{f}^\star,\breve{\bfg}^\star,\breve{h}_2^\star)))
\]
to ${\mathrm{tw}}_{\chi_w}(\Theta_p^{\BD}(f/K,\chi_t))=\Theta_p^{\BD}(f/K,\chi)$, the result now follows from \eqref{eq:IMC-div-relstr} and global duality by the same argument as in \cite[Thm.~5.1]{BCK-PRconj}.
\end{proof}

\begin{rmk} An upper bound divisibility in the anticyclotomic Iwasawa main conjecture for $V_{f,\chi}$ as in Theorem~\ref{thm:IMC-def} 
was first obtained by Bertolini--Darmon \cite{bdIMC} for finite order character $\chi$ in weight $2$ and by Chida--Hsieh \cite{ChHs2} in higher weights $2\leq k<p-1$ using Heegner cycles and 
level-raising congruences. Our proof of Theorem~\ref{thm:IMC-def} is completely different from theirs (instead, it is more in line with Kolyvagin's original arguments), and allows us to dispense with their ramification hypotheses on $\bar{\rho}_f$. 
\end{rmk}

\subsection{On the Bloch--Kato conjecture in rank 1} 

The arguments in the proof of Theorem~\ref{thm:IMC-def} give the following result towards the Bloch--Kato conjecture in rank $1$.

\begin{thm}\label{thm:BK-def-1}
Let the hypotheses be as in Theorem~\ref{thm:BK-def}. 
If $j\geq r$ (which implies the vanishing of $L(f/K,\chi,r)$), then 
\[
\dim_{L_\mathfrak{P}}\,\Sel_{\mathrm{BK}}(K,V_{f,\chi})\geq 1.
\]
Moreover, there exists a class $z_{f,\chi}\in\Sel_{\mathrm{BK}}(K,V_{f,\chi})$ such that
\[
z_{f,\chi}\neq 0\quad\Longrightarrow\quad\dim_{L_\mathfrak{P}}\,\Sel_{\mathrm{BK}}(K,V_{f,\chi})=1.
\]
\end{thm}

\begin{proof}
The proof of Theorem~\ref{thm:IMC-def} showed that the class
\[
\mathbf{z}_{f,\chi}:={\mathrm{tw}}_{V_1,\psi_0^{\cc-1}\chi_w^{-1}}(\kappa_1(\breve{f}^\star,\breve{\bfg}^\star,\breve{h}_2^\star))\in\Sel_{\relstr}(K_\infty^-,T_{f,\chi})
\]
is non-torsion over $\Lambda_K^-$ (note that $f$ is not required to have big image for this). On the other hand, one readily checks that the natural map
\begin{equation}\label{eq:proj}
\Sel_{\relstr}(K_\infty^-,T_{f,\chi})/(\gamma_--1)\Sel_{\relstr}(K_\infty^-,T_{f,\chi})\rightarrow\Sel_{\relstr}(K,T_{f,\chi})
\end{equation}
is injective. Thus we conclude that $\Sel_{\relstr}(K,T_{f,\chi})$ has positive $\cO$-rank, which together with  Lemma~\ref{lem:BK-Gr} yields the first part of the theorem. Letting $z_{f,\chi}\in\Sel_{\relstr}(K,T_{f,\chi})$ be the image of $\mathbf{z}_{f,\chi}$ under (\ref{eq:proj}), 
the second claim follows from Theorem~\ref{thm:rank-1-general}.
\end{proof}

\begin{rmk}\label{rmk:Heeg-BF}
%
From the Euler system of Beilinson--Flach elements constructed by Lei--Loeffler--Zerbes and Kings--Loeffler--Zerbes \cite{LLZ,KLZ} attached to the Rankin--Selberg convolution of $f$ and a suitable CM form, one can produce a class ${\mathrm{BF}}_{f,\chi}\in\rH^1(K,V_{f,\chi})$. As shown in \cite{LLZ-K} and \cite{BL}, this class extends to a full Euler system for the $G_K$-representation $V_{f,\chi}$ , but not for the correct local conditions at $p$. Indeed, with notations as in the proof of Theorem~\ref{thm:BK-def}, it follows from the explicit reciprocity law of \cite{KLZ} that, for $j\geq r$, the class ${\mathrm{BF}}_{f,\chi}$ lands in $\Sel_{\relstr}(K,V_{f,\chi})=\Sel_{\mathrm{BK}}(K,V_{f,\chi})$ precisely when 
\begin{equation}\label{eq:theta=0}
\Theta_p^{\BD}(f/K,\chi_t)(\chi_w(\gamma_-)-1)=0
\end{equation}
(see \cite[Thm.~2.4]{cas-BF} and \cite[Thm.~3.11]{BL} for a specialization of the results of \cite{KLZ} to this case).  
However, for $j\geq r$ the character $\chi_w$ is \emph{outside} the range of interpolation of $\Theta_p^{\BD}(f/K,\chi_t)$, 
and so the vanishing \eqref{eq:theta=0} is \emph{not} a consequence of  $L(f/K,\chi,r)=0$. As a result,  Theorem~\ref{thm:BK-def-1} seems to fall outside the scope of methods building on these classes.
(On the other hand, Heegner cycles also seem to not be enough, since $N^-$ is assumed to have an odd number of prime factors, rending Heegner cycles not directly accessible, and in this definite setting the level-raising techniques of Bertolini--Darmon \cite{bdIMC} are only known to yield results towards the Bloch--Kato conjecture  in rank $0$, see e.g. \cite{LV-JNT}.) 
\end{rmk}

\section{Indefinite case}

In this section we deduce our applications to the Bloch--Kato conjecture (in ranks $0$ and $1$) for anticyclotomic twists of $f/K$ when  $\epsilon(f/K)=-1$. Since the nonvanishing results we shall need 
from \cite{hsieh} are currently only available in the literature under the classical Heegner hypothesis, in the following we shall restrict to this case, but we note that with the required extension of \cite{hsieh} at hand (see \cite{burungale-II,Magr} for progress in this direction), our results directly extend to the general indefinite case.

\subsection{Anticyclotomic \texorpdfstring{$p$}{p}-adic \texorpdfstring{$L$}{L}-functions}
\label{subsec:Lp-indef}

Keeping the setting introduced in Section~\ref{sec:definite}, 
we assume now that $K$ satisfies the classical \emph{Heegner hypothesis}:
\begin{equation}\label{eq:Heeg}
\textrm{every prime $\ell\mid N_f$ splits in $K$,}\tag{Heeg}
\end{equation}
and fix an ideal $\mathfrak{N}\subset\cO_K$ with $\cO_K/\mathfrak{N}\simeq\Z/N_f\Z$. 

Let $\Omega_p$ and $\Omega_K$ be the CM periods attached to $K$ as in \cite[\S{2.5}]{cas-hsieh1}, and put  
\[
\Lambda_{K}^{-,\ur}=\Lambda_K^-\hat\otimes_{\Z_p}\Z_p^{\ur},
\]
where $\Z_p^{\ur}$ is the completion of the ring of integers of the maximal unramified extension of $\Q_p$.

\begin{thm}\label{thm:BDP}
Let $\chi_0$ be an $\cO$-valued ring class character of $K$ of conductor $c\cO_K$ with $(pN_f,cD_K)=1$. Then there exists a unique element 
$\mathscr{L}_\pp^{\BDP}(f/K,\chi_0)\in\Z_p^{\ur}\dBr{W}\otimes_{\Z_p}\cO$ such that every character $\phi$ of $\Gamma^-$ of infinity type $(-j,j)$ with $j\geq r$ and conductor $p^n$, we have
\begin{align*}
\mathscr{L}_\pp^{\BDP}(f/K,\chi_0)^2(\phi(\gamma_-)-1)=\frac{\Omega_p^{4j}}{\Omega_K^{4j}}\cdot&\frac{\Gamma(r+j)\Gamma(j+1-r)\phi(\mathfrak{N}^{-1})}{4(2\pi)^{2j+1}\sqrt{D_K}^{2j-1}}\\
\times &e_\pp(f,\chi_0\phi)\cdot L(f/K,\chi_0\phi,r),
\end{align*} 
where
\[
e_\pp(f,\chi_0\phi)=\begin{cases}
\bigl(1-a_p\chi_0\phi(\bar{\pp})p^{-r}+\chi_0\phi(\bar{\pp})^2p^{-1}\bigr)^2&\textrm{if  $n=0$,}\\[0.3em]
\varepsilon(\frac{1}{2},(\chi_0\phi)_\pp)^{-2}&\textrm{else},
\end{cases}  
\]
with $\varepsilon(\frac{1}{2},(\chi_0\phi)_\pp)$ the epsilon-factor in \cite[p.\,570]{cas-hsieh1} attached to the component of $\chi_0\phi$ at $\pp$. 
Moreover, $\mathscr{L}_\pp^{\BDP}(f/K,\chi_0)$ is a nonzero element of $\Lambda_K^{-,{\ur}}$.
\end{thm}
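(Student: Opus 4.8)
The plan is to construct $\mathscr{L}_\pp^{\rm BDP}(f/K,\chi)$ and verify both its interpolation property and its nonvanishing by reducing to the work of Bertolini--Darmon--Prasanna \cite{bdp1} and Castella--Hsieh \cite{cas-hsieh1}, where these statements were established for the eigenform $f$ and the quadratic twist families attached to ring class characters. First I would recall that $\chi$ is a ring class character of conductor $c\cO_K$ prime to $pD_K N_f$, so its $p$-adic avatar factors through $\mathrm{Gal}(K_{cp^\infty}/K)$; composing the BDP $p$-adic $L$-function of \emph{op.\,cit.} (built from the $p$-adic Petersson inner product of the ordinary $p$-stabilisation of $f^\circ$ against the family of CM forms/Eisenstein--theta series twisted by characters of $\mathrm{Gal}(K_{cp^\infty}/K)$) with the inclusion $\Gamma^-\hookrightarrow\mathrm{Gal}(K_{cp^\infty}/K)$ coming from a splitting as in (\ref{eq:split-c}) produces the desired element of $\Z_p^{\rm ur}\dBr{W}$. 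The interpolation formula at characters $\phi$ of infinity type $(j,-j)$ with $j\geq r$ is then just a reindexing of the BDP interpolation formula (see \cite[Thm.\,1.1]{bdp1}, \cite[Thm.\,4.9]{cas-hsieh1}): the Gamma factors $\Gamma(r+j)\Gamma(j+1-r)$, the power of $2\pi$ and of $\sqrt{D_K}$, the ratio of CM periods $\Omega_p^{4j}/\Omega_K^{4j}$, the root-number/Euler factor $e_\pp(f,\chi\phi)$, and the toric term $\phi(\mathfrak{N}^{-1})$ all match up after unwinding normalisations; this is a bookkeeping step and I would not reproduce it in detail.

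The substantive claim is the final sentence: $\mathscr{L}_\pp^{\rm BDP}(f/K,\chi)$ is a \emph{nonzero} element of $\Lambda_K^{-,\rm ur}$ (in particular, it lies in $\Lambda_K^{-,\rm ur}$ rather than only in its fraction field, and it does not vanish identically). Integrality follows from the construction: the BDP $p$-adic $L$-function is built by $p$-adically interpolating algebraic parts of $L$-values, and by the construction in \cite{bdp1} (or the Hida-theoretic repackaging in \cite{cas-hsieh1,cas-2var}) the resulting object already lies in $\Z_p^{\rm ur}\dBr{W}$, since $f$ is $p$-ordinary and $p$-stabilised; I would cite this directly. For nonvanishing, the plan is to invoke the nonvanishing theorem of Hsieh \cite{hsieh} (building on Vatsal \cite{vatsal-special} and extended to higher weights): under the Heegner hypothesis (\ref{eq:Heeg}) and the hypothesis that $\bar\rho_f$ is residually irreducible (and $p\nmid h_K$, with $\chi_t$ of conductor prime to $p$, as assumed throughout this section), infinitely many of the interpolated central values $L(f/K,\chi\phi,r)$ with $\phi$ ranging over anticyclotomic characters of $p$-power conductor are nonzero, hence $\mathscr{L}_\pp^{\rm BDP}(f/K,\chi)$ cannot be the zero power series. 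Concretely, $\mathscr{L}_\pp^{\rm BDP}(f/K,\chi)$ vanishing identically would force all its specialisations at characters in the interpolation range to vanish, contradicting the existence of even one $\phi$ with $L(f/K,\chi\phi,r)\neq 0$ and $e_\pp(f,\chi\phi)\neq 0$; the latter is automatic at characters of positive $p$-power conductor since then $e_\pp$ is an inverse square of a local $\varepsilon$-factor, which is a nonzero constant.

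The main obstacle is making sure the hypotheses under which \cite{hsieh} is available are exactly the ones in force here, and that the normalisation of the CM periods $\Omega_p,\Omega_K$ and of the local $\varepsilon$-factor $\varepsilon(\tfrac12,\chi_\pp\phi_\pp)$ match those of \cite{cas-hsieh1} so that the interpolation formula is literally the one stated; this is why the section is explicitly restricted to the classical Heegner hypothesis, as the authors note. Thus the proof I would write is short: \emph{Construction and interpolation: this is the BDP $p$-adic $L$-function of \cite{bdp1} in the form of \cite[\S{4}]{cas-hsieh1} (see also \cite{cas-2var} for the higher-weight normalisation), specialised to the anticyclotomic line via a splitting as in} (\ref{eq:split-c}); \emph{integrality is part of that construction; nonvanishing follows from the nonvanishing of infinitely many central $L$-values in the family, established by Hsieh \cite{hsieh} (extending \cite{vatsal-special}) under} (\ref{eq:Heeg}) \emph{and the residual irreducibility of $\bar\rho_f$.} I would phrase this as a citation-driven proof rather than attempting to re-derive the interpolation formula from scratch.
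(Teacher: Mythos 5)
Your proposal is correct and takes essentially the same citation-driven route as the paper, which simply states that the theorem is a reformulation of results in \cite[\S 3]{cas-hsieh1} and that nonvanishing follows from \cite[Thm.\,3.9]{cas-hsieh1} (itself an application of \cite{hsieh}, which you cite directly), using the hypothesis $(N_f,D_K)=1$ guaranteed by the Heegner hypothesis. One small miscalibration: you invoke residual irreducibility of $\bar\rho_f$ as a hypothesis needed for nonvanishing, but the theorem as stated does not assume it, and the paper's proof emphasizes only coprimality $(N_f,D_K)=1$ as the relevant input to \cite[Thm.\,3.9]{cas-hsieh1}.
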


\begin{proof}
This is a reformulation of results contained in \cite[\S{3}]{cas-hsieh1}. Note that since $(N_f,D_K)=1$ is a consequence of  (\ref{eq:Heeg}), the nonvanishing of $\mathscr{L}_\pp^{\BDP}(f/K,\chi_0)$ follows from \cite[Thm.~3.9]{cas-hsieh1}.
\end{proof}

\begin{rmk}\label{rem:periods}
The CM period $\Omega_{K}\in\bC^\times$ in Theorem~\ref{thm:BDP} agrees with that in \cite[(5.1.16)]{bdp1}, but is \emph{different} from the period $\Omega_\infty$ defined in \cite[p.\,66]{deshalit} and \cite[(4.4b)]{HT-ENS}. In fact, one has 
\[
\Omega_{\infty}=2\pi i\cdot\Omega_K.
\]
In terms of $\Omega_\infty$, the interpolation formula in Theorem~\ref{thm:BDP} reads
\begin{align*}
\mathscr{L}_{\pp}^{\BDP}(f/K,\chi_0)^2(\phi(\gamma_-)-1)=\frac{\Omega_p^{4j}}{\Omega_\infty^{4j}}\cdot&\frac{\Gamma(r+j)\Gamma(j+1-r)\phi(\mathfrak{N}^{-1})}{4(2\pi)^{1-2j}\sqrt{D_K}^{2j-1}}\\ \times &e_\pp(f,\chi_0\phi)\cdot L(f/K,\chi_0\phi,r).
\end{align*}
This is the form of the interpolation that we shall use later.
\end{rmk}

\subsection{Factorization of triple product \texorpdfstring{$p$}{p}-adic \texorpdfstring{$L$}{L}-function}\label{subsec:factor-L-indef}

As in $\S\ref{subsec:factor-L-def}$, we consider the triple  $(\bff,\bfg,\bfh)$, with $\bff\in S^o(N_f,\omega^{2r-2},\mathbb{I})$ the Hida family specialising to $f$ at an arithmetic point $Q_0\in\mathfrak{X}_\cR^+$ of weight $2r$, and 
\[
(\bfg,\bfh)=(\boldsymbol{\theta}_{\xi_1}(Z_1),\boldsymbol{\theta}_{\xi_2}(Z_2))\in\cO\dBr{Z_1}\dBr{q}\times\cO\dBr{Z_2}\dBr{q}
\]
the CM Hida families of $\S\ref{subsec:CM}$ attached to the ray class characters $\xi_1,\xi_2$ satisfying \eqref{eq:sd-xi}, and also \eqref{eq:dist}.

The triple product $p$-adic $L$-function of interest in this section is the $\bfg$-unbalanced $p$-adic $L$-function 
\begin{equation}\label{eq:g-unb}
\mathscr{L}_p^{\bfg}(\bff,\bfg,\bfh)\in\mathcal{R}=\cR\hat\otimes_{\cO}\cO\dBr{Z_1}\hat\otimes_{\cO}\cO\dBr{Z_2}\simeq\cR\dBr{Z_1,Z_2}
\end{equation}
obtained from Theorem~\ref{thm:hsieh-triple} with the roles of $\bff$ and $\bfg$ reversed (note that the conditions in Theorem~\ref{thm:hsieh-triple} in this setting are ensured by \eqref{eq:dist} and our hypothesis on the conductor of $\xi_1$). In the following we let 
\[
\mathscr{L}_p^{\bfg}(f,\bfg,\bfh)\in\cO\dBr{Z_1,Z_2}
\] 
be the image of $\mathscr{L}_p^{\bfg}(\bff,\bfg,\bfh)$ under the map $\cR\dBr{Z_1,Z_2}\rightarrow\cO\dBr{Z_1,Z_2}$ given by 
$Q_0:\cR\rightarrow\cO$. 

\subsubsection{Anticyclotomic Katz $p$-adic $L$-function}

Before we can state and prove the main result of this section, we need to recall the interpolation property of the Katz $p$-adic $L$-functions \cite{Katz49}, following the exposition in \cite{deshalit}. For any ideal $\mathfrak{c}\subset\cO_K$ coprime to $p$, we let $Z(\mathfrak{c})$ denote the ray class group of $K$ of conductor $\mathfrak{c}p^\infty$ (so $Z(\mathfrak{c})\simeq{\mathrm{Gal}}(K_{\mathfrak{c}p^\infty}/K)$). 

\begin{thm}\label{thm:katz}
There exists an element $\Lcal_{\pp,\mathfrak{c}}^{\mathrm{Katz}}
\in\cO\dBr{Z(\mathfrak{c})}\hat\otimes_{\Z_p}\Z_p^{\ur}$ 
such that for every character $\xi$ of $Z(\mathfrak{c})$ that is crystalline at both $\pp$ and $\ppbar$, corresponding to a Hecke character of infinity type $(k,j)$ with $k>0$ and $j\leq 0$, then $\xi$  satisfies 
\[
\Lcal_{\pp,\mathfrak{c}}^{\mathrm{Katz}}(\xi)=\frac{\Omega_p^{k-j}}{\Omega_\infty^{k-j}}\cdot\Gamma(k)\cdot\biggl(\frac{\sqrt{D_K}}{2\pi}\biggr)^j\cdot(1-\xi^{-1}(\pp)p^{-1})(1-\xi(\bar{\pp}))\cdot L_{\mathfrak{c}}(\xi,0),
\]
where $L_{\mathfrak{c}}(\xi,s)$ denotes the Hecke $L$-function of $\xi$ with the Euler factors at the primes dividing $\mathfrak{c}$ removed.  
Moreover, we have the functional equation 
\[
\Lcal_{\pp,\mathfrak{c}}^{\mathrm{Katz}}(\xi)=\Lcal_{\pp,\bar{\mathfrak{c}}}^{\mathrm{Katz}}(\xi^{-\cc}\mathbf{N}^{-1}),
\] 
where the equality is up to a $p$-adic unit.
\end{thm}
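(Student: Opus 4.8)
The construction is due to Katz \cite{Katz49}; the plan is to obtain $\Lcal_{\pp,\mathfrak{c}}^{\rm Katz}$ as the two-variable Katz $p$-adic $L$-function attached to $K$ and the conductor $\mathfrak{c}p^\infty$, following the exposition and normalisations in \cite[Ch.\,II]{deshalit}, and then simply to translate its interpolation property and functional equation into the form stated above. First I would recall from [\emph{loc.\,cit.}] that there is a measure, i.e. an element of $\cO\dBr{Z(\mathfrak{c})}\hat\otimes\Z_p^{\rm ur}$, whose value at a Hecke character $\xi$ of $Z(\mathfrak{c})$ of infinity type $(k,j)$ in the range $k>-j\geq 0$ (the range in which the $\pp$-adic interpolation is unconditional) equals, up to explicit archimedean and $p$-adic fudge factors, the critical value $L(\xi,0)$; this is the assertion to be matched with the displayed formula.

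The bulk of the work is bookkeeping of conventions. I would check: (a) that the periods are normalised so that $\Omega_\infty$ and $\Omega_p$ are exactly those of Remark~\ref{rem:periods} (in particular $\Omega_\infty=2\pi i\cdot\Omega_K$, so that the interpolation can equivalently be written with $\Omega_K$ as in \cite{bdp1}); (b) that the archimedean factor is $\Gamma(k)\cdot(\sqrt{D_K}/2\pi)^j$, which comes from the Deligne period computation for the Hecke $L$-function together with the shift to the centre $s=0$; and (c)---the only delicate point---that with $\pp$ the prime of $K$ above $p$ fixed by our embedding $\imath_p$, the modified $p$-Euler factor is the $\pp$-depleted one, namely $(1-\xi^{-1}(\pp)p^{-1})(1-\xi(\ppbar))$, the asymmetry in the two local factors reflecting the choice of the unit eigenvalue at $\pp$ and matching the convention of \cite[\S{II.4}]{deshalit}.

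For the functional equation, the plan is again to invoke Katz: the Katz measure satisfies $\Lcal_{\pp,\mathfrak{c}}^{\rm Katz}(\xi)=\Lcal_{\pp,\mathfrak{c}}^{\rm Katz}(\xi^{-\cc}\mathbf{N}^{-1})$ up to a $p$-adic unit, a $p$-adic reflection of the functional equation of Hecke $L$-functions over $K$ together with the identity $L(\xi,0)=L(\xi^\cc,0)$; since $\mathfrak{c}$ is stable under complex conjugation, $\xi^{-\cc}\mathbf{N}^{-1}$ is again a character of $Z(\mathfrak{c})$, so the statement makes sense. Here I expect the main obstacle to be that this identity \emph{cannot} be deduced by interpolation and density, since the range $k>-j\geq 0$ and its image under $\xi\mapsto\xi^{-\cc}\mathbf{N}^{-1}$ (which has infinity type $(-j-1,-k-1)$) are disjoint; one must instead use Katz's direct construction of the measure via $p$-adic Eisenstein--Kronecker series on CM elliptic curves, which is exactly what \cite[Ch.\,II]{deshalit} carries out, so I would cite that and content myself with verifying that the ambiguous unit is indeed a $p$-adic unit, being built out of local root numbers and powers of $p$.
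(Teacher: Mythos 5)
Your proposal is correct and takes essentially the same approach as the paper, which simply identifies $\Lcal_{\pp,\mathfrak{c}}^{\rm Katz}$ with the measure $\mu(\mathfrak{c}\bar{\pp}^\infty)$ of \cite[Thm.~II.4.14]{deshalit} and cites \cite[Thm.~II.6.4]{deshalit} for the functional equation. The paper's proof is even terser than yours, but your observations about the period normalisation, the $\pp$-depleted Euler factor, and the disjointness of the two interpolation ranges are exactly the convention-matching steps implicit in the citation.
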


\begin{proof}
Our $\Lcal_{\pp,\mathfrak{c}}^{\mathrm{Katz}}$ corresponds to the measure denoted $\mu(\mathfrak{c}\bar{\pp}^\infty)$ in \cite[Thm.~II.4.14]{deshalit}, 
and the functional equation is given in \cite[Thm.~II.6.4]{deshalit} (which allows to extend the interpolation property from $k>-j\geq 0$ to the entire range in the statement).
\end{proof}


Let $\Gamma_\mathfrak{c}$ be the maximal torsion-free subgroup of $Z(\mathfrak{c})$, and fix a (non-canonical) splitting
\[
Z(\mathfrak{c})\simeq\Delta_{\mathfrak{c}}\times\Gamma_{\mathfrak{c}}
\]
with $\Delta_{\mathfrak{c}}$ a finite group and $\Gamma_{\mathfrak{c}}\simeq\Z_p^2$. For $\mathfrak{c}'\supset\mathfrak{c}$ the natural projection $Z(\mathfrak{c})\twoheadrightarrow Z(\mathfrak{c}')$ takes $\Delta_{\mathfrak{c}}$ to $\Delta_{\mathfrak{c}'}$, inducing an isomorphism $\Gamma_{\mathfrak{c}}\xrightarrow{\sim}\Gamma_{\mathfrak{c}'}$. Thus in the following we shall identify $\Gamma_{\mathfrak{c}}$ with $\Gamma_\infty:=\Gamma_{(1)}$, the Galois group of the $\Z_p^2$-extension of $K$ as introduced in $\S\ref{subsec:CM}$. 

Suppose $\eta$ is a Hecke character of $K$ of conductor dividing $\mathfrak{c}p^\infty$. Viewing $\eta$ as a character on $Z(\mathfrak{c})\simeq\Delta_{\mathfrak{c}}\times\Gamma_K$, we put $\bar{\eta}:=\eta\vert_{\Delta_\mathfrak{c}}$, and denote by $\mathcal{L}_{\pp,\bar{\eta}}^{{\mathrm{Katz}},-}$ the image of $\mathcal{L}_{\pp,\mathfrak{c}}^{\mathrm{Katz}}$ under the composite map 
\[
\cO\dBr{Z(\mathfrak{c})}\hat\otimes_{\Z_p}\Z_p^{\ur}\rightarrow\cO\dBr{\Gamma_K}\hat\otimes_{\Z_p}\Z_p^{\ur}\rightarrow\Lambda_K^{-,{\ur}},
\] 
where the first arrow is the natural projection defined by $\bar{\eta}$, and the second arrow is defined by $\gamma\mapsto\gamma^{1-\cc}$ for $\gamma\in\Gamma_\infty$. Put also $\bar{\eta}^-:=\bar{\eta}^{\cc-1}$.

\begin{lem}\label{lem:HT}
Let $\ch$ be a ray class character of $K$ such that $\bar{\ch}^-$ has conductor $\mathfrak{c}$ prime-to-$p$. Assume that:
\begin{enumerate}
\item[(i)] $\mathfrak{c}$ is only divisible by primes that are split in $K$; 
\item[(ii)] $\Delta_{\mathfrak{c}}$ has order prime-to-$p$; 
\item[(iii)] $\bar{\xi}^-\vert_{G_{K_v}}\neq 1$ for all primes $v\mid p$ in $K$;
\item[(iv)] $\bar{\ch}^-$ has order at least $3$. 
\end{enumerate}
Then the congruence ideal of the CM Hida family $\boldsymbol{\theta}_{\ch}(Z)$ in \eqref{eq:CM-explicit} is principal, generated by $\mathcal{L}_{\pp,\bar{\ch}^-}^{{\mathrm{Katz}},-}$.
\end{lem}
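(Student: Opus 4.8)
The plan is to reduce the claim about the congruence ideal of $\boldsymbol{\theta}_{\ch}(S)$ to the Katz $p$-adic $L$-function computation via the general principle, due to Hida and Mazur--Wiles, that for a CM Hida family the congruence module is governed by an anticyclotomic $L$-value. Concretely, write $\ch$ so that $\ch^{-}:=\ch^{\cc-1}$ has conductor $\mathfrak{c}$ prime-to-$p$, and recall (as recalled in $\S$\ref{subsec:CM} and $\S$\ref{subsec:factor-L-indef}) that $\boldsymbol{\theta}_{\ch}(S)$ has tame level $N_{K/\Q}(\mathfrak{c})D_K$ and that its Galois representation restricted to $G_K$ is a sum of two characters differing by the anticyclotomic character attached to $\ch^{-}$. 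First I would identify the local ring $\bT_\fkm$ through which $\lambda_{\boldsymbol{\theta}_\ch}$ factors: since $\bar{\ch}^-$ has order $\geq 3$ (hypothesis (iii)), the residual representation $\bar\rho_{\boldsymbol{\theta}_\ch}$ is absolutely irreducible and dihedral, so by \cite{Wiles} and \cite{hida-AJM} the congruence ideal is principal, generated by a well-defined $\eta_{\boldsymbol{\theta}_\ch}$ up to units; this is the content one wants to pin down.

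Next I would invoke the explicit formula for the congruence number of a CM form, which in the dihedral case expresses $\eta_{\boldsymbol{\theta}_\ch}$ in terms of the value of an anticyclotomic $L$-function — this is exactly the shape of Hida's computation of congruence modules for CM components (see \cite{hida-AJM}, and its $\Lambda$-adic refinement). The point is that the relevant $L$-function is the one attached to the ``anticyclotomic ratio'' $\ch/\ch^{\cc} = \ch^-$, and after the projection $\cO\dBr{Z(\mathfrak{c})}\hat\otimes\Z_p^{\rm ur}\to\Lambda_K^{-,\rm ur}$ defined by $\bar{\ch}^-$ this $L$-function becomes precisely $\mathcal{L}_{\pp,\bar{\ch}^-}^{{\rm Katz},-}$ as defined just above the lemma, using the interpolation property in Theorem~\ref{thm:katz} together with the functional equation $\Lcal_{\pp,\mathfrak{c}}^{\rm Katz}(\xi)=\Lcal_{\pp,\mathfrak{c}}^{\rm Katz}(\xi^{-\cc}\mathbf{N}^{-1})$ to land in the self-dual/anticyclotomic direction. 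Hypothesis (i), that $\mathfrak{c}$ is divisible only by split primes, is what guarantees no extra Euler-factor discrepancies at ramified/inert primes enter the comparison (the ``primitive'' vs ``imprimitive'' $L$-value issue), and hypothesis (ii), that $\Delta_\mathfrak{c}$ has order prime-to-$p$, ensures the projection to the $\bar\ch^-$-component is a direct summand so the congruence ideal is computed on the nose rather than up to a power of $p$.

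The constant $\tfrac{h_K}{w_K}$ I would trace to the comparison of normalizations between the adelic CM $L$-value appearing in Hida's congruence formula and the Katz measure $\mathcal{L}_{\pp,\mathfrak{c}}^{\rm Katz}$ as normalized in \cite{deshalit}: passing between the ray class group $Z(\mathfrak{c})$ and the idele class group, and between $\Gamma_K$ and $\Gamma^-$, introduces exactly the class-number-over-unit-index factor (this is the same $u_K=\vert\cO_K^\times\vert/2$ / $h_K$ bookkeeping that shows up throughout, e.g. in Theorem~\ref{thm:BD-theta}). I would make this precise by specializing both sides at a single well-chosen arithmetic point of weight $\geq 2$ and character in the interpolation range of Theorem~\ref{thm:katz}, checking that the ratio of the two generators is the stated rational constant times a $p$-adic unit, and then concluding equality of the ideals since both are principal in the integrally closed (normal) domain $\Lambda_K^{-,\rm ur}$ and the ratio is a constant. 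The main obstacle I anticipate is precisely this constant- and period-tracking: reconciling de Shalit's period $\Omega_\infty$ (and the $2\pi i$ discrepancy flagged in Remark~\ref{rem:periods}) with the periods implicit in the congruence-module formula, and making sure the Euler factor at $\pp$ in Theorem~\ref{thm:katz}, once hit with the ``$\cc-1$'' twist, matches the (trivial, because $\mathfrak{c}$ is prime-to-$p$) modification factor expected on the congruence side — a bookkeeping exercise rather than a conceptual one, but the kind where sign and normalization errors are easy to make.
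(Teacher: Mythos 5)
You correctly identify the shape of the result and the roles of hypotheses (i)--(iii): split conductor avoids Euler-factor discrepancies, the prime-to-$p$ condition on $\Delta_\mathfrak{c}$ makes the $\bar\ch^-$-projection a direct summand, and order at least $3$ gives residual irreducibility so that Wiles and Hida guarantee a principal congruence ideal. Your tracking of the $h_K/w_K$ normalization via the Katz interpolation formula and the functional equation is also a reasonable way to pin down the constant once the ideals are known to agree up to unit. But the closing step has a genuine gap. Verifying that the ratio of the two candidate generators is $h_K/w_K$ up to a $p$-adic unit at a \emph{single} arithmetic point does not show the ratio is a unit in $\Lambda_K^{-,\rm ur}$: for that you would need the check at a Zariski-dense set of points, and at every such point the comparison between Hida's complex congruence-number formula and the Katz interpolation involves the period ratio $\Omega_p/\Omega_\infty$ whose $p$-integrality and non-vanishing properties are precisely what is at stake. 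This is not a bookkeeping exercise; it is the substance of the anticyclotomic Iwasawa main conjecture for Hecke characters.

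The paper's proof is exactly a pointer to this input: per \cite[Prop.~4.6]{ACR2}, the lemma is a consequence of the \emph{proof} of the anticyclotomic main conjecture by Hida--Tilouine \cite{HT-ENS,HT-117} and Hida \cite{hida-coates}, which rests on a Hecke-algebra/deformation-theoretic divisibility on one side and the elliptic-unit Euler system (Rubin) on the other. Your proposal instead invokes ``Hida's computation of congruence modules for CM components \cite{hida-AJM}, and its $\Lambda$-adic refinement'' as though the $\Lambda$-adic statement were an explicit formula verifiable pointwise, but the $\Lambda$-adic refinement you need is the main conjecture itself, not a corollary of the classical weight-$k$ congruence-number formula. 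Once you grant the main conjecture as input, the rest of your outline — identifying the congruence ideal with an anticyclotomic characteristic ideal, projecting onto the $\bar\ch^-$-branch, and checking the $h_K/w_K$ normalization at one interpolation point — does reconstruct the argument; without that input, the step ``conclude the ratio is a constant'' is unsupported.
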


\begin{proof}
As explained in \cite[Prop.~4.6]{ACR2}, this is a consequence of the proof of the anticyclotomic Iwasawa main conjecture for Hecke characters by Hida--Tilouine \cite{HT-ENS,HT-117} and Hida \cite{hida-coates} (recall that here we assume \eqref{eq:p-nmid-h}, so the omitted term $h_K$ is a $p$-adic unit). 
\end{proof}

\subsubsection{The factorization result}

We now fix our choice of generator of the congruence ideal of $\bfg=\boldsymbol{\theta}_{\xi_1}(Z_1)$.

\begin{defn}\label{def:gen-triple}
For $\xi_1$ satisfying the conditions of Lemma~\ref{lem:HT} (in particular, note that (iii) is equivalent to \eqref{eq:dist}), put
\[
\mathscr{L}_p^{\bfg}(\bff,\bfg,\bfh):=\mathcal{L}_{\pp,\bar{\xi}_1^-}^{{\mathrm{Katz}},-}\cdot\mathscr{L}_p^{\bfg}(\breve{\bff}^\star,\breve{\bfg}^\star,\breve{\bfh}^\star),
\]
where $(\breve{\bff}^\star,\breve{\bfg}^\star,\breve{\bfh}^\star)$ is the triple of level-$N$ test vectors from Theorem~\ref{thm:hsieh-triple} (see also Remark~\ref{rmk:harris-kudla}), and let $\mathscr{L}_p^{\bfg}(f,\bfg,\bfh)$ denote its image under the map induced by $Q_0:\cR\rightarrow\cO$.
\end{defn}

Note that $\ch_1$ can be replaced by a twist $\ch_1\cdot\phi\circ\mathbf{N}$ for a Dirichlet character $\phi$ without changing $\bar\ch_1^-$, and thus in the following we may assume that $\ch_1$ satisfies the following minimality hypotheses:
\begin{equation}\label{eq:minimal}
\textrm{the conductor of $\ch_1$ is minimal among Dirichlet twists.} 
\end{equation}

The following is an analogue of Proposition~\ref{prop:factor-L-def} in the indefinite setting. Note that a variant of this result first appeared in the work of Darmon--Lauder--Rotger (see \cite[Thm.~3.9]{DLR-stark}), but the formulation of their result is not well-suited for our Iwasawa-theoretic purposes in this paper. 

\begin{prop}\label{prop:factor-L-indef}
Assume that $\ch_1$ satisfies the conditions in Lemma~\ref{lem:HT}.  Set 
\[
S_i=\mathbf{u}^2(1+Z_i)-1
\]
for $i=1,2$, and 
\[
W_1=\mathbf{u}^{-1}(1+S_1)^{1/2}(1+S_2)^{1/2}-1,\quad\quad
W_2=(1+S_1)^{1/2}(1+S_2)^{-1/2}-1.
\]
Then
\begin{align*}
\mathscr{L}_p^{\bfg}(f,\bfg,\bfh)(S_1,S_2)
&=\pm\mathbf{w}\cdot\mathscr{L}_\pp^{\BDP}(f/K,\ch_1\ch_2)(W_1)\cdot\mathscr{L}_\pp^{\BDP}(f/K,\ch_1\ch_2^\cc)(W_2),
\end{align*}
where $\mathbf{w}$ is a unit in $\cO\dBr{Z_1,Z_2}\otimes_{\Z_p}\bQ_p$.
\end{prop}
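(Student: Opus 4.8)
The plan is to compare the two sides at a Zariski-dense set of classical points and to invoke the interpolation characterisations, in the spirit of Proposition~8.1 of \cite{hsieh-triple} (which treats the special case $\ch_2=\ch_1^{-1}$) and closely related to the factorisations in \cite[Thm.\,3.9]{DLR-stark} and \cite[\S{8}]{BCS}. First observe that both sides lie in $\Z_p^{\rm ur}\dBr{S_1,S_2}$: the left-hand side by Definition~\ref{def:gen-triple}, and the right-hand side because the substitution sending $W_i$ to the displayed power series in $S_1,S_2$ carries $W_i$ into the maximal ideal of $\cO\dBr{S_1,S_2}$ --- using that $(1+S)^{1/2}\in\Z_p\dBr{S}$ for $p$ odd and that $\mathbf{v}\equiv 1$ modulo the maximal ideal of $\cO$ --- hence defines a continuous ring homomorphism $\Z_p^{\rm ur}\dBr{W}\to\Z_p^{\rm ur}\dBr{S_1,S_2}$ under which each $\mathscr{L}_\pp^{\rm BDP}(f/K,-)$ has a well-defined image. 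Since $\Z_p^{\rm ur}\dBr{S_1,S_2}$ is a regular local ring, hence a unique factorisation domain, and the Bertolini--Darmon--Prasanna $p$-adic $L$-functions are nonzero by Theorem~\ref{thm:BDP}, it suffices to prove that the \emph{squares} of the two sides agree up to a unit; the asserted equality up to a sign and a unit then follows by extracting a square root.

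To prove the identity of squares I would work with the Zariski-dense set of points $\underline{Q}=(Q_0,Q_1,Q_2)\in\mathfrak{X}_{\mathcal{R}}^{\bfg}$ with $Q_0$ the point at which $\bff$ specialises to $f$ and with $Q_1$, $Q_2$ ranging over the classical points of $\bfg=\boldsymbol{\theta}_{\ch_1}(S_1)$ and $\bfh=\boldsymbol{\theta}_{\ch_2}(S_2)$ whose specialisations are ordinary $p$-stabilisations of theta series of Hecke characters $\psi_1',\psi_2'$ of infinity types $(1-k_1,0)$, $(1-k_2,0)$ with $k_1\ge 2r+k_2$; density holds because for each fixed weight $k_2$ the admissible $Q_1$ are cofinite among the classical points of $\bfg$, and there are infinitely many admissible $k_2$. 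At such a $\underline{Q}$ the Garrett--Rankin triple product central $L$-value factors, by the Artin formalism and the identity $\mathrm{Ind}_K^{\Q}\psi_1'\otimes\mathrm{Ind}_K^{\Q}\psi_2'\cong\mathrm{Ind}_K^{\Q}(\psi_1'\psi_2')\oplus\mathrm{Ind}_K^{\Q}((\psi_1'\psi_2')^{\cc})$, as
\[
L(\VQdag,0)=L(f/K,\psi_1'\psi_2',r)\cdot L(f/K,(\psi_1'\psi_2')^{\cc},r),
\]
and, by the choice of the variables $W_1,W_2$ built into the decomposition (\ref{eq:dec-V-first}), the two anticyclotomic characters occurring here are exactly the specialisations at $\underline{Q}$ of $\ch_1\ch_2\Psi_{W_1}^{1-\cc}$ and $\ch_1\ch_2^{\cc}\Psi_{W_2}^{1-\cc}$, with infinity-type parameters $j_1,j_2\ge r$ lying in the interpolation ranges of the corresponding $\mathscr{L}_\pp^{\rm BDP}$.

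Next I would match the auxiliary factors. The $\bfg$-unbalanced analogue of Theorem~\ref{thm:hsieh-triple}, applied with the generator $\eta_\bfg=\frac{h_K}{w_K}\mathcal{L}_{\pp,\bar{\ch}_1^-}^{{\rm Katz},-}$ of $C(\bfg)$ furnished by Lemma~\ref{lem:HT} and Definition~\ref{def:gen-triple}, expresses $\mathscr{L}_p^{\bfg}(f,\bfg,\bfh)(\underline{Q})^2$ as the product of the two $L$-values above, times an archimedean $\Gamma$-factor, the modified $p$-Euler factor $\mathcal{E}_p(\mathscr{F}_p^{\bfg}(\VQdag))$, a product of factors $(1+q^{-1})^2$, and $\Omega_{\bfg_{Q_1}}^{-2}$. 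On the other side, Theorem~\ref{thm:BDP} in the form of Remark~\ref{rem:periods} expresses each $\mathscr{L}_\pp^{\rm BDP}(f/K,-)(\,\cdot\,)^2$ in terms of the corresponding $L$-value, a $\Gamma$-factor, a local $p$-factor $e_\pp(f,-)$, and $(\Omega_p/\Omega_\infty)^{4j_i}$. The archimedean $\Gamma$-factors match because the archimedean $L$-factor of a Garrett--Rankin triple product of this shape equals the product of those of the two Rankin--Selberg convolutions; and the identification of $\mathcal{E}_p(\mathscr{F}_p^{\bfg}(\VQdag))$ together with the factors $(1+q^{-1})^2$ against $e_\pp(f,\psi_1'\psi_2')\cdot e_\pp(f,(\psi_1'\psi_2')^{\cc})$, up to a factor that extends to a unit, is a direct computation with the Frobenius eigenvalues of $f$ and the $\psi_i'$ at $p$ (the theta series being unramified away from $pD_KN_{K/\Q}(\fkf_1\fkf_2)$, so the tame factors are harmless). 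The main obstacle is the \emph{period comparison}: one must show that $\Omega_{\bfg_{Q_1}}^{-2}$ and $(\Omega_p/\Omega_\infty)^{4(j_1+j_2)}$ agree up to the value at $\underline{Q}$ of a fixed unit. This is where the deep input enters: unwinding the definition of $\Omega_{\bfg_{Q_1}}$ in Theorem~\ref{thm:hsieh-triple}, its congruence-number denominator $\eta_{\bfg_{Q_1}}$ is, by Lemma~\ref{lem:HT}, the specialisation of $\frac{h_K}{w_K}\mathcal{L}_{\pp,\bar{\ch}_1^-}^{{\rm Katz},-}$, whose value is computed by the interpolation formula of Theorem~\ref{thm:katz}; comparing this with the factorisation of the adjoint $L$-value controlling the Petersson norm $\Vert\bfg_{Q_1}^\circ\Vert^2$ of the CM newform $\bfg_{Q_1}^\circ$ into a Hecke $L$-value over $K$ (the content of the Hida--Tilouine anticyclotomic main conjecture underlying Lemma~\ref{lem:HT}), one finds that the Petersson norm cancels and $\Omega_{\bfg_{Q_1}}^{-2}$ is an algebraic $p$-unit multiple of the required power of $\Omega_p/\Omega_\infty$.

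Collecting these comparisons shows that at every point of the dense set the squares of the two sides agree up to the value of one and the same unit $\mathbf{w}\in\Z_p^{\rm ur}\dBr{S_1,S_2}^{\times}$; hence they agree up to $\mathbf{w}$ as power series, and extracting a square root in the unique factorisation domain $\Z_p^{\rm ur}\dBr{S_1,S_2}$ yields the proposition, the sign $\pm$ recording the choice of square root.
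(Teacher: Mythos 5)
Your proof is correct and follows essentially the same strategy as the paper's: compare interpolation formulas at the Zariski-dense set of $\bfg$-unbalanced classical specialisations, factor the central triple-product $L$-value via the Mackey/Artin decomposition of $\mathrm{Ind}_K^\Q\psi_1'\otimes\mathrm{Ind}_K^\Q\psi_2'$, match $\Gamma$-, Euler-, and period factors, and extract a square root in the complete regular local ring $\Z_p^{\rm ur}\dBr{S_1,S_2}$. The crux you identify — that the congruence number $\eta_{\bfg_{Q_1}}$ appearing in $\Omega_{\bfg_{Q_1}}$ is, by Lemma~\ref{lem:HT}, a specialisation of the anticyclotomic Katz $p$-adic $L$-function, which when combined (via the Katz functional equation) with Hida's adjoint $L$-value formula for $\Vert\bfg_{Q_1}^\circ\Vert^2$ makes the Petersson norm cancel and leaves exactly the CM period ratio demanded by Theorem~\ref{thm:BDP} — is precisely the paper's computation in equations (\ref{eq:period}), (\ref{eq:hida-formula}), (\ref{eq:Omega-bis}).

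One notational slip should be fixed: the displayed Mackey identity $\mathrm{Ind}_K^{\Q}\psi_1'\otimes\mathrm{Ind}_K^{\Q}\psi_2'\cong\mathrm{Ind}_K^{\Q}(\psi_1'\psi_2')\oplus\mathrm{Ind}_K^{\Q}((\psi_1'\psi_2')^{\cc})$ is false as written, since $\mathrm{Ind}_K^\Q(\eta^\cc)\cong\mathrm{Ind}_K^\Q(\eta)$ for any Hecke character $\eta$ of $K$. The second summand should be $\mathrm{Ind}_K^{\Q}\bigl(\psi_1'(\psi_2')^{\cc}\bigr)$, and correspondingly the displayed factorisation should read $L(\VQdag,0)=L(f/K,\psi_1'\psi_2',r)\cdot L\bigl(f/K,\psi_1'(\psi_2')^{\cc},r\bigr)$. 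Your very next sentence, which identifies the two anticyclotomic characters as specialisations of $\ch_1\ch_2\Psi_{W_1}^{1-\cc}$ and $\ch_1\ch_2^{\cc}\Psi_{W_2}^{1-\cc}$, already uses the correct version, so this is merely a typo in the display.
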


\begin{proof}
Let $k_1, k_2$ be integers with $k_1\equiv k_2\pmod{2}$ and $k_1\geq k_2+2r$. Set $S_i=\mathbf{u}^{k_i}-1$ for $i=1,2$, so the corresponding specializations of $W_i$ are given by 
\[
W_1=\mathbf{u}^{(k_1+k_2-2)/2}-1,\quad\quad W_2=\mathbf{u}^{(k_1-k_2)/2}-1,
\]
and denote by $\VQdag$ the specialization of $\Vdag$ at $\underline{Q}=(Q_0,S_1,S_2)$. 
Putting 
\[
T_i=\mathbf{u}^{-1}(1+S_i)-1=\mathbf{u}(1+Z_i)-1
\]
for the ease of notation, we have 
\[
\det(T_f^{\vee}\otimes V_{\bfg_{T_1}}\otimes V_{\bfh_{T_2}})=\varepsilon_{\mathrm{cyc}}^{2r-1}\cdot(\ch_1\ch_2\Psi_{T_1}\Psi_{T_2}\circ\mathscr{V})=\varepsilon_{\mathrm{cyc}}^{2r-1}\cdot(\Psi_{T_1}\Psi_{T_2}\circ\mathscr{V}),
\]
using that the central characters of $\ch_1$ and $\ch_2$ are inverses of each other for the second equality, and so
\begin{equation}\label{eq:dec-V}
\begin{aligned}
\VQdag&= T_f^{\vee}\otimes({\mathrm{Ind}}_K^\Q\ch_1^{-1}\Psi_{T_1})\otimes({\mathrm{Ind}}_K^\Q\ch_2^{-1}\Psi_{T_2})\otimes\varepsilon_{\mathrm{cyc}}^{1-r}(\Psi_{T_1}^{-1/2}\Psi_{T_2}^{-1/2}\circ\mathscr{V})\\
&\simeq\bigl(T_f^{\vee}(1-r)\otimes{\mathrm{Ind}}_{K}^\Q\ch_1^{-1}\ch_2^{-1}\Psi_{W_1}^{1-\cc}\bigr)\oplus\bigl(T_f^{\vee}(1-r)\otimes{\mathrm{Ind}}_K^\Q\ch_1^{-1}\ch_2^{-\cc}\Psi_{W_2}^{1-\cc}\bigr).
\end{aligned}
\end{equation} 
Thus we find that the completed $L$-value appearing in the interpolation formula of Theorem~\ref{thm:hsieh-triple} is given by
\begin{equation}\label{eq:explicit}
\begin{aligned}
\Gamma_{\VQdag}(0)\cdot L(\VQdag,0)&=\frac{\Gamma\bigl(\frac{k_1+k_2}{2}+r-1\bigr)\Gamma\bigl(\frac{k_1-k_2}{2}-r+1\bigr)\Gamma\bigl(\frac{k_1+k_2}{2}-r\bigr)\Gamma\bigl(\frac{k_1-k_2}{2}+r\bigr)}{2^4\cdot(2\pi)^{2k_1}}\\
&\quad\times L(f/K,\ch_1\ch_2\Psi_{W_1}^{\cc-1},r)\cdot L(f/K,\ch_1\ch_2^\cc\Psi_{W_2}^{\cc-1},r),
\end{aligned}
\end{equation}
and similarly the modified Euler factor decomposes as
\begin{equation}\label{eq:Ep}
\begin{aligned}
\mathcal{E}_p(\mathscr{F}_p^{\bfg}(\VQdag))&=\,\bigr(1-a_p(\ch_1\ch_2\Psi_{W_1}^{\cc-1})(\bar{\pp})p^{-r}+(\ch_1\ch_2\Psi_{W_2}^{\cc-1})(\bar{\pp})^2p^{-1}\bigl)^2\\
&\quad\,\times\bigr(1-a_p(\ch_1\ch_2^\cc\Psi_{W_2}^{\cc-1})(\bar{\pp})p^{-r}+(\ch_1\ch_2^\cc\Psi_{W_2}^{\cc-1})(\bar{\pp})^2p^{-1}\bigl)^2.
\end{aligned}
\end{equation}
Moreover, letting $\chi_g'$ be the prime-to-$p$ part of the nebentypus character of $\bfg_{T_1}$, we have
\begin{align*}
\biggl(1-\frac{\chi_g'(p)p^{k_1-1}}{\ch_1\Psi^{-1}_{T_1}(\bar{\pp})^2}\biggr)\biggl(1-\frac{\chi_g'(p)p^{k_1-2}}{\ch_1\Psi^{-1}_{T_1}(\bar{\pp})^2}\biggr)
&=\bigl(1-\ch_1^{\cc-1}\Psi_{T_1}^{1-\cc}(\ppbar)\bigr)\bigl(1-\ch_1^{\cc-1}\Psi^{1-\cc}_{T_1}(\ppbar)p^{-1}\bigr),
\end{align*}
and therefore the canonical period $\Omega_{\bfg_{T_1}}$ in Theorem~\ref{thm:hsieh-triple} (associated with the generator $\eta_{\bfg}^\star=\mathcal{L}_{\pp,\bar{\xi}_1^-}^{{\mathrm{Katz}},-}$ of $C(\bfg)$ from Lemma~\ref{lem:HT}) is given by
\begin{equation}\label{eq:period}
\Omega_{\bfg_{T_1}}=(-2\sqrt{-1})^{k_1+1}\cdot\frac{\Vert\bfg_{T_1}^\circ\Vert_{\Gamma_0(C)}^2}{\eta_{\bfg_{T_1}}^{\star}}\cdot\bigl(1-\ch_1^{\cc-1}\Psi_{T_1}^{1-\cc}(\ppbar)\bigr)\bigl(1-\ch_1^{\cc-1}\Psi^{1-\cc}_{T_1}(\ppbar)p^{-1}\bigr),
\end{equation}
where $C=N_{K/\Q}(\fkf_1)D_K$, and we note that we may ignore the term $\prod_{q\in\Sigma_{\mathrm{exc}}}(1+q^{-1})^2$ from Theorem~\ref{thm:hsieh-triple}, since (up to a $p$-adic unit) it only contributes a fixed power of $p$ (in particular, independent of $k_1,k_2$).
%

On the other hand, since $g_{T_1}$ has weight $k_1$, from Hida's formula for the adjoint $L$-value \cite[Thm.~7.1]{HT-ENS} (using that $\xi_1$ satisfies the minimality condition (\ref{eq:minimal})) and Dirichlet's class number formula we obtain 
\[
\Vert \bfg_{T_1}^\circ\Vert^2_{\Gamma_0(C)}=\Gamma(k_1)\cdot\frac{D_K^2}{2^{2k_1}\pi^{k_1+1}}\cdot\frac{2\pi h_K}{w_K\sqrt{D_K}}\cdot L(\ch_1^{\cc-1}\Psi_{T_1}^{1-\cc},1),
\]
where 
$w_K=\vert\cO_K^\times\vert$. 

Note that $L(\ch_1^{\cc-1}\Psi_{T_1}^{1-\cc},1)=L(\ch_1^{\cc-1}\Psi_{T_1}^{1-\cc}\mathbf{N}^{-1},0)$, and $\ch_1^{\cc-1}\Psi_{T_1}^{1-\cc}\mathbf{N}^{-1}$ has infinity type $(k_1,2-k_1)$. Hence for $k_1\geq 2$ this character lies in the range of interpolation of $\cL_{\pp,\mathfrak{f}}^{\mathrm{Katz}}$, where $\mathfrak{f}$ denotes the conductor of $\xi_1^{1-\cc}$, and from the right above formula for $L(\ch_1^{\cc-1}\Psi_{T_1}^{1-\cc},1)$ and Theorem~\ref{thm:katz} we obtain
\begin{equation}\label{eq:hida-formula}
\begin{aligned}
\cL_{\pp,\mathfrak{f}}^{\mathrm{Katz}}(\ch_1^{\cc-1}\Psi_{T_1}^{1-\cc}\mathbf{N}^{-1})&
=\biggl(\frac{\Omega_p}{\Omega_\infty}\biggr)^{2k_1-2}\cdot\frac{\pi^{2k_1-2}\cdot 2^{3k_1-3}}{\sqrt{D_K}^{k_1+1}}\\
&\quad\times\bigl(1-\ch_1^{\cc-1}\Psi^{1-\cc}_{T_1}(\ppbar)\bigr)\bigl(1-\ch_1^{\cc-1}\Psi^{1-\cc}_{T_1}(\ppbar)p^{-1}\bigr)\cdot\Vert\bfg^\circ_{T_1}\Vert_{\Gamma_0(C)}^2\cdot\frac{w_K}{h_K}.
\end{aligned}
\end{equation}
Moreover, by the functional equation of Theorem~\ref{thm:katz} 
and the definition of $\eta_{\bfg}^{\star}$ we have the relation 
\begin{equation}\label{eq:Katz-and-congruence}
    \frac{h_K}{w_K}\cdot\Lcal_{\pp,\mathfrak{f}}^{\mathrm{Katz}}(\ch_1^{\cc-1}\Psi_{T_1}^{1-\cc}\mathbf{N}^{-1})\sim_p\eta_{\bfg_{T_1}}^{\star},
\end{equation}
where $\sim_p$ denotes equality up to a $p$-adic unit. Therefore, equations \eqref{eq:hida-formula} and \eqref{eq:Katz-and-congruence} imply that
\begin{equation}\label{eq:subs-for-hida-formula}
\frac{\Vert\bfg_{T_1}^\circ\Vert_{\Gamma_0(C)}^2}{\eta_{\bfg_{T_1}}^{\star}}\cdot\bigl(1-\ch_1^{\cc-1}\Psi_{T_1}^{1-\cc}(\ppbar)\bigr)\bigl(1-\ch_1^{\cc-1}\Psi^{1-\cc}_{T_1}(\ppbar)p^{-1}\bigr)\sim_p  \biggl(\frac{\Omega_\infty}{\Omega_p}\biggr)^{2k_1-2}\cdot\frac{\sqrt{D_K}^{k_1+1}}{(2\pi)^{2k_1-2}}   
\end{equation}

Hence, from (\ref{eq:period}) and (\ref{eq:subs-for-hida-formula}) we arrive at
\begin{equation}\label{eq:Omega-bis}
\frac{1}{\Omega_{\bfg_{T_1}}}\sim_p\biggl(\frac{\Omega_p}{\Omega_\infty}\biggr)^{2k_1-2}\cdot\frac{(2\pi)^{2k_1-2}}{\sqrt{-D_K}^{k_1+1}}.
\end{equation}

Finally, note that the characters $\ch_1\ch_2\Psi_{W_1}^{\cc-1}$ and $\ch_1\ch_2^\cc\Psi_{W_2}^{\cc-1}$ in the right-hand side of (\ref{eq:explicit}) are both  anticyclotomic, and of infinity type $(-(k_1+k_2-2)/2,(k_1+k_2-2)/2)$ and $(-(k_1-k_2)/2,(k_1-k_2)/2)$, respectively, and so for $k_1\geq k_2+2r$ they are in the range of interpolation for $\mathscr{L}_\pp^{\BDP}(f/K,\ch_1\ch_2)$ and $\mathscr{L}_\pp^{\BDP}(f/K,\ch_1\ch_2^\cc)$, respectively. Thus substituting (\ref{eq:explicit}), (\ref{eq:Ep}), and (\ref{eq:Omega-bis}) into the interpolation formula for $\mathscr{L}_p^\bfg(f,\bfg,\bfh)$ in Theorem~\ref{thm:hsieh-triple} and comparing with Theorem~\ref{thm:BDP} we finally arrive at
\begin{align*}
\mathscr{L}_p^{\bfg}(f,\bfg,\bfh)^2(S_1,S_2)
\sim_p\frac{-1}{D_K^{k_1+1}}\cdot\mathscr{L}_\pp^{\BDP}(f/K,\ch_1\ch_2)^2(W_1)\cdot\mathscr{L}_\pp^{\BDP}(f/K,\ch_1\ch_2^\cc)^2(W_2),
\end{align*}
and this yields the proof of the result.
\end{proof}



\subsection{Selmer group decomposition}
\label{subsec:Selmer-decomposition-indef}



\begin{prop}\label{prop:factor-S-indef}
Under the direct sum decomposition
\[
\rH^1(\Q,\Vsdag)\simeq\rH^1(K,T_f^{\vee}(1-r)\otimes\ch_1^{-1}\ch_2^{-1}\Psi_{W_1}^{1-\cc})\oplus\rH^1(K,T_f^{\vee}(1-r)\otimes\ch_1^{-1}\ch_2^{-\cc}\Psi_{W_2}^{1-\cc})
\]
of \eqref{eq:shapiro}, 
the balanced Selmer group $\Sel^{\bal}(\Q,\Vsdag)$ decomposes as
\begin{align*}
\Sel^{\bal}(\Q,\Vsdag)\simeq\Sel_{\relstr}&(K,T_f^{\vee}(1-r)\otimes\ch_1^{-1}\ch_2^{-1}\Psi_{W_1}^{1-\cc})\\ \oplus&\Sel_{\ord}(K,T_f^{\vee}(1-r)\otimes\ch_1^{-1}\ch_2^{-\cc}\Psi_{W_2}^{1-\cc});
\end{align*}
and the $\bfg$-unbalanced Selmer group $\Sel^{\bfg}(\Q,\Vsdag)$ decomposes as
\begin{align*}
\Sel^{\bfg}(\Q,\Vsdag)\simeq
\Sel_{\relstr}&(K,T_f^{\vee}(1-r)\otimes\ch_1^{-1}\ch_2^{-1}\Psi_{W_1}^{1-\cc})\\ \oplus&\Sel_{\relstr}(K,T_f^{\vee}(1-r)\otimes\ch_1^{-1}\ch_2^{-\cc}\Psi_{W_2}^{1-\cc}).\end{align*}
\end{prop}

\begin{proof}
The result for $\Sel^{\bal}(\Q,\Vsdag)$ is given in Proposition~\ref{prop:factor-S}.
It suffices to focus on $\Sel^{\bfg}(\Q,\Vsdag)$. Put 
\[
\widetilde{\mathbf{V}}_{Q_0}^\dagger=\bigl(T_f^{\vee}(1-r)\otimes\ch_1^{-1}\ch_2^{-1}\Psi_{W_1}^{1-\cc}\bigr)\oplus\bigl(T_f^{\vee}(1-r)\otimes\ch_1^{-1}\ch_2^{-\cc}\Psi_{W_2}^{1-\cc}\bigr),
\] 
so by Shapiro's lemma we have $\rH^1(\Q,\Vsdag)\simeq\rH^1(K,\widetilde{\mathbf{V}}_{Q_0}^\dagger)$.
 
Putting $T_i=\mathbf{u}(1+Z_i)-1$ as in the proof of Proposition~\ref{prop:factor-L-indef}, so \eqref{eq:CM-ind} can be rewritten as
\[
V_{\bfg}\cong{\mathrm{Ind}}_K^\Q(\xi_1^{-1}\Psi_{T_1}),\quad\quad
V_{\bfh}\cong{\mathrm{Ind}}_K^\Q(\xi_2^{-1}\Psi_{T_2}),
\]
a direct computation shows that the $\bfg$-unbalanced local condition is given by
\begin{align*}
\mathscr{F}_p^\bfg(\Vsdag)&=T_f^{\vee}\otimes\ch_1^{-1}\Psi_{T_1}\otimes\bigl(\ch_2^{-1}\Psi_{T_2}\oplus\ch_2^{-\cc}\Psi_{T_2}^\cc\bigr)\otimes\varepsilon_{\mathrm{cyc}}^{1-r}(\Psi_{T_1}^{-1/2}\Psi_{T_2}^{-1/2}\circ\mathscr{V})\\
&=\bigl(T_f^{\vee}(1-r)\otimes\ch_1^{-1}\ch_2^{-1}\Psi_{W_1}^{1-\cc}\bigr)\oplus \bigl(T_f^{\vee}(1-r)\otimes\ch_1^{-1}\ch_2^{-\cc}\Psi_{W_2}^{1-\cc}\bigr).
\end{align*}
Therefore, we have 
\[ \mathscr{F}_\pp(\widetilde{\mathbf{V}}_{Q_0}^\dagger)=\widetilde{\mathbf{V}}_{Q_0}^\dagger,\quad  \mathscr{F}_{\ppbar}(\widetilde{\mathbf{V}}_{Q_0}^\dagger)=0, 
\]
and this yields the stated decomposition for $\Sel^{\bfg}(\Q,\Vsdag)$.
\end{proof}

\begin{cor}\label{cor:factor-S-indef}
The balanced Selmer group $\Sel^{\bal}(\Q,\mathbf{A}_{Q_0}^\dagger)$ decomposes as
\begin{align*}
\Sel^{\bal}(\Q,\mathbf{A}_{Q_0}^\dagger)&\simeq\Sel_{\strrel}(K,A_f(r)\otimes\ch_1\ch_2\Psi_{W_1}^{\cc-1})\oplus\Sel_{\ord}(K,A_f(r)\otimes\ch_1\ch_2^\cc\Psi_{W_2}^{\cc-1});
\end{align*}
and the $\bfg$-unbalanced Selmer group $\Sel^{\bfg}(\Q,\mathbf{A}_{Q_0}^\dagger)$ decomposes as
\begin{align*}
\Sel^{\bfg}(\Q,\mathbf{A}_{Q_0}^\dagger)&\simeq
\Sel_{\strrel}(K,A_f(r)\otimes\ch_1\ch_2\Psi_{W_1}^{\cc-1})\oplus\Sel_{\strrel}(K,A_f(r)\otimes\ch_1\ch_2^\cc\Psi_{W_2}^{\cc-1}).
\end{align*}
\end{cor}

\begin{proof}
As in Corollary~\ref{cor:factor-S}, this is immediate from Proposition~\ref{prop:factor-S-indef} and local Tate duality.
\end{proof}

\subsection{Explicit reciprocity law}
\label{subsec:ERL-indef}

As in $\S\ref{subsec:ERL-def}$, we put 
\[
\mathbb{V}^\dagger=\Vsdag\otimes_{\cO\dBr{Z_1,Z_2}}\cO\dBr{Z_1,Z_2}/(Z_2),
\]
let $h_2$ be specialization of $\bfh=\boldsymbol{\theta}_{\xi_2}(Z_2)$ of weight $2$ given by $Z_2=0$, but now consider the second component $\kappa_2(f,\bfg,h_2)$ of the specialized big diagonal class
\[
\kappa(f,\bfg,h_2)=(\kappa_1(f,\bfg,h_2),\kappa_2(f,\bfg,h_2))
\]
according to the decomposition of $\Sel^{\bal}(\Q,\mathbb{V}^\dagger(\mathscr{N}))$ from Proposition~\ref{prop:factor-S-indef}; in particular, we have
\begin{equation}\label{eq:k2-ord-ord}
\kappa_2(f,\bfg,h_2)\in\Sel_{\ord}(K,T_f^{\vee}(1-r)\otimes\ch_1^{-1}\ch_2^{-\cc}\Psi_{W_2}^{1-\cc}(\mathscr{N})),
\end{equation}
where $W_2=(1+Z_1)^{1/2}-1$. 

Let $\mathfrak{X}_{\cO\dBr{W_2}}^+$ be the set of ring homomorphisms $Q\in{\mathrm{Spec}}(\cO\dBr{W_2})(\overline{\Q}_p)$ with $Q(1+W_2)=\zeta_Q\mathbf{u}^{j_Q}$ for some $\zeta_Q\in\mu_{p^\infty}$ and $j_Q\in\Z_{\geq 0}$, and for any $\cO\dBr{W_2}$-module $M$ we let $M_Q$ denote the corresponding  specialization.



\begin{thm}\label{thm:ERL-indef}
For every triple $(\breve{f},\breve{\bfg},\breve{h}_2)$ of level-$N$ test vectors for $(f,\bfg,h_2)$ there is an injective $\cO\dBr{W_2}$-module homomorphism with pseudo-null cokernel
\[
{\mathrm{Log}}^{\bfg}_{\ppbar,(\breve{f},\breve{\bfg},\breve{h}_2)}:\rH^1(K_{\ppbar},T_f^{\vee,+}(1-r)\otimes\ch_1^{-1}\ch_2^{-\cc}\Psi_{W_2}^{1-\cc}(\mathscr{N}))\rightarrow C(\bfg)^{-1}\cO\dBr{W_2}
\]
such that for any $\mathfrak{Z}\in\rH^1(K_{\ppbar},T_f^{\vee,+}(1-r)\otimes\ch_1^{-1}\ch_2^{-\cc}\Psi_{W_2}^{1-\cc}(\mathscr{N}))$ and $Q\in\mathfrak{X}_{\cO\dBr{W_2}}^+$ of weight $j_Q\geq r$ we have
\[
{\mathrm{Log}}^\bfg_{\ppbar,(\breve{f},\breve{\bfg},\breve{h}_2)}(\mathfrak{Z})_Q=
c_Q\cdot\left\langle{\mathrm{exp}}^*_p(\mathfrak{Z}_{Q}),\omega_{\breve{f}}\otimes\eta_{\breve{\bfg}_{Q'}}\otimes\omega_{\breve{h}_2}\right\rangle_{\mathrm{dR}},
\]
where $c_Q$ is an explicit nonzero constant, and $Q'\in{\mathrm{Spec}}(\cO\dBr{Z_1})(\overline{\Q}_p)$ is given by $Q'(1+Z_1)=\zeta_Q^2\mathbf{u}^{2j_Q}$. Moreover, we have 
the \emph{explicit reciprocity law}
\[
{\mathrm{Log}}^\bfg_{\ppbar,(\breve{f},\breve{\bfg},\breve{h}_2)}\bigl({\mathrm{res}}_{\ppbar}(\kappa_2(f,\bfg,h_2))\bigr)(W_2)=\mathscr{L}_p^{\bfg}(\breve{f},\breve{\bfg},\breve{h}_2)(S_1),
\]
where $S_1=\mathbf{u}^2(1+Z_1)-1=\mathbf{u}^2(1+W_2)^2-1$.
\end{thm}

\begin{proof}
As shown in the proof of Theorem~\ref{thm:ERL-def}, we have
\begin{align*}
\mathscr{F}_p^{\bal}(\VVdag)/\mathscr{F}_p^3(\VVdag)&\cong
\bigl(T_f^{\vee,-}(1-r)\otimes\ch_1^{-1}\ch_2^{-1}\Psi_{W_1}^{1-\cc}\bigr)\\
&\quad\oplus\bigl(T_f^{\vee,+}(1-r)\otimes\ch_1^{-1}\ch_2^{-\cc}\Psi_{W_2}^{1-\cc}\bigr)\oplus\bigl(T_f^{\vee,+}(1-r)\otimes\ch_1^{-\cc}\ch_2^{-1}\Psi_{W_2}^{\cc-1}\bigr),
\end{align*}
with the direct summands corresponding to $\mathbb{V}_{f}^{\bfg h_2}$, $\mathbb{V}_{h_2}^{f\bfg}$, and $\mathbb{V}_{\bfg}^{fh_2}$ from (\ref{eq:gr2}), respectively. As a result, the analogue of the composite map (\ref{eq:map-ERL}) in the present $\bfg$-unbalanced case:
\begin{equation}\label{eq:res-g-unb}
\begin{aligned}
\Sel^{\bal}(\Q,\VVdag(\mathscr{N}))\xrightarrow{{\mathrm{res}}_p}&\rH^1(\Q_p,\mathscr{F}_p^{\bal}(\VVdag(\mathscr{N})))\\ \rightarrow &\rH^1(\Q_p,\mathscr{F}_p^{\bal}(\VVdag(\mathscr{N}))/\mathscr{F}_p^3(\VVdag(\mathscr{N})))
\rightarrow\rH^1(\Q_p,\mathbb{V}_{\bfg}^{fh_2}(\mathscr{N}))
\end{aligned}
\end{equation} 
corresponds, under the isomorphism of Proposition~\ref{prop:factor-S-indef}, to the projection onto \[\Sel_{\ord}(K,T_f^{\vee}(1-r)\otimes\ch_1^{-1}\ch_2^{-\cc}\Psi_{W_2}^{1-\cc}(\mathscr{N}))\] (the second factor in that decomposition) composed with the restriction map
\[
\Sel_{\ord}(K,T_f^{\vee}(1-r)\otimes\ch_1^{-1}\ch_2^{-\cc}\Psi_{W_2}^{1-\cc}(\mathscr{N}))\xrightarrow{{\mathrm{res}}_{\ppbar}}\rH^1(K_{\ppbar},T_f^{\vee,+}(1-r)\otimes\ch_1^{-1}\ch_2^{-\cc}\Psi_{W_2}^{1-\cc}(\mathscr{N})).
\]
In particular, under the corresponding identifications the image \[{\mathrm{res}}_p(\kappa(f,\bfg,h_2))_\bfg\] of $\kappa(f,\bfg,h_2)$ under \eqref{eq:res-g-unb} is such that 
\[{\mathrm{res}}_p(\kappa(f,\bfg,h_2))_\bfg={\mathrm{res}}_{\ppbar}(\kappa_2(f,\bfg,h_2))\] 
in
\[
\rH^1(\Q_p,V_\bfg^{fh_2}(\mathscr{N}))\simeq\rH^1(K_{\ppbar},T_f^{\vee,+}(1-r)\otimes\ch_1^{-1}\ch_2^{-\cc}\Psi_{W_2}^{1-\cc}(\mathscr{N})).
\] 
On the other hand, the construction of ${\mathrm{Log}}^{\bfg}_{\ppbar,(\breve{f},\breve{\bfg},\breve{h}_2)}$ is deduced from a specialization of the $3$-variable $p$-adic regulator map ${\mathrm{Log}}_{(\breve{\bff},\breve{\bfg},\breve{\bfh})}^{\bfg}$ in \S\ref{subsec:diag} by the same argument as in \cite[Prop.~7.3]{ACR}), and the associated explicit reciprocity law then follows from Theorem~\ref{thm:ERL}.
\end{proof}

In particular, for the choice of level-$N$ test vectors from Theorem~\ref{thm:hsieh-triple} we deduce the following.

\begin{cor}\label{cor:ERL-indef}
Assume that $\ch_1$ satisfies the conditions in Lemma~\ref{lem:HT}, and put
\[
S_1=\mathbf{u}^2(1+Z_1)-1,\quad 
W_1=\mathbf{u}(1+Z_1)^{1/2}-1,\quad
W_2=(1+Z_1)^{1/2}-1.
\]
Then 
\begin{align*}
\mathcal{L}_{\pp,\bar{\xi}_1^-}^{{\mathrm{Katz}},-}\cdot{\mathrm{Log}}^\bfg_{\ppbar,(\breve{f}^\star,\breve{\bfg}^\star,\breve{h}_2^\star)}\bigl(&{\mathrm{res}}_{\ppbar}(\kappa_2(f,\bfg,h_2))\bigr)(W_2)=\mathscr{L}_p^\bfg(f,\bfg,h_2)(S_1)\\
&=\pm\mathbf{w}\cdot\mathscr{L}_\pp^{\BDP}(f/K,\xi_1\xi_2)(W_1)\cdot\mathscr{L}_\pp^{\BDP}(f/K,\xi_1\xi_2^\cc)(W_2),
\end{align*}
where $\mathscr{L}_p^{\bfg}(f,\bfg,h_2)$ is the specialization of $\mathscr{L}_p^\bfg(\bff,\bfg,\bfh)$ in Definition~\ref{def:gen-triple} and $\mathbf{w}$ is a unit in $\cO\dBr{Z_1}\otimes_{\Z_p}\Q_p$.
\end{cor}

\begin{proof}
The first equality is immediate from Lemma~\ref{lem:HT} and Theorem~\ref{thm:ERL-indef}, and the second follows from  Proposition~\ref{prop:factor-L-indef}.
\end{proof}

\subsection{On the Bloch--Kato conjecture in rank 0}
\label{subsec:BK-indef}

As another application of the Euler system construction in this paper, we now deduce a result towards the Bloch--Kato conjecture for  
\[
V_{f,\chi}=V_f^\vee(1-r)\otimes\chi^{-1}
\]
analogous to Theorem~\ref{thm:BK-def} but in the indefinite setting. 

\begin{thm}\label{thm:BK-indef}
Let $f\in S_{2r}(\Gamma_0(N_f))$, with $p\nmid N_f$, be a $p$-ordinary newform of weight $2r\geq 2$, let $K$ be an imaginary quadratic field satisfying $\eqref{eq:spl}$ and $\eqref{eq:p-nmid-h}$, and let $\chi$ be an anticyclotomic Hecke character of conductor $c\cO_K$ and infinity type $(-j,j)$, $j\geq 0$. 
Assume that:
\begin{itemize}
    \item every prime $\ell\mid N_f$ splits in $K$;
    \item $(pN_f,cD_K)=1$;
    \item $\chi_t$ has conductor prime-to-$p$;
    \item $\bar{\rho}_f$ is absolutely irreducible;
    \item $f$ is not of CM-type.
\end{itemize}
Then
\[
L(f/K,\chi,r)\neq 0\quad\Longrightarrow\quad\Sel_{\mathrm{BK}}(K,V_{f,\chi})=0,
\] 
and hence the Bloch--Kato conjecture for $V_{f,\chi}$ holds in analytic rank zero.
\end{thm}

\begin{proof}
We argue similarly as in the proof of Theorem~\ref{thm:BK-def}, with some modifications. By our assumption on $N_f$, the sign in the functional equation of $L(f/K,\chi,s)$ is $-1$ for $0\leq j<r$, so without loss of generality we assume that 
$j\geq r$.

Write $\chi_t=\alpha/\alpha^\cc$ for a ray class character $\alpha$ as in the proof of Theorem~\ref{thm:BK-def}, but now put
\begin{equation}\label{eq:xi-i}
\ch_1:=\beta\alpha,\quad\quad\ch_2:=(\beta^{-1}\alpha^{-\cc})^\cc=\beta\alpha^{-1},
\end{equation}
with $\beta$ an auxiliary ring class character of $K$ of $q$-power conductor for a suitable prime $q\neq p$ split in $K$. Consider the setting of $\S$\ref{subsec:factor-L-indef} with the CM Hida families
\[
(\bfg,\bfh)=(\boldsymbol{\theta}_{\ch_1}(Z_1),\boldsymbol{\theta}_{\ch_2}(Z_2)).
\]
By \cite[Thm.~C]{ChHs1} we may take $q$ and $\beta$ so that $\mathscr{L}_\pp^{\BDP}(f/K,\beta^2)(W_1)$ is a unit in $\Z_p^{\ur}\dBr{W_1}\otimes_{\Z_p}\cO$ and $\xi_1$ satisfies the hypotheses of Lemma~\ref{lem:HT}. With such a choice, using the equalities $\xi_1\xi_2=\beta^2$ and $\xi_1\xi_2^\cc=\chi_t$ the explicit reciprocity law of Corollary~\ref{cor:ERL-indef} becomes
\begin{equation}\label{eq:ERL-infef-unit}
\mathcal{L}_{\pp,\bar{\chi}_1^-}^{{\mathrm{Katz}},-}\cdot{\mathrm{Log}}^\bfg_{\ppbar,(\breve{f}^\star,\breve{\bfg}^\star,\breve{h}_2^\star)}\bigl({\mathrm{res}}_{\ppbar}(\kappa_2(f,\bfg,h_2))\bigr)(W_2)
=\pm\mathbf{w}'\cdot\mathscr{L}_\pp^{\BDP}(f/K,\chi_t)(W_2),
\end{equation}
where $W_2=(1+Z_1)^{1/2}-1=V_1$, with $\mathbf{w}'$ is a unit in $\Z_p^{\ur}\dBr{W_2}\otimes_{\Z_p}L_\mathfrak{P}$.

Denoting by $Q\in\mathfrak{X}_{\cO\dBr{W_2}}^{+}$ the specialization $W_2\mapsto\zeta_Q\mathbf{u}^{j}-1$  ($\zeta_Q\in\mu_{p^\infty}$) such that
\[
\chi_w=\Psi_{W_2}^{\cc-1}\vert_{W_2=\zeta_Q\mathbf{u}^j-1},
\]
from \eqref{eq:ERL-infef-unit}, Theorem~\ref{thm:BDP}, and Theorem~\ref{thm:ERL-indef} we find
\begin{equation}\label{eq:L-implies-indef}
\begin{aligned}
L(f/K,\chi,r)\neq 0\quad&\Longrightarrow\quad\mathscr{L}_\pp^{\BDP}(f/K,\chi_t)(\chi_w(\gamma_-)-1)\neq 0\\
&\Longrightarrow\quad{\mathrm{res}}_{\ppbar}(\kappa_2(\breve{f}^\star,\breve{\bfg}^\star,\breve{h}_2^\star)_Q)\neq 0,
\end{aligned}
\end{equation}
where $\kappa_2(\breve{f}^\star,\breve{\bfg}^\star,\breve{h}_2^\star)$
denotes the image of the class $\kappa_2(f,\bfg,h_2)$ in \eqref{eq:k2-ord-ord} under the projection
\begin{align*}
\Sel_{\ord}(K,T_f^{\vee}(1-r)\otimes\chi_t^{-1}\Psi_{W_2}^{1-\cc}(\mathscr{N}))&\rightarrow\Sel_{\ord}(K,T_f^{\vee}(1-r)\otimes\chi_t^{-1}\Psi_{W_2}^{1-\cc})
\end{align*}
associated to $(\breve{f}^\star,\breve{\bfg}^\star,\breve{h}_2^\star)$. 

As noted in Remark~\ref{rem:diag-components}, the class  $\kappa_2(\breve{f}^\star,\breve{\bfg}^\star,\breve{h}_2^\star)$ is the bottom class of the anticyclotomic Euler system $\{ {}^\cc\mathbf{z}_{f,\xi_1,\xi_2,m}\}_m$ of Theorem~\ref{maintheorem2} for $T_{f,\xi_1\xi_2^\cc}=T_{f,\chi_t}$ (and the given choice of test vectors). Therefore, letting ${\mathrm{tw}}_{\chi_w^{-1}}(\kappa_2(\breve{f}^\star,\breve{\bfg}^\star,\breve{h}_2^\star))$ denote the image of $\kappa_2(\breve{f}^\star,\breve{\bfg}^\star,\breve{h}^\star_2)$ under the `twisting' map
\[
\Sel_{\ord}(K,T_f^{\vee}(1-r)\otimes\chi_t^{-1}\Psi_{W_2}^{1-\cc})\rightarrow\Sel_{\ord}(K,T_f^{\vee}(1-r)\otimes\chi^{-1}\Psi_{W_2}^{1-\cc})
\]
given by the change of variables $W_2\mapsto\zeta_Q^{-1}\mathbf{u}^{-j}(1+W_2)-1$, it follows that \[{\mathrm{tw}}_{\chi_w^{-1}}(\kappa_2(\breve{f}^\star,\breve{\bfg}^\star,\breve{h}_2^\star))\] is the bottom class of the Euler system 
\begin{equation}\label{eq:tw-ES-indef}
\bigl\{{}^\cc\mathbf{z}_{f,\chi,m}\bigr\}_m:=\bigl\{ {}^\cc\mathbf{z}_{f,\xi_1,\xi_2,,m}\otimes\chi_w^{-1}\bigr\}_m
\end{equation}
of Theorem~\ref{maintheorem2} for $T_{f,\xi_1\xi_2^\cc}\otimes\chi_w^{-1}=T_{f,\chi}$. 

Since by construction the class   $\kappa_2(\breve{f}^\star,\breve{\bfg}^\star,\breve{h}_2^\star)_Q$ in \eqref{eq:L-implies-indef} agrees with the image of the bottom class ${}^\cc\mathbf{z}_{f,\chi,1}$ of the system \eqref{eq:tw-ES-indef} under natural map
\[
\Sel_{\ord}(K,T_f^{\vee}(1-r)\otimes\chi^{-1}\Psi_{W_2}^{1-\cc})\cong\Sel_{\ord}(K_\infty^-,T_{f,\chi})\rightarrow\Sel_{\ord}(K,T_{f,\chi}),
\]
from Theorem~\ref{thm:rank-1-general} 
we deduce that $\Sel_{\ord}(K,V_{f,\chi})$ is one-dimensional, spanned by $\kappa_2(\breve{f}^\star,\breve{\bfg}^\star,\breve{h}_2^\star)_Q$. 
Since we have in fact shown that ${\mathrm{res}}_{\ppbar}(\kappa_2(\breve{f}^\star,\breve{\bfg}^\star,\breve{h}_2^\star)_Q)\neq 0$, the vanishing of $\Sel_{\relstr}(K,V_{f,\chi})$ then follows by global duality similarly as in the proof of Theorem~\ref{thm:BK-def}; and since by Lemma~\ref{lem:BK-Gr}, for $j\geq r$ the latter group agrees with $\Sel_{\mathrm{BK}}(K,V_{f,\chi})$, this yields the result.
\end{proof}

\subsection{On the Iwasawa main conjecture}
\label{subsec:IMC-indef}

Writing any anticyclotomic Hecke character $\chi$ of $K$ as $\chi=\chi_t\cdot\chi_w$ as in $\S\ref{subsec:BK-def}$, we let $\mathscr{L}_\pp^{\BDP}(f/K,\chi)$ denote the image of $\mathscr{L}_\pp^{\BDP}(f/K,\chi_t)$ under the twisting homomorphism ${\mathrm{tw}}_{\chi_w}:\Z_p^{\ur}\dBr{W_2}\otimes_{\Z_p}\cO\rightarrow\Z_p^{\ur}\dBr{W_2}\otimes_{\Z_p}\cO$ given by $W_2\mapsto\chi_w(\gamma_-)(1+W_2)-1$. 

Our next application is to the Iwasawa--Greenberg main conjecture for $\mathscr{L}_\pp^{\BDP}(f/K,\chi)$. 

\begin{thm}\label{thm:IMC-indef}
Let the hypotheses be an in Theorem~\ref{thm:BK-indef}, and assume in addition that $f$ has big image.  Then $\Sel_{\strrel}(K,A_{f,\chi})$ is cotorsion over $\Lambda_K^-$, and we have the divisibility
\[
{\mathrm{char}}_{\Lambda_K^-}\bigl(\Sel_{\strrel}(K,A_{f,\chi})^\vee\bigr)\supset\bigl(\mathscr{L}_\pp^{\BDP}(f/K,\chi)^2\bigr)
\]
in $\Lambda_K^{-,\ur}\otimes_{\Z_p}\Q_p$.
\end{thm}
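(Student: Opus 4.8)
The strategy mirrors the definite-case argument of Theorem~\ref{thm:IMC-def}, but now using the $\bfg$-unbalanced side of the factorisation. First I would choose, as in the proof of Theorem~\ref{thm:BK-indef}, an auxiliary ring class character $\beta$ of $\ell$-power conductor (for a fixed split prime $\ell\neq p$) so that $\mathscr{L}_\pp^{\rm BDP}(f/K,\beta^2\chi_t)$ is nonzero; this is possible by \cite[Thm.~C]{hsieh} together with the nonvanishing statement in Theorem~\ref{thm:BDP}, and it forces the associated Selmer group ${\rm Sel}_{\strrel}(K,A_f(r)\otimes\beta^2\chi_t\cdot(\ldots))$ to be $\Lambda_K^-$-cotorsion via the rank-zero case already established in Theorem~\ref{thm:BK-indef}. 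Writing $\chi_t=\alpha/\alpha^\cc$ and setting $\ch_1=\beta\alpha$, $\ch_2=\beta\alpha^{-1}$, I would form the CM Hida families $\bfg=\boldsymbol{\theta}_{\ch_1}(S_1)$, $\bfh=\boldsymbol{\theta}_{\ch_2}(S_2)$ and restrict to $\mathbb{V}^\dagger=\Vsdag\otimes\cO\dBr{S_1,S_2}/(\vv^2(1+S_2)^{-1}-1)$, so that $\bfh$ is specialised to a weight-one theta series. The decomposition in Proposition~\ref{prop:factor-S-indef}, applied to $\mathbb{V}^\dagger$, identifies the $\bfg$-unbalanced component $\kappa_2(f,\bfg,\bfh_{\vv^2-1})$ of the diagonal class with (the restriction of) the $\Lambda_K^-$-adic Euler system class $\mathbf{z}_{f,\psi_1,\psi_2,(1)}$ attached to the Hecke characters $(\psi_1,\psi_2)$ corresponding to $(\bfg_{S_1},\bfh_{S_2})$, landing in ${\rm Sel}_{\relstr}(K,T_f^\vee(1-r)\otimes\ch_1^{-1}\ch_2^{-\cc}\Psi_W^{1-\cc})$ with $W=(1+S_1)^{1/2}-1$.

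The next step is to establish non-triviality of this class. By Proposition~\ref{prop:factor-L-indef}, the two-variable $p$-adic $L$-function $\mathscr{L}_p^\bfg(f,\bfg,\bfh)$ factors (up to a unit) as $\mathscr{L}_\pp^{\rm BDP}(f/K,\ch_1\ch_2)(W_1)\cdot\mathscr{L}_\pp^{\rm BDP}(f/K,\ch_1\ch_2^\cc)(W_2)$, and specialising at $S_2=\vv^2-1$ this becomes $\mathscr{L}_\pp^{\rm BDP}(f/K,\beta^2)(W_1)\cdot\mathscr{L}_\pp^{\rm BDP}(f/K,\chi_t)((1+W)^2-1)$ up to a unit. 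Both factors are nonzero: the first by the choice of $\beta$ and Theorem~\ref{thm:BDP} (or the nonvanishing of Katz's $p$-adic $L$-function), and the second by the nonvanishing statement in Theorem~\ref{thm:BDP} (which ultimately rests on \cite[Thm.~3.9]{cas-hsieh1}). Hence by the explicit reciprocity law of Theorem~\ref{thm:ERL-indef}, $\mathscr{L}_{\ppbar}^{\eta_\bfg^\star}\bigl({\rm res}_{\ppbar}(\kappa_2(f,\bfg,\bfh_{\vv^2-1}))\bigr)$ is a nonzero element of $\cO\dBr{W}$, and since $\mathscr{L}_{\ppbar}^{\eta_\bfg^\star}$ is injective, the class $\kappa_2(f,\bfg,\bfh_{\vv^2-1})$ is non-torsion over $\cO\dBr{W}\simeq\Lambda_K^-$ (after matching the cyclotomic variable $W$ with the anticyclotomic variable).

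Finally I would invoke the general machinery. Since the big-image hypotheses on $f$ (together with $\bar\rho_f$ absolutely irreducible and $p$-distinguished) guarantee the conditions (i)--(iii) of Theorem~\ref{thm:IMC-general} for the $G_K$-module $T_{f,\psi_1,\psi_2}$, and since the system $\{\mathbf{z}_{f,\psi_1,\psi_2,\fkm}\}_{\fkm\in\mathcal{N}}$ of Theorem~\ref{maintheorem2} is a $\Lambda_K^-$-adic anticyclotomic Euler system for the relaxed-strict Selmer group (Proposition~\ref{propselmer2}), the non-torsionness of $\mathbf{z}_{f,\chi}:={\rm cor}_{K[1]/K}(\mathbf{z}_{f,\psi_1,\psi_2,(1)})=\kappa_2(f,\bfg,\bfh_{\vv^2-1})$ gives, via part (I) of Theorem~\ref{thm:IMC-general}, that $X_{\strrel}(K_\infty^-,A_{f,\psi_1,\psi_2})$ has $\Lambda_K^-$-rank one together with a divisibility of characteristic ideals. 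Combined with the Selmer decomposition of Corollary~\ref{cor:factor-S-indef} for $\mathbf{A}_{Q_0}^\dagger$ (which isolates ${\rm Sel}_{\strrel}(K,A_{f,\chi})$, using that the auxiliary factor ${\rm Sel}_{\strrel}(K,A_f(r)\otimes\beta^2\chi_t)$ is finite by Theorem~\ref{thm:BK-indef}), the equivalence in Proposition~\ref{prop:equiv}, and the factorisation of $p$-adic $L$-functions in Proposition~\ref{prop:factor-L-indef}, this yields the cotorsionness of ${\rm Sel}_{\strrel}(K,A_{f,\chi})$ and the stated divisibility ${\rm char}_{\Lambda_K^-}\bigl({\rm Sel}_{\strrel}(K,A_{f,\chi})^\vee\bigr)\supset\bigl(\mathscr{L}_\pp^{\rm BDP}(f/K,\chi)\bigr)$ in $\Lambda_K^{-,\rm ur}\otimes_{\Z_p}\Q_p$. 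The main technical obstacle I anticipate is the careful bookkeeping in the Selmer-group and $p$-adic $L$-function decompositions: one must verify that the local conditions at $\pp$ and $\ppbar$ match up precisely on both sides (so that the relaxed-strict condition on the $\chi$-component genuinely corresponds to the balanced-versus-unbalanced comparison in Proposition~\ref{prop:equiv}), and that the specialisation $S_2=\vv^2-1$ does not create spurious zeros or poles in the regulator map $\mathscr{L}_{\ppbar}^{\eta_\bfg^\star}$; the argument for pseudo-nullity of the cokernel in Theorem~\ref{thm:ERL-indef} needs to be applied with care since we are working in a one-variable quotient.
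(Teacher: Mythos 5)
Your overall strategy matches the paper's --- choose auxiliary $\beta$, form CM Hida families $\bfg=\boldsymbol{\theta}_{\ch_1}(S_1)$, $\bfh=\boldsymbol{\theta}_{\ch_2}(S_2)$, restrict to the $S_2=\vv^2-1$ slice, combine Proposition~\ref{prop:factor-L-indef}, Theorem~\ref{thm:ERL-indef}, Theorem~\ref{thm:IMC-general}, Proposition~\ref{prop:equiv}, and Corollary~\ref{cor:factor-S-indef}. But there are several substantive errors in the details.

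\emph{Wrong local condition and wrong variant of the Euler system.} You place $\kappa_2(f,\bfg,\bfh_{\vv^2-1})$ in ${\rm Sel}_{\relstr}(K,T_f^\vee(1-r)\otimes\ch_1^{-1}\ch_2^{-\cc}\Psi_W^{1-\cc})$ and then invoke part~(I) of Theorem~\ref{thm:IMC-general}. Looking at Proposition~\ref{prop:factor-S-indef}, the second component of the balanced Selmer group is ${\rm Sel}_{\ord}$, not ${\rm Sel}_{\relstr}$. Accordingly, $\kappa_2$ is the base of the Euler system $\{{}^{\cc}\mathbf{z}_{f,\psi_1,\psi_2,\fkm}\}$ (the twisted variant built via $\xi_\Delta^\cc$ that lands in the ordinary Selmer group; see Proposition~\ref{propselmer2}), and one must invoke part~(II) of Theorem~\ref{thm:IMC-general}, which bounds $X_{\ord}$, not $X_{\strrel}$. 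The paper's class $\kappa_1$ --- the one in ${\rm Sel}_{\relstr}$ --- corresponds to part~(I), but that is the wrong component here.

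\emph{Nonvanishing of the auxiliary $\beta^2$-factor is not enough; you need it to be a unit.} You propose choosing $\beta$ so that the auxiliary BDP $p$-adic $L$-function (you write $\beta^2\chi_t$ where it should be $\beta^2$, since $\ch_1\ch_2=\beta^2$) is merely nonzero, and then handling the auxiliary Selmer group by quoting finiteness from Theorem~\ref{thm:BK-indef}. This does not suffice in the indefinite case. Unlike the definite case of Theorem~\ref{thm:IMC-def}, where the factorisation in Proposition~\ref{prop:factor-L-def} pins the auxiliary factor at a single point ($\Theta_p^{\rm BD}(f/K,\beta^2)(0)$, a constant), here \emph{both} factors in Proposition~\ref{prop:factor-L-indef} remain genuine elements of $\Lambda_K^-$ after specialising $S_2=\vv^2-1$. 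Therefore, to extract a clean divisibility for the $\chi$-factor from the divisibility of characteristic ideals over $\Lambda_K^-$, the characteristic ideal and the $p$-adic $L$-function of the $\beta^2$-factor must both be trivial as $\Lambda_K^-$-ideals. The paper achieves this by invoking \cite[Thm.~C]{hsieh} to choose $\beta$ so that $\mathscr{L}_\pp^{\rm BDP}(f/K,\beta^2)$ is a \emph{unit} in $\Lambda_K^{-,{\rm ur}}$ (while also ensuring conditions (i)--(iii) of Lemma~\ref{lem:HT} hold). Finiteness of an auxiliary Selmer group at one specialisation, which is what Theorem~\ref{thm:BK-indef} gives, does not control the whole Iwasawa module and cannot replace the unit statement.
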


\begin{proof}
Repeating the argument in the proof of Theorem~\ref{thm:BK-def}, we arrive at the equality
\begin{equation}\label{eq:ERL-indef-unit-bis}
\mathcal{L}_{\pp,\bar{\xi}_1^-}^{{\mathrm{Katz}},-}\cdot{\mathrm{Log}}^\bfg_{\ppbar,(\breve{f}^\star,\breve{\bfg}^\star,\breve{h}_2^\star)}\bigl({\mathrm{res}}_{\ppbar}(\kappa_2(f,\bfg,h_2))\bigr)(W_2)
=\pm\mathbf{w}'\cdot\mathscr{L}_\pp^{\BDP}(f/K,\chi_t)(W_2),
\end{equation}
with $\mathbf{w}'$ is a unit in $\Z_p^{\ur}\dBr{W_2}\otimes_{\Z_p}L_\mathfrak{P}$. Since $\mathscr{L}_\pp^{\BDP}(f/K,\chi_t)(W_2)$ is nonzero by Theorem~\ref{thm:BDP}, letting 
\[
\kappa_2(\breve{f}^\star,\breve{\bfg}^\star,\breve{h}_2^\star)\in\Sel_{\ord}(K,T_f^{\vee}(1-r)\otimes\chi_t^{-1}\Psi_{W_2}^{1-\cc}) 
\]
be as in the proof of Theorem~\ref{thm:BK-indef}, from \eqref{eq:ERL-indef-unit-bis} we conclude that $
\kappa_2(\breve{f}^\star,\breve{\bfg}^\star,\breve{h}_2)$ is non-torsion. 

Since ${\mathrm{tw}}_{\chi_w^{-1}}(\kappa_2(\breve{f}^\star,\breve{\bfg}^\star,\breve{h}_2^\star))$ is the bottom class of the Euler system $\{{}^\cc\mathbf{z}_{f,\chi,m}\}_m$ for $T_{f,\chi}$ constructed in Theorem~\ref{maintheorem2}, 
from Theorem~\ref{thm:IMC-general} 
we deduce that $\Sel_{\ord}(K,T_{f,\chi})$ and $X_{\ord}(K,A_{f,\chi})$ have both $\Lambda_K^-$-rank one, and we have the divisibility
\begin{equation}\label{eq:IMC-div-ord}
{\mathrm{char}}_{\Lambda_{K}^-}\bigl(X_{\ord}(K,A_{f,\chi})_{\mathrm{tors}}\bigr)\supset{\mathrm{char}}_{\Lambda_K^-}\biggl(\frac{\Sel_{\ord}(K,T_{f,\chi})}{\Lambda_K^-\cdot{\mathrm{tw}}_{\chi_w^{-1}}(\kappa_2(\breve{f}^\star,\breve{\bfg}^\star,\breve{h}_2^\star))}\biggr)^2
\end{equation}
in $\Lambda_K^-$. Since from \eqref{eq:ERL-indef-unit-bis} we deduce an explicit reciprocity law relating 
\[
{\mathrm{res}}_{\ppbar}({\mathrm{tw}}_{\chi_w^{-1}}(\kappa_2(\breve{f}^\star,\breve{\bfg}^\star,\breve{h}_2^\star)))
\]
to ${\mathrm{tw}}_{\chi_w}(\mathscr{L}_\pp^{\BDP}(f/K,\chi_t))=\mathscr{L}_\pp^{\BDP}(f/K,\chi)$, the result now follows from \eqref{eq:IMC-div-ord} and global duality by the same argument as in \cite[Thm.~5.1]{BCK-PRconj}.
\end{proof}

\begin{rmk}
Note that Theorem~\ref{thm:IMC-indef} also yields a proof of a divisibility towards the Perrin-Riou main conjecture for generalized Heegner cycles formulated in \cite{LV-kyoto} (see \cite[Thm.~5.2]{BCK-PRconj} for the argument), removing some of the hypotheses in the main result of \cite{LV-kyoto}.
\end{rmk}

\subsection{On the Bloch--Kato conjecture in rank 1}

We can also give an analogue of Theorem~\ref{thm:BK-def-1} in the indefinite case. 

\begin{thm}\label{thm:BK-indef-1}
Let the hypotheses be as in Theorem~\ref{thm:BK-indef}. If $0\leq j<r$ (which implies $L(f/K,\chi,r)=0$), then 
\[
\dim_{L_\mathfrak{P}}\,\Sel_{\mathrm{BK}}(K,V_{f,\chi})\geq 1.
\]
Moreover, there exists a class ${}^\cc z_{f,\chi}\in\Sel_{\mathrm{BK}}(K,V_{f,\chi})$ such that
\[
{}^\cc z_{f,\chi}\neq 0\quad\Longrightarrow\quad\dim_{L_\mathfrak{P}}\,\Sel_{\mathrm{BK}}(K,V_{f,\chi})=1.
\]
\end{thm}

\begin{proof}
The proof of Theorem~\ref{thm:IMC-indef} showed that the class
\[
{}^\cc\mathbf{z}_{f,\chi}:={\mathrm{tw}}_{\chi_w^{-1}}(\kappa_2(\breve{f}^\star,\breve{\bfg}^\star,\breve{h}_2^\star))\in\Sel_{\ord}(K_\infty^-,T_{f,\chi})
\]
is non-torsion over $\Lambda_K^-$. On the other hand, one readily checks that the natural map
\begin{equation}\label{eq:proj-indef}
\Sel_{\ord}(K_\infty^-,T_{f,\chi})/(\gamma_--1)\Sel_{\ord}(K_\infty^-,T_{f,\chi})\rightarrow\Sel_{\ord}(K,T_{f,\chi})
\end{equation}
is injective. Thus we conclude that $\Sel_{\ord}(K,T_{f,\chi})$ has positive $\cO$-rank, which together with  Lemma~\ref{lem:BK-Gr} yields the first part of the theorem. Letting ${}^\cc z_{f,\chi}\in\Sel_{\ord}(K,T_{f,\chi})$ be the image of ${}^\cc\mathbf{z}_{f,\chi}$ under (\ref{eq:proj-indef}), 
the last claim follows from Theorem~\ref{thm:rank-1-general}. 
\end{proof}

\newpage
\bibliographystyle{amsalpha}
\bibliography{Schoen-cm-final}

\end{document}